\documentclass[oneside,a4paper]{amsart} 
\usepackage{amssymb,verbatim,mathtools}

\usepackage[shortlabels]{enumitem}
    \setenumerate[0]{label={\rm (\roman*)}, leftmargin=*}

\usepackage[T1]{fontenc}
\usepackage[english]{babel}

\setlength{\parskip}{1ex}

\usepackage[textsize=footnotesize,textwidth=20ex,colorinlistoftodos]{todonotes}

\usepackage{hyperref}
\hypersetup{
    colorlinks=true,
    linkcolor=blue,
    filecolor=magenta, 
    citecolor=red,     
    urlcolor=blue,
}

\usepackage[capitalize]{cleveref}
    \crefname{enumi}{}{}
    \Crefname{enumi}{Item}{Items}
    \crefname{equation}{}{}
    \Crefname{equation}{Equation}{Equations}

\newtheorem{theorem}{Theorem}[section]
\newtheorem*{maintheorem}{Main Theorem}
\newtheorem{lemma}[theorem]{Lemma}
\newtheorem{proposition}[theorem]{Proposition}
\newtheorem{corollary}[theorem]{Corollary}

\theoremstyle{definition}
\newtheorem{remark}[theorem]{Remark}
\newtheorem{definition}[theorem]{Definition}
\newtheorem{notation}[theorem]{Notation}
\newtheorem{construction}[theorem]{Construction}
\newtheorem{assumption}[theorem]{Assumption}
\newtheorem{example}[theorem]{Example}

\DeclareMathOperator{\Inst}{Instrl}
\DeclareMathOperator{\Inder}{Inder}
\DeclareMathOperator{\Der}{Der}
\DeclareMathOperator{\Aut}{Aut}
\DeclareMathOperator{\id}{id}
\DeclareMathOperator{\ad}{ad}
\DeclareMathOperator{\Char}{char}

\DeclareMathOperator{\End}{End}
\DeclareMathOperator{\Sym}{Sym}

\DeclareMathOperator{\sign}{sgn}

\newcommand{\A}{\mathcal{A}}
\renewcommand{\SS}{\mathcal{S}}
\newcommand{\HH}{\mathcal{H}}
\newcommand{\LL}{\mathcal{L}}

\newcommand{\E}{\mathcal{E}}
\newcommand{\F}{\mathcal{F}}
\newcommand{\M}{\mathcal{M}}
\newcommand{\dash}{\nobreakdash-\hspace{0pt}}

\numberwithin{equation}{section}
\raggedbottom

\hyphenation{Moufang hexa-gons}

\makeatletter
\@namedef{subjclassname@2020}{\textup{2020} Mathematics Subject Classification}
\makeatother

\begin{document}

\title[Inner ideals and structurable algebras]{Inner ideals and structurable algebras: \\ Moufang sets, triangles and hexagons}

\author{Tom De Medts}
\address[T. De Medts and J. Meulewaeter]{Department of Mathematics: Algebra and Geometry, Ghent University, Krijgslaan 281--S25, 9000 Gent\\Belgium}
\email[Tom De Medts]{tom.demedts@ugent.be}

\author{Jeroen Meulewaeter}
\email[Jeroen Meulewaeter]{jeroen.meulewaeter@ugent.be}
\thanks{The second author is a PhD Fellow of the Research Foundation Flanders (Belgium) (F.W.O.-Vlaanderen), 166032/1128720N}

\date{\today}
\subjclass[2020]{17A30, 17B45, 17B60, 17B70, 51E24, 16W10, 17C40}
\keywords{Lie algebras, inner ideals, Jordan algebras, structurable algebras, Moufang sets, Moufang triangles, Moufang hexagons, generalized polygons, Tits--Kantor--Koecher construction}

\begin{abstract}
    We construct Moufang sets, Moufang triangles and Moufang hexagons using inner ideals of Lie algebras obtained from structurable algebras via the Tits--Kantor--Koecher construction. The three different types of structurable algebras we use are, respectively:
    (1) structurable division algebras,
    (2) algebras $D \oplus D$ for some alternative division algebra $D$, equipped with the exchange involution,
    (3) matrix structurable algebras $M(J,1)$ for some cubic Jordan division algebra $J$.
    In each case, we also determine the root groups directly in terms of the structurable algebra.
\end{abstract}

\maketitle

\section{Introduction}

\emph{Spherical buildings} have been introduced by Jacques Tits \cite{Tits1974} as a tool to study isotropic simple linear algebraic groups over arbitrary fields. These spherical buildings always satisfy the so-called \emph{Moufang property}, which says that the automorphism groups of such a building is highly transitive (in a very precise way). If the rank of the building (which coincides with the relative rank of the algebraic group) is $1$, then the building is called a \emph{Moufang set}; if it is $2$, then the building is called a \emph{Moufang polygon}.
Depending on the relative type, the Moufang polygon will be a \emph{Moufang triangle} (relative type $A_2$), \emph{Moufang quadrangle} (relative type $B_2$ or $BC_2$) or a \emph{Moufang hexagon} (relative type $G_2$).
The Moufang polygons have been classified and investigated in detail in \cite{Tits2002}.

Each linear algebraic group has an associated Lie algebra. If the algebraic group is isotropic and the underlying field has characteristic not $2$ or $3$, then the Lie algebra (or, more precisely, its derived algebra) also arises via the Tits--Kantor--Koecher (TKK) construction starting from a \emph{structurable algebra}, often in more than one way \cite[Theorem 5.9]{Stavrova2017}.

In this paper, we discuss constructions of Moufang sets, triangles and hexagons directly from some specific structurable algebras. 
We construct these geometries using \emph{inner ideals} of Lie algebras.
A subspace $I$ of a Lie algebra $\LL$ is called an inner ideal if $[I, [I, \LL]] \leq I$; see \cref{def inner ideal} below.
Inner ideals in Lie algebras have been introduced and investigated by Georgia Benkart in her PhD thesis \cite{Benkart1974}; see also \cite{Benkart1977}.

We can summarize our results as follows. 
\begin{maintheorem}
Let $\A$ be a structurable algebra and let $\mathcal{G}$ be the poset of all proper non-trivial inner ideals of the TKK Lie algebra associated to $\A$.
\begin{enumerate}
    \item\label{main:1} If $\A$ is a structurable division algebra, then $\mathcal{G}$ forms a Moufang set (\cref{non-ab Moufang,Set Inner are Moufang}).
    \item\label{main:2} If $\A = D \oplus D$ for some alternative division algebra $D$, equipped with the exchange involution, then $\mathcal{G}$ forms the dual double of a Moufang triangle (\cref{triangle root groups}). 
    \item\label{main:3} If $\A = M(J, 1)$ for some cubic Jordan division algebra $J$, then $\mathcal{G}$ forms a Moufang hexagon (\cref{Hexagon Main Theorem,hexagon root groups}).
\end{enumerate}
\end{maintheorem}

In each case, we also describe the root groups of these geometries in terms of the associated structurable algebras.

Notice that in part \cref{main:2} of our main theorem, we obtain the Moufang triangles only via their dual double (which is a thin generalized hexagon). Indeed, it is impossible to construct Moufang triangles directly as the geometry $\mathcal{G}$ for some structurable algebra.
See \cref{rem:dualdouble} below.

\medskip

In \cite{Cohen2006}, Arjeh Cohen and Gabor Ivanyos have introduced \emph{extremal geometries} associated to Lie algebras.
An element of a Lie algebra is called \textit{extremal} if it spans a one-dimensional inner ideal,
and the corresponding extremal geometry has as point set the set of all those one-dimensional inner ideals.
It seems that the first appearance of extremal geometries is \cite[Chapter 12]{Faulkner1977}, where John Faulkner discusses generalized hexagons.
There are some connections between this result and our results in \cref{sec 7}; see \cref{Remark Faulkner hexagon} below.

However, if there are no one-dimensional inner ideals, then the extremal geometry is empty.
In these cases, one can take the \textit{minimal proper inner ideals} as points.
Only some of our constructions are examples of extremal geometries:
in \cref{sec 3,sec 4,sec 6} we describe geometries which are not extremal geometries.

If the Lie algebra is defined over an algebraically closed field of characteristic~$0$, then its inner ideals have been studied in detail in \cite{Draper2012}.
Moreover, under the same assumptions, Faulkner has connected these inner ideals to geometries; see \cite{Faulkner1973}.

The concept of an inner ideal also exists in Jordan theory. 
(In fact, it was introduced in Jordan algebras before it was introduced in Lie algebras.)
The inner ideals of Jordan algebras have been studied (and in many cases classified) in \cite{McCrimmon1971b}.
The correspondence between inner ideals of a Jordan algebra and inner ideals of its TKK Lie algebra is obvious.

In \cite{Garibaldi2001}, Skip Garibaldi defines inner ideals in structurable algebras of skew-dimension one%
\footnote{Interestingly, Garibaldi shows in \cite[\S 7]{Garibaldi2001} that some of the inner ideals of the Brown algebra are related to a building of type $E_7$, which is a result in the spirit of the current paper.},
and this is also what we will use in \cref{sec 7}.
To the best of our knowledge, inner ideals in structurable algebras in general have not yet been considered in the literature, but we can do without such a notion in \cref{sec 4,sec 5,sec 6}.
Our rather technical proof of \cref{Characterization extremal J +} below suggests that it is not obvious to show that inner ideals of an arbitrary structurable algebra of skew-dimension one correspond to inner ideals of its TKK Lie algebra. (We only prove this for a restricted class of structurable algebras of skew-dimension one.) 
For arbitrary structurable algebras (not necessarily of skew-dimension one), it seems likely that a more restrictive definition than the one of Garibaldi will be needed, but we have not pursued this.

Recently, some deep connections between structurable algebras and low rank geometries have been discovered.
More precisely, Lien Boelaert, Tom De Medts and Anastasia Stavrova have established a strong relationship between structurable division algebras and Moufang sets \cite{Boelaert2019}.
In \cref{sec 4,sec 5}, we translate this result to the setting of inner ideals of Lie algebras of relative rank one.
This settles the case for rank $1$ geometries. 

For rank $2$ geometries, Boelaert and De Medts made two different connections between structurable algebras and Moufang quadrangles (of exceptional type) in \cite{Boelaert2013,Boelaert2015}.
In the former paper, the associated structurable algebras have skew-dimension one, whereas in the latter paper, the associated structurable algebras are tensor products of composition algebras.
In this paper, we restrict to the other Moufang polygons, namely triangles and hexagons.
However, we believe that the ideas in this paper can also be used to handle the Moufang quadrangles and to connect the two different constructions in \cite{Boelaert2013} and \cite{Boelaert2015} with each other.
We hope to present those results in a subsequent paper.

\subsection{Organization of the paper}

In \cref{sec 2} we recall the necessary preliminaries about structurable algebras, inner ideals of Lie algebras, Moufang polygons, extremal geometries and Moufang sets. We add a few more elementary results that we will need later.
\cref{sec 3} is the most technical part of this paper. We prove, among other things, that we can reduce the situation to inner ideals containing $\SS_+$. 
We then apply this reduction theorem in all other sections, except in \cref{sec 4}.

We then proceed to prove our main theorem.
We prove \cref{main:1} in \cref{sec 4,sec 5}, we prove \cref{main:2} in \cref{sec 6} and we finally prove \cref{main:3} in \cref{sec 7}.
In each case, we include the description of the root groups in terms of the corresponding structurable algebra.

\subsection{Acknowledgment}

The idea to use inner ideals of Lie algebras has been inspired by a talk of Arjeh Cohen during the workshop ``Buildings and Symmetry'' at the University of Western Australia in September 2017. We thank Alice Devillers, Bernhard M\"uhlherr, James Parkinson and Hendrik Van Maldeghem for organizing this workshop. We are grateful to Arjeh Cohen for sharing his preprint \cite{Cohen2020} with us.

\subsection{Assumptions}

We assume that all our algebras are finite-dimensional over a field $k$ of characteristic different from $2$ and $3$.
Except for Lie algebras, all algebras are assumed to be unital, but not necessarily associative.

\section{Preliminaries}
\label{sec 2}

\subsection{Structurable algebras}

Structurable algebras have been introduced by Bruce Allison in \cite{Allison1978} as a generalization of Jordan algebras. It is precisely in this context that we will study them. Each Jordan algebra gives rise, via the Tits--Kantor--Koecher (TKK) construction, to a Lie algebra equipped with a $3$-grading. This construction has been generalized to structurable algebras, giving rise to Lie algebras equipped with a $5$-grading; see \cref{def:Lie alg} below.

The material in this subsection is based on \cite[Chapter 2]{Boelaert2019} and we refer to that paper for a more detailed exposition.

\begin{definition}\label{def:struct algebra}
	Let $\A$ be a $k$-algebra equipped with an involution $\sigma \colon x \mapsto \overline{x}$, i.e., a $k$-linear map satisfying $\overline{x.y}=\overline{y} .\overline{x}$ for all $x,y \in \A$.
	Let
	\[ V_{x,y}(z):=(x\overline y)z+(z\overline y)x-(z\overline x)y \]
	for all $x,y,z \in \A$.
	If
		\[ [V_{x,y},V_{z,w}]=V_{V_{x,y}(z),w}-V_{z,V_{y,x}(w)} \]
	for all $x,y,z,w\in\A$ (where the left hand side denotes the Lie bracket of the two operators)
	then we call $\A$ a \textit{structurable algebra}.
\end{definition}

\begin{definition}

	Let $\A$ be a structurable algebra; then $\A=\mathcal H\oplus \SS$, with 
		\[ \mathcal H=\{h\in\A\mid\overline h=h\} \text{ and } \SS=\{s\in\A\mid \overline s=-s\}.\]
	The elements of $\mathcal H$ are called \textit{hermitian elements}, the elements of $\SS$ are called \textit{skew elements}. 
	The dimension of $\SS$ is called the \textit{skew-dimension} of $\A$.
	
\end{definition}

\begin{definition}

	Let $\A$ be a structurable algebra. 
	An element $u \in \A$ is called \textit{conjugate invertible} (or simply \textit{invertible}) if there exists an element $\hat u \in \A$ such that
	 	$$ V_{u,\hat u} = \id, \text{ or equivalently, } V_{\hat u,u} = \id. $$
	If $u$ is conjugate invertible, then the element $\hat u$ is uniquely determined, and is called the \textit{conjugate inverse} of $u$.
	Moreover, if $u$ is conjugate invertible, the operator $U_u$ is invertible; see \cite[Section 6]{Allison1981}.
\end{definition}

\begin{definition}
	An \textit{ideal} of $\A$ is a two-sided ideal stabilized by the involution, and $\A$ is \textit{simple} if its only ideals are $\{0\}$ and $\A$.
	The center of $\A$ is defined by
	\[ Z(\A)=\{z\in \mathcal H \mid [z,\A]=[z,\A,\A]=[\A,z,\A]=[\A,\A,z]=0\},\]
	and $\A$ is \textit{central} if its center equals $k1$.
\end{definition}

\begin{definition}\label{def:skewer}
	The following map, called the \textit{skewer map}, plays an important role in the theory of structurable algebras:
	\[ \psi \colon \A\times \A\rightarrow \mathcal S \colon (x,y)\mapsto x\overline y-y\overline x .\]
\end{definition}

\begin{definition}\label{def:epsilon delta}

For each $x \in \A$, we let $L_x$ and $R_x$ denote the left and right multiplication by $x$, respectively. 
For each $A\in\End (\A)$, we define two new $k$-linear operators 
\begin{align*}
	A^\epsilon &:= A-L_{A(1)+\overline{A(1)}}\,, \\
	A^\delta &:= A+R_{\overline{A(1)}}\,.
\end{align*}
One can verify that 
\begin{align}
	V_{x,y}^\epsilon &= -V_{y,x} \quad \text{and} \label{formula:V epsilon} \\ 
	V_{x,y}^\delta (s) &= -\psi (x,sy) \label{formula:V delta}
\end{align}
for all $x,y\in \A$ and $s\in \SS$
and that
\begin{align}
	(L_rL_t)^\epsilon & = -L_tL_r \quad \text{and} \label{formula:Ls epsilon} \\
	(L_rL_t)^\delta(s) & = s(tr)+r(ts) \label{formula:Ls delta}
\end{align}
for all $r,s,t \in \SS$.
\end{definition}

\begin{definition}
	By the definition of a structurable algebra, the subspace
	\[ \Inst(\A) := \operatorname{span}\{V_{x,y}\mid x,y\in \A\}\]
	is a Lie subalgebra of $\End(\A)$.
\end{definition}

We recall the notion of derivations in structurable algebras.

\begin{definition}
	A \textit{derivation} of $\A$ is a $k$-linear map $D \colon \A\rightarrow\A$ such that $D(ab)=D(a)b+aD(b)$ and $D(\overline a)=\overline{D(a)}$, for all $a,b\in \A$. In particular, we have $D(\SS) \subseteq \SS$ and $D(\HH) \subseteq \HH$.
	
	The Lie algebra of all derivations of $\A$ is denoted by $\Der(\A)$. 
\end{definition}

In the following lemma we use the notation $T_x:=V_{x,1}$.
\begin{lemma}[{\cite[page 1840]{Allison1979}}]
\label{S1 Decom Inst}
We have 
\[ \Inst(\A)=\{T_x\mid x\in \A\} \oplus \Inder(\A),\]
with $\Inder(\A)$ a certain Lie subalgebra of $\Der(\A)$ (namely the Lie subalgebra of inner derivations, but we will not need this explicitly).
\end{lemma}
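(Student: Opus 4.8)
The plan is to use the $\epsilon$-operator from \cref{def:epsilon delta} as the main tool, since $A\mapsto A^\epsilon$ measures the failure of $A$ to preserve $1$ in a suitable sense and \eqref{formula:V epsilon} tells us exactly how it behaves on the generators $V_{x,y}$. First I would establish the sum: for an arbitrary $V_{x,y}$, consider the element $V_{x,y}+V_{y,x}$. By \eqref{formula:V epsilon}, $(V_{x,y})^\epsilon=-V_{y,x}$, so $V_{x,y}+V_{y,x}=V_{x,y}-(V_{x,y})^\epsilon=L_{V_{x,y}(1)+\overline{V_{x,y}(1)}}$, which is a left-multiplication operator; one checks $V_{x,y}(1)=(x\overline y)\cdot 1+(1\cdot\overline y)x-(1\cdot\overline x)y$, and after symmetrizing this should collapse to (a scalar multiple of) $T_z$ for an appropriate $z$, or more directly one observes $V_{x,1}=T_x$ by definition and uses that $T$'s span the ``$L$-part''. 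Meanwhile $D:=\tfrac12(V_{x,y}-V_{y,x})$ is, up to the identification, the $\epsilon$-antisymmetric part and one verifies directly from the defining identity of a structurable algebra that such combinations are derivations: $D$ satisfies the Leibniz rule and commutes with $\sigma$. This shows $\Inst(\A)=\{T_x\}+\Inder(\A)$, with $\Inder(\A)$ spanned by the elements $V_{x,y}-V_{y,x}$ (up to scalars).

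Next I would check directness, i.e.\ $\{T_x\mid x\in\A\}\cap\Inder(\A)=0$. The key point is the evaluation at $1$: a derivation $D$ satisfies $D(1)=D(1\cdot 1)=2D(1)$, hence $D(1)=0$, so every element of $\Inder(\A)$ kills $1$. On the other hand $T_x(1)=V_{x,1}(1)=(x\cdot 1)\cdot1+(1\cdot1)x-(1\cdot\overline x)x=x+x-\overline x\cdot\text{(something)}$; more carefully $T_x(1)=x\overline 1+\overline 1 x-\overline x 1 = 2x-\overline x$ — wait, one must compute $V_{x,1}(z)=(x\overline 1)z+(z\overline 1)x-(z\overline x)1=xz+zx-z\overline x$, so $T_x(1)=x+x-\overline x=2x-\overline x$. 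If $T_x\in\Inder(\A)$ then $2x-\overline x=0$, and applying $\sigma$ gives $2\overline x-x=0$; adding, $x+\overline x=0$, and substituting back $3x=0$, so $x=0$ since $\Char k\neq 3$. Hence $T_x=0$, giving directness. (This is precisely where the characteristic hypothesis enters, which fits the paper's standing assumptions.)

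The main obstacle — really the only non-formal part — is verifying that $\Inder(\A):=\operatorname{span}\{V_{x,y}-V_{y,x}\}$ genuinely consists of derivations, i.e.\ that each such operator satisfies the Leibniz rule $D(ab)=D(a)b+aD(b)$ and $D(\overline a)=\overline{D(a)}$. The skew-commutation with $\sigma$ follows quickly from $\epsilon$-antisymmetry together with the fact that $L_{V_{x,y}(1)+\overline{V_{x,y}(1)}}$ accounts exactly for the non-hermitian discrepancy. The Leibniz rule is the genuine computation: one expands $V_{x,y}$ via its definition and uses the defining structurable identity $[V_{a,b},V_{c,d}]=V_{V_{a,b}(c),d}-V_{c,V_{b,a}(d)}$ together with the linearized identities it entails — this is the classical computation of Allison and is exactly the content cited from \cite[page~1840]{Allison1979}, so I would either reproduce that linearization argument or simply invoke the reference. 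Since the lemma is quoted with a citation, a reasonable write-up records the sum decomposition and the directness argument above in detail and defers the Leibniz-rule verification to \cite{Allison1979}.
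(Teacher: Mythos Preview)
The paper gives no proof of this lemma; it is stated with a citation to \cite[page~1840]{Allison1979} and nothing more. So there is no argument in the paper to compare against, and your write-up is already more than what the paper provides. Your directness argument is correct and is exactly the right idea: evaluate at~$1$, use that derivations kill~$1$, and solve $2x-\overline{x}=0$ using $\Char k\neq 3$.

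However, your proposed decomposition for the sum has a genuine error. You claim that $D:=\tfrac12(V_{x,y}-V_{y,x})$ is the derivation part, but this operator does \emph{not} annihilate~$1$: using $V_{a,b}(1)=a\overline b+\overline b\,a-\overline a\,b$ one finds $\overline{V_{x,y}(1)}=V_{y,x}(1)$, so $(V_{x,y}-V_{y,x})(1)$ is twice the skew part of $V_{x,y}(1)$, which is typically nonzero (e.g.\ take $x=i$, $y=j$ in a quaternion algebra with standard involution). Thus $\tfrac12(V_{x,y}-V_{y,x})$ cannot be a derivation, and your identification $\Inder(\A)=\operatorname{span}\{V_{x,y}-V_{y,x}\}$ is incorrect. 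What your $\epsilon$-symmetrization actually produces is $V_{x,y}+V_{y,x}=L_h=T_h$ for the \emph{hermitian} element $h=V_{x,y}(1)+\overline{V_{x,y}(1)}$; you have only accounted for $\{T_h:h\in\HH\}$, not all of $\{T_x:x\in\A\}$. The fix is to further subtract a suitable $T_s$ with $s\in\SS$ (note $T_s(1)=3s$) so that the remainder genuinely kills~$1$: solve $2z-\overline z=V_{x,y}(1)$ for $z$ by taking the hermitian part of $V_{x,y}(1)$ plus one third of its skew part, and set $D:=V_{x,y}-T_z$. Showing that \emph{this} $D$ satisfies the Leibniz rule is then the substantive step you correctly flag as the content of Allison's cited computation.
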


\begin{lemma}
\label{V operator symmetry}
	We have	$V_{a,b}(c)=V_{c,b}(a)+\psi(a,c)b$ for all $a,b,c \in \A$.
	Moreover, we have $V_{a,a} = L_{a\overline{a}} = T_{a\overline{a}}$ for all $a \in \A$.
\end{lemma}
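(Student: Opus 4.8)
The plan is to prove both assertions by a direct expansion of the defining formula
\[ V_{x,y}(z)=(x\overline y)z+(z\overline y)x-(z\overline x)y, \]
with no appeal to the structurability axiom — both identities are purely formal consequences of the shape of $V$ together with the elementary properties of the involution.

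First I would treat the identity $V_{a,b}(c)=V_{c,b}(a)+\psi(a,c)b$. Writing out
\[ V_{a,b}(c)=(a\overline b)c+(c\overline b)a-(c\overline a)b \quad\text{and}\quad V_{c,b}(a)=(c\overline b)a+(a\overline b)c-(a\overline c)b, \]
one observes that the first two terms of each expression coincide (the roles of the ``outer'' arguments $a$ and $c$ are symmetric in those summands), so they cancel in the difference, leaving
\[ V_{a,b}(c)-V_{c,b}(a)=(a\overline c-c\overline a)b=\psi(a,c)b \]
by \cref{def:skewer}. That is all there is to the first claim.

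For the second assertion I would proceed in three short steps. Setting $b=a$ in the formula gives $V_{a,a}(z)=(a\overline a)z+(z\overline a)a-(z\overline a)a=(a\overline a)z$, hence $V_{a,a}=L_{a\overline a}$. Next, $a\overline a$ is hermitian: since $\overline 1=1$ and $\overline{\cdot}$ reverses products, $\overline{a\overline a}=\overline{\overline a}\,\overline a=a\overline a$. Finally, recalling $T_x=V_{x,1}$, one computes $T_x(z)=(x\overline 1)z+(z\overline 1)x-(z\overline x)\cdot 1=xz+zx-z\overline x$, so for any hermitian $x$ we get $T_x=L_x$; applying this with $x=a\overline a\in\HH$ yields $T_{a\overline a}=L_{a\overline a}$ and completes the chain $V_{a,a}=L_{a\overline a}=T_{a\overline a}$.

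There is no genuine obstacle here: each step is an immediate unwinding of definitions. The only point requiring a moment's care is recording that $a\overline a$ lies in $\HH$, which is precisely what makes $T_{a\overline a}$ collapse to $L_{a\overline a}$; this is where $\overline{xy}=\overline y\,\overline x$ and $\overline 1=1$ are used.
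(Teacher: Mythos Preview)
Your proof is correct and follows essentially the same approach as the paper, which simply states that the lemma is immediate from the definitions; you have supplied the explicit expansion that justifies this.
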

\begin{proof}
    This is immediate from the definitions.
\end{proof}

We are now ready to introduce the TKK construction for structurable algebras.

\begin{definition}\label{def:Lie alg}

Consider two copies $\A_+$ and $\A_-$ of $\A$ with corresponding isomorphisms $\A \to \A_+ \colon a \mapsto a_+$
and $\A \to \A_- \colon a \mapsto a_-$, and let $\SS_+\subset \A_+$ and $\SS_-\subset \A_-$ be the corresponding subspaces of skew elements.
Define the vector space
\[K(\A)=\SS_-\oplus \A_-\oplus \Inst(\A) \oplus \A_+ \oplus \SS_+.\]
As in \cite[\S 3]{Allison1979}, we define a Lie algebra on $K(\A)$ as the unique extension of the Lie algebra on $\Inst(\A)$ satisfying
\begin{alignat*}{2}
    [V, a_+] &:= (Va)_+ \in \A_+ & [V, a_-] &:= (V^\epsilon a)_-\in\A_- \\
    [V, s_+] &:= (V^\delta s)_+\in \SS_+ & [V, s_-] &:= (V^{\epsilon\delta} s)_-\in \SS_- \\[2ex]
    [s_+,a_+] &:= 0 & [s_-,a_-] &:= 0 \\
    [s_+,a_-] &:= (sa)_+\in \A_+ & [s_-,a_+] &:= (sa)_-\in \A_- \\[2ex]
    [a_+,b_-] &:= V_{a,b}\in \Inst(\A) \qquad \\
    [a_+,b_+] &:= \psi(a,b)_+\in \SS_+ &[a_-,b_-] &:= \psi(a,b)_-\in \SS_- \\[2ex]
    [s_+,t_+]&:=0 & [s_-,t_-]&:=0 \\
    [s_+,t_-]&:=L_{s}L_{t}\in \Inst(\A)
\end{alignat*}
for all $a,b \in \A$, $s,t \in \SS$ and $V \in \Inst(\A)$.
\end{definition}

From the definition of the Lie bracket we clearly see that the Lie algebra $K(\A)$ has a $5$-grading given by $K(\A)_j=0$ for all $|j|>2$ and 
\begin{multline*}
	K(\A)_{-2}=\SS_-,\quad K(\A)_{-1}=\A_-,\quad K(\A)_{0}=\Inst(\A),\\
		K(\A)_{1}=\A_+,\quad K(\A)_{2}=\SS_+.
\end{multline*}

We now turn to examples of structurable algebras.

\begin{example}
	The \textit{central simple} structurable algebras have been classified and are usually listed in 6 (non-disjoint) classes.
	We will need the following three classes:
	\begin{enumerate}
		\item The Jordan algebras are precisely the structurable algebras with trivial involution. They have skew-dimension $0$.
		\item The structurable algebras of skew-dimension $1$ form a separate class and have peculiar features. They all arise as \emph{forms} of structurable matrix algebras (see \cref{def:struct matrix,Skew dim 1 form of} below); see \cite{DeMedts2019} for an explicit construction of these algebras and for a recent overview of the theory.
		\item If $C_i$ is a composition algebra over $k$ with standard involution $\sigma_i$, for $i=1,2$, then the $k$-algebra $C_1\otimes_k C_2$, equipped with the involution 
			\[ \overline{\,\cdot\,} =\sigma := \sigma_1\otimes \sigma_2 ,\]
			is a structurable algebra.
			It has skew-dimension $\dim_k C_1 + \dim_k C_2 - 2$.
	\end{enumerate}
\end{example}

\begin{definition}
	\label{def:struct matrix}
	Let $J$ be a Jordan algebra over a field $k$, let $T \colon J \times J \rightarrow k$ be a symmetric bilinear form, let $\times \colon J \times J \rightarrow J$ be a symmetric bilinear map, and let $N \colon J \rightarrow k$ be a cubic form such that one of the following holds:
	\begin{itemize}
		\item $J$ is a cubic Jordan algebra with a non-degenerate admissible form $N$, with basepoint $1$, trace form $T$, and Freudenthal cross product $\times$; see, for instance, \cite[$\S$38]{Knus1998}.
		\item $J$ is a Jordan algebra of a non-degenerate quadratic form $q$ with basepoint $1$, and $T$ is the linearization of $q$. In this case, $N$ and $\times$ are the zero maps.
		\item $J = 0$, and the maps $N$, $T$ and $\times$ are the zero maps. (In this case, $J$ is not unital.)
	\end{itemize}
	Fix a constant $\eta \in  k^\times$. 
	We now define the \emph{structurable matrix algebra} $M(J,\eta)$ as follows. 
	Let
	\[
		\A=\left\{ \begin{pmatrix}
			k_1 & j_1 \\ j_2 & k_2
		\end{pmatrix} \mid k_1, k_2\in k, j_1, j_2\in J \right\}, \]
	and define the multiplication and the involution by the formulae
	\begin{align*}
	    \begin{pmatrix}
			k_1 & j_1 \\ j_2 & k_2
		\end{pmatrix}\begin{pmatrix}
			k'_1 & j'_1 \\ j'_2 & k'_2
		\end{pmatrix}
		&= \begin{pmatrix}
			k_1k'_1+\eta T(j_1,j'_2) & k_1j'_1+k'_2j_1+\eta (j_2\times j'_2) \\ k'_1j_2+k_2j'_2+j_1\times j'_1 & k_2k'_2+\eta T(j_2,j'_1)
		\end{pmatrix}, \\
	    \overline{\begin{pmatrix}
			k_1 & j_1 \\ j_2 & k_2
		\end{pmatrix}}
		&= \begin{pmatrix}
			k_2 & j_1 \\ j_2 & k_1
		\end{pmatrix},
	\end{align*}
	for all $k_1, k_2, k'_1,k'_2\in k$ and $j_1, j_2, j'_1, j'_2\in J$. 
	It is shown in \cite[Section 8.v]{Allison1978} and \cite[Section 4]{Allison1984}	 that $M(J,\eta)$ is a central simple structurable algebra.
\end{definition}

The following proposition relates all structurable algebras of skew-dimension one to these structurable matrix algebras.

\begin{proposition}[{\cite[Proposition 4.5]{Allison1984}}]
	Let $\A$ be a structurable algebra of skew-dimension one with $s_0^2 = \mu 1$. 
	Then $\A$ is isomorphic to a structurable matrix algebra $M(J,\eta)$ if and only if $\mu$ is a square in $k$.
\end{proposition}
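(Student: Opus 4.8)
The plan is to prove the two implications separately: the ``only if'' direction is essentially a computation, while the converse requires setting up a Peirce-type decomposition of $\A$.

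\emph{The ``only if'' direction.} In $M(J,\eta)$ the element $s_0=\left(\begin{smallmatrix}1&0\\0&-1\end{smallmatrix}\right)$ is a nonzero skew element, and the multiplication rule of \cref{def:struct matrix} gives $s_0^2=1$ directly. Since $\SS$ is one-dimensional, replacing $s_0$ by $\lambda s_0$ ($\lambda\in k^\times$) replaces $\mu$ by $\lambda^2\mu$, so the square class of $\mu$ is an invariant of $\A$; for $M(J,\eta)$ that class is trivial, whence $\mu\in(k^\times)^2$. (Here $\mu\neq 0$, since $s_0$ is conjugate invertible and $V_{s_0,s_0}=L_{s_0\overline{s_0}}=-\mu\,\id$ by \cref{V operator symmetry}.)

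\emph{The ``if'' direction.} Assume $\mu=\lambda^2$ with $\lambda\in k^\times$. After replacing $s_0$ by $\lambda^{-1}s_0$ we may assume $s_0^2=1$. Put
\[
 e_1:=\tfrac12(1+s_0),\qquad e_2:=\tfrac12(1-s_0)=\overline{e_1},
\]
so that $e_1+e_2=1$, $e_1-e_2=s_0$, and, using $s_0^2=1$, also $e_i^2=e_i$ and $e_1e_2=e_2e_1=0$. Next I would record the operator identities $L_{s_0}^2=R_{s_0}^2=\id$. The first follows by applying the $\epsilon$-operation to $A:=L_{s_0}L_{s_0}$: since $A(1)=s_0^2=1$ the definition of $A^\epsilon$ gives $A^\epsilon=L_{s_0}^2-2\,\id$, while \eqref{formula:Ls epsilon} gives $A^\epsilon=-L_{s_0}^2$, so $L_{s_0}^2=\id$ as $\Char k\neq 2$. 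The second follows by conjugating with the involution $\sigma\colon x\mapsto\overline x$, using $\overline{s_0x}=-\overline{x}\,s_0$, which shows $\sigma L_{s_0}\sigma=-R_{s_0}$ and hence $R_{s_0}^2=\id$. A short further computation with the defining identity of \cref{def:struct algebra} shows that $L_{s_0}$ and $R_{s_0}$ commute. Consequently $\A$ decomposes as the direct sum of the four joint $(\pm1)$-eigenspaces
\[
 \A_{ij}:=\{x\in\A\mid s_0x=\varepsilon_i x,\ xs_0=\varepsilon_j x\},\qquad \varepsilon_1=1,\ \varepsilon_2=-1,
\]
with $e_i\in\A_{ii}$.

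\emph{Identification with the matrix model.} It then remains to show $\A_{11}=ke_1$ and $\A_{22}=ke_2$ (this is where the simplicity of $\A$ enters), to set $J:=\A_{12}$, and to transport the multiplication of $\A$ to the $2\times 2$-matrix picture: the restriction of the multiplication to the Peirce spaces ($\A_{12}\cdot\A_{21}\subseteq\A_{11}$, $\A_{12}\cdot\A_{12}\subseteq\A_{21}$, the Peirce action of $e_1,e_2$, and the cubing map) endows $J$ with a symmetric bilinear form $T$, a symmetric product $\times$, a Jordan product and a cubic form $N$, and produces a constant $\eta\in k^\times$, in such a way that the block decomposition $\A=\A_{11}\oplus\A_{12}\oplus\A_{21}\oplus\A_{22}$ yields a linear bijection $\A\to M(J,\eta)$ which one then checks to respect multiplication and involution. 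The main obstacle is precisely this last verification --- that the data $(J,N,T,\times)$ so obtained satisfies the exact hypotheses of \cref{def:struct matrix}, i.e.\ that $J$ is a cubic Jordan algebra with nondegenerate admissible cubic form (or one of the two degenerate alternatives) --- which is where the full strength of the structurable identity is needed; establishing that $L_{s_0}$ and $R_{s_0}$ commute and that $\A_{11},\A_{22}$ are one-dimensional are the subsidiary points that require care. This is in essence the argument of \cite[Proposition 4.5]{Allison1984}.
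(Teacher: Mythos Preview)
The paper does not give a proof of this proposition; it is quoted verbatim from \cite[Proposition~4.5]{Allison1984} and used only as a black box (via \cref{Skew dim 1 form of}). There is therefore nothing in the present paper to compare your argument against.

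As a reconstruction of Allison's original proof, your outline is essentially correct. The ``only if'' direction is exactly the computation you give, and for the ``if'' direction the Peirce decomposition via the commuting involutive operators $L_{s_0}$ and $R_{s_0}$ is indeed the mechanism used in \cite{Allison1984}. Your derivation of $L_{s_0}^2=\id$ from \eqref{formula:Ls epsilon} and of $R_{s_0}^2=\id$ by conjugating with the involution are both valid. One remark: you invoke simplicity of $\A$ to conclude $\A_{11}=ke_1$ and $\A_{22}=ke_2$, whereas the proposition as quoted here does not state that hypothesis. This is not a flaw in your reasoning --- in \cite{Allison1984} the ambient assumption is central simplicity, and since every $M(J,\eta)$ is central simple (as noted after \cref{def:struct matrix}) the ``if'' direction would be false without it --- but it is worth flagging that the statement as reproduced in this paper is slightly underspecified.
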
 

\begin{corollary}
	\label{Skew dim 1 form of}

	Let $\A$ be a structurable algebra of skew-dimension one. 
	Then there exists a field extension $\ell/k$ of degree at most $2$ such that ${\A} \otimes_{k}  {\ell}$ is isomorphic to a structurable matrix algebra over $\ell$.
\end{corollary}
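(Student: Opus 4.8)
The plan is to reduce \cref{Skew dim 1 form of} to the preceding proposition by extracting a suitable square root. Let $\A$ be a structurable algebra of skew-dimension one, and fix a nonzero skew element $s_0 \in \SS$, so that $\SS = k s_0$. The first step is to recall (from the theory referenced in the excerpt, e.g.\ \cite{Allison1984} or \cite{DeMedts2019}) that $s_0^2$ lies in $k1$; write $s_0^2 = \mu 1$ for some $\mu \in k$, and note $\mu \neq 0$ since otherwise $s_0$ would be a nonzero nilpotent generating the skew part, contradicting the structure of these algebras (indeed $\mu \in k^\times$ is part of the standard setup).

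The second step splits into two cases according to whether $\mu$ is a square in $k$. If $\mu$ is a square, take $\ell = k$; then by the cited proposition $\A \cong M(J,\eta)$ already over $k$, and $\A \otimes_k \ell = \A$ is a structurable matrix algebra over $\ell = k$, as required. If $\mu$ is not a square in $k$, set $\ell = k(\sqrt{\mu})$, a separable (since $\Char k \neq 2$) field extension of degree exactly $2$. The third step is to observe that $\A \otimes_k \ell$ is again a structurable algebra over $\ell$ of skew-dimension one: extension of scalars preserves the defining identity in \cref{def:struct algebra} (it is multilinear), preserves the involution, and $\SS(\A \otimes_k \ell) = \SS(\A) \otimes_k \ell = \ell \cdot (s_0 \otimes 1)$ is one-dimensional over $\ell$. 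Moreover the generator $s_0 \otimes 1$ of its skew part still satisfies $(s_0 \otimes 1)^2 = \mu (1 \otimes 1)$, and now $\mu$ \emph{is} a square in $\ell$ by construction. Applying the proposition over $\ell$ yields $\A \otimes_k \ell \cong M(J', \eta')$ for some cubic Jordan algebra $J'$ over $\ell$ and some $\eta' \in \ell^\times$. Since $[\ell : k] \le 2$ in both cases, this completes the proof.

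I expect no serious obstacle here: the argument is essentially a one-line descent-type application of the previous proposition. The only points requiring a modicum of care are (a) justifying that $\mu \in k^\times$ and that $s_0^2 \in k1$ for a general skew-dimension-one structurable algebra — this is standard and can be cited — and (b) checking that scalar extension behaves well, i.e.\ that $\A \otimes_k \ell$ is still structurable of skew-dimension one with the same $\mu$; this is routine since all the relevant structure (multiplication, involution, the operators $V_{x,y}$, the identity) is defined by $k$-multilinear data that base-changes transparently. In a written proof I would state (a) with a reference and dispatch (b) in a single sentence.
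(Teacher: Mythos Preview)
Your argument is correct and is exactly the intended one: the corollary follows immediately from the preceding proposition by adjoining $\sqrt{\mu}$ when $\mu$ is not already a square, and the paper simply states it without proof. The two caveats you flag---that $s_0^2 = \mu 1$ with $\mu \in k^\times$ (standard for skew-dimension one, see \cite{Allison1984}) and that scalar extension preserves the structurable identity and the skew-dimension---are routine and can be handled exactly as you suggest.
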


\begin{remark}
    In \cite{DeMedts2019}, the structurable algebras of skew-dimension one are constructed explicitly, either in terms of \textit{hermitian cubic norm structures}, or equivalently, in terms of (ordinary) cubic norm structures equipped with a semilinear self-adjoint autotopy.
\end{remark}

\subsection{Inner ideals}

We now turn to inner ideals of Lie algebras and in particular of the Lie algebras $K(\A)$ arising from structurable algebras through the TKK construction.

\begin{definition}\label{def inner ideal}
Let $\LL$ be a Lie algebra. 
An \textit{inner ideal} of $\LL$ is a subspace $I$ of $\LL$ satisfying $[I,[I,\LL]]\leq I$.
If $I$ is $1$-dimensional, we call any non-zero element of $I$ an \textit{extremal element}.
We call an inner ideal \textit{singular} if all its non-zero elements are extremal elements.
We call an inner ideal \textit{trivial} if is equal to $0$, and we call it \textit{proper} if it is not equal to $\LL$.
\end{definition}

\begin{definition}
\label{definition zero divisor}
An element $x$ of a Lie algebra $\LL$ is called an \textit{absolute zero divisor}\footnote{In some papers, e.g.\@~\cite{Cohen2006}, these elements are called \textit{sandwich elements}.} if $[x,[x,\LL]]=0$.
A Lie algebra is \textit{non-degenerate} if it has no non-trivial absolute zero divisors. 
\end{definition}

\begin{definition}
Set $\LL_i=K(\A)_i$ for all $i$ with $-2\leq i\leq 2$. 
The \textit{$i$-component} of $x\in\LL$ is the image of the projection of $x$ onto $\LL_i$.
\end{definition}

Recall that we assume $\Char(k) \neq 2,3$ throughout.

\begin{definition}
	 Let $\LL$ be a finite-dimensional 5-graded Lie algebra over $k$. We say that $\LL$ is \textit{algebraic} if for any $(x,s) \in \LL_{\sigma 1} \oplus \LL_{\sigma 2}$ (with $\sigma \in \{ +,- \}$), the endomorphism $\exp(\ad(x+s))$ of $\LL$ is a Lie algebra automorphism. 
	 We say that a structurable algebra $\A$ over $k$ is \textit{algebraic} if $K(\A)$ is algebraic in the above sense.
\end{definition}

The next lemma is mentioned in (the proof of) \cite[Theorem 3.1(1)]{Garcia2011}.

\begin{lemma}
\label{ideals 5 grading}
	Let $\LL=\LL_{-2}\oplus \LL_{-1}\oplus \LL_0\oplus \LL_1\oplus \LL_2$ be a $5$-graded Lie algebra with $\LL_0=[\LL_1,\LL_{-1}]$. Suppose that $I$ is a subspace closed under all Lie algebra endomorphisms $\exp (\ad (l))$, with $l\in \LL_i, i\neq 0$. Then $I$ is an ideal of $\LL$.
\end{lemma}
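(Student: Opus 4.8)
The plan is to exploit the fact that $\exp(\ad(l))$ for $l$ in a nonzero graded piece is a \emph{finite} sum (since the grading has only five pieces, $\ad(l)$ is nilpotent of small order), so that all its "coordinates" relative to the grading are available to us inside $I$. Concretely, for $x \in \LL$ and $l \in \LL_i$ with $i \neq 0$, write $\exp(\ad(l))x = x + [l,x] + \tfrac12[l,[l,x]] + \cdots$; because each bracket with $l$ shifts the grading by $i$ and the grading is concentrated in degrees $-2,\dots,2$, this sum terminates. If $x \in I$, then $\exp(\ad(l))x \in I$ by hypothesis; replacing $l$ by $tl$ for a scalar $t$ and using that $I$ is a subspace over the infinite field $k$ (a Vandermonde / polynomial-identity argument in $t$), we conclude that each homogeneous-degree term $\ad(l)^m x$ lies in $I$ separately. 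In particular $[l,x] \in I$ for every $l \in \LL_i$ with $i \neq 0$ and every $x \in I$.

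Next I would assemble from this that $[\LL_i, I] \subseteq I$ for all $i \neq 0$. Combined with the hypothesis $\LL_0 = [\LL_1, \LL_{-1}]$, one gets $[\LL_0, I] = [[\LL_1,\LL_{-1}], I] \subseteq [\LL_1,[\LL_{-1},I]] + [\LL_{-1},[\LL_1,I]] \subseteq [\LL_1, I] + [\LL_{-1}, I] \subseteq I$, using the Jacobi identity and the previous step. Hence $[\LL_i, I] \subseteq I$ for \emph{all} $i$, i.e.\ $[\LL, I] \subseteq I$, which is exactly the statement that $I$ is an ideal.

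The main obstacle — really the only subtle point — is justifying the separation of the terms $\ad(l)^m x$ from the single containment $\exp(\ad(l))x \in I$. This needs $k$ to be infinite (guaranteed here since $\Char k \neq 2,3$ is not quite enough on its own, but the standing assumption is that everything is finite-dimensional over a field, and the Vandermonde argument only requires enough distinct scalars, which one has after a harmless scalar extension if necessary — or one simply notes $k$ infinite is implicitly in force). One picks distinct scalars $t_0, t_1, \dots$ and inverts the Vandermonde matrix relating $\bigl(\exp(\ad(t_j l))x\bigr)_j$ to $\bigl(\ad(l)^m x\bigr)_m$; each $\ad(l)^m x$ comes out as a $k$-linear combination of elements of $I$, hence lies in $I$. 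A minor bookkeeping point is that the binomial-type coefficients $1/m!$ appearing in $\exp$ are invertible, which is fine since $\ad(l)$ is nilpotent of order at most $5$ and $\Char k \neq 2,3$ (so $2!,3!,4!$ are units; order-$5$ nilpotency would bring in $4!$ at worst). After that, everything is a routine application of the Jacobi identity as sketched above.
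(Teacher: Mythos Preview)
Your proof is correct and follows the same overall strategy as the paper: first show $[l,x]\in I$ for every $l\in\LL_i$ with $i\neq 0$, then use the Jacobi identity together with $\LL_0=[\LL_1,\LL_{-1}]$ to get $[\LL_0,I]\subseteq I$.

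The only difference is in how you isolate $[l,x]$. The paper does this by a short explicit computation rather than a general Vandermonde argument: one has
\[
\exp(\ad(l))(x)-\exp(\ad(-l))(x)=2[l,x]+\tfrac{1}{3}\,\ad(l)^3(x)\in I,
\]
and replacing $l$ by $2l$ gives $4[l,x]+\tfrac{8}{3}\,\ad(l)^3(x)\in I$; the combination $8\cdot(\text{first})-(\text{second})=12[l,x]$ then lies in $I$, and $12$ is a unit since $\Char k\neq 2,3$. This is really just a hand-tailored instance of your Vandermonde idea, but it sidesteps any discussion of field size.

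On that point, your hedging about needing $k$ infinite (or passing to a scalar extension) is unnecessary: since $\ad(l)^5=0$ for $l\in\LL_{\pm 1}$ (and $\ad(l)^3=0$ for $l\in\LL_{\pm 2}$), you need at most five distinct scalars, and $\Char k\neq 2,3$ already forces $|k|\geq 5$. So your Vandermonde argument goes through as stated over the given field.
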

\begin{proof}
	Let $l\in \LL_i, i\neq 0$, be arbitrary. Consider $x\in I$. By assumption, $I$ contains 
	\[ \exp(\ad (l))(x)-\exp(\ad (-l))(x)=2[l,x]+\frac 1 3 [l,[l,[l,x]]] .\]
	If we replace $l$ by $2l$ in this expression, we get that $I$ contains $4[l,x]+\frac 8 3 [l,[l,[l,x]]]$ as well. Therefore, $I$ contains $12[l,x]$ and hence also $[l,x]$. 
	In particular, $[\LL_i,x]\leq I$ for each $i\neq 0$. 
	
	Next, let $l\in \LL_1$ and $l'\in \LL_{-1}$. Then 
	 \[ [[l,l'],x]=-[[l',x],l]-[[x,l],l']\in I .\]
	Since $\LL_0=[\LL_1,\LL_{-1}]$, this implies $[\LL,x]\leq I$. 
	Hence $I$ is an ideal of $\LL$.
\end{proof}

\begin{theorem}[{\cite[Theorem 2.13, Theorem 3.4]{Stavrova2017}}]
\label{csa is algebraic}
	Any central simple finite\dash dimensional structurable algebra over $k$ is algebraic.
\end{theorem}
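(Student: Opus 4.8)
The plan is to reduce the statement to the two theorems of Stavrova cited in the hypothesis, so the work is really a matter of unpacking the definition of ``algebraic'' given just above and matching it with what \cite[Theorem 2.13]{Stavrova2017} and \cite[Theorem 3.4]{Stavrova2017} actually provide. First I would recall that for a central simple structurable algebra $\A$ over $k$, the TKK Lie algebra $K(\A)$ is (the derived algebra of) the Lie algebra of a semisimple linear algebraic group $G$ over $k$, and that the $5$-grading on $K(\A)$ is the weight grading coming from a cocharacter $\mathbb{G}_m \to G$; this is precisely the content of \cite[Theorem 2.13]{Stavrova2017}, which identifies $K(\A)$ with $\operatorname{Lie}(G)$ (up to center) together with its grading. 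The key point extracted there is that the graded pieces $K(\A)_{\pm 1}$ and $K(\A)_{\pm 2}$ are exactly the Lie algebras of the (abelian, in degrees $\pm 2$, and two-step nilpotent overall) unipotent root subgroups $U_{\pm}$ of $G$ relative to this grading.

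Next I would invoke \cite[Theorem 3.4]{Stavrova2017}, which (in the language of that paper) gives the explicit exponential parametrization of these unipotent subgroups: for $(x,s)\in \A_{\sigma} \oplus \SS_{\sigma} = K(\A)_{\sigma 1}\oplus K(\A)_{\sigma 2}$ there is a group element whose action on $K(\A)=\operatorname{Lie}(G)$ by the adjoint representation coincides with $\exp(\ad(x+s))$. Concretely one checks that $\ad(x+s)$ is nilpotent --- indeed $(\ad(x+s))^k = 0$ for $k \geq 5$ because of the $5$-grading and the fact that $x+s$ lies in strictly positive (or strictly negative) degree --- so the formal exponential $\exp(\ad(x+s)) = \sum_{k\geq 0} \frac{1}{k!}(\ad(x+s))^k$ is a well-defined endomorphism of $K(\A)$, the denominators $1,2,6,24$ being invertible since $\Char(k)\neq 2,3$ (and $5$ is invertible unless $\Char k = 5$, but the fourth power already suffices here; one should double-check that only factorials up to $4!$ appear). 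That this endomorphism is a Lie algebra automorphism then follows because it is the adjoint action of an honest group element of $G$, automorphisms of the group inducing automorphisms of its Lie algebra. Hence $\exp(\ad(x+s)) \in \Aut(K(\A))$ for every such pair, which is exactly the definition of $K(\A)$ --- and therefore $\A$ --- being algebraic.

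The main obstacle I anticipate is not a deep one but a bookkeeping one: the cited results of Stavrova are phrased in terms of algebraic groups and their root systems rather than in terms of the TKK Lie algebra and its $5$-grading directly, so the proof consists largely in translating between the two pictures and verifying that the exponential map on $\operatorname{Lie}(G)$ induced by the unipotent one-parameter (or rather $(x,s)$-parametrized) subgroups agrees on the nose with the formal power series $\exp(\ad(x+s))$ used in our definition. One subtlety to be careful about is the passage from $G$ to $K(\A)$: \cite[Theorem 2.13]{Stavrova2017} identifies $K(\A)$ with the derived subalgebra of $\operatorname{Lie}(G)$, so one must ensure the unipotent subgroups in question actually lie in the derived group $G'$ (equivalently, that their adjoint action preserves $K(\A)$), which is automatic since $U_\pm \subseteq G'$ for $G$ semisimple. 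With that in hand, the statement follows immediately, and no genuinely new computation is required beyond confirming the nilpotency degree of $\ad(x+s)$ and the invertibility of the relevant small integers.
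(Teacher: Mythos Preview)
The paper provides no proof of this theorem at all: it is stated as a direct citation of \cite[Theorem~2.13, Theorem~3.4]{Stavrova2017}, with the references placed in the theorem header and no accompanying argument. Your proposal is therefore not so much a reproduction of the paper's proof as an expansion of \emph{why} those two cited results yield the statement, and in that respect your sketch is correct: Stavrova's results identify $K(\A)$ (for $\A$ central simple) with the Lie algebra of an isotropic simple algebraic group compatibly with the $5$-grading, and then show that the maps $\exp(\ad(x+s))$ for $x+s\in\LL_{\sigma1}\oplus\LL_{\sigma2}$ coincide with the adjoint action of elements of the unipotent root subgroups, hence are genuine Lie algebra automorphisms. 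Your observation that $(\ad(x+s))^5=0$ and that only the denominators $1,2,6,24$ occur (so $\Char k\neq 2,3$ suffices) is accurate and is exactly what makes the formal exponential well defined. In short, your proposal is consistent with the paper's approach---indeed it supplies the connective tissue the paper omits by simply deferring to Stavrova.
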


The next corollary is similar to \cite[Theorem 3.1.(1)]{Garcia2011}. (Notice, however, that the characteristic $5$ case is excluded in that paper).

\begin{corollary}
\label{A nondeg}
	Let $\A$ be a central simple structurable algebra over $k$. Then $K(\A)$ is a non-degenerate Lie algebra.
\end{corollary}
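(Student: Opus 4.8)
The plan is to deduce non-degeneracy of $K(\A)$ from the absence of proper non-trivial ideals together with the algebraicity provided by \cref{csa is algebraic}. First I would recall that an absolute zero divisor of a $5$-graded Lie algebra can be decomposed into its graded components, and a standard argument (using the grading and the fact that $[x,[x,\LL]]=0$ forces each homogeneous piece to be an absolute zero divisor as well, after conjugating by suitable automorphisms) reduces the problem to showing that there are no \emph{homogeneous} absolute zero divisors. So suppose $0 \neq z \in \LL_i$ with $[z,[z,\LL]]=0$; I want to derive a contradiction from simplicity.

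Next I would form the linear span of the orbit of $z$ under all the automorphisms $\exp(\ad(l))$ with $l \in \LL_j$, $j \neq 0$ — these exist precisely because $\A$ is algebraic by \cref{csa is algebraic}. Call $I$ the subspace generated by this orbit (and closed under these exponentials); by construction $I$ satisfies the hypothesis of \cref{ideals 5 grading}, provided we also check that $\LL_0 = [\LL_1,\LL_{-1}]$ for $K(\A)$. This last identity should follow from the explicit TKK brackets: $[a_+,b_-] = V_{a,b}$, and by \cref{V operator symmetry} and \cref{S1 Decom Inst} the $V_{a,b}$ span $\Inst(\A) = K(\A)_0$. Hence \cref{ideals 5 grading} applies and $I$ is an ideal of $K(\A)$. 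Since $z \in I$, the ideal $I$ is non-trivial; since $\A$ is central simple, $K(\A)$ is simple (this is the classical TKK fact — I would cite \cite{Allison1979} or the relevant statement in \cite{Boelaert2019}), so $I = K(\A)$.

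The final step is to turn $I = K(\A)$ into a contradiction. The point is that an absolute zero divisor generates, under the exponential automorphisms, only absolute zero divisors: if $z$ is an absolute zero divisor and $\varphi$ is an automorphism, then $\varphi(z)$ is again an absolute zero divisor, and sums and brackets of absolute zero divisors living in a commutative-enough configuration stay sandwich elements — more carefully, the set of absolute zero divisors is stable under the adjoint action in the sense needed here, so $I$ consists entirely of absolute zero divisors. But then $K(\A) = I$ would be a Lie algebra all of whose elements are absolute zero divisors, forcing $[\LL,[\LL,\LL]] = 0$ and hence $\LL$ nilpotent of class $\leq 2$; this contradicts simplicity of $K(\A)$ (which is non-abelian, being at least $5$-graded with non-zero pieces). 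Therefore no non-trivial absolute zero divisor exists and $K(\A)$ is non-degenerate.

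The main obstacle I expect is the bookkeeping in the reduction to homogeneous absolute zero divisors and, relatedly, verifying carefully that the subspace $I$ built from the orbit of $z$ genuinely consists of absolute zero divisors (so that the contradiction at the end is valid) — this is where one has to be careful that $\exp(\ad(l))$ preserves the property of being a sandwich element and that the closure under these maps does not accidentally introduce non-sandwich elements. An alternative route that sidesteps part of this is to invoke a known general principle (e.g.\ from \cite{Garcia2011} or \cite{Cohen2006}) that in an algebraic graded Lie algebra the ideal generated by a sandwich element is contained in the ``McCrimmon radical'' and hence is a proper ideal unless the algebra is degenerate-by-construction; combined with simplicity this immediately gives the result. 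I would present the direct argument but remark that it parallels \cite[Theorem 3.1(1)]{Garcia2011}, the only new input being that \cref{csa is algebraic} removes the characteristic $5$ restriction.
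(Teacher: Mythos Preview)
Your overall strategy matches the paper's: use algebraicity to get the exponential automorphisms, apply \cref{ideals 5 grading} to conclude that the span of absolute zero divisors is an ideal, and then invoke simplicity of $K(\A)$. The paper does exactly this (taking $I$ to be the span of \emph{all} absolute zero divisors, so the reduction to homogeneous ones is unnecessary).

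The genuine gap is in your final step. You assert that ``$I$ consists entirely of absolute zero divisors'' and hence $[\LL,[\LL,\LL]]=0$. This is false: a \emph{linear combination} of absolute zero divisors need not be one. For a concrete example, in the strictly upper-triangular $4\times 4$ matrices, both $e_{12}$ and $e_{34}$ satisfy $\ad(\cdot)^2=0$, but $\ad(e_{12}+e_{34})^2(e_{23})=-2e_{14}\neq 0$. So from $I=\LL$ you only get that $\LL$ is \emph{spanned} (hence generated) by sandwich elements, not that every element is one; your nilpotency-of-class-$\leq 2$ conclusion does not follow.

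The paper closes the argument instead by invoking Zel'manov's theorem \cite{Zelmanov1980}: a Lie algebra generated by finitely many absolute zero divisors is nilpotent. Since $K(\A)$ is finite-dimensional, if $I=\LL$ then $\LL$ is spanned by finitely many sandwich elements and is therefore nilpotent, contradicting that $\LL$ is simple and non-abelian. This is the missing ingredient you need; the ``alternative route'' you mention at the end is in fact the actual route.
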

\begin{proof}
	$\LL:=K(\A)$ is a central simple $5$-graded Lie algebra such that $\LL_0=[\LL_1,\LL_{-1}]$ (see \cite[\S 5]{Allison1979}). 
	Moreover, by definition of algebraicity of $\A$ and \cref{csa is algebraic}, all endomorphisms $\exp (\ad (l))$ of $\LL$, with $l\in \LL_i, i\neq 0$, are actually automorphisms of $\LL$. 
	Let $I$ be the subspace spanned by all absolute zero divisors. It is clear that $I$ is closed under any Lie algebra automorphism.
	By \cref{ideals 5 grading}, $I$ is an ideal. 
	Since $\LL$ is simple, $I=\LL$ or $I=0$.
	If $I=\LL$, then $\LL$ is generated by (a finite set of) absolute zero divisors, and is thus is nilpotent (see \cite{Zelmanov1980}), which is impossible since $\LL$ is non-abelian and simple. 
\end{proof}

\begin{corollary}
\label{Inner abelian}
	Let $\A$ be a central simple structurable algebra over $k$. Then any proper inner ideal of $K(\A)$ is abelian. 
\end{corollary}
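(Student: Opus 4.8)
The statement to prove is \cref{Inner abelian}: for $\A$ central simple, every proper inner ideal of $\LL := K(\A)$ is abelian. The natural route is to combine \cref{A nondeg} (which tells us $\LL$ is non-degenerate) with a general fact from the Lie-theoretic inner ideal literature: in a non-degenerate Lie algebra, proper inner ideals are abelian. More precisely, I would invoke the result (due to Benkart, and refined by others; see e.g.\ \cite{Benkart1977} or the Garc\'ia--G\'omez Lozano circle of papers) that if $I$ is an inner ideal of a non-degenerate Lie algebra $\LL$ and $[I,I] \neq 0$, then $I$ must in fact be all of $\LL$, or at least cannot be proper. So the skeleton is: (1) recall $\LL$ is non-degenerate by \cref{A nondeg}; (2) quote or reprove the non-degenerate $\Rightarrow$ proper inner ideals abelian principle; (3) conclude.

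If a self-contained argument is wanted rather than a black-box citation, here is how I would structure it. Let $I$ be an inner ideal with $[I,I] \neq 0$; I want to show $I = \LL$. First note that $[I,[I,\LL]] \leq I$ forces $[I,[I,I]] \leq I$ and, iterating, that $[[I,I],\LL] \leq$ (something controllable); the key move is to show $J := I + [I,\LL] + [[I,\LL],\LL] + \cdots$ — the ideal generated by $I$ — is small, or alternatively to show directly that $[I,I]$ generates $\LL$. One standard technique: pick $x,y \in I$ with $[x,y] \neq 0$; use the Jacobi identity and the inner ideal condition repeatedly to show that the ideal generated by $[x,y]$ is contained in $I$, hence $I$ is an ideal; then simplicity of $\LL$ (which holds since $\A$ is central simple, by \cite[\S 5]{Allison1979}) gives $I = \LL$ or $I = 0$, and $I \neq 0$ since $[I,I]\neq 0$. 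Filling in "the ideal generated by $[x,y]$ lies in $I$" is exactly where non-degeneracy enters: one shows that modulo $I$, the element $[x,y]$ behaves like an absolute zero divisor, and non-degeneracy kills it. I would lean on the fact that $\LL$ is $5$-graded and on the grading-compatible arguments already used in the proof of \cref{A nondeg}, via \cref{ideals 5 grading}.

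**Main obstacle.** The delicate point is the passage "$[I,I]\neq 0$ and $I$ proper leads to a contradiction with non-degeneracy." The clean way is to establish that a proper inner ideal $I$ of a non-degenerate Lie algebra satisfies $[[I,I],I]=0$ and then that $[I,I]$ consists of absolute zero divisors — but verifying that $[I,I]$ elements are absolute zero divisors (i.e.\ $[z,[z,\LL]]=0$ for $z \in [I,I]$) requires a careful manipulation of iterated brackets using only the defining relation $[I,[I,\LL]]\leq I$, and this is precisely the sort of identity-juggling that is easy to get subtly wrong. For this reason, in the write-up I would most likely cite the known result from the literature on non-degenerate Lie algebras (e.g.\ \cite{Benkart1977}) rather than reprove it, keeping the proof of \cref{Inner abelian} to essentially one line: by \cref{A nondeg}, $K(\A)$ is non-degenerate, and in a non-degenerate Lie algebra every proper inner ideal is abelian.
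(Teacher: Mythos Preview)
Your proposal is correct and matches the paper's proof almost exactly: the paper simply observes that $K(\A)$ is finite-dimensional, simple, and non-degenerate (the latter by \cref{A nondeg}), and then cites \cite[Lemma 1.13]{Benkart1977}. One small caveat: your final one-line formulation omits simplicity and finite-dimensionality, whereas the paper's citation uses all three hypotheses; make sure to include these when stating the black-box result.
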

\begin{proof}
	Since $K(\A)$ is a finite-dimensional non-degenerate simple Lie algebra, this follows from \cite[Lemma 1.13]{Benkart1977}.
\end{proof}

The following result from \cite{Benkart1977} will be useful in the proof of \cref{0 -1 -2} below.

\begin{lemma}[{\cite[Lemma 1.8]{Benkart1977},\cite[Lemma 1.11]{Draper2008}}]
\label{Benkart Lemma}
	If $I$ is an inner ideal of a Lie algebra $\LL$ and $x\in \LL$ an element element such that $\ad_x^3=0$, then $[x,[x,I]]$ is an inner ideal.
\end{lemma}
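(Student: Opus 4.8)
The plan is to write $d:=\ad_x$, so that $d^{3}=0$, and to work with the inner derivation $\delta:=[d,\,\cdot\,]$ of the associative algebra $\End(\LL)$, i.e. $\delta(N):=dN-Nd$ for $N\in\End(\LL)$. Since $\delta(\ad_{c})=[\ad_x,\ad_{c}]=\ad_{[x,c]}$ for every $c\in\LL$, one has $\delta^{k}(\ad_{c})=\ad_{d^{k}c}$ for all $k\ge 0$; in particular $\delta^{k}(\ad_{c})=0$ once $k\ge 3$, and $\ad_{[x,[x,c]]}=\delta^{2}(\ad_{c})$. Now $[x,[x,I]]=d^{2}(I)$, so every element of it is of the form $d^{2}a$ with $a\in I$, and it therefore suffices to show
\[
\bigl[\,d^{2}a,\,[\,d^{2}b,\,z\,]\,\bigr]\in d^{2}(I)\qquad\text{for all }a,b\in I \text{ and } z\in\LL .
\]

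The heart of the argument is a short computation of the operator $\ad_{d^{2}a}\,\ad_{d^{2}b}=\delta^{2}(\ad_{a})\,\delta^{2}(\ad_{b})\in\End(\LL)$, which I would carry out by evaluating $\delta^{4}(\ad_{a}\ad_{b})$ in two different ways. On one hand, $\delta$ being a derivation of $\End(\LL)$, the Leibniz rule gives $\delta^{4}(\ad_{a}\ad_{b})=\sum_{i=0}^{4}\binom{4}{i}\delta^{i}(\ad_{a})\,\delta^{4-i}(\ad_{b})$; since $\delta^{i}(\ad_{a})=0$ for $i\ge 3$ and $\delta^{4-i}(\ad_{b})=0$ for $i\le 1$, only the term $i=2$ survives, so $\delta^{4}(\ad_{a}\ad_{b})=6\,\delta^{2}(\ad_{a})\,\delta^{2}(\ad_{b})$. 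On the other hand, writing $\delta=L_{d}-R_{d}$ with $L_{d}$ and $R_{d}$ (left and right composition with $d$) commuting, one has $\delta^{4}(M)=\sum_{j=0}^{4}\binom{4}{j}(-1)^{4-j}d^{j}Md^{4-j}$ for every $M$, and since $d^{3}=0$ only $j=2$ survives, so $\delta^{4}(\ad_{a}\ad_{b})=6\,d^{2}(\ad_{a}\ad_{b})d^{2}$. As $\Char k\ne 2,3$, comparing the two expressions yields the identity $\ad_{d^{2}a}\,\ad_{d^{2}b}=d^{2}(\ad_{a}\ad_{b})d^{2}$ in $\End(\LL)$.

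Evaluating this identity at $z\in\LL$ gives $\bigl[d^{2}a,[d^{2}b,z]\bigr]=d^{2}\bigl(\ad_{a}\ad_{b}(d^{2}z)\bigr)=\bigl[x,[x,[a,[b,[x,[x,z]]]]]\bigr]$; since $[x,[x,z]]\in\LL$ and $I$ is an inner ideal, $[a,[b,[x,[x,z]]]]\in I$, so the right-hand side lies in $[x,[x,I]]$, as required. The step I expect to be the real obstacle is spotting the device of computing $\delta^{4}(\ad_{a}\ad_{b})$ in these two ways: this is exactly what collapses the unwieldy product $\delta^{2}(\ad_{a})\delta^{2}(\ad_{b})$ into the transparent $d^{2}(\ad_{a}\ad_{b})d^{2}$, after which the inner-ideal property of $I$ finishes things at once. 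One could instead expand the triple bracket directly with the Jacobi identity, but that route is considerably more involved and makes it less transparent where $\ad_x^{3}=0$ and $\Char k\ne 2,3$ enter.
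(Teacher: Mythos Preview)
Your argument is correct. The paper itself does not prove this lemma; it merely cites \cite[Lemma~1.8]{Benkart1977} and \cite[Lemma~1.11]{Draper2008}, so there is no in-paper proof to compare against. Your computation of $\delta^{4}(\ad_a\ad_b)$ in two ways is a clean way to obtain the key operator identity $\ad_{d^{2}a}\ad_{d^{2}b}=d^{2}(\ad_a\ad_b)d^{2}$, and the inner-ideal property of $I$ then finishes the proof exactly as you say. Note that your use of the standing assumption $\Char k\neq 2,3$ (to cancel the factor $6$) is legitimate in the context of this paper, though the cited references establish the lemma in greater generality.
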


We will now define some subgroups of $\mathrm{GL}_k(\mathfrak g)$. 
In \cite{Allison1999}, the action of $\mathrm{GL}_k(\mathfrak g)$ on $\mathfrak g$ is denoted on the left, whereas we need an action on the right in order to be compatible with the conventions in the theory of Moufang sets. 
This is why some formulas differ slightly from the ones in \cite{Allison1999}.

\begin{definition}[{\cite[Lemma 3.2.7]{Boelaert2019}}]
\label{convention multiplication}
    Consider $\LL := K(\A)$, with $\A$ a central simple structurable algebra. 
	Let $E_\sigma(\A)$ denote the subgroup of $\Aut(\LL)$ consisting of the automorphisms\footnote{By \cref{csa is algebraic}, these are indeed automorphisms.}
	\[ e_\sigma(a,s):=\exp(\ad(a_\sigma+s_\sigma)), \] 
	and 
	\[ e_\sigma(a,s)e_\sigma(b,t)=e_\sigma \bigl( a+b, s+t+\tfrac{1}{2} \psi(a,b) \bigr) = e_\sigma(b,t)\circ e_\sigma(a,s)\]
	as multiplication, with $a,b\in\A$, $s,t\in\SS$, $\sigma\in \{+,-\}$ arbitrary. 
	In order to be compatible with the above multiplication, we define the multiplication in $\Aut(K(\A))$ as $fg=g\circ f$ for any $f$, $g\in\Aut(K(\A))$. 
	Let $E(\A)$ be the subgroup of $\Aut(\LL)$ generated by $E_+(\A)$ and $E_-(\A)$.
\end{definition}

The explicit computations in \cref{Image e_+,V equiv with V^epsilon,V^2 equiv with (V^epsilon)^2,a invertible V_{a,sa}} will be needed later.

\begin{lemma}
\label{Image e_+}
	Let $a\in\A, s,t\in\SS$. Then
	\begin{align*}
    	e_+(a,s)(t_-) &= t_-+(-ta)_-+(L_sL_t-\tfrac{1}{2}V_{a,ta})+(-s(ta)+\tfrac{1}{6}U_a(ta))_+ \\
    	   & \hspace*{20ex} + (-s(ts)-\tfrac{1}{2}\psi (a,s(ta))+\tfrac{1}{24}\psi (a,U_a(ta)))_+ \, , \\
    	e_-(a,s)(t_+) &= t_++(-ta)_++(-L_tL_s+\tfrac{1}{2}V_{ta,a})+(-s(ta)+\tfrac{1}{6}U_a(ta))_- \\
    	   & \hspace*{20ex} + (-s(ts)-\tfrac{1}{2}\psi (a,s(ta))+\tfrac{1}{24}\psi (a,U_a(ta)))_- \, .
	\end{align*}
\end{lemma}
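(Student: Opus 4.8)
The statement is an explicit formula for the action of the one-parameter-type automorphism $e_+(a,s) = \exp(\ad(a_+ + s_+))$ on a skew element $t_- \in \SS_- = \LL_{-2}$ (and symmetrically for $e_-(a,s)$ on $t_+$). Since $a_+ + s_+ \in \LL_1 \oplus \LL_2$ and $t_- \in \LL_{-2}$, the element $x := a_+ + s_+$ raises the grading degree by at most $2$, so $\ad_x$ applied to $\LL_{-2}$ lands in $\LL_0 \oplus \LL_1 \oplus \LL_2$, then in $\LL_1 \oplus \cdots$, and after five applications we are out of the graded pieces. Hence $\ad_x^5(t_-) = 0$ and $\exp(\ad_x)(t_-) = \sum_{i=0}^{4} \tfrac{1}{i!}\,\ad_x^i(t_-)$ is a finite sum. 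The whole proof is therefore the bookkeeping of computing $\ad_x^i(t_-)$ for $i = 1,2,3,4$ using the bracket relations in \cref{def:Lie alg}, together with the operator identities \cref{formula:V epsilon,formula:V delta,formula:Ls epsilon,formula:Ls delta} and \cref{V operator symmetry}.

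**Key steps.** First I would write $x = a_+ + s_+$ and compute $[x, t_-] = [a_+, t_-] + [s_+, t_-] = V_{a,t} + L_s L_t$, using $[a_+, b_-] := V_{a,b}$ and $[s_+, t_-] := L_s L_t$; note this lies in $\LL_0 = \Inst(\A)$ (here I would also want to re-express $V_{a,t}$ using $V_{a,t}(c) = V_{c,t}(a) + \psi(a,c)t$ from \cref{V operator symmetry}, or symmetrize it, to match the $-\tfrac12 V_{a,ta}$ shape that appears in the answer — in fact $[t_-, a_+] = -V_{a,t}$, and one expects the first-order term to be $[x,t_-]$, so the degree-$0$ part of $e_+(a,s)(t_-)$ beyond $t_-$ itself is exactly $[x,t_-] = L_sL_t + V_{a,t}$; matching $L_sL_t - \tfrac12 V_{a,ta}$ will require the identity $V_{a,t} = -\tfrac12 V_{a,ta} + (\text{something in }\LL_{\ge 1})$? — no: more carefully, the stated answer groups $V_{a,t}$ together with half of $\ad_x^2$, so I must be disciplined about collecting contributions degree by degree rather than order by order). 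Concretely the cleanest organization is: expand $\exp(\ad_x)(t_-)$, then re-sort the resulting sum by grading component, i.e. compute separately the $\LL_{-2}$, $\LL_{-1}$, $\LL_0$, $\LL_1$, $\LL_2$ parts of $\sum_{i} \tfrac1{i!}\ad_x^i(t_-)$. The $\LL_{-2}$-part is just $t_-$. The $\LL_{-1}$-part comes only from $[s_+, \cdot]$ lowering... wait, no — $s_+$ raises degree by $2$, $a_+$ by $1$; to get from $\LL_{-2}$ down to $\LL_{-1}$ is impossible going up, so the $\LL_{-1}$-component must arise from applying $\ad_x$ once in a way that yields $\A_-$: but $[a_+,t_-]=V_{a,t}\in\LL_0$ and $[s_+,t_-]=L_sL_t\in\LL_0$, neither is in $\LL_{-1}$. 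I realize the $(-ta)_-$ term must come from $\ad_x$ acting through a $\delta$/$\epsilon$ twist — actually $[V, t_-] = (V^{\epsilon\delta}t)_-$ for $V \in \LL_0$, so after the first bracket lands in $\LL_0$, the second bracket $[x, \cdot]$ has a $\LL_0 \to \LL_{\ge 1}$ piece $[a_+, V] = -(Va)_+$ and... hmm, but also I should double check: does $\ad_x$ ever map down? No. So $e_+(a,s)(t_-) - t_-$ has no $\LL_{-2}$-correction and the $\LL_{-1}$-term $(-ta)_-$ cannot appear. This forces me to re-read the bracket conventions: indeed $[s_-, a_+] := (sa)_-$, so $s_-$ (degree $-2$) brackets with $a_+$ (degree $+1$) to give degree $-1$ — but our $x$ has $s_+$, not $s_-$. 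I conclude the formula as I'm parsing it has $t_- \in \SS_-$ and the operator $\ad(a_+ + s_+)$; then $[a_+ + s_+, t_-]$: $[a_+,t_-] = V_{a,t}$, and here is the subtlety — is there a nonzero $[a_+, t_-]$ contribution landing in $\SS_+$? No, $[a_+,b_-]$ is always in $\Inst(\A)$. So the only way to produce $(-ta)_-\in\A_-$ is — and now I see it — the formula must actually be computing $\exp(\ad(a_- + s_-))(t_+)$-type behavior, OR the reader (me) is wrong that $\ad_x$ only raises degree; let me recheck: $[s_+, a_-] := (sa)_+$ — that is $\LL_2$ with $\LL_{-1}$ giving $\LL_1$, consistent with raising. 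And $[s_+, t_-] := L_sL_t$ — $\LL_2$ with $\LL_{-2}$ giving $\LL_0$, raising by $4$, consistent. Everything raises. Therefore the resolution must be that in the expansion, once we reach $\LL_2 = \SS_+$, further brackets with $a_+\in\LL_1$ or $s_+\in\LL_2$ vanish ($[s_+,a_+] = 0$, $[s_+,t_+]=0$, $[a_+,b_+]=\psi(a,b)_+\in\SS_+$ stays), so the process terminates; and the $(-ta)_-$ term genuinely cannot be there unless the intended $x$ is in negative degree. Given the companion formula has $e_-(a,s)(t_+)$ producing $(-ta)_+$, I now believe the convention is that $e_+$ uses $a_+, s_+$ and acts on $t_-$, and the $(-ta)_-$ arises because — of course — $\ad(a_+)$ does NOT only raise degree in the naive way I thought, since $[a_+, V_{b,c}]$ for $V_{b,c}\in\LL_0$: by $[V, a_+] = (Va)_+$ we get $[V_{b,c}, a_+] = (V_{b,c}a)_+$, still in $\LL_1$. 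I am going in circles; the honest plan is: trust the bracket table, compute $\ad_x^i(t_-)$ mechanically for $i=0,1,2,3,4$, and the $(-ta)_-$ must in fact be a typo-free consequence I'll discover — most likely I have the sign/direction of one bracket backwards and $[t_-, a_+]$ or a Jacobi rearrangement produces an $\A_-$ term; I will not pre-judge it.

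**Main obstacle.** The real difficulty is purely computational stamina and sign-discipline: one must carry four nested brackets, at each stage splitting the intermediate element into its graded components and applying the correct one of the eight-or-so bracket rules to each, while simultaneously translating $V^\epsilon, V^\delta, V^{\epsilon\delta}$ via \cref{formula:V epsilon,formula:V delta} and the $L_rL_t$ analogues via \cref{formula:Ls epsilon,formula:Ls delta}, and using \cref{V operator symmetry} and the definition of $U_a$ (presumably $U_a = V_{a,a} - $ ... or $U_a(x) = V_{a,x}(a) = (a\bar x)a + (a\bar x)a - (a\bar a)x$, i.e. $U_a x = 2(a\bar x)a - (a\bar a)x$) to recognize the combinations $\tfrac16 U_a(ta)$ and $\tfrac1{24}\psi(a, U_a(ta))$ when they emerge from $\ad_x^3$ and $\ad_x^4$. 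The factors $\tfrac12, \tfrac16, \tfrac1{24}$ are exactly $\tfrac1{2!}, \tfrac1{3!}, \tfrac1{4!}$, confirming the terms come from the 2nd, 3rd, 4th order terms of $\exp$, which is a useful sanity check: the $\LL_1$-component of the answer, $(-s(ta) + \tfrac16 U_a(ta))_+$, must receive $-s(ta)$ from $\tfrac1{2!}\ad_x^2(t_-)$ and $\tfrac16 U_a(ta)$ from $\tfrac1{3!}\ad_x^3(t_-)$, and the $\LL_2$-component $(-s(ts) - \tfrac12\psi(a, s(ta)) + \tfrac1{24}\psi(a, U_a(ta)))_+$ collects one term each from orders $2$, $3$, $4$. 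So the proof plan is: (1) show $\ad_x^5(t_-) = 0$ by the grading, hence $\exp(\ad_x)(t_-) = \sum_{i=0}^4 \tfrac1{i!}\ad_x^i(t_-)$; (2) compute $\ad_x(t_-)$, $\ad_x^2(t_-)$, $\ad_x^3(t_-)$, $\ad_x^4(t_-)$ in turn, at each stage using the bracket table of \cref{def:Lie alg} and simplifying the $\epsilon, \delta$ twists via the formulas in \cref{def:epsilon delta}; (3) collect the five graded components, recognizing the $U_a$ and $\psi(a, U_a(\cdot))$ combinations; (4) note that the formula for $e_-(a,s)(t_+)$ follows by the symmetry $\sigma \leftrightarrow -\sigma$ of the bracket table (which swaps $\epsilon$-twisted and untwisted formulas, accounting for the sign changes $L_sL_t \mapsto -L_tL_s$ and $V_{a,ta} \mapsto V_{ta,a}$). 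I expect step (2), specifically identifying the $\LL_1$ and $\LL_2$ parts at orders $3$ and $4$, to be where essentially all the work and all the risk of sign errors lies; a good cross-check is that setting $s = 0$ should reduce to the known formula for $\exp(\ad(a_+))(t_-)$ and setting $a = 0$ should give $\exp(\ad(s_+))(t_-) = t_- + L_sL_t + \tfrac12[s_+, L_sL_t] + \cdots$, and since $[s_+, L_sL_t] \in \LL_2$ equals $(L_sL_t)^\delta$ applied... I would verify $\tfrac12(L_sL_t)^\delta(s)_+ = \tfrac12(s(ts) + s(ts))_+ = (s(ts))_+$ against the claimed $-s(ts)$, which flags that I'll need to track one more sign carefully — exactly the kind of check that makes this a finite but delicate verification rather than a conceptually deep argument.
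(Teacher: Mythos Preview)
Your overall plan---expand $\exp(\ad(a_++s_+))(t_-)$ as the finite sum $\sum_{i=0}^{4}\tfrac{1}{i!}\ad_x^i(t_-)$ and compute each $\ad_x^i(t_-)$ from the bracket table---is exactly what the paper does. The match of the coefficients $\tfrac12,\tfrac16,\tfrac1{24}$ with $\tfrac1{2!},\tfrac1{3!},\tfrac1{4!}$ is the right sanity check, and the symmetry argument for $e_-(a,s)(t_+)$ is also the paper's.

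However, your long detour about ``$\ad_x$ cannot produce an $\LL_{-1}$-term'' comes from a genuine misreading of the bracket table, and until you fix it the computation cannot get off the ground. You wrote $[a_+,t_-]=V_{a,t}\in\LL_0$. But the rule $[a_+,b_-]:=V_{a,b}$ applies only when $b_-\in\A_-=\LL_{-1}$. Here $t\in\SS$, so $t_-\in\SS_-=\LL_{-2}$, and the relevant rule is $[s_-,a_+]:=(sa)_-$; hence
\[
    [a_+,t_-]=-[t_-,a_+]=-(ta)_-\in\A_-=\LL_{-1}.
\]
This is perfectly consistent with the grading ($-2+1=-1$), so $\ad_x$ \emph{does} only raise degree; you simply mis-identified the target component. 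With this correction the first step reads
\[
    \ad(a_++s_+)(t_-)=(-ta)_-+L_sL_t,
\]
and the mysterious $(-ta)_-$ in the answer is already visible at order $1$. The rest is then the mechanical iteration you outlined (and the sign you flagged at the end resolves once you remember $[s_+,V]=-(V^\delta s)_+$, giving $\tfrac12\ad(s_+)^2(t_-)=-\bigl(s(ts)\bigr)_+$ as claimed).
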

\begin{proof}
	Using \cref{formula:Ls delta,formula:V delta}, we compute that
	\begin{align*} 
	\text{ad}(a_++s_+)(t_-)&=(-ta)_-+L_sL_t\\
	\text{ad}(a_++s_+)^2(t_-)&=[a_++s_+,\text{ad}(a_++s_+)(t_-)]\\&= -V_{a,ta}+(-2s(ta))_++(-(L_sL_t)^\delta (s))_+\\
	                         &=-V_{a,ta}+(-2s(ta))_++(-2s(ts))_+\\
	\text{ad}(a_++s_+)^3(t_-)&= (V_{a,ta}(a)_+-2\psi (a,s(ta))_+)-\psi(a,s(ta))_+\\&=U_a(ta)_+-3\psi (a,s(ta))_+\\
	\text{ad}(a_++s_+)^4(t_-)&=\psi (a,U_a(ta))_+ .
	\end{align*}
	These identities together with the definition of $e_+(a,s)$ yield the result. The computations for $e_-(a,s)(t_+)$ are similar, using \cref{formula:Ls epsilon}.
\end{proof}

\begin{lemma}
\label{V equiv with V^epsilon}
	Let $a,b\in \A$ such that $a$ is conjugate invertible and let $V\in \Inst (\A)$ such that $V^2=0$ and $(V^\epsilon)^2=0$. Then
	\[ V_{a,b}=0 \iff b=0 \iff V_{b,a}=0 \]
	and 
	\[ V(a)=0 \iff V^\epsilon (\hat a)=0 .\]
\end{lemma}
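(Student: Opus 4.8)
The plan is to reduce everything to two ingredients. The first is that for conjugate invertible $a$ the map $b \mapsto V_{a,b}(a)$ is a bijection of $\A$: indeed $V_{a,b}(a) = U_a(b)$ (cf.\ the computation in the proof of \cref{Image e_+}), and $U_a$ is invertible precisely because $a$ is conjugate invertible. The second is the identity $[V, V_{x,y}] = V_{V(x),y} + V_{x, V^\epsilon(y)}$, valid for all $V \in \Inst(\A)$ and all $x,y \in \A$: it holds for $V = V_{z,w}$ by the defining relation in \cref{def:struct algebra} together with $V_{z,w}^\epsilon = -V_{w,z}$ from \cref{formula:V epsilon}, and it then extends to arbitrary $V \in \Inst(\A)$ by linearity in $V$ (note that $V \mapsto V^\epsilon$ is linear, as is clear from \cref{def:epsilon delta}).

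For the chain $V_{a,b} = 0 \iff b = 0 \iff V_{b,a} = 0$, the outer equivalence is free: applying $(\,\cdot\,)^\epsilon$ and \cref{formula:V epsilon} gives $V_{a,b}^\epsilon = -V_{b,a}$, so $V_{a,b} = 0$ forces $V_{b,a} = 0$, and symmetrically. One implication of the middle equivalence is trivial, since $V_{a,0} = 0$. For the converse I would evaluate $V_{a,b} = 0$ at $a$, obtaining $U_a(b) = 0$, whence $b = 0$ by the first ingredient.

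For $V(a) = 0 \iff V^\epsilon(\hat a) = 0$, I would apply the second ingredient with $x = a$ and $y = \hat a$. Since $V_{a,\hat a} = \id$, the left-hand side $[V,\id]$ vanishes, so $V_{V(a),\hat a} = -V_{a, V^\epsilon(\hat a)}$. If $V(a) = 0$, then $V_{a, V^\epsilon(\hat a)} = 0$, and the first part of the lemma (for this $a$) gives $V^\epsilon(\hat a) = 0$. Conversely, if $V^\epsilon(\hat a) = 0$, then $V_{V(a),\hat a} = 0$; since $\hat a$ is itself conjugate invertible with conjugate inverse $a$ (by uniqueness of conjugate inverses together with $V_{\hat a,a} = \id$), the first part of the lemma applied to $\hat a$ gives $V(a) = 0$.

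I do not expect a serious obstacle here; the work is essentially bookkeeping. The points that need care are keeping the $\epsilon$-twists consistent, using both $V_{a,\hat a} = \id$ and $V_{\hat a,a} = \id$, and matching the normalization $U_a(b) = V_{a,b}(a)$ with the cited invertibility of $U_a$. In particular, I do not expect to use the hypotheses $V^2 = 0$ and $(V^\epsilon)^2 = 0$ in this argument; presumably they appear in the statement because that is the form in which the lemma is invoked later.
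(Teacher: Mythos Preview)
Your proof is correct and follows essentially the same approach as the paper: evaluate $V_{a,b}$ at $a$ to get $U_a(b)=0$, use $V_{a,b}^\epsilon=-V_{b,a}$ for the symmetry, and derive the last equivalence from $0=[V,\id]=[V,V_{a,\hat a}]=V_{V(a),\hat a}+V_{a,V^\epsilon(\hat a)}$ combined with the first part. Your observation that the hypotheses $V^2=0$ and $(V^\epsilon)^2=0$ are not actually used is also correct; the paper's own proof does not invoke them either.
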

\begin{proof}
	The first claim follows from $U_a (b)=V_{a,b}(a)=0$ and the fact that $U_a$ is invertible. The second claim follows immediately by $V_{a,b}^\epsilon =-V_{b,a}$. The last claim follows from 
	\[ 0=[V,\id]=[V,V_{a,\hat a}]=V_{V(a),\hat a}+V_{a,V^\epsilon (\hat a)}\]
	and the previous claims.
\end{proof}

\begin{lemma}
\label{V^2 equiv with (V^epsilon)^2}
	If $V, W\in\Inst (\A)$ satisfy $V^\delta(s)=0=W^\delta (s)$ for some conjugate invertible $s\in\SS$, then we have 
	\[ WV=0\iff W^\epsilon V^\epsilon=0 .\]
\end{lemma}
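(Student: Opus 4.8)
The plan is to reduce the statement to the case handled by \cref{V^2 equiv with (V^epsilon)^2} --- sorry, to \cref{V equiv with V^epsilon} --- by exploiting the symmetry between $V$ and $V^\epsilon$. Recall that for any $X \in \Inst(\A)$ we have $X^\epsilon = X - L_{X(1) + \overline{X(1)}}$ and that $(\cdot)^\epsilon$ is an involution on $\Inst(\A)$ (indeed $V_{x,y}^\epsilon = -V_{y,x}$, so $(V_{x,y}^\epsilon)^\epsilon = V_{x,y}$). Hence one implication in the claimed equivalence follows from the other by replacing $(V,W)$ with $(V^\epsilon, W^\epsilon)$, provided the hypothesis $V^\delta(s) = 0 = W^\delta(s)$ is preserved under $(\cdot)^\epsilon$. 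So the first key step is to check that $X^\delta(s) = 0$ implies $(X^\epsilon)^\delta(s) = 0$ for $s \in \SS$; this should be a short computation comparing $X^\delta = X + R_{\overline{X(1)}}$ with $(X^\epsilon)^\delta$, using that $X^\epsilon$ and $X$ have the same relation to left/right multiplications and that the action on $\SS$ is what matters. It is plausible that $(X^\epsilon)^\delta = X^{\epsilon\delta}$ coincides on $\SS$ with $-X^{\delta}$ up to sign, or at least that its kernel on $\SS$ is the same; this is exactly the kind of identity recorded implicitly in \cref{def:epsilon delta}.

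The second, main step is to prove one direction, say $WV = 0 \implies W^\epsilon V^\epsilon = 0$. Here I would use that $s$ is conjugate invertible, so the operator $U_s$ is invertible and $V_{s,\hat s} = \id$. The idea is to turn the hypothesis $V^\delta(s) = 0$, i.e. (by \cref{formula:V delta} if $V = V_{x,y}$, but more usefully directly) the statement that $V$ kills $s$ in the appropriate twisted sense, into an identity that lets one transport the vanishing of $WV$ through the $\epsilon$\dash twist. Concretely, from $V^\delta(s) = 0$ one gets $V(s) = -R_{\overline{V(1)}}(s) = -s\,\overline{V(1)}$, and similarly for $V^\epsilon$; combined with $W^\delta(s) = 0$ one obtains a relation between $W^\epsilon V^\epsilon$ applied to a well-chosen element and $WV$ applied to another. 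The cleanest route is probably to apply $WV$ to $s$: since $WV = 0$ certainly $WV(s) = 0$, and then unwind $V(s)$ and $W(\,\cdot\,)$ using the $\delta$\dash hypotheses to arrive at something like $U_s \circ (W^\epsilon V^\epsilon) = 0$ or $(W^\epsilon V^\epsilon) \circ U_s = 0$ as operators, whence $W^\epsilon V^\epsilon = 0$ by invertibility of $U_s$. Alternatively, one mimics the commutator trick of \cref{V equiv with V^epsilon}: write $0 = [\,\cdot\,, \id] = [\,\cdot\,, V_{s,\hat s}]$ type identities and expand, using that conjugation by elements killing $s$ behaves well.

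The third step is purely formal: having proved the forward implication for all pairs $(V,W)$ satisfying the hypothesis, apply it to $(V^\epsilon, W^\epsilon)$ --- which still satisfy the hypothesis by the first step --- and use $(V^\epsilon)^\epsilon = V$, $(W^\epsilon)^\epsilon = W$ to deduce $W^\epsilon V^\epsilon = 0 \implies WV = 0$. This closes the equivalence.

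The step I expect to be the main obstacle is the middle one: finding the right intermediate identity that relates $W^\epsilon V^\epsilon$ to $WV$ using only the hypotheses $V^\delta(s) = 0 = W^\delta(s)$ and conjugate invertibility of $s$. The difficulty is that $(\cdot)^\epsilon$ changes an operator by a left\dash multiplication term $L_{X(1)+\overline{X(1)}}$, so $W^\epsilon V^\epsilon$ is not simply $(WV)^\epsilon$ or $(WV)^{\epsilon\epsilon}$; controlling the cross terms $L_{(\cdot)}\cdot V$, $W \cdot L_{(\cdot)}$ and $L_{(\cdot)}L_{(\cdot)}$ will require knowing how $V$ and $W$ act on $1$ and on products, and this is where the skew\dash element hypothesis must be used to force those cross terms to cancel (likely because $V(1), W(1)$ land in a controlled subspace once $V^\delta(s) = 0$, or because evaluating everything at the conjugate\dash invertible $s$ collapses them). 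I would first settle the case $V = V_{a,b}$, $W = V_{c,d}$ using \cref{formula:V delta} and \cref{formula:V epsilon} to get a feel for the cancellations, then argue by linearity and \cref{S1 Decom Inst} (splitting off the $T_x$ and inner\dash derivation parts) in general.
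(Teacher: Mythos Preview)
Your plan is far more elaborate than what is needed, and it misses the single identity that makes the lemma a two-line proof. The paper simply invokes the identity (recorded in \cite[(1.10)]{Allison1984})
\[
  V(sx) \;=\; V^\delta(s)\,x \;+\; s\,V^\epsilon(x) \qquad (V\in\Inst(\A),\ s\in\SS,\ x\in\A).
\]
Under the hypothesis $V^\delta(s)=0$ this says $V\circ L_s = L_s\circ V^\epsilon$, i.e.\ left multiplication by $s$ intertwines $V^\epsilon$ with $V$. Applying this once for $V$ and once for $W$ gives $(WV)(sx)=s\,(W^\epsilon V^\epsilon)(x)$ for all $x$. Since $s$ is conjugate invertible, $L_s$ is a bijection of $\A$, and the equivalence follows at once in both directions. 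No symmetry reduction, no expansion of $W^\epsilon V^\epsilon$, no case analysis $V=V_{a,b}$ is needed.

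By contrast, your three-step strategy runs into real trouble. Step~1 (showing that $X^\delta(s)=0$ implies $(X^\epsilon)^\delta(s)=0$) is not the triviality you suggest: writing out $(X^\epsilon)^\delta(s)$ from the definitions leaves you with $X(s)-(X(1)+\overline{X(1)})s - sX(1)$, and cancelling this against $X(s)=-s\,\overline{X(1)}$ requires exactly the kind of $L_s$-conjugation identity you are trying to avoid. Step~2, your ``main obstacle'', is even worse: expanding $W^\epsilon V^\epsilon$ via $X^\epsilon = X - L_{X(1)+\overline{X(1)}}$ produces four terms, and the cross terms $W\,L_{(\cdot)}$ and $L_{(\cdot)}\,V$ do not simplify using only $V^\delta(s)=W^\delta(s)=0$ without essentially rederiving the intertwining identity above. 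Evaluating $WV$ only at the single element $s$ (rather than at $sx$ for all $x$) cannot yield an operator identity. So the route you sketch is not just longer; without the Allison identity it does not visibly close.
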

\begin{proof}
	By \cite[(1.10)]{Allison1984} we have $V(sx)=V^\delta(s)x+sV^\epsilon(x)=sV^\epsilon (x)$ and hence $(WV)(sx)=s(W^\epsilon V^\epsilon)(x)$, for any $x\in\A$.
	Since $s$ is conjugate invertible we get the desired equivalence.
\end{proof}

\begin{lemma}
\label{a invertible V_{a,sa}}	
	If $a\in\A$ and $s\in\SS$ are conjugate invertible, then $V_{a,sa}\neq 0$.
\end{lemma}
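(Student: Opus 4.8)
The plan is to argue by contradiction, using the invertibility machinery developed in the preceding lemmas. Suppose that $a\in\A$ and $s\in\SS$ are both conjugate invertible but $V_{a,sa}=0$. The element $sa$ is a natural object here: recall from \cref{formula:V delta} that $V_{x,y}^\delta(s)=-\psi(x,sy)$, so setting $x=y=a$ gives $V_{a,a}^\delta(s)=-\psi(a,sa)$. On the other hand, by \cref{V operator symmetry} we have $V_{a,a}=L_{a\overline a}=T_{a\overline a}$, and since $a$ is conjugate invertible, $a\overline a$ is conjugate invertible as well (this is standard; indeed $\widehat{a\overline a}$ exists because $U_a$ is invertible), so $V_{a,a}$ is an invertible operator. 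This suggests that $sa$ itself is conjugate invertible whenever $a$ and $s$ are — a fact I would either quote or prove in a line, using that left multiplication $L_s$ by a conjugate invertible skew element is invertible (this is part of the skew-dimension-one / general structurable invertibility theory, cf.\ \cite{Allison1981}).

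Granting that $sa$ is conjugate invertible, I would then apply \cref{V equiv with V^epsilon} in a suitable form. The point is that $V_{a,sa}=0$ should be impossible because $U_a(sa)=V_{a,sa}(a)$, and $U_a$ is invertible (since $a$ is conjugate invertible), forcing $sa=0$, which contradicts $sa$ being conjugate invertible (in particular nonzero). So the cleanest route is: from $V_{a,sa}=0$ deduce $V_{a,sa}(a)=0$, i.e.\ $U_a(sa)=0$; invert $U_a$ to get $sa=0$; but $s$ invertible and $a$ invertible force $sa\ne 0$ — contradiction. This does not even need the full strength of \cref{V equiv with V^epsilon}, only the identity $V_{x,y}(x)=U_x(y)$ together with invertibility of $U_a$.

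The one genuine obstacle is justifying that $sa\neq 0$, equivalently that $s$ (or $L_s$) does not annihilate the invertible element $a$. If $L_s$ were known to be invertible for conjugate invertible $s\in\SS$, this is immediate; this should follow from the structurable-algebra literature on conjugate inverses (e.g.\ one has $L_s L_{\hat s^{-1}}$ or an analogous product equal to a scalar, or one uses $V_{s,s}=L_{s\overline s}=L_{-s^2}$ acting invertibly when $s^2$ is an invertible scalar multiple of $1$, which holds in skew-dimension one; in the general case one invokes that $s$ conjugate invertible implies $U_s$ invertible and unwinds). I would state the needed fact — \emph{if $s\in\SS$ is conjugate invertible then $L_s$ is bijective} — citing \cite{Allison1981} or the relevant passage of \cite{Boelaert2019}, and then the argument closes in two lines.

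Putting it together: assume $V_{a,sa}=0$. Evaluating at $a$ and using \cref{V operator symmetry}-type identities gives $U_a(sa)=V_{a,sa}(a)=0$. Since $a$ is conjugate invertible, $U_a$ is invertible, hence $sa=0$. But $a$ is conjugate invertible, so $a\neq 0$, and $L_s$ is injective (as $s$ is conjugate invertible), so $sa\neq 0$ — a contradiction. Therefore $V_{a,sa}\neq 0$.
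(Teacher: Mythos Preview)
Your proposal is correct and, once the preliminary musings are stripped away, your final ``Putting it together'' paragraph is essentially identical to the paper's proof: assume $V_{a,sa}=0$, evaluate at $a$ to get $U_a(sa)=0$, then use invertibility of $U_a$ and $L_s$ to force $a=0$, a contradiction. The detours about $sa$ being conjugate invertible and about \cref{V equiv with V^epsilon} are unnecessary but harmless.
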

\begin{proof}
	If $V_{a,sa}=0$, then $0=V_{a,sa}(a)=U_a L_s(a)$.
	Since $a$ and $s$ are conjugate invertible, the operators $U_a$ and $L_s$ are invertible; hence $a=0$, a contradiction.
\end{proof}

\subsection{Moufang polygons and extremal geometries}

In this subsection we state the necessary preliminaries regarding generalized polygons and extremal geometries.

\begin{definition}
	Let $\Gamma=(\mathbf P,\mathbf L,I)$ be a point-line geometry. 
	We call it a \emph{partial linear space} if two distinct points lie on at most one line.
	Assume now that $\Gamma$ is a partial linear space.
	We call $X\subseteq \mathbf P$ a \textit{subspace} if for all $x, y\in X$ with $x$ and $y$ on a (necessarily unique) line $l$, all points of $l$ are contained in $X$.
	We call this subspace \textit{singular} if any two distinct points of this subspace are collinear.
\end{definition}

\begin{definition}
	Consider $n\in\mathbb N\backslash \{0,1,2\}$.
	A \textit{generalized $n$-gon} is a partial linear space $\mathcal T=(\mathbf P,\mathbf L,I)$ such that: 
	\begin{itemize}
		\item $\mathcal T$ does not contain ordinary $m$-gons as subgeometries, for $m<n$;
		\item For any $x,y\in\mathbf P\cup\mathbf L$, there exists a subgeometry containing both $x$ and $y$ isomorphic to an ordinary $n$-gon.
	\end{itemize}
	We call a point-line geometry a \textit{generalized polygon} if it is a generalized $n$-gon for some $n\geq 3$.
	Generalized $3$-, $4$- and $6$-gons are also called \textit{generalized triangles, quadrangles} and \textit{hexagons}, respectively.
\end{definition}

\begin{definition}
	If $\mathcal T=(\mathbf P,\mathbf L,I)$ is a generalized $n$-gon, then we denote its incidence graph by $\tilde{\mathcal T}$. 
	This is the bipartite graph with $\mathbf P\cup\mathbf L$ as points and $\{p,l\}$ as edges, where $p\in\mathbf P$ and $l\in\mathbf L$ with $p\in l$.
	Note that $\tilde{\mathcal T}$ has diameter $n$ and girth~$2n$.
	If $(x_0,\dots,x_n)$ is a path of length $n$ in $\tilde{\mathcal T}$, then we define the subgroup
		$$ U(x_0,\dots,x_n)$$
	of $\Aut(\tilde{\mathcal T})$ to be the pointwise stabilizer in $\tilde{\mathcal T}$ of all vertices at distance $\leq 1$ from $x_1$, $x_2$, $\dots$, or $x_{n-1}$ and we call this a \emph{root group}.
\end{definition}

\begin{notation}
\label{Notation root groups}
	If we fix a cycle $(x_0,\dots,x_{2n-1},x_{2n})$ of length $2n$ in $\tilde{\mathcal T}$, then we set 
	\[ U_i=U(x_{i},\dots,x_{n+i}),\]
	with $1\leq i\leq n$.
\end{notation}

\begin{definition}
	We call a generalized $n$-gon $\mathcal T=(\mathbf P,\mathbf L,I)$ \emph{Moufang} if the root group $U(x_0,\dots,x_n)$ acts transitively on the set of all neighbors of $x_0$ distinct from~$x_1$, for all paths $(x_0,\dots,x_n)$ of length $n$ in $\tilde{\mathcal T}$.
\end{definition}

\begin{lemma}[{\cite[(3.7)]{Tits2002}}]
\label{Moufang general remark}
	Let $\Gamma$ be a generalized polygon, $G$ its automorphism group and $(x_0,\ldots,x_n)$ an $n$-path. Consider neighbors $x,y$ of $x_0$, distinct from $x_{1}$. Then there is at most one element of $U:=U(x_0,\ldots,x_n)$ mapping $x$ on to $y$. In particular, $\Gamma$ is Moufang if and only if $U$ acts sharply transitively on the set of neighbors of $x_0$ distinct from $x_{1}$ (for all $n$-paths).
\end{lemma}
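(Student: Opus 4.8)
The plan is to dispose of the second assertion first, as it is purely formal. The first assertion says precisely that (for the given $x$) the orbit map $u\mapsto u(x)$ is injective, i.e.\ that $U$ acts \emph{freely} on the set of neighbours of $x_0$ distinct from $x_1$; combined with the definition of the Moufang property---that $U$ acts \emph{transitively} on that set, for every $n$-path---this gives that $\Gamma$ is Moufang if and only if $U$ acts \emph{sharply} transitively there. So all the content lies in the first assertion, and since it is equivalent to the triviality of the stabiliser of $x$ in $U$, it suffices to prove: \emph{if $w\in U:=U(x_0,\dots,x_n)$ fixes some neighbour $x\neq x_1$ of $x_0$, then $w=\id$.}

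First I would record what such a $w$ already fixes. By definition of the root group, $w$ fixes every vertex at distance $\le 1$ from one of $x_1,\dots,x_{n-1}$; in particular it fixes the path $(x_0,x_1,\dots,x_n)$ pointwise, and together with $w(x)=x$ it fixes the $(n+1)$-path $\gamma=(x,x_0,x_1,\dots,x_n)$ pointwise. A routine count using the girth $2n$ and diameter $n$ of the incidence graph $\tilde{\Gamma}$ of $\Gamma$ shows that the endpoints $x,x_n$ of $\gamma$ lie at distance exactly $n-1$, that the (hence unique) geodesic $\gamma'$ joining them meets $\gamma$ only in those endpoints, and therefore that $\gamma\cup\gamma'$ is a $2n$-cycle in $\tilde{\Gamma}$, that is, a subgeometry $A$ of $\Gamma$ isomorphic to an ordinary $n$-gon and containing the path $(x_0,\dots,x_n)$. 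Since $w$ fixes $\gamma$ pointwise and must carry the unique geodesic $\gamma'$ to itself while fixing both its endpoints, $w$ fixes $A$ pointwise.

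The engine of the argument would be the standard fact that for \emph{opposite} vertices $u,v$ of $\Gamma$, i.e.\ vertices with $d(u,v)=n$, the projection $\pi_u$ sending a neighbour $v'$ of $v$ to the neighbour of $u$ nearest $v'$ is a \emph{bijection} from the neighbours of $v$ onto the neighbours of $u$; this is itself a short girth/diameter argument. From it I would extract a \emph{propagation principle}: if $w$ fixes a vertex $v$ and fixes every neighbour of some vertex $u$ opposite $v$---hence also fixes $u$, which is the unique common neighbour of any two of its neighbours, all of which $w$ fixes---then $w$ fixes every neighbour of $v$ as well, because for a neighbour $v'$ of $v$ the vertices $v'$ and $w(v')$ both lie in the star of $v$ and satisfy $\pi_u(v')=\pi_u(w(v'))$. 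Applying this inside $A$, written cyclically as $A=(x_0,x_1,\dots,x_n,z_1,\dots,z_{n-1})$ with $z_{n-1}=x$: for $1\le j\le n-1$ the vertex $z_j$ is the vertex of $A$ diametrically opposite $x_j$, and since $w$ already fixes every neighbour of $x_j$, the propagation principle shows that $w$ fixes every neighbour of $z_j$ too. Hence $w$ fixes the closed star of every vertex of $A$, with the possible exception of $x_0$ and $x_n$.

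Finally I would bootstrap outward from $A$: once $w$ is known to fix the closed stars of enough vertices, the propagation principle---applied with $v$ a vertex just beyond the region already fixed and $u$ a suitable vertex opposite $v$ whose star is already known to be fixed---enlarges the fixed-point set of $w$; since $\Gamma$ is finite of diameter $n$, this terminates with $w=\id$. I expect this last step to be the only genuinely delicate one: one must choose the right induction parameter (something like ``$w$ fixes the closed star of every vertex at distance $\le m$ from the path $(x_0,\dots,x_n)$''), and then check at each stage that an opposite vertex with an already-fixed star is available. In particular the pair $\{x_0,x_n\}$, which inside $A$ are opposite only to each other---so that the propagation principle cannot be applied to them using vertices of $A$ alone---must be resolved using vertices lying outside $A$; I expect this to be pure bookkeeping with girth-and-diameter estimates, needing no further idea.
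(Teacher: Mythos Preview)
The paper does not prove this lemma at all; it merely cites \cite[(3.7)]{Tits2002}. So there is no ``paper's own proof'' to compare against, and your proposal should be evaluated on its own merits.

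Your strategy---reduce to $w\in U$ fixing an extra neighbour $x$ of $x_0$, build the apartment $A$ through the root and $x$, use the projection bijection between stars of opposite vertices to propagate, then bootstrap outward---is exactly the standard one, and the first two stages are carried out correctly. Two points deserve attention, though. First, ``since $\Gamma$ is finite of diameter $n$'' is at best ambiguous: generalized polygons need not be finite. What you need (and what you presumably mean) is that the incidence graph has finite diameter $n$, so an induction on distance terminates.

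Second, and more substantively, the induction you propose---``$w$ fixes the closed star of every vertex at distance $\le m$ from the path $(x_0,\dots,x_n)$''---does not work as stated. The base case $m=0$ already fails, since $x_0$ and $x_n$ lie on the path but you have not yet shown their stars are fixed. More generally, at each step you need the new vertex to be opposite something whose star is \emph{already} known to be fixed, and the vertices available from the distance-$\le m$ ball around the path are not the right ones (the propagation so far has used the $z_j$, which are \emph{far} from the path). A workable route is: after obtaining $F\supseteq\{x_1,\dots,x_{n-1},z_1,\dots,z_{n-1}\}$, use thickness to pick a neighbour $b$ of $x_1$ outside $A$; a girth argument shows $b$ is opposite $z_2\in F$, so $b\in F$; another girth argument shows $x_n$ is opposite $b$, so $x_n\in F$; then $x_0\in F$ since $x_0$ is opposite $x_n$. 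Once all of $A$ lies in $F$, one finishes by showing that \emph{every} vertex is opposite some vertex of $A$ (a short argument with the distance function restricted to the $2n$-cycle $A$), and a single application of propagation then gives $w=\id$. So your outline is sound, but the endgame needs a slightly different organization than the one you sketched, and it does use thickness---it is a touch more than ``pure bookkeeping.''
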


We now turn to extremal geometries.

\begin{definition}[{\cite[p.\@~435]{Cohen2006}}]\label{E_i}
	Consider a Lie algebra $\LL$. Let $E=E(\LL)$ denote the set of extremal elements in $\LL$. Consider $x,y\in E$. Then we say that $x$ and $y$ are
	\begin{itemize}
		\item \textit{identical} if $\langle x\rangle=\langle y\rangle$. We set $(x,y)\in E_{-2}$;
		\item \textit{strongly commuting} if $[x,y]=0$, $\langle x\rangle\neq \langle y\rangle$ and $\lambda x+\mu y$ is either $0$ or an extremal element, for all $\lambda, \mu\in k$. We set $(x,y)\in E_{-1}$;
		\item \textit{commuting} if $[x,y]=0$ and $x$ and $y$ are not strongly commuting or identical. We set $(x,y)\in E_{0}$;
		\item \textit{special} if $[x,y]\neq 0$ and $[x,[x,y]]=0$. We set $(x,y)\in E_{1}$;
		\item \textit{hyperbolic} if $[x,[x,y]]\neq 0$. We set $(x,y)\in E_{2}$.
	\end{itemize}
	Clearly $E\times E=E_{-2}\sqcup E_{-1}\sqcup E_{0}\sqcup E_{1}\sqcup E_{2}$. We set $E_{\leq i}=\bigcup\limits_{j=-2}^{i} E_j$.
\end{definition}

\begin{definition}\label{extremal geometry}
	Let $\LL$ be a Lie algebra. The \textit{associated extremal geometry} is the incidence geometry with as points 
	\[ \E=\E(\LL):=\{\langle x\rangle \mid x\in E=E(\LL)\},\]
	as lines 
	\[ \F=\F(\LL):=\{ \langle x,y\rangle\mid x,y\in E, (x,y)\in E_{-1}\}, \]
	and inclusion as incidence relation.
	We denote this geometry by $\Gamma(\LL)$.
\end{definition}

Recall the definition of an absolute zero divisor (\cref{definition zero divisor}).

\begin{theorem}
\label{Extremal geometry characterisation hexagon}	
	Assume that the finite-dimensional simple Lie algebra $\LL$ has no absolute zero divisors, that $\LL$ is generated by $E(\LL)$, and that $\F(\LL)\neq\emptyset$. If there is a line in $\F(\LL)$ which is a maximal singular subspace of the extremal geometry $(\E,\F)$, then $(\E,\F)$ is a generalized hexagon. 
\end{theorem}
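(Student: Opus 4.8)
The plan is to invoke the general machinery of Cohen–Ivanyos on extremal geometries and isolate exactly which case of their classification we land in. Recall that in \cite{Cohen2006} the extremal geometry $(\E,\F)$ of a simple Lie algebra without absolute zero divisors, generated by its extremal elements, with $\F\neq\emptyset$, is shown to be a so-called \emph{root filtration space}; such spaces are then classified, and the possibilities are: a single line (thin case), a non-degenerate polar space, a product geometry, a building of type $E_n$ (line Grassmannian), or a generalized hexagon. The strategy is to show that the hypothesis ``some line is a maximal singular subspace'' rules out every possibility except the hexagon.

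First I would record that $(\E,\F)$ is a connected root filtration space (connectedness follows from simplicity of $\LL$ together with the hypothesis that $E(\LL)$ generates $\LL$, via the standard argument that the connected components correspond to ideals). Next I would go through the other outcomes of the classification. In a non-degenerate polar space of rank $\geq 2$, maximal singular subspaces have dimension $\geq 1$ as projective spaces, i.e.\ they are \emph{planes or larger} whenever the geometry is not itself a generalized quadrangle; and in the quadrangle case a line is maximal singular but the geometry then violates the requirement $\F\neq\emptyset$ being ``large enough'' — here one has to be a little careful, so the cleanest route is: a polar space in which a line is maximal singular is a generalized quadrangle, but generalized quadrangles do not arise as extremal geometries of Lie algebras in this list (the rank-two members of the classification that are polar spaces have singular rank $\geq 2$), contradiction. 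For product geometries and for the $E_n$ line-Grassmannians, maximal singular subspaces are again of dimension $\geq 2$ (they contain planes), so the maximality hypothesis fails. That leaves only the hexagon. One should also dispose of the degenerate/thin case where $(\E,\F)$ is a single line: then \emph{every} point lies on that one line, so $\LL$ would be generated by a conic's worth of pairwise strongly commuting extremal elements, forcing $\LL$ to be small (of type $A_1$ essentially) — but then one checks directly there are no extremal elements outside, and in fact this case is incompatible with $\F\neq\emptyset$ being a \emph{proper} subspace structure, or simply: we are told there \emph{is} a line, and if it were the whole geometry the ``hexagon'' conclusion is vacuously what one wants to exclude, so this should be handled by noting the classification's thin case does not occur for a simple $\LL$ of dimension $>3$, and dimension $3$ is excluded since $\mathfrak{sl}_2$'s extremal geometry has empty $\F$.

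**The main obstacle** I anticipate is not any single computation but rather matching conventions: Cohen–Ivanyos work with root filtration spaces and a specific axiomatization, and one must verify that our $(\E,\F)$, built from $E_{-1}$ (strong commuting) pairs, really satisfies their axioms — in particular the axiom governing the relation $E_1$ (special pairs) and the partition $E\times E = \bigsqcup E_i$, which is exactly \cref{E_i} above, so this should go through but needs to be stated. A secondary subtlety is that ``maximal singular subspace'' must be interpreted inside the point-line geometry $(\E,\F)$, and one must confirm that singular subspaces here are projective spaces (this is part of the root filtration space structure) so that ``a line is maximal singular'' genuinely forces singular rank $2$, which is the numerical invariant distinguishing the hexagon from the other entries in the list. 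Once the classification is quoted correctly, the argument is a finite case-check on that list.
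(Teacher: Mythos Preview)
Your approach matches the paper's exactly: the paper's entire proof of this theorem is the one-line citation ``This is \cite[Corollary~18]{Cohen2007} combined with \cite[Theorem~28(ii)]{Cohen2006}.'' You have correctly identified both ingredients --- Theorem~28 of \cite{Cohen2006} supplies the root filtration space structure, and the result from \cite{Cohen2007} isolates the hexagon case via the singular-rank hypothesis.

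Since you go beyond the paper and sketch the internal case-check, one remark on where that sketch wobbles. Your elimination of the polar space case is circular as written: you observe that a polar space with lines as maximal singular subspaces is a generalized quadrangle, then assert quadrangles ``do not arise \dots\ (the rank-two members of the classification that are polar spaces have singular rank $\geq 2$)'' --- but a quadrangle \emph{has} singular rank~$2$, so this sentence does not exclude anything. The actual content buried in the cited references is that when the extremal geometry of a simple Lie algebra (generated by extremals, no sandwiches) is a non-degenerate polar space, it has polar rank at least~$3$; this is a theorem, not a bookkeeping observation, and is precisely where the work in \cite{Cohen2007} lies. Your instinct that ``one has to be a little careful'' here is well placed. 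The thin/single-line case is also handled somewhat loosely, but that one is genuinely easy to dispose of once you note that a single line of extremal elements cannot generate a simple Lie algebra of dimension larger than~$3$.
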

\begin{proof}
	This is \cite[Corollary 18]{Cohen2007} combined with \cite[Theorem 28(ii)]{Cohen2006}.
\end{proof}

\subsection{Moufang sets}

We briefly recall the definition of a Moufang set.
We refer to \cite{DeMedts2009} for a detailed introduction to the subject.

\begin{definition}
Let $X$ be a set (with $|X|\geq 3$) and let $\{U_x\mid x\in X\}$ be a collection of subgroups of $\Sym(X)$. 
The data $(X, \{U_x\}_{x\in X})$ is a \emph{Moufang set} if the following two properties are satisfied:
\begin{itemize}
	\item For each $x\in X$, $U_x$ fixes $x$ and acts  sharply transitively on $X \backslash \{x\}$.
	\item For each $g\in G^+:=\langle U_x\mid x\in X\rangle\leq \Sym(X)$ and each $y\in X$ we have $U_y^g=U_{y.g}$.\footnote{As is common in the theory of Moufang sets, we will always denote group actions on the right.}
\end{itemize}
The group $G^+$ is called the \emph{little projective group} of the Moufang set, and the groups
$U_x$ are called the \emph{root groups}.

Each group $G$ acting doubly sharply transitively on a set $X$ gives rise to a Moufang set (with $U_x = \operatorname{Stab}_G(x)$ and $G^+ = G$). A Moufang set is called \textit{proper} if the action of $G^+$ on $X$ is \textit{not} doubly sharply transitive.
\end{definition}

All known examples of proper Moufang sets with abelian root groups arise from (quadratic) Jordan division algebras \cite{DeMedts2006,DeMedts2008,Gruninger2015}.
More generally, all known examples of proper Moufang sets with (abelian or non-abelian) root groups without elements of order $2$ or $3$ arise from structurable division algebras \cite{Boelaert2019}.
(There are infinite families of counterexamples over fields of characteristic $2$ and $3$, but also those examples are still of algebraic nature.)

\section{Reduction to inner ideals containing $\SS_+$}
\label{sec 3}
In this section, we will provide a tool to reduce our study of inner ideals of $K(\A)$ to those containing the $2$-component $\SS_+$.
We will need one technical condition (\cref{ass:V0}), but as we will see, this assumption will be satisfied in all situations we are interested in; see \cref{triangle Assumption (i),Hexagon Assumption (i)} below.

\begin{assumption}
\label{ass:skew}
    Throughout this section, we will assume that $\A$ is a central simple structurable algebra over $k$ such that $\SS \neq 0$ and \textit{all non-zero elements of $\SS$ are conjugate invertible}.
\end{assumption}

\begin{remark}
	Recall that by \cref{Inner abelian}, all proper inner ideals of $K(\A)$ are abelian.
	We will use this fact repeatedly in this section without explicitly mentioning it.
\end{remark}

\begin{remark}
	It will be obvious that if we replace $\SS_+$ by $\SS_-$ and $\A_+$ by $\A_-$ in any statement in this section, the statement remains valid.
\end{remark}

\begin{lemma}
\label{psi non-degenerate}
	The skewer map $\psi$ is non-degenerate.
\end{lemma}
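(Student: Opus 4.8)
By the skew‑symmetry $\psi(a,b)=-\psi(b,a)$, the left and right radicals of $\psi$ coincide, so non‑degeneracy amounts to: if $x\in\A$ satisfies $\psi(x,\A)=0$, then $x=0$. Fix such an $x$. Plugging $y=1$ into $\psi(x,1)=x-\overline x$ shows $x\in\HH$. For hermitian $x$ the condition $\psi(x,y)=0$ reads $x\overline y=yx$ for all $y$, equivalently (replace $y$ by $\overline y$) $xz=\overline z\,x$ for all $z\in\A$, i.e.\ $L_x=R_x\sigma$ as operators. Hence $x$ commutes with $\HH$ and anticommutes with $\SS$; moreover evaluating $V_{x,y}(1)=x\overline y+\overline y x-\overline x y$ at $y=1$ (together with $x\overline x=x^2$) forces $x^2=0$, so $V_{x,x}=L_{x\overline x}=0$ by \cref{V operator symmetry}.

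The plan is then to pass to $\LL:=K(\A)$ and show that $x_+$ is an absolute zero divisor (\cref{definition zero divisor}), which forces $x_+=0$ by \cref{A nondeg}, hence $x=0$. By \cref{def:Lie alg} we have $[x_+,a_+]=\psi(x,a)_+=0$ for all $a\in\A$ and $[x_+,s_+]=0$ for all $s\in\SS$, so $x_+$ centralises $\LL_1\oplus\LL_2$ and therefore $[x_+,[x_+,\LL_{\ge 0}]]=0$. Computing the remaining brackets from \cref{def:Lie alg} (exactly as in the proof of \cref{Image e_+}) gives $[x_+,[x_+,a_-]]=-\bigl(V_{x,a}(x)\bigr)_+$ for $a\in\A$ and $[x_+,[x_+,s_-]]=-V_{x,sx}$ for $s\in\SS$. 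Thus $x_+$ is an absolute zero divisor precisely when $V_{x,a}(x)=0$ for all $a\in\A$ and $V_{x,sx}=0$ for all $s\in\SS$.

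Verifying these two vanishings is the main obstacle, and it is here that \cref{ass:skew} is used essentially (the statement of the lemma is false when $\SS=0$). Using the first‑step relations one rewrites $V_{x,a}(x)=2(x\overline a)x-x^2a=2(ax)x$, and $V_{x,sx}$ simplifies after noting $\overline{sx}=sx$. To kill these expressions I would fix a conjugate invertible $s_0\in\SS$ (so that the associated $U$‑operator, hence $L_{s_0}$, is invertible) and combine the defining identity of a structurable algebra with \cref{formula:V epsilon,formula:V delta} and \cref{V operator symmetry}; the equivalences of \cref{V equiv with V^epsilon,V^2 equiv with (V^epsilon)^2} (whose hypotheses are met since $V^\delta_{x,y}(s)=-\psi(x,sy)=0$ for all $s\in\SS$) are the natural tools. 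A possible shortcut is to argue instead that $x_+$ lies in the centre of $\LL_{\ge 1}=\LL_1\oplus\LL_2$, that this centre equals $\LL_2=\SS_+$ (a graded‑simplicity argument, again invoking \cref{A nondeg} and $\SS\neq 0$), and hence $x\in\SS$; since $x$ was shown to be hermitian, $x\in\HH\cap\SS=0$.
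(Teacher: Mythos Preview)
The paper does not give an independent argument here at all: it simply cites \cite[Lemma~2.2]{Allison1984}, noting that Allison's proof (via the multiplication algebra of a central simple algebra) works verbatim for any central simple structurable algebra with $\SS\neq 0$. So your proposal is a genuinely different route, and it is worth comparing honestly.

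Your first paragraph contains a concrete error. From $\psi(x,\A)=0$ you correctly extract $x\in\HH$ and $L_x=R_x\sigma$, but the claim that ``evaluating $V_{x,y}(1)=x\overline y+\overline y x-\overline x y$ at $y=1$ \dots forces $x^2=0$'' is false as written: at $y=1$ this expression is just $x+x-x=x$, which says nothing about $x^2$. More generally, using $x\overline y=yx$ and $\overline y\,x=xy$ one finds $V_{x,y}(1)=yx$, so this quantity is not zero and does not yield $x^2=0$. Since the rest of your computation $V_{x,a}(x)=2(x\overline a)x-x^2a$ only collapses to $2(ax)x$ \emph{after} assuming $x^2=0$, the argument does not get off the ground.

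Even granting $x^2=0$, the two ``vanishings'' you isolate, $U_x(a)=V_{x,a}(x)=0$ for all $a$ and $V_{x,sx}=0$ for all $s\in\SS$, are the real content of the lemma, and the tools you list do not obviously apply: \cref{V equiv with V^epsilon} and \cref{V^2 equiv with (V^epsilon)^2} need a conjugate invertible element as input, and there is no reason for $x$ (or $sx$) to be conjugate invertible. Your ``shortcut'' is circular as stated: the centre of $\LL_1\oplus\LL_2$ is, by direct computation with \cref{def:Lie alg}, exactly $\operatorname{rad}(\psi)_+\oplus\SS_+$, so asserting that this centre equals $\SS_+$ \emph{is} the statement you are trying to prove; the phrase ``a graded-simplicity argument'' is not a proof. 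The efficient way to close the gap is Allison's: show that $\operatorname{rad}(\psi)$ is stable under all left and right multiplications and under $\sigma$, hence under the multiplication algebra, which acts irreducibly on a central simple $\A$; since $\psi\neq 0$ (as $\SS\neq 0$), the radical must be $0$.
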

\begin{proof}
	This is precisely \cite[Lemma 2.2]{Allison1984}. Although it is only stated for structurable algebras of skew-dimension one, it continues to hold for arbitrary central simple structurable algebras with $\SS\neq 0$; this follows from the remarks regarding the \textit{multiplication algebra} on page $189$ of \textit{loc.\@~cit}.
\end{proof}

\begin{lemma}
\label{Inner containing s}
	If $I$ is an inner ideal of $K(\A)$ containing $s_+$, with $s\in\SS$ non-zero, then $\SS_+\leq I$.
\end{lemma}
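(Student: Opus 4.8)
The plan is to exploit the skew bracket relation $[s_+, t_-] = L_s L_t \in \Inst(\A)$ together with the assumption that all non-zero skew elements are conjugate invertible. Starting from $s_+ \in I$, since $I$ is an inner ideal we have $[s_+, [s_+, \LL]] \leq I$. Taking $\LL \ni t_-$ for arbitrary $t \in \SS$, we get $[s_+, [s_+, t_-]] = [s_+, L_s L_t] \in I$. Now $[s_+, L_s L_t] = [s_+, V]$ for $V = L_s L_t \in \Inst(\A)$, and by the bracket rules this equals $-(V^\delta s)_+ = -(L_s L_t)^\delta(s)_+$. Using \cref{formula:Ls delta}, $(L_s L_t)^\delta(s) = s(ts) + s(ts) = 2 s(ts)$, so $I$ contains $(s(ts))_+$ for all $t \in \SS$.

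Next I would show that the set $\{ s(ts) \mid t \in \SS \}$ spans all of $\SS$. Since $s$ is conjugate invertible, the operator $L_s$ is invertible on $\A$ (this is part of the structure: $V_{s,s} = L_{s\bar s} = L_{-s^2}$ and conjugate invertibility of $s$ forces $L_s$ invertible, or one can invoke that $U_s$ invertible implies $L_s$ invertible for skew $s$; alternatively cite \cite[Section 6]{Allison1981}). Hence $t \mapsto s(ts)$ is the composite $L_s \circ R_s$ restricted to $\SS$, and I need that $R_s(\SS) \subseteq \SS$ and that $L_s R_s$ maps $\SS$ onto $\SS$. The containment $ts \in$ (something mapped back into $\SS$ by $L_s$) needs care: $s(ts)$ is skew because $\overline{s(ts)} = \overline{(ts)}\,\overline{s} = (\overline{s}\,\overline{t})(-s) = (st)(-s)$... this requires the identity $s(ts) = -(st)s$ type manipulation, which holds in structurable algebras with $s$ skew. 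The cleanest route is: the map $\SS \to \SS \colon t \mapsto s(ts)$ equals $-U_s$ restricted appropriately or is a known invertible operator when $s$ is invertible; since $\dim \SS < \infty$ and the map is injective (if $s(ts) = 0$ then applying $L_s^{-1}$ gives $ts = 0$, then $R_s^{-1}$... but $R_s$ need not be invertible, so instead: $s(ts)=0 \Rightarrow ts=0$; and $t s = 0$ with $s$ invertible — here I should use that right multiplication by an invertible skew element is also invertible, which follows from invertibility of $U_s$ and the structure identities), it is bijective.

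Therefore $(r)_+ \in I$ for every $r \in \SS$, i.e. $\SS_+ \leq I$, as claimed.

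The main obstacle I anticipate is the algebraic fact that $t \mapsto s(ts)$ is a bijection of $\SS$ onto itself when $s \in \SS$ is conjugate invertible. Conjugate invertibility is defined via $V_{s,\hat s} = \id$, and while it is standard that $U_s$ is then invertible, passing to invertibility of $L_s$ and $R_s$ on $\A$ — and especially to surjectivity of $L_s R_s$ on the subspace $\SS$ — requires either a direct computation with the structure identities for skew elements or a reference. I would look for a lemma in \cite{Allison1981} or \cite{Allison1984} giving that $L_s$ is invertible for invertible skew $s$ (this is true: for skew $s$, $V_{s,s} = L_{-s^2}$ and more relevantly $L_s \hat{s}$-type identities pin it down), and combine it with \cref{psi non-degenerate} or a dimension count to finish. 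If a clean reference is unavailable, the fallback is the injectivity argument above plus finite-dimensionality, which still needs surjectivity of $R_s$ on $\SS$ — plausibly the true crux of the lemma.
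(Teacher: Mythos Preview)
Your approach is exactly the paper's: compute $[s_+,[s_+,t_-]] = -2(s(ts))_+$ via \cref{formula:Ls delta} and then invoke conjugate invertibility of $s$ to conclude that $\{s(ts) \mid t \in \SS\}$ is all of $\SS$. The paper does not spell out the surjectivity step you worry about either; the clean route is the Moufang-type identity $L_{s(ts)} = L_sL_tL_s$ for $s,t\in\SS$ (\cite[Proposition~1]{Allison1978}) together with invertibility of $L_s$ for conjugate invertible skew $s$ (e.g.\ $L_sL_{\hat s}=-\id$, \cite{Allison1981}), so $s(ts)=0$ forces $L_t=0$ and hence $t=L_t(1)=0$, giving injectivity and therefore bijectivity on the finite-dimensional space $\SS$.
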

\begin{proof}
	Note that $I$ contains $[s_+,[s_+,t_-]]=-2sts_+$ for any $t\in\SS$, using \cref{formula:Ls delta}. Since $s$ is conjugate invertible, we get $\SS_+\leq I$.
\end{proof}

Our goal is to show that any proper non-trivial inner ideal of $\LL:=K(\A)$ can be mapped by an element of $E(\A)$ to an inner ideal containing the set of skew elements $\SS_+$.

\begin{lemma}
\label{-1 0 1}
	Let $I$ be a non-trivial inner ideal of $\LL$. Then there exists an automorphism in $E(\A)$ mapping $I$ to an inner ideal containing an element with non-zero $2$-component.
\end{lemma}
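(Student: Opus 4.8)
The plan is to argue by contradiction, using the simplicity of $\LL := K(\A)$ together with the fact that $E(\A)$ is generated by automorphisms $\exp(\ad(l))$ with $l$ homogeneous of nonzero degree. Suppose that no automorphism in $E(\A)$ sends $I$ to an inner ideal with nonzero $2$-component. Since automorphisms preserve inner ideals, this means precisely that $g(I) \subseteq \LL_{-2}\oplus\LL_{-1}\oplus\LL_0\oplus\LL_1 =: \LL_{\le 1}$ for every $g \in E(\A)$; equivalently, $I \subseteq M$ where $M := \bigcap_{g \in E(\A)} g^{-1}(\LL_{\le 1})$. Note that $\LL_{\le 1}$ is a proper subspace of $\LL$, because $\LL_2 = \SS_+ \neq 0$ by \cref{ass:skew}.

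First I would check that $M$ is stable under $E(\A)$: if $m \in M$ and $g \in E(\A)$, then $(g'g)(m) \in \LL_{\le 1}$ for all $g' \in E(\A)$, whence $g(m) \in M$. Next, recall from \cref{convention multiplication} that $e_+(a,0)\, e_+(0,s) = e_+(a,s)$ (and similarly for $e_-$), so $E(\A)$ is generated by the automorphisms $\exp(\ad(a_\pm)) = e_\pm(a,0)$ and $\exp(\ad(s_\pm)) = e_\pm(0,s)$ with $a \in \A$, $s \in \SS$; as $\LL_1 = \A_+$, $\LL_2 = \SS_+$, $\LL_{-1} = \A_-$ and $\LL_{-2} = \SS_-$, these are exactly the automorphisms of the form $\exp(\ad(l))$ with $l$ ranging over all homogeneous elements of nonzero degree. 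In particular $M$ is closed under all such $\exp(\ad(l))$, so by \cref{ideals 5 grading} (applicable since $\LL_0 = [\LL_1, \LL_{-1}]$) the subspace $M$ is an ideal of $\LL$. But $M \subseteq \LL_{\le 1} \subsetneq \LL$ and $\LL = K(\A)$ is simple ($\A$ being central simple), so $M = 0$, contradicting $0 \neq I \subseteq M$.

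I do not expect a genuine obstacle in this lemma; the only care needed is the bookkeeping showing that $M$ is really $E(\A)$-stable (so that \cref{ideals 5 grading} applies) and the observation that $E(\A)$ is generated by exponentials of homogeneous elements. The alternative, hands-on route — take $0 \neq x \in I$, look at its lowest nonzero graded component, and push $x$ upward step by step using \cref{Image e_+} and the non-degeneracy of $\psi$ (\cref{psi non-degenerate}) — would be the real obstacle, since one must then control how the various graded components of $x$ interfere under the $e_\pm$; I would avoid that in favour of the abstract argument above.
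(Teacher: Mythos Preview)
Your argument is correct and is essentially the dual of the paper's proof: both hinge on \cref{ideals 5 grading} together with the simplicity of $\LL$. The paper takes the $E(\A)$-span $\sum_{\varphi\in E(\A)}\varphi(I)$, observes it is a nonzero ideal and hence all of $\LL$, and then reads off an element with nonzero $2$-component; you instead take the largest $E(\A)$-stable subspace $M$ contained in $\LL_{\leq 1}$, observe it is a proper ideal and hence zero, and derive a contradiction.
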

\begin{proof}
	By \cref{ideals 5 grading} the subspace $\sum_{\varphi\in E(\A)}\varphi(I)$ is a non-zero ideal of $\LL$.
	Consider $0\neq s\in \SS$ arbitrary, since $\LL$ is non-degenerate, $[s_+,[s_+,\varphi(I)]]\neq 0$ for some $\varphi\in E(\A)$.
	This implies that $\varphi(I)$ is an inner ideal containing an element with non-zero $2$-component.
\end{proof}

\begin{lemma}
\label{0 -1 -2}
	Let $I$ be a minimal inner ideal of $\LL$ containing an element with non-zero $2$-component.
	Then $I\cap (\LL_{-2}\cup \LL_{-1}\cup\LL_0\cup \LL_1)=0$.
\end{lemma}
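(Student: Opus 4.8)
The plan is to first reduce to the case $I\cap\LL_2=0$ and settle the negative degrees, which is routine, and then to treat the degree-$0$ and degree-$1$ cases simultaneously, which is the technical heart.

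For the reduction, note that $\LL_2=\SS_+$ is itself a proper inner ideal of $\LL$: the inclusion $[\LL_2,[\LL_2,\LL]]\subseteq\LL_2$ is immediate from the $5$-grading of \cref{def:Lie alg}, while $\LL_2\neq 0$ by \cref{ass:skew} and $\LL_2\neq\LL$ since $\Inst(\A)\neq 0$. Since $\LL_2$ contains elements with non-zero $2$-component, $\LL$ itself is not minimal with this property, so $I\neq\LL$ and hence $I$ is abelian by \cref{Inner abelian}. Moreover $I\cap\LL_2$ is an inner ideal (an intersection of inner ideals is one) contained in $I$, so by minimality either $I\subseteq\LL_2$, in which case there is nothing to prove, or $I\cap\LL_2=0$, which we henceforth assume. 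Fix $e\in I$ with $2$-component $e_2=s_+$, $0\neq s\in\SS$; by \cref{ass:skew}, $s$ is conjugate invertible, so $L_s$ is invertible. If $0\neq r_-\in I\cap\LL_{-2}$, comparing $\LL_0$-components in $[r_-,e]=0$ (valid as $I$ is abelian) gives $L_sL_r=0$, hence $r=L_r(1)=0$; and if $0\neq b_-\in I\cap\LL_{-1}$, comparing $\LL_1$-components in $[b_-,e]=0$ gives $sb=0$, hence $b=0$. Both contradictions, so $I\cap\LL_{-2}=I\cap\LL_{-1}=0$.

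For the remaining degrees I would combine the inner ideal property of $I$ with its abelianness. If $0\neq a_+\in I$, then $I$ contains $[a_+,[a_+,c_-]]=-(U_ac)_+$, $[a_+,[a_+,t_-]]=-V_{a,ta}$ and $[a_+,[a_+,V]]=-\psi(a,Va)_+$ for all $c\in\A$, $t\in\SS$, $V\in\Inst(\A)$; since $I\cap\LL_2=0$ this gives $\psi(a,Va)=0$ for all $V\in\Inst(\A)$, and it shows $(U_a\A)_+\subseteq I$ and $V_{a,ta}\in I\cap\LL_0$. Here $a$ cannot be conjugate invertible, for then $U_a$ is invertible, so $\A_+\subseteq I$, forcing $\SS_+=[\A_+,\A_+]\subseteq I\cap\LL_2=0$, absurd. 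Dually, if $0\neq V\in I\cap\Inst(\A)$, comparing components in $[V,[V,\LL_j]]\subseteq I$ gives $(V^\delta)^2|_{\SS}=(V^{\epsilon\delta})^2|_{\SS}=(V^\epsilon)^2|_{\A}=0$ and $[V,[V,\LL_1]]\subseteq I\cap\LL_1$. Thus the two cases feed into each other, and the idea is to run a joint descent: replacing a generator $a$ by a suitable $U_ac$, or $V$ by an element of $[V,[V,\LL]]$ (an inner ideal by \cref{Benkart Lemma} once $\ad_V^3=0$), one reaches ever more degenerate generators inside $I$. As $\LL$ is finite-dimensional this terminates, and the terminal generator is an absolute zero divisor of $\LL$ — e.g.\ $a_+$ with $U_a=0$ and all $V_{a,ta}=0$ satisfies $\ad_{a_+}^2=0$ by the identities above. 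By non-degeneracy of $\LL$ (\cref{A nondeg}) such a generator is $0$, contradicting $I\cap\LL_0\neq 0$ or $I\cap\LL_1\neq 0$; non-degeneracy of the skewer map (\cref{psi non-degenerate}) is used to pass from relations $\psi(a,\Inst(\A)a)=0$ to $a=0$ whenever the descent stops short of a clean square-zero situation.

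The step I expect to be the main obstacle is precisely this treatment of degrees $0$ and $1$: the two are genuinely entangled (a non-zero $V\in I\cap\Inst(\A)$ with $V^2\neq 0$ on $\A$ produces elements of $I\cap\LL_1$, and a non-zero $a_+\in I$ with some $V_{a,ta}\neq 0$ produces elements of $I\cap\LL_0$), so they must be handled together, and making the descent rigorous requires careful bookkeeping with the operators $U_a,V^\epsilon,V^\delta$ and the standard structurable-algebra identities governing them, in particular the behaviour of $U$-operators under composition. Everything else — the reduction and the two negative-degree cases — is routine computation with the bracket of \cref{def:Lie alg} and the invertibility of $L_s$ for conjugate-invertible $s\in\SS$.
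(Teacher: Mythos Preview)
Your reduction to the case $I\cap\LL_2=0$ and your treatment of degrees $-2$ and $-1$ via abelianness are correct, but the ``joint descent'' you sketch for degrees $0$ and $1$ is not a proof: you never specify a well-founded quantity that strictly decreases, nor explain why the process must terminate at an absolute zero divisor rather than cycle among non-trivial elements of $I\cap\LL_0$ and $I\cap\LL_1$. In fact the kind of step you describe (producing $U\in\Inst(\A)$ with $U^2\neq 0$ from a square-zero $V$) is exactly the content of \cref{ass:V0}, which the paper only introduces \emph{after} this lemma and which is \emph{not} assumed here. So as written the argument has a genuine gap.

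The paper avoids all of this with a single use of \cref{Benkart Lemma}, applied in a different direction than you suggest. Since $I$ is minimal it is proper, hence abelian by \cref{Inner abelian}; thus for the given $x\in I$ with $2$-component $s_+\neq 0$ one has $\ad_x^2(\LL)\subseteq I$ and $\ad_x^3(\LL)\subseteq[x,I]=0$. Now apply \cref{Benkart Lemma} with the inner ideal $\SS_-$ (not with $\LL$, and not to an auxiliary $V\in I$): it yields that $J:=[x,[x,\SS_-]]$ is an inner ideal, and $J\subseteq I$ since $I$ is inner. A direct computation shows the $2$-component of $[x,[x,t_-]]$ is $-2(sts)_+$, which is non-zero for every $0\neq t\in\SS$ because $s$ is conjugate invertible. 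Hence $J\neq 0$, minimality forces $I=J$, and every non-zero element of $I$ has non-zero $2$-component. This gives $I\cap\LL_j=0$ for all $j\leq 1$ simultaneously, with no case analysis and no descent.
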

\begin{proof}
	By assumption $I$ contains an element $x$ with non-zero $2$-component $s_+\in \SS_+$.
	By \cref{Benkart Lemma}, $[x,[x,\SS_-]]\leq I$ is an inner ideal.
	Now note that the $2$-component of $[x,[x,t_-]]$ equals $-2sts$, which is non-zero if $t\in\SS$ is non-zero.
	Hence $I=[x,[x,\SS_-]]$ and $I\cap (\LL_{-2}\cup \LL_{-1}\cup\LL_0\cup \LL_1)=0$.
\end{proof}

\begin{lemma}
\label{-2 -1 0 1 2}
	Let $I$ be a minimal inner ideal of $\LL$ containing an element with non-zero $2$-component. Then there exists an element of $E_{-}(\A)$ mapping $I$ to $\SS_+$.
\end{lemma}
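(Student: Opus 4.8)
The plan is to bring the element $x\in I$ that witnesses the non-zero $2$-component into $\SS_+$ by an explicit automorphism in $E_-(\A)$, built up one graded layer at a time; once $x$ lies in $\SS_+$, all of $I$ will be forced to equal $\SS_+$.

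First I would invoke \cref{0 -1 -2} and its proof: $I=[x,[x,\SS_-]]$ for any $x\in I$ with non-zero $2$-component $s_+$, and $I\cap(\LL_{-2}\oplus\LL_{-1}\oplus\LL_0\oplus\LL_1)=0$; moreover $s$ is conjugate invertible by \cref{ass:skew}. The $2$-component of $[x,[x,w_-]]$ equals $-2(s(ws))_+$, and, as in the proof of \cref{Inner containing s}, $w\mapsto s(ws)$ maps $\SS$ onto $\SS$; so the projection onto the $2$-component restricts to a linear isomorphism $I\to\SS_+$, and in particular $[s_+,[s_+,\SS_-]]=\SS_+$. Hence it suffices to find $e\in E_-(\A)$ with $e(x)=s_+$: every element of $\LL_{-1}\oplus\LL_{-2}$ acts on $\LL$ by an operator strictly lowering the grading, so any such $e$ fixes $\SS_-=\LL_{-2}$ pointwise, and then $e(I)=e\bigl([x,[x,\SS_-]]\bigr)=[s_+,[s_+,\SS_-]]=\SS_+$.

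Write $x=t_-+b_-+V+a_++s_+$. A short computation gives that the $1$-component of $e_-(c,0)(x)$ is $(a-sc)_+$, so I would replace $I$ by $e_-\bigl(L_s^{-1}(a),0\bigr)(I)$ — still a minimal inner ideal containing an element with non-zero $2$-component, so the conclusion of \cref{0 -1 -2} persists — to reduce to $a=0$. Since $I$ is proper (being minimal) it is abelian by \cref{Inner abelian}, so $\ad_x^3=0$ on $\LL$; the crux is then to deduce, from this together with $I=[x,[x,\SS_-]]$ and $I\cap(\LL_{-2}\oplus\LL_{-1}\oplus\LL_0\oplus\LL_1)=0$, that there is an $r\in\SS$ with
\[ b=0,\qquad V=L_sL_r,\qquad t=-r(sr). \]
Granting this, a direct computation with \cref{Image e_+} (and the formulas of \cref{def:epsilon delta}, which give $(L_sL_r)^{\epsilon\delta}(r)=-2r(sr)$) shows $e_-(0,r)(x)=s_+$, so
\[ e:=e_-\bigl(L_s^{-1}(a),0\bigr)\,e_-(0,r)=e_-\bigl(L_s^{-1}(a),r\bigr)\in E_-(\A) \]
satisfies $e(x)=s_+$, whence $e(I)=\SS_+$.

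The hard part is proving the three displayed relations. I would expand the five graded components of $[x,[x,w_-]]$, of $[x,[x,\id]]$, and of $\ad_x^2$ and $\ad_x^3$ applied to suitable elements of $\SS_-$ and $\Inst(\A)$, using the formulas of \cref{def:epsilon delta}; the forced vanishing of every component of degree $\leq 1$, combined with the invertibility of $L_s$ (cf.\ the proof of \cref{a invertible V_{a,sa}}) and of $R_s$ (since $\overline{xs}=-s\overline{x}$), and the non-degeneracy of $\psi$ (\cref{psi non-degenerate}), should isolate $b$, $V$, $r$ and then $t$. This bookkeeping — and the pervasive use of \cref{ass:skew} — is where the computational weight of the statement lies.
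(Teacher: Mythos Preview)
Your outline is correct and would work, and the endpoint is the same as the paper's: an explicit $e\in E_-(\A)$ with $e(x)=s_+$, whence $e(I)=\SS_+$. The difference is in how that element is found. You proceed in two stages (first kill the $1$-component by $e_-(-\hat s a,0)$, then handle the rest), and you leave the second stage as unspecified ``bookkeeping'' over several brackets. The paper does it in a single stroke: compute
\[
   y:=[x,[x,-\hat s_-]]+2x
\]
and observe that its $2$- and $1$-components cancel, so $y\in I\cap(\LL_{-2}\oplus\LL_{-1}\oplus\LL_0)$ and hence $y=0$ by \cref{0 -1 -2}. The vanishing of the three remaining components yields, with $a':=\hat s a$ and $s':=-\tfrac12 V^{\epsilon\delta}(\hat s)$, the closed formulas
\[
   V=\tfrac12 V_{sa',a'}-L_sL_{s'},\qquad b=\tfrac16 V_{a',sa'}(a')-s'(sa'),\qquad t=-s'(ss')-\tfrac12\psi(a',s'(sa'))+\tfrac{1}{24}\psi(a',U_{a'}(sa')),
\]
which are recognized via \cref{Image e_+} as exactly the statement $x=e_-(a',s')(s_+)$. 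Your two-step element equals $e_-(-a',-s')=e_-(a',s')^{-1}$, so the answers coincide; but the paper's route replaces your open-ended plan (compute $[x,[x,w_-]]$, $[x,[x,\id]]$, $\ad_x^3$ on various inputs and piece things together) with one targeted element of $I$ whose vanishing gives everything at once. If you keep your preliminary reduction to $a=0$, the same trick applies and is even shorter: then $a'=0$, so $y=0$ gives $b=0$, $V=-L_sL_{s'}$ and $t=-s'(ss')$ immediately, i.e.\ your $r=-s'$.
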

\begin{proof}
	By assumption, $I$~contains an element $x := t_- + b_- + V + a_+ + s_+$, with $a,b\in \A$, $V\in\Inst(\A)$, $s,t\in \SS$, with $s \neq 0$.
	Then $I$ also contains 
	\begin{align*}
		[x,[x,-\hat s_-]] &= [x, -V^{\epsilon\delta} (\hat s)_- + (\hat s a)_- + \id] \\
				   &= \bigl( -(V^{\epsilon\delta})^2(\hat s)_- + (V^{\epsilon\delta}(\hat s)a)_- - L_sL_{V^{\epsilon\delta}(\hat s)} \bigr) \\
				    & \hspace*{8ex} + \bigl( \psi(b,\hat sa)_- +V^\epsilon(\hat sa)_- + V_{a,\hat sa} - a_+ \bigr) \\
				    & \hspace*{8ex} + \bigl( 2t_- + b_- - a_+ - 2s_+ \bigr) \\
				   &= \bigl( 2t+\psi(b,\hat sa)-(V^{\epsilon\delta})^2(\hat s) \bigr)_- + \bigl( b+V^\epsilon(\hat sa)+V^{\epsilon\delta}(\hat s)a \bigr)_- \\
				    & \hspace*{8ex} + \bigl( V_{a,\hat sa} - L_sL_{V^{\epsilon\delta}(\hat s)} \bigr) - 2a_+ -2s_+ .
	\end{align*}
	By adding twice the element $x$, we find that $I$ contains
	\begin{multline*}
	   y := \bigl( 4t+\psi(b,\hat sa)-(V^{\epsilon\delta})^2(\hat s) \bigr)_- + \bigl( 3b+V^\epsilon(\hat sa)+V^{\epsilon\delta}(\hat s)a \bigr)_- \\
				    + \bigl( 2V + V_{a,\hat sa} - L_sL_{V^{\epsilon\delta}(\hat s)} \bigr) .
	\end{multline*}
	Since $y \in \LL_{-2} \oplus \LL_{-1} \oplus \LL_0$, \cref{0 -1 -2} now implies that $y=0$.
	Set
	\[ s' = -\tfrac{1}{2}V^{\epsilon\delta} (\hat s) \quad \text{and} \quad a'=\hat sa. \]
	Expressing that the $0$-component of $y$ is $0$ yields
	\begin{equation}\label{eq-V}
	   V = \tfrac{1}{2} (-V_{a,\hat sa}+L_sL_{V^{\epsilon\delta}(\hat s)}) = \tfrac{1}{2} V_{sa',a'} - L_sL_{s'},
	\end{equation}
	using $a=-sa'$.
	Now we use this equation, the formulas in \cref{def:epsilon delta} and the fact that the $(-1)$-component of $y$ equals $0$ to get
	\begin{align*}  
		3b &= -V^\epsilon(\hat sa)-V^{\epsilon\delta}(\hat s)a \\
		  &= \bigl( \tfrac{1}{2} V_{a',sa'}(a')-s'(sa') \bigr) + \bigl( -\tfrac{1}{2} \psi(a',\hat s(sa'))-s'(s\hat s)-\hat s(ss') \bigr) a \\
		  &= \tfrac{1}{2} V_{a',sa'}(a')-3s'(sa'),
	\end{align*}
	hence
	\begin{equation}\label{eq-b}
	    b = \tfrac{1}{6} V_{a',sa'}(a') - s'(sa')
	\end{equation}
	Finally, expressing that the $(-2)$-component of $y$ equals $0$ yields, using \cref{eq-V,eq-b}, that
	\begin{align*}
		4t &= -2V^{\epsilon\delta}(s') + \psi (a',b) \\
		  &= -\psi(a',s'(sa'))-4s'(ss') + \psi \bigl(a',\tfrac{1}{6} V_{a',sa'}(a')-s'(sa')\bigr) \\
		  &= -4s'(ss')-2\psi (a',s'(sa')) + \tfrac{1}{6} \psi (a',U_{a'}(sa')).
	\end{align*}
	Hence
	\begin{equation}\label{eq-t}
	   t = -s'(ss')-\tfrac{1}{2} \psi (a',s'(sa')) + \tfrac{1}{24} \psi (a',U_{a'}(sa')) .
	\end{equation}
	By \cref{eq-V,eq-b,eq-t} and \cref{Image e_+}, we conclude that $x= e_-(a',s')(s_+)$. 
	
	Hence $J:=e_-(a',s')^{-1}(I)$ is an inner ideal containing $s_+\in\SS_+$.
	Since $s$ is non-zero and $I$ is minimal, \cref{Inner containing s} implies $\SS_+ =J$.
\end{proof}

\begin{assumption}
\label{ass:V0} 
Consider the following (technical) assumption:
\begin{quote}    
	If $V\in\Inst(\A)$ is such that $V^\delta(\SS)=0$ and $V^2=0$, then there exists some
	$W\in \Inst(\A)$ such that $U:=[V,[V,W]] \in\Inst(\A)$ satisfies $U^2\neq 0$. In particular, $U\not\in\langle V\rangle$.
\end{quote}
We will always explicitly mention when we make this assumption.
\end{assumption}

\begin{remark}
	Our motivation for \cref{ass:V0} is that it basically ensures that we cannot get a generalized quadrangle as the point-line geometry we will obtain in the next sections.
	We now explain this in some detail.
	
	Assume that there are two types of non-trivial proper inner ideals in $K(\A)$, say of dimension $m$ and $n$, with $m<n$, and assume that the point-line space with as points the $m$-dimensional inner ideals and as lines the $n$-dimensional inner ideals is a generalized quadrangle. 
	
	Since every non-zero element of $\SS$ is conjugate invertible, $\SS$ is a minimal inner ideal by \cref{Inner containing s}, hence $m=\dim (\SS)$.
	So $\SS_+$ and $\SS_-$ are two points of the generalized quadrangle. 
	Since $[s_+,\hat s_-]=L_sL_{\hat s}=-\id\neq 0$, these two points are not collinear (since collinear points lie in a common $n$-dimensional inner ideal, which is abelian). 
	Hence they should be at distance $2$ and have a common neighbor, say~$I$.
	Then $I$ is an inner ideal of $K(\A)$ of dimension $m$.
	Since $\SS_+$ and $I$ are collinear, they are contained in a common proper inner ideal, which is abelian, hence $[\SS_+,I]=0$.
	Using that all non-zero elements of $\SS$ are conjugate invertible, we get $I\leq \Inst (\A)\oplus \A_+\oplus \SS_+$.
	Similarly, the fact that $\SS_-$ and $I$ are collinear implies $I\leq \SS_-\oplus \A_-\oplus \Inst (\A)$ and hence $I\leq \Inst (\A)$.
	Consider any non-zero $V\in I$.
	Since $I\leq \Inst (\A)$, $[V,[V,a_+]]=V^2(a)_+$ for all $a \in \A$ implies $V^2=0$. 
	Using $[I,\SS_+]=0$ we get $V^\delta(\SS)=0$.
	
	However, for each $W\in\Inst (\A)$, we now have $U:=[V,[V,W]]\in I$, so by the previous observations $U^2=0$.
	Hence $\A$ does not satisfy \cref{ass:V0}.
\end{remark}

\begin{lemma}
\label{Containing S}
	Let $I$ be a proper inner ideal properly containing $\SS_+$. Then $I = I_0\oplus I_1\oplus \SS_+$ for some inner ideal $I_0\leq \Inst(\A)$ and some subspace $0\neq I_1\leq \A_+$.
	If, in addition, $\A$ satisfies \cref{ass:V0}, then $I_0=0$.
\end{lemma}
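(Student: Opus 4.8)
The plan is to work inside the $5$-grading $\LL := K(\A) = \LL_{-2} \oplus \dots \oplus \LL_2$, using throughout that $I$ is abelian (\cref{Inner abelian}) and that every non-zero element of $\SS$ is conjugate invertible (\cref{ass:skew}). \emph{Step 1: $I \subseteq \LL_0 \oplus \LL_1 \oplus \LL_2$.} Since $\SS_+ \subseteq I$ and $I$ is abelian, $[s_+, x] = 0$ for all $s \in \SS$ and $x \in I$. Writing $x = x_{-2} + \dots + x_2$ and comparing homogeneous components, the $\LL_0$-component of $[s_+, x]$ equals $L_s L_t$ (where $x_{-2} = t_-$) and its $\LL_1$-component equals $(sb)_+$ (where $x_{-1} = b_-$); taking $s$ conjugate invertible, $L_s$ is invertible, which forces $t = b = 0$. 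Thus $I \cap \LL_{-2} = I \cap \LL_{-1} = 0$. The vanishing of the $\LL_2$-component of $[s_+,x]$ moreover records, for $x = V + a_+ + u_+ \in I$, the relation $V^\delta(s) = \psi(s,a)$ for all $s \in \SS$, which I expect to need in Step~2. Since $\LL_2 = \SS_+ \subseteq I$, it follows that $I = I' \oplus \SS_+$, where $I' := I \cap (\LL_0 \oplus \LL_1) \neq 0$ (as $I$ properly contains $\SS_+$).

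\emph{Step 2 (the main step): $I'$ is graded.} Put $I_j := I \cap \LL_j$; the aim is $I' = I_0 \oplus I_1$. Fix $x = V + a_+ \in I'$. Since $x \in I$, we have $[x,[x,b_-]] \in I$, and by Step~1 its $\LL_{-1}$-component $\bigl((V^\epsilon)^2 b\bigr)_-$ must vanish, so $(V^\epsilon)^2 = 0$. As $s_+ \in I$, the elements $[s_+,[x,b_-]] = \bigl(sV^\epsilon(b)\bigr)_+ + \psi(a,sb)_+$ and $[x,[s_+,b_-]] = \bigl(V(sb)\bigr)_+ + \psi(a,sb)_+$ lie in $I$; subtracting their $\LL_2$-parts (which lie in $\SS_+ \subseteq I$) shows $\bigl(sV^\epsilon(b)\bigr)_+$ and $\bigl(V(sb)\bigr)_+$ lie in $I_1$ for all $s \in \SS$, $b \in \A$, and choosing $s$ conjugate invertible yields $V(\A) \subseteq I_1$. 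This already gives $I_1 \neq 0$: either some element of $I'$ has non-zero $\LL_0$-part $V$, whence $0 \neq V(\A) \subseteq I_1$, or else $I' \subseteq \LL_1$ and $I' = I_1 \neq 0$. It remains to prove $a_+ \in I$ (then $V = x - a_+ \in I$), and this should follow by combining $(V^\epsilon)^2 = 0$, the inclusions just obtained, the $\LL_1$-components $-(U_a b)_+$ of $[x,[x,b_-]]$ and $(V^2 c)_+$ of $[x,[x,c_+]]$, and the relation $V^\delta(s) = \psi(s,a)$ from Step~1. I expect this last computation to be the main obstacle. Once $I' = I_0 \oplus I_1$, the inclusion $[I_0,[I_0,\Inst(\A)]] \subseteq [I,[I,\LL]] \cap \LL_0 \subseteq I_0$ shows that $I_0$ is an inner ideal of $\Inst(\A)$.

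\emph{Step 3: $I_0 = 0$ when $\A$ satisfies \cref{ass:V0}.} Suppose $I_0 \neq 0$ and fix $0 \neq V \in I_0 \subseteq \LL_0$. Abelianness gives $0 = [V, s_+] = (V^\delta s)_+$, so $V^\delta(\SS) = 0$, while $[V,[V,b_-]] = \bigl((V^\epsilon)^2 b\bigr)_- \in I \cap \LL_{-1} = 0$ gives $(V^\epsilon)^2 = 0$; hence $V^2 = 0$ by \cref{V^2 equiv with (V^epsilon)^2}. For every $W \in \Inst(\A)$ the element $U := [V,[V,W]]$ lies in $I \cap \LL_0 = I_0$, and the same two arguments give $U^\delta(\SS) = 0$ and $(U^\epsilon)^2 = 0$, hence $U^2 = 0$ again by \cref{V^2 equiv with (V^epsilon)^2}. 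Thus $[V,[V,W]]^2 = 0$ for all $W \in \Inst(\A)$, contradicting \cref{ass:V0}; therefore $I_0 = 0$.
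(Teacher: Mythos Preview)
Your Step~2 is left genuinely incomplete: you explicitly say ``It remains to prove $a_+\in I$\dots I expect this last computation to be the main obstacle'' and then stop. The detour through $V(\A)\subseteq I_1$, $(V^\epsilon)^2=0$, and the $\LL_1$-components of $[x,[x,b_-]]$ and $[x,[x,c_+]]$ does not assemble into a proof that $a_+\in I$; none of these ingredients produces $a$ itself rather than expressions like $V(c)$ or $U_a(b)$. Also, the side relation you record in Step~1 is incorrect: in the TKK bracket one has $[s_+,a_+]=0$ for $s\in\SS$, $a\in\A$ (Definition~2.10), so $[s_+,x]=0$ gives $V^\delta(s)=0$, not $V^\delta(s)=\psi(s,a)$.

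The paper bypasses all of this with a single observation you are missing: for $0\neq s\in\SS$ one has $[s_+,\hat s_-]=L_sL_{\hat s}=-\id$, and $\id$ is the grading derivation of the $5$-grading. Hence for $x=V+a_++t_+\in I$ (with $V\in\LL_0$, $a\in\A$, $t\in\SS$) one gets
\[
 [x,[s_+,\hat s_-]]=[x,-\id]=-[\id,x]=-(0+a_++2t_+)\in I,
\]
so $a_++2t_+\in I$; since $t_+\in\SS_+\subseteq I$ this gives $a_+\in I$ and then $V\in I$. That is the whole decomposition $I=I_0\oplus I_1\oplus\SS_+$ in one line. With this in hand, your Step~3 is correct and matches the paper; your argument that $I_0$ is an inner ideal only shows it is an inner ideal of $\Inst(\A)$, whereas the paper checks $[I_0,[I_0,\LL_i]]=0$ for $i\neq 0$ (using $V^\delta(\SS)=0$ and \cref{V^2 equiv with (V^epsilon)^2}) to get that $I_0$ is an inner ideal of $\LL$.
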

\begin{proof}
	Consider an arbitrary element $x := u_- + b_- + V + a_+ + t_+ \in I$.
	Choose some $0 \neq s \in \SS$; then by assumption, $s_+ \in I$.
	Since $I$ is abelian, $[s_+,x] = 0$, which implies $[s_+,b_-] = 0$ and $[s_+,u_-] = 0$.
	Since $s$ is conjugate invertible, $(sb)_+=[s_+,b_-]=0$ implies $b=0$ and
	$L_s L_u = [s_+,u_-] = 0$ implies $u=0$.
	Hence $I \leq \LL_0 \oplus \LL_1 \oplus \LL_2$. 
	
	Note that $a_++2t_+=-[x,\id]=[x,[s_+,\hat s_-]]\in I$.
	Together with $t_+\in I$ and $V+a_++t_+=x\in I$ this implies $V\in I$ and $a_+\in I$.
	Hence $I=I_0\oplus I_1\oplus \SS_+$, with $I_0\leq \Inst(\A)$ and $I_1\leq \A_+$.
	
	Consider $V$ and $W$ in $I_0$ arbitrary. 
	Since $I$ is abelian $[V,\SS_+]=V^\delta(\SS)=0$.
	Moreover, since $I \leq \LL_0 \oplus \LL_1 \oplus \LL_2$, $[V,[W,\LL_i]]=0$ for $i=-1,-2$.
	Hence \cref{V^2 equiv with (V^epsilon)^2} implies $[V,[W,\LL_1]]=0$ and clearly $[V,[W,\LL_0]]\leq I_0$.
	Hence $I_0$ is indeed an inner ideal.
	
	Consider $0\neq V\in I_0$, then $V(sa)_+=[V,[s_+,a_-]]\in I$ for all $a\in \A$ and $0\neq s \in\SS$.
	Since $V\neq 0$ and $s$ is conjugate invertible, this implies $I_1\neq 0$.
	
	Now assume that $\A$ satisfies \cref{ass:V0}.
	Consider $0\neq V\in I_0$.
	Note that the fact that $I$ is abelian implies $V^\delta(\SS)=0$.
	For each $b \in \A$, $(V^\epsilon)^2(b)_-=[V,[V,b_-]]\in I \leq \LL_0 \oplus \LL_1 \oplus \LL_2$ must be $0$; hence $(V^\epsilon)^2=0$.
	By \cref{V^2 equiv with (V^epsilon)^2}, we get $V^2=0$ and by \cref{ass:V0}, there exists $W\in\Inst(\A)$ such that $U:=[V,[V,W]]$ satisfies $U^2\neq 0$. 
	Hence $U\in I_0$.
	The previous argument shows that $U^\delta(\SS) = 0$ and $(U^{\epsilon})^2=0$ and hence, by \cref{V^2 equiv with (V^epsilon)^2}, $U^2=0$, a contradiction. 
	We conclude that $I \leq \LL_1 \oplus \LL_2 = \A_+ \oplus \SS_+$.
\end{proof}

\begin{theorem}
\label{Reducing to S_+}
    Let $\A$ be a central simple structurable algebra satisfying \cref{ass:skew}
    and let $I$ be a proper non-trivial inner ideal of $K(\A)$.
    \begin{enumerate}
        \item There exists an element of $E(\A)$ mapping $I$ to an inner ideal $J$ such that $\SS_+ \leq J \leq \Inst(\A)\oplus \A_+\oplus \SS_+$.
        \item If, in addition, $\A$ satisfies \cref{ass:V0}, then $\SS_+ \leq J \leq \A_+ \oplus \SS_+$.
    \end{enumerate}
\end{theorem}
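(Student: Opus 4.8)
The strategy is simply to chain together the four lemmas that precede this theorem, which were designed precisely to be assembled here. First I would apply \cref{-1 0 1} to the proper non-trivial inner ideal $I$: this produces an automorphism $\varphi_1 \in E(\A)$ so that $\varphi_1(I)$ is an inner ideal containing an element with non-zero $2$-component. Since $I$ is proper and non-trivial, so is $\varphi_1(I)$, and it is abelian by \cref{Inner abelian}. Next, I would pass to a \emph{minimal} inner ideal contained in $\varphi_1(I)$ that still contains an element with non-zero $2$-component — such a minimal one exists by finite-dimensionality — call it $I'$. The point of minimality is that it is the hypothesis needed for \cref{0 -1 -2} and \cref{-2 -1 0 1 2}.

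Applying \cref{-2 -1 0 1 2} to $I'$ gives an element $\varphi_2 \in E_-(\A)$ with $\varphi_2(I') = \SS_+$, i.e.\ $\varphi_2$ maps the minimal sub-inner-ideal onto $\SS_+$. Setting $\psi := \varphi_2 \circ \varphi_1 \in E(\A)$ (composed in the group $E(\A)$ with the paper's convention $fg = g\circ f$), we obtain that $J := \psi(I)$ is an inner ideal with $\SS_+ = \psi(I') \leq J$. It remains to pin down the upper bound on $J$. Since $J$ is a proper inner ideal properly containing $\SS_+$ (unless $J = \SS_+$ exactly, in which case both conclusions are trivially true), \cref{Containing S} applies directly and yields $J = I_0 \oplus I_1 \oplus \SS_+$ with $I_0 \leq \Inst(\A)$ an inner ideal and $0 \neq I_1 \leq \A_+$; in particular $\SS_+ \leq J \leq \Inst(\A) \oplus \A_+ \oplus \SS_+$, proving part (i). For part (ii), under the additional \cref{ass:V0}, the last paragraph of \cref{Containing S} gives $I_0 = 0$, hence $\SS_+ \leq J \leq \A_+ \oplus \SS_+$.

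The one genuine wrinkle — and the place where I would be most careful — is the passage to the minimal inner ideal $I'$ and checking that the property ``contains an element with non-zero $2$-component'' is preserved when shrinking, together with the bookkeeping that $I' \leq \varphi_1(I)$ means $\varphi_2(\varphi_1(I)) \supseteq \varphi_2(I') = \SS_+$, so that $J = \psi(I)$ genuinely contains $\SS_+$ and not merely a subspace mapped from some unrelated inner ideal. Everything else is a direct citation. Note also that if $\SS$ were such that $\SS_+$ is itself already a maximal proper inner ideal, then $J = \SS_+$ and there is nothing to do; otherwise $J$ properly contains $\SS_+$ and \cref{Containing S} is in force. No new computation beyond what is in the cited lemmas is required.
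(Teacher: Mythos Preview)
Your proposal is correct and follows essentially the same route as the paper: chain \cref{-1 0 1}, \cref{-2 -1 0 1 2}, and \cref{Containing S}. The one difference is the order of the first two steps. The paper first chooses a minimal inner ideal $J' \leq I$ and \emph{then} applies \cref{-1 0 1} to $J'$; since automorphisms preserve minimality, the image of $J'$ is automatically a minimal inner ideal containing an element with non-zero $2$-component, so \cref{-2 -1 0 1 2} applies immediately with no further justification. Your order (automorphism first, then pass to a minimal sub-inner-ideal with the property) forces you to argue that the resulting $I'$ is minimal among \emph{all} inner ideals, not merely among those retaining a non-zero $2$-component --- exactly the wrinkle you flagged. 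That can be done (once $I' = [x,[x,\SS_-]]$, every non-zero element of $I'$ has non-zero $2$-component, so any non-trivial sub-inner-ideal inherits the property and minimality follows), but the paper's ordering sidesteps the issue entirely.
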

\begin{proof}
	Let $J$ be a minimal inner ideal contained in $I$.
	By \cref{-1 0 1}, we can assume that $J$ contains an element with a non-zero $2$-component. 
	By \cref{-2 -1 0 1 2}, we may then assume that $J$ equals $\SS_+$. 
	Now \cref{Containing S} concludes the proof.
\end{proof}

The next result will not be used in this paper, but it is a useful fact requiring only very little effort to prove at this point.
\begin{proposition}
\label{Inner spanned by min}
	Let $\A$ be a central simple structurable algebra satisfying \cref{ass:skew}.
	Any proper non-trivial inner ideal of $K(\A)$ is linearly spanned by its minimal inner ideals.
\end{proposition}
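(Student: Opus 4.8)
The plan is to reduce $I$ to the normal form of \cref{Containing S} and then span it explicitly by $E(\A)$-conjugates of $\SS_+$. Since every element of $E(\A)$ is an automorphism of $K(\A)$, it carries minimal inner ideals to minimal inner ideals and preserves the property of being linearly spanned by minimal inner ideals; hence by \cref{Reducing to S_+}(i) I may assume $\SS_+ \leq I \leq \Inst(\A) \oplus \A_+ \oplus \SS_+$. By \cref{ass:skew} and \cref{Inner containing s}, any non-zero inner ideal contained in $\SS_+$ already contains $\SS_+$, so $\SS_+$ is itself a minimal inner ideal and the claim is immediate when $I = \SS_+$. Otherwise \cref{Containing S} gives $I = I_0 \oplus I_1 \oplus \SS_+$ with $I_0 \leq \Inst(\A)$ an inner ideal and $0 \neq I_1 \leq \A_+$; write $\widetilde I_1 \leq \A$ for the subspace with $(\widetilde I_1)_+ = I_1$.

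Next I would record the structure forced on $I_0$ and $\widetilde I_1$ by the facts that $\SS_+ \leq I$ and, by \cref{Inner abelian}, $I$ is abelian: namely (a) $V(\A) \subseteq \widetilde I_1$ and $V(\widetilde I_1) = 0$ for every $V \in I_0$ (as in the proof of \cref{Containing S}, using that every non-zero $s \in \SS$ has $L_s$ bijective); (b) $V_{a,sb} \in I_0$ for all $a_+,b_+ \in I_1$ and all $s \in \SS$, and $L_sL_{s'}(\widetilde I_1) \subseteq \widetilde I_1$ for all $s,s' \in \SS$ (apply $[I,[I,\LL]] \leq I$ to $[a_+,[s_-,b_+]]$ and to $[s_+,[s'_-,a_+]]$); and (c) $\widetilde I_1$ contains no conjugate-invertible element, because for such an $a$ the inner ideal generated by $a_+$ contains $\{-(U_a b)_+ \mid b \in \A\} = U_a(\A)_+ = \A_+$, so that $\A_+ \oplus \SS_+ \leq I$, contradicting $[\A_+,\A_+] = \SS_+ \neq 0$ while $I$ is abelian. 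In particular, if every non-zero element of $\A$ is conjugate invertible then $\widetilde I_1 = 0$ and $I = \SS_+$, so the substance of the proposition lies in structurable algebras admitting conjugate non-invertible elements.

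For the spanning, I would exhibit, besides $\SS_+$, further minimal inner ideals of the form $M_c := e_-(c,0)(\SS_+)$ for $c$ ranging over a suitable set $C \subseteq \A$ with $\SS c \subseteq \widetilde I_1$. Using \cref{Image e_+} together with (a)--(c), one checks that $e_-(c,0)$ is an automorphism of $K(\A)$ stabilising $I$, hence that $M_c \leq I$ is a minimal inner ideal with $\A_+$-component $\{(tc)_+ \mid t \in \SS\}$ and $\Inst(\A)$-component $\operatorname{span}\{V_{tc,c} \mid t \in \SS\} \subseteq I_0$. Then $\SS_+$ and the $M_c$ ($c \in C$) span $I$: the $\SS_+$-components give $\SS_+$; the $\A_+$-components span $I_1$ as soon as $\bigcup_{c \in C} \SS c$ spans $\widetilde I_1$; and, when $I_0 \neq 0$, the $\Inst(\A)$-components span $I_0$ because by (b) the operators $V_{a,sb}$ with $a_+,b_+ \in I_1$ span $I_0$. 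When $\A$ satisfies \cref{ass:V0} one has $I_0 = 0$ by \cref{Containing S}, and only the $\A_+$-component requires attention.

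The step I expect to be the real obstacle is the choice of $C$: one must produce enough elements $c$ satisfying the identities (essentially: $U_c$ annihilating $\widetilde I_1$ and $\SS c$, and $V_{tc,c} \in I_0$) that make $e_-(c,0)$ stabilise $I$, in such a way that the sets $\SS c$ (and the operators $V_{tc,c}$) exhaust $\widetilde I_1$ (and $I_0$). This is an elementary but delicate computation inside $\A$; a natural attempt is to take, for each $a \in \widetilde I_1$ and each non-zero $s \in \SS$, the element $c := -\hat s\,a$ (note $\hat s \in \SS$), which satisfies $a = sc$ and $\SS c = \{-L_t L_{\hat s}(a) \mid t \in \SS\} \subseteq \widetilde I_1$ by (b), and then to verify the remaining identities using \cref{ass:skew} and (a)--(c).
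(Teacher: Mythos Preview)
Your overall strategy---reduce to $\SS_+\le I$ via \cref{Reducing to S_+}, write $I=I_0\oplus I_1\oplus\SS_+$ by \cref{Containing S}, and then span $I$ by conjugates $e_-(c,0)(\SS_+)$---is exactly the paper's route, and your candidate $c=-\hat s\,a$ is essentially the paper's choice (the paper takes $c=sa$; both give $a$ back as the $\A_+$-component of a suitable element of $M_c$). Two points, however, separate your proposal from a proof.

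\textbf{The $I_0$-component.} You assert that ``by (b) the operators $V_{a,sb}$ with $a_+,b_+\in I_1$ span $I_0$'', but (b) only places these operators \emph{inside} $I_0$; nothing you have written forces them to span it. Since Proposition~\ref{Inner spanned by min} is stated under \cref{ass:skew} alone (not \cref{ass:V0}), $I_0$ can be nonzero, and there is no reason to expect it to be generated by elements of the special form $V_{a,sb}$, let alone by the still more special $V_{tc,c}$ that actually arise as $\Inst(\A)$-components of your $M_c$'s. The paper circumvents this entirely by arguing \emph{by induction on $\dim I$}: since $I_0\le\Inst(\A)$ is itself a proper inner ideal of $K(\A)$ of strictly smaller dimension (or is zero), the induction hypothesis already gives that $I_0$ is spanned by its minimal inner ideals, and these are automatically minimal inner ideals contained in $I$.

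\textbf{Stabilising versus intersecting.} You aim to show that $e_-(c,0)$ stabilises $I$, which would require $U_c(tc)=0$ for \emph{all} $t\in\SS$; this is more than you need and not obviously available from (a)--(c). The paper does something lighter: it computes, for a \emph{single} $t=\hat s$, that
\[
e_-(sa,0)(\hat s_+)=\hat s_+ + a_+ - \tfrac12 V_{a,sa},
\]
using the identity $U_{sa}(a)=V_{sa,a}(sa)=0$, which follows from $[V_{a,sa},[a_+,s_-]]\in I\cap\LL_{-1}=0$. Since all three summands lie in $I$, this one element witnesses $e_-(sa,0)(\SS_+)\cap I\neq 0$; minimality of $e_-(sa,0)(\SS_+)$ then forces $e_-(sa,0)(\SS_+)\le I$. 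Combining this with the inductive handling of $I_0$ gives $a_+$ in the span of minimal inner ideals contained in $I$, for every $a_+\in I_1$.
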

\begin{proof}
	We will prove this claim by induction on $\dim (I)$.
	When $\dim(I)=\dim (\SS)$, $I$ itself is a minimal inner ideal by \cref{Reducing to S_+} and then the claim is obvious.
	Assume $\dim(I)>\dim(\SS_+)$.
	By \cref{Reducing to S_+}, we may assume that $\SS_+\leq I$.
	By \cref{Containing S}, $I=I_0\oplus I_1\oplus \SS_+$ for some inner ideal $I_0\leq \Inst(\A)$ and some subspace $0\neq I_1\leq \A_+$.
	By induction, $I_0$ is spanned by its minimal inner ideals (where we allow $I_0 = 0$).
	
	Let $a_+\in I_1$ be arbitrary and let $s \in \SS \setminus \{ 0 \}$.
	Then $[a_+,[a_+,s_-]]=-V_{a,sa}$ is contained in $I_0$ and hence $[V_{a,sa},[a_+,s_-]]=V_{sa,a}(sa)_-\in\LL_{-1}$ is contained in $I$ and is thus $0$.
	Together with \cref{Image e_+} this gives $e_-(sa)(\hat s_+) = -\frac{1}{2}V_{a,sa} + a_+ + \hat s_+$.
	This is an element of the minimal inner ideal $e_-(sa)(\SS_+)$.
	Since $\hat s_+$ is an element of a minimal inner ideal and $V_{a,sa}\in I_0$ is contained in the linear span of minimal inner ideals, $a_+$ is also contained in the linear span of minimal inner ideals. 
\end{proof}

\section{Moufang sets with abelian root groups}
\label{sec 4}

We are now prepared to begin our investigation of the geometry of proper non-trivial inner ideals in specific situations. In \cref{sec 4,sec 5}, we will deal with the case of structurable \textit{division} algebras and we will show that this gives rise to Moufang sets.
The case of Jordan division algebras will give rise to Moufang sets with abelian root groups (\cref{sec 4}) whereas the structurable division algebras of skew-dimension $>0$ will give rise to Moufang sets with non-abelian root groups (\cref{sec 5}).

Throughout this section, we will assume that $\A$ is a Jordan division algebra and we denote $\A$ by $J$. 
We also write $a^{-1} := \hat a$, as is usual in Jordan theory.
Note that $J$ is commutative and hence $V_{x,y}z=V_{z,y}x$ for all $x,y,z \in J$.
Set $\LL=K(J)$.
\begin{remark}
	It is obvious, as in the previous section, that if we replace $J_+$ by $J_-$ in any statement in this section, the statement remains valid.
\end{remark}

\begin{lemma}
\label{JordanInner}
	Let $I$ be an inner ideal of $\LL$. 
	If $I\cap J_+\neq 0$, then $J_+\leq I$.
\end{lemma}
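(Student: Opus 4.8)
The statement is about the Jordan case, where $\A = J$ is a Jordan division algebra, $\LL = K(J)$ carries a $3$-grading (since $\SS = 0$), with $\LL_{-1} = J_-$, $\LL_0 = \Inst(J) = \{T_x \mid x \in J\} \oplus \Inder(J)$, and $\LL_1 = J_+$. Here $[a_+,b_-] = V_{a,b}$, $[V, a_+] = (Va)_+$, $[V, a_-] = (V^\epsilon a)_-$. The goal is: if $I$ is an inner ideal with $I \cap J_+ \neq 0$, then $J_+ \leq I$. So pick $0 \neq a_+ \in I \cap J_+$; $a$ is invertible in the division algebra $J$, so $a^{-1} = \hat a$ exists and $V_{a,\hat a} = \id$. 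I want to produce every $b_+$ from $a_+$ using the inner ideal condition $[I,[I,\LL]] \leq I$.

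\textbf{Key step.} The natural move is $[a_+,[a_+, b_-]] = [a_+, V_{a,b}]$. Now $[a_+, V_{a,b}] = -[V_{a,b}, a_+] = -(V_{a,b}(a))_+ = -(U_a(b))_+$, using $V_{a,b}(a) = U_a(b)$ (this is from \cref{V operator symmetry}: $V_{a,a} = U_a$, and more precisely $V_{a,b}(a) = U_a(b)$ in a Jordan algebra). Since $a$ is invertible in the Jordan division algebra $J$, the operator $U_a$ is invertible (standard Jordan theory, also recalled in the excerpt that $U_u$ is invertible when $u$ is conjugate invertible). Therefore, as $b$ ranges over all of $J$, $U_a(b)$ ranges over all of $J$, so $(U_a(b))_+$ ranges over all of $J_+$. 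Hence $J_+ = \{[a_+,[a_+,b_-]] \mid b \in J\} \leq [I,[I,\LL]] \leq I$, which is exactly what we want.

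\textbf{Main obstacle.} There is essentially no obstacle here — the only thing to be careful about is the identity $V_{a,b}(a) = U_a(b)$ and the invertibility of $U_a$, both of which are entirely standard in Jordan theory and are in fact flagged in the excerpt (the sentence "the operator $U_u$ is invertible; see \cite[Section 6]{Allison1981}" and \cref{V operator symmetry}). One should double-check the sign/bracket conventions from \cref{def:Lie alg}, but since we only need that the set $\{[a_+,[a_+,b_-]]\}$ equals $J_+$ up to the invertible linear map $-U_a$, signs are irrelevant. So the proof is a two-line computation; I would write:

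\begin{proof}
	Let $0 \neq a_+ \in I \cap J_+$; since $J$ is a division algebra, $a$ is invertible and hence $U_a$ is an invertible operator on $J$. For every $b \in J$ we have, using \cref{def:Lie alg} and \cref{V operator symmetry},
	\[ [a_+,[a_+,b_-]] = [a_+, V_{a,b}] = -(V_{a,b}(a))_+ = -(U_a(b))_+ . \]
	As $b$ runs over $J$, the element $U_a(b)$ runs over all of $J$, so $\{[a_+,[a_+,b_-]] \mid b \in J\} = J_+$. Since $I$ is an inner ideal, this set is contained in $[I,[I,\LL]] \leq I$, whence $J_+ \leq I$.
\end{proof}
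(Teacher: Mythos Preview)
Your proof is correct and essentially identical to the paper's own proof: both pick a nonzero $a_+\in I\cap J_+$, compute $[a_+,[a_+,b_-]]=-(U_a b)_+$, and conclude from the invertibility of $U_a$ in the Jordan division algebra that $J_+\leq I$.
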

\begin{proof}
	Consider $0\neq a_+\in I\cap J_+$. 
	The inner ideal $I$ contains
	\[ [a_+,[a_+,b_-]] = [a_+,V_{a,b}] = -(V_{a,b}a)_+ = -(U_a b)_+, \]
	for every $b\in J$.
	Since $J$ is a Jordan division algebra, $U_a$ is invertible and hence this implies $J_+\leq I$.
\end{proof}

\begin{lemma}
\label{V zero}
	Let $x,y\in J \setminus \{ 0 \}$. 
	Then $V_{x,y}^2\neq 0$ and $(V_{x,y}^\epsilon)^2 \neq 0$.
\end{lemma}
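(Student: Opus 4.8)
The statement concerns a Jordan division algebra $J$, so the involution is trivial, $V_{x,y}(z) = \{x,y,z\}$ is the Jordan triple product (up to the usual normalization), and $V_{x,y}^\epsilon = -V_{y,x} = -V_{x,y}$ by commutativity. Thus it suffices to prove the single assertion $V_{x,y}^2 \neq 0$ for nonzero $x,y \in J$, since the $\epsilon$-version is literally the same statement with $x$ and $y$ swapped. The plan is to argue by contradiction: suppose $V_{x,y}^2 = 0$ with $x,y \neq 0$.

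\textbf{Key steps.} First I would evaluate $V_{x,y}^2$ at the element $x$ itself. Using $V_{x,y}(x) = \{x,y,x\} = U_x y$ (here $U_x y = 2(x(xy)) - x^2 y$, the quadratic operator of $J$), we get $0 = V_{x,y}^2(x) = V_{x,y}(U_x y)$. Now I would use the fundamental Jordan triple identities relating $V$ and $U$: specifically the identity $V_{U_x y, y} = V_{x, U_y x}$ (a consequence of the fundamental formula / the ``commuting'' identities in a Jordan triple system), or more directly compute $U_x(V_{y,x}(y)) = U_x(U_y x)$ and relate it via $U_{U_x y} = U_x U_y U_x$. The cleanest route: since $U_x$ is invertible ($J$ is a division algebra), $V_{x,y}(x)=U_x y$ is nonzero because $y \neq 0$; call this element $z := U_x y \neq 0$. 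Then $V_{x,y}(z) = 0$. I would next feed $z$ back into the quadratic operator machinery: applying $U_x^{-1}$ and using $U_x^{-1} = U_{x^{-1}}$, one converts the relation $V_{x,y}(U_x y) = 0$ into a relation purely about $U_y$ and invertible operators, forcing $y = 0$. Concretely, $V_{x,y}(U_x y) = \{x, y, U_x y\}$, and by the identity $\{x,y,U_x y\} = U_x\{x^{-1}... \}$-type manipulations (or directly $\{x,y,U_x y\} = U_x U_y x$ after using $U_x y \cdot$ identities in the Jordan triple system), this equals an invertible operator applied to a nonzero vector, a contradiction.

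\textbf{Alternative cleaner step.} Rather than hunting for the exact triple identity, I would instead use \cref{V zero}'s likely intended shortcut: apply $V_{x,y}^2 = 0$ together with \cref{a invertible V_{a,sa}} or the non-degeneracy of $\LL = K(J)$ (\cref{A nondeg}). Since $x$ is conjugate invertible (every nonzero element of a division algebra is), and $V_{x,y} \neq 0$ whenever $y \neq 0$ by \cref{V equiv with V^epsilon} (which states $V_{a,b} = 0 \iff b = 0$ for $a$ invertible, once we know $V_{x,y}^2 = 0 = (V_{x,y}^\epsilon)^2$ — but that is exactly what we are trying to rule out, so this would be circular). So the honest approach is the direct Jordan-theoretic computation: from $V_{x,y}^2 = 0$ deduce $V_{x,y}(x) = U_x y$ lies in $\ker V_{x,y}$, then use $U_x^{-1} V_{x,y} U_x = V_{x^{-1}, U_x y}$ (conjugation formula for $V$ under $U_x$ in a Jordan triple), giving $V_{x^{-1}, U_x y}(y) = 0$, i.e. $\{x^{-1}, U_x y, y\} = U_y x^{-1}$-ish $= 0$, whence $y = 0$ since $U_y$ is invertible.

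\textbf{Main obstacle.} The hard part will be pinning down which Jordan triple identity does the job cleanly — the formulas $U_x U_y U_x = U_{U_x y}$, $V_{U_x y, y} = V_{x, U_y x}$, and $\{x, y, U_x y\} = U_x U_y x$ (up to constants depending on normalization conventions, which matter since $\operatorname{char} k \neq 2, 3$) all have variants, and getting the composition right so that one lands on ``invertible operator applied to nonzero vector $=0$'' requires care. Once the right identity is invoked, the contradiction is immediate. I expect the author's proof to simply evaluate at $x$, observe $U_x y \neq 0$, and cite a one-line Jordan identity (or the explicit formula for $V$ in terms of left multiplications, since $J$ is commutative) to finish.
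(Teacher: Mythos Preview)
Your reduction of the $\epsilon$-statement to the first one via $V_{x,y}^\epsilon = -V_{y,x}$ is exactly what the paper does. Your strategy of evaluating $V_{x,y}^2$ at a specific element is also the paper's strategy, but you pick the wrong element and then have to fight for the identity you need.

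You evaluate at $x$, get $V_{x,y}(x)=U_x y\neq 0$, and then must show $V_{x,y}(U_x y)\neq 0$. This can indeed be finished with the identity you mention, $V_{U_x y,\,y}=V_{x,\,U_y x}$: combined with the symmetry $V_{a,b}(c)=V_{c,b}(a)$ (valid here since $\psi=0$), it gives $V_{x,y}(U_x y)=V_{U_x y,\,y}(x)=V_{x,\,U_y x}(x)=U_x(U_y x)\neq 0$. So your route works, but it needs a genuine Jordan triple identity that the paper has not set up.

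The paper sidesteps all of this by evaluating at $y^{-1}$ instead of $x$. Using only the symmetry $V_{a,b}(c)=V_{c,b}(a)$ and the definition of conjugate inverse ($V_{y^{-1},y}=\id$), one has $V_{x,y}(y^{-1})=V_{y^{-1},y}(x)=x$, and therefore $V_{x,y}^2(y^{-1})=V_{x,y}(x)=U_x y\neq 0$. Two lines, no auxiliary identities beyond what is already in the preliminaries. Your ``main obstacle'' disappears entirely if you feed the operator $y^{-1}$ first and $x$ second, rather than starting at $x$.
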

\begin{proof}
	Since $J$ is a division algebra, the elements $x$ and $y$ are invertible.
	Then $V_{x,y}(y^{-1})=V_{y^{-1},y}(x)=x$ and hence $(V_{x,y})^2(y^{-1})=V_{x,y}(x)=U_x(y)$ which is non-zero since $U_x$ is invertible; therefore $V_{x,y}^2 \neq 0$.
	Since $V_{x,y}^\epsilon=-V_{y,x}$, we also have $(V_{x,y}^\epsilon)^2 \neq 0$.
\end{proof}

\begin{lemma}
\label{I contains Jordans}
	Let $I$ be an inner ideal of $\LL$ with $J_-\oplus J_+\leq I$. 
	Then $I=\LL$.
\end{lemma}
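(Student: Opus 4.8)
The plan is to show that if $I$ contains both $J_-$ and $J_+$, then $I$ contains all five graded pieces of $\LL$, which forces $I = \LL$. First I would use the bracket relations from \cref{def:Lie alg} to see that $[J_+, J_-] = \Inst(\A)$ (this is the standard fact that $\LL_0 = [\LL_1, \LL_{-1}]$, recorded in \cite[\S 5]{Allison1979}; here it is even more concrete since $[a_+, b_-] = V_{a,b}$ and the $V_{a,b}$ span $\Inst(J)$ by definition). Thus $\Inst(J) \leq [I, I] \leq I$, since $I$ is a subspace and the bracket of two elements of $I$ lies in $[I,[I,\LL]]$... but wait — that last inclusion is not automatic for a mere inner ideal. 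So instead I would argue directly: $[J_+, J_-] \leq [I, [I, \LL]] \leq I$ because $J_- \leq \LL$, hence $\Inst(J) \leq I$.

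Next I would observe that, since $J$ is a Jordan division algebra, $\SS = 0$, so the grading of $\LL = K(J)$ reduces to the $3$-grading $J_- \oplus \Inst(J) \oplus J_+$; there are no $\LL_{\pm 2}$ pieces to worry about. Therefore, having shown $J_- \leq I$, $J_+ \leq I$ and $\Inst(J) \leq I$, we conclude $I = J_- \oplus \Inst(J) \oplus J_+ = \LL$, as desired.

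The only point requiring a little care is the claim $[J_+, J_-] = \Inst(J)$: the spanning set $\{V_{a,b} \mid a,b \in J\}$ of $\Inst(J)$ is exactly $\{[a_+, b_-] \mid a, b \in J\}$ by the definition of the Lie bracket, so the span of $[J_+, J_-]$ equals $\Inst(J)$ on the nose, and each such bracket lies in $[I,[I,\LL]] \subseteq I$. I expect no genuine obstacle here; the lemma is essentially immediate once one unwinds that $K(J)$ for a Jordan algebra $J$ is the classical TKK $3$-graded Lie algebra and that $I$, being an inner ideal containing the two outer graded pieces, must swallow the middle piece as well.
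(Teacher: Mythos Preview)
Your approach is exactly the paper's: show $\Inst(J)\leq I$ and use the $3$-grading to conclude. There is, however, one step you wave past. You write ``$[J_+,J_-]\leq [I,[I,\LL]]$ because $J_-\leq\LL$'', but that reasoning only gives $[J_+,J_-]\leq [I,\LL]$, which need not lie in $I$ for an inner ideal. You correctly noticed that $[I,I]\leq I$ is not automatic, but your proposed fix has the same defect: a single bracket $[a_+,b_-]$ is not a priori of the form $[i_1,[i_2,l]]$.

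The paper closes this gap with a one-line trick: since $\id\in\Inst(J)\leq\LL$ and $[x_+,\id]=-x_+$, one has
\[
    V_{x,y} = [x_+,y_-] = [y_-,[x_+,\id]] \in [I,[I,\LL]] \leq I.
\]
Equivalently, you could note $b_- = [b_-,\id]\in [I,\LL]$ and then bracket with $a_+\in I$. Either way, the missing ingredient is inserting the grading element $\id$ to manufacture the required double bracket. Once this is done, your argument goes through verbatim.
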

\begin{proof}
	Consider $x,y\in J$ arbitrary.
	Then $I$ contains
	\[ [y_-,[x_+,\id]]=[x_+,y_-]=V_{x,y}.\]
	Hence $\Inst (J)\leq I$ and our claim follows.
\end{proof}

\begin{lemma}
\label{Inner0}
	Let $0\neq V\in  \Inst(J)$ be arbitrary. 
	The only inner ideal of $\LL$ containing $V$ is $\LL$ itself.
\end{lemma}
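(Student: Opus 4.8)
The plan is to show that if a non-trivial proper inner ideal $I$ of $\LL = K(J)$ contains a non-zero element $V \in \Inst(J)$, then $I$ must contain the full $2$-graded pieces and hence all of $\LL$ by \cref{I contains Jordans}. Concretely, I would first use $V \in I$ together with the bracket relations to produce elements of $J_+$ and $J_-$ inside $I$, then invoke \cref{JordanInner} (and its $J_-$ analogue, valid by the remark) to conclude $J_+ \oplus J_- \leq I$, and finally apply \cref{I contains Jordans}. The key tool is that $J$ is a Jordan \emph{division} algebra, so $U_a$ is invertible for every non-zero $a$, and also $\Inst(J) = \{T_x \mid x \in J\} \oplus \Inder(J)$ by \cref{S1 Decom Inst}.

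\textbf{Producing elements in the $\pm 1$ components.} Write $V \in \Inst(J)$ and pick $b \in J$ with $V(b) \neq 0$ (possible since $V \neq 0$); then $I$ contains $[V, b_+] = (V(b))_+ \in J_+$, a non-zero element of $I \cap J_+$. By \cref{JordanInner}, $J_+ \leq I$. Symmetrically, choose $c \in J$ with $V^\epsilon(c) \neq 0$; such a $c$ exists because $V^\epsilon = V - L_{V(1)+\overline{V(1)}} = V - 2 L_{V(1)}$ (using that the involution on $J$ is trivial) and one can check $V^\epsilon \neq 0$: if $V^\epsilon = 0$ then $V = 2L_{V(1)}$, but one verifies $(2L_j)^\epsilon = -2L_j \neq 0$ for $j \neq 0$ via \cref{formula:Ls epsilon} applied with $J$ in place of $\SS$, forcing $V(1) = 0$ and thus $V = 0$. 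Then $I$ contains $[V, c_-] = (V^\epsilon(c))_- \in J_-$, a non-zero element of $I \cap J_-$, whence $J_- \leq I$ by the $J_-$ version of \cref{JordanInner}. (Since $\dim \SS = 0$ here, there are no skew components to worry about; the $5$-grading degenerates to a $3$-grading on the relevant pieces.)

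\textbf{Conclusion and the main obstacle.} Once $J_+ \oplus J_- \leq I$, \cref{I contains Jordans} immediately gives $I = \LL$. I expect the only genuinely delicate point to be the argument that $V^\epsilon \neq 0$ whenever $V \neq 0$ in $\Inst(J)$; depending on how the author sets things up, it may be cleaner to instead argue directly that some bracket of $V$ with an element of $J_-$ or $\Inst(J)$ lands non-trivially in $J_-$, e.g. using \cref{formula:V epsilon} to note that $V \neq 0$ implies $V^\epsilon = -V_{y,x}$-type expressions are non-zero when $V$ is in the $V_{x,y}$-span, combined with \cref{V zero}. Everything else is a routine application of the preceding lemmas, so the proof should be short.
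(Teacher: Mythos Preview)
Your argument has a genuine gap at the very first step: you write ``$I$ contains $[V,b_+]=(V(b))_+$'', but $I$ is only an \emph{inner} ideal, so the defining condition is $[I,[I,\LL]]\leq I$, not $[I,\LL]\leq I$. A single bracket $[V,b_+]$ need not lie in $I$. What you actually get is $[V,[V,b_+]]=V^2(b)_+\in I$ (and symmetrically $(V^\epsilon)^2(b)_-\in I$). This is precisely why the paper's proof is longer than yours: one must confront the possibility that $V^2=0$ or $(V^\epsilon)^2=0$ even though $V\neq 0$, and \cref{V zero} only guarantees $V_{x,y}^2\neq 0$ for the \emph{individual} generators $V_{x,y}$, not for arbitrary linear combinations in $\Inst(J)$.

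The paper handles this by a case split on which of $V^2$, $(V^\epsilon)^2$ vanish. In the worst case $V^2=0=(V^\epsilon)^2$, one computes $[V,[V,V_{x,y}]]=2V_{V(x),V^\epsilon(y)}$; choosing $x,y$ so that $V(x)\neq 0\neq V^\epsilon(y)$ (your observation that $V^\epsilon\neq 0$ is used here) produces a non-zero element of the form $V_{a,b}\in I$, to which \cref{V zero} now applies and the argument closes. Your remark about $V^\epsilon\neq 0$ is correct and useful, but it enters only at this later stage; by itself it does not let you bypass the double-bracket restriction.
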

\begin{proof}
	Note that $V\neq 0$ implies $V^\epsilon \neq 0$.
	Suppose that $I$ is an inner ideal containing $V$.
	For each $a \in J$, we then have
	\[ I \ni [V,[V,a_+]] = [V,V(a)_+] = V^2(a)_+ . \]
	Similarly, $I$ also contains $(V^{\epsilon})^2(a)_-$, for all $a\in J$.
	We distinguish the following cases:
	\begin{itemize}
		\item $V^2\neq 0\neq (V^{\epsilon})^2$: By \cref{JordanInner} we get $J_-\oplus J_+\leq I$. 
		\item $V^2\neq 0 = (V^\epsilon)^2$: By \cref{JordanInner} we get $J_+\leq I$. 
				So $I$ contains $x_+$ and $V$ and thus
				\[ [x_+,[V,x^{-1}_-]]=[x_+,V^\epsilon(x^{-1})_-]=V_{x,V^\epsilon(x^{-1})}, \]
			for any $0\neq x\in J$. 
			By \cref{V zero} there exists an $x\in J$ such that $V_{x,V^\epsilon(x^{-1})}\neq 0$.
			By the first case and \cref{V zero} we get $J_-\oplus J_+\leq I$. 
		\item $V^2= 0 \neq (V^\epsilon)^2$: Similarly as in the previous case we get $J_-\oplus J_+\leq I$.
		\item $V^2=0=(V^\epsilon)^2$: For any $x,y\in J$ the inner ideal $I$ contains 
		 \[ [V,[V,V_{x,y}]]=[V,V_{V(x),y}+V_{x,V^{\epsilon}(y)}]=2V_{V(x),V^\epsilon(y)}.\] 
		 By $V\neq0\neq V^\epsilon$ and \cref{V zero} we get that $0\neq V_{a,b}\in I$ for some $a,b\in J$.
		 Again, by the first case and \cref{V zero} we get $J_-\oplus J_+\leq I$.
	\end{itemize}
	So, in any case, we get $J_-\oplus J_+\leq I$.
	Hence, by \cref{I contains Jordans} we have $I=\LL$.
\end{proof}

\begin{lemma}
\label{Inner01}
	Let $I$ be a proper inner ideal containing $a_++V$, where $V\neq 0, a\in J$.
	Then $I=\LL$.
\end{lemma}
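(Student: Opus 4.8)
The plan is to show that every inner ideal $I$ containing $x := a_+ + V$ with $V \neq 0$ must also contain a nonzero element of $\Inst(J)$, so that $I = \LL$ by \cref{Inner0}; in all but one case I reach this through the intermediate step $J_+ \leq I$, after which $V = x - a_+ \in I$.

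First, if $a = 0$ then $x = V$ is itself a nonzero element of $\Inst(J)$ and \cref{Inner0} applies, so assume $a \neq 0$. Recalling that in the Jordan setting $\psi \equiv 0$ and $V_{1,1} = \id$ (\cref{V operator symmetry}), the inner ideal $I$ contains $[x,[x,\id]] = -(Va)_+$ and $[x,[x,y_+]] = (V^2 y)_+$ for every $y \in J$. Hence if $Va \neq 0$ or $V^2 \neq 0$, then $I \cap J_+ \neq 0$; by \cref{JordanInner} this gives $J_+ \leq I$, hence $V \in I$, hence $I = \LL$ by \cref{Inner0}.

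It remains to treat the degenerate case $Va = 0 = V^2$ (with $a, V \neq 0$), which I expect to be the crux of the argument. Since $I$ is proper, it is abelian by \cref{Inner abelian}, so $[x, w_b] = 0$ for every $b \in J$, where a short computation (using $Va = 0$) gives $w_b := [x,[x,b_-]] = ((V^\epsilon)^2 b)_- + 2V_{a,V^\epsilon b} - (U_a b)_+ \in I$. Comparing the components in $\Inst(J)$ on both sides of $[x, w_b] = 0$ yields $3V_{a,(V^\epsilon)^2 b} = 0$; since $U_a$ is invertible, $V_{a,c} = 0$ forces $U_a c = V_{a,c}(a) = 0$, i.e. $c = 0$, and therefore $(V^\epsilon)^2 = 0$. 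As $V \neq 0$ we have $V^\epsilon \neq 0$, so $V^\epsilon(J)$ is a nonzero subspace of $\ker(V^\epsilon)$; choosing $0 \neq b$ in it makes both $V^\epsilon b$ and $(V^\epsilon)^2 b$ vanish, so $w_b = -(U_a b)_+$ is a nonzero element of $J_+ \cap I$. Thus $J_+ \leq I$ by \cref{JordanInner}, whence $V \in I$ and $I = \LL$ by \cref{Inner0}.

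The main obstacle is exactly this last case: when $Va = V^2 = 0$ the two first-order brackets $[x,[x,\id]]$ and $[x,[x,y_+]]$ both vanish, and the only surviving bracket $w_b = [x,[x,b_-]]$ is genuinely spread across all three graded pieces, so $x$ alone does not produce a pure element of $\Inst(J)$ or of $J_+$. One is forced to feed $w_b$ back into the inner-ideal relation and to use that $I$ is abelian; this is what pins down $(V^\epsilon)^2 = 0$ and makes a suitable $w_b$ collapse onto a nonzero element of $J_+$.
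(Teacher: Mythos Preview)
Your argument is correct. The paper's proof, however, dispatches the case $V(a)=0$ in one stroke, without the further split on whether $V^2=0$: from $V(a)=0$ it deduces $V^\epsilon(a^{-1})=0$ via \cref{V equiv with V^epsilon} (the relevant implication there comes from $0=[V,\id]=[V,V_{a,a^{-1}}]=V_{V(a),a^{-1}}+V_{a,V^\epsilon(a^{-1})}$ and actually needs neither $V^2=0$ nor $(V^\epsilon)^2=0$). Then $[x,a^{-1}_-]=V_{a,a^{-1}}+(V^\epsilon a^{-1})_-=\id$, so $[x,[x,a^{-1}_-]]=[x,\id]=-a_+$ lands in $I\cap J_+$, and one finishes via \cref{JordanInner} and \cref{Inner0} exactly as you do.

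This bypasses your entire ``degenerate case'': no computation of $w_b$, no appeal to abelianness of $I$ through \cref{Inner abelian}, and no search for a suitable $b\in V^\epsilon(J)$. Your route, by contrast, is more hands-on and self-contained; it does not rely on spotting that the square-vanishing hypotheses in \cref{V equiv with V^epsilon} are not used in the proof of its last assertion. The trade-off is that your argument invokes \cref{Inner abelian}, which brings in the central-simplicity machinery, whereas the paper's proof stays purely within the elementary identities of \cref{sec 4}.
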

\begin{proof}
	If $a=0$, the claim follows from \cref{Inner0}.
	Note that $I$ contains
	\[ [V+a_+,[V+a_+,\id]]=[V+a_+,-a_+]=(-V(a))_+.\]
	Suppose $V(a)\neq 0$, we get by \cref{JordanInner} that $J_+\leq I$ and hence $0\neq V\in I$ and thus, by \cref{Inner0}, $I=\LL$.
	So we may assume $V(a)=0$. 
	By \cref{V equiv with V^epsilon} we get $V^\epsilon(a^{-1})=0$.
	Hence, $I$ contains
	\[ [V+a_+,[V+a_+,a^{-1}_-]]=[V+a_+,V^\epsilon(a^{-1})_-+\id]=-a_+, \]
	and thus, as in the previous case, $J_+\leq I$ and $I=\LL$.
	\end{proof}

\begin{corollary}
\label{Inner containing J}
	There are only two inner ideals of $\LL$ containing $J_+$, namely $J_+$ and $\LL$.
\end{corollary}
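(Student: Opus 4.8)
The plan is to show that any inner ideal $I$ of $\LL$ with $J_+ \leq I$ must be either $J_+$ itself or all of $\LL$, by analyzing which of the other graded components $I$ can meet nontrivially. Suppose $J_+ \leq I$ and $I \neq J_+$. First I would pick an element $x \in I \setminus J_+$ and expand it along the $5$-grading as $x = u_- + b_- + V + a_+ + s_+$ with $u,s \in \SS = 0$ (since $J$ is a Jordan algebra, its skew part is trivial, so the $\pm 2$ components vanish identically). So really $x = b_- + V + a_+$ with $(b,V)\neq(0,0)$, and since $a_+ \in J_+ \leq I$ already, we may assume $x = b_- + V \in I$ with $(b,V) \neq (0,0)$.

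Next I would bracket $x$ against elements of $J_+$, which is inside $I$. For $c \in J$ we get $[c_+, x] = [c_+, b_- + V] = V_{c,b} - V(c)_+$; since $V(c)_+ \in J_+ \leq I$, this shows $V_{c,b} \in I$ for all $c \in J$. Similarly $[c_+,[c_+,x]] = [c_+, V_{c,b}] = -(U_c b)_+ \in J_+$, which gives nothing new directly, but the containment $V_{c,b} \in I$ is the useful one. Now I split into cases according to whether $b = 0$. If $b \neq 0$, then since $J$ is a division algebra we may choose $c$ with $V_{c,b} \neq 0$ (e.g.\ $c$ invertible makes $V_{c,b}(c) = U_c(b) \neq 0$, so $V_{c,b}\neq 0$), and then $I$ contains a nonzero element of $\Inst(J)$, so $I = \LL$ by \cref{Inner0}. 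If $b = 0$, then $x = V$ with $V \neq 0$, so again $I$ contains a nonzero element of $\Inst(J)$ and $I = \LL$ by \cref{Inner0}.

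Actually the cleanest route avoids even this case split: once we know $I \supsetneq J_+$, take any $x \in I \setminus J_+$, write $x = b_- + V + a_+$, subtract off $a_+ \in I$, and observe that $x' := b_- + V \in I$ is nonzero. Then $[x', [s_+?]]$ — no, there is no $\SS$ here. Instead, bracket with $\id \in \Inst(J)$: compute $[x',[x',\id]]$ or use $[J_+, x']$ as above to land either a nonzero element of $\A_+$ giving nothing, or a nonzero $V_{c,b}$ or the original $V$ in $\Inst(J)$. In every subcase we produce a nonzero element of $\Inst(J)$ inside $I$, and \cref{Inner0} finishes. The main obstacle is organizing the bracketing so that when $b \neq 0$ one genuinely gets a \emph{nonzero} element of $\Inst(J)$: this rests on the fact that $J$ is a division algebra, so that $U_c$ is invertible for $c \neq 0$ and hence $V_{c,b} = 0$ for all $c$ would force $b = 0$. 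Given \cref{Inner0}, no harder input is needed, so this corollary really is an immediate consequence and the proof should be just a few lines.
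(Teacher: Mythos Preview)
Your overall strategy is sound and close to the paper's, but there is a genuine gap in the key step. You write that for $c_+\in J_+\leq I$ and $x'=b_-+V\in I$ one has $[c_+,x']=V_{c,b}-V(c)_+\in I$, and then subtract off $V(c)_+$ to conclude $V_{c,b}\in I$. However, the defining property of an inner ideal is $[I,[I,\LL]]\leq I$, not $[I,I]\leq I$; a single bracket of two elements of $I$ need not lie in $I$. (In this particular setting, \cref{Inner abelian} tells you that if $I$ were proper then $[c_+,x']=0$, which would force $V_{c,b}=0$ rather than give you a nonzero element of $\Inst(J)$ --- so your intended application of \cref{Inner0} would not fire.)

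The fix is a one-line adjustment: instead of $[c_+,x']$, use the legitimate double bracket
\[
[c_+,[x',\id]] = [c_+,b_-] = V_{c,b}\in I,
\]
since $[b_-,\id]=b_-$ and $[V,\id]=0$. For $b\neq 0$ and any nonzero $c$, this gives a nonzero element of $\Inst(J)$ inside $I$, and \cref{Inner0} finishes exactly as you intended. With this correction your argument is complete.

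For comparison, the paper's proof is organized slightly differently: it splits on whether $J_-\cap I$ is nonzero (then \cref{JordanInner} and \cref{I contains Jordans} give $I=\LL$) or zero (then the element $b_-+V\in I$ must have $V\neq 0$, and \cref{Inner01} applied on the minus side gives $I=\LL$). Your route, once patched, avoids invoking \cref{Inner01} and goes straight to \cref{Inner0}, which is arguably a bit more economical.
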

\begin{proof}
	Let $I$ be an inner ideal containing $J_+$.
	If $J_-\cap I\neq 0$ we get $J_-\leq I$ by \cref{JordanInner} and \cref{I contains Jordans} yields $I=\LL$.
	If $J_-\cap I=0$ and $I\neq J_+$, $I$ contains an element $a_-+V$ with $V\neq 0$ and \cref{Inner01} yields $I=\LL$, a contradiction.
\end{proof}

\begin{proposition}
\label{Set Inner ideals}
	Let $I$ be a proper non-trivial inner ideal of $\LL$ with $I \neq J_+$.
	Then there exists a unique $x\in J$ such that
	\[ I = e_+(x)(J_-) = \bigl\{ b_-+V_{x,b}-\tfrac{1}{2}V_{x,b}(x)_+ \mid b\in J \bigr\} . \] 
	Moreover, for each $x \in J$, $e_+(x)(J_-)$ is an inner ideal.
\end{proposition}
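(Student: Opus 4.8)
The plan is to analyze an arbitrary proper non-trivial inner ideal $I \neq J_+$ by exploiting the results already built up, in particular \cref{Inner containing J,Inner01,Inner0}, which together say that $I$ cannot contain $J_+$, cannot contain any nonzero element of $\Inst(J)$ on its own, and cannot contain an element of the form $a_+ + V$ with $V \neq 0$. First I would observe that these exclusions force $I$ to project non-trivially onto $\LL_{-1} = J_-$: if $I \subseteq \Inst(J) \oplus J_+ \oplus \SS_+ = \Inst(J) \oplus J_+$ (recall $\SS = 0$ here since $J$ is a Jordan algebra), then picking any nonzero $x = V + a_+ \in I$ gives a contradiction with \cref{Inner0} or \cref{Inner01} unless $x \in J_+$, and then \cref{Inner containing J} would force $I = J_+$ or $I = \LL$. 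Symmetrically (using the remark that $J_+ \leftrightarrow J_-$), $I$ also cannot be contained in $J_- \oplus \Inst(J)$ in a way that reduces to those lemmas. So $I$ contains an element $x = b_- + V + a_+$ with $b \neq 0$.

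Next I would pin down the element $x$ completely. Since $I$ is proper, it is abelian by \cref{Inner abelian}, and applying $[x,[x,\id]]$, $[x,[x,c_-]]$, $[x,[x,c_+]]$ for suitable $c$ and using that the various $-1$, $0$, $1$ components must stay inside $I$ (which by the previous paragraph is ``graded-thin'' in a way we control) should force algebraic relations on $a$, $V$, $b$. The cleanest route: show first that $\dim I = \dim J_+ = \dim J$ and that the projection $I \to J_-$ is a bijection, so $I = \{ b_- + V_b + (a_b)_+ \mid b \in J\}$ for linear maps $b \mapsto V_b \in \Inst(J)$ and $b \mapsto a_b \in J$. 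Then, fixing one nonzero element $x_0 = b_{0,-} + V_0 + (a_0)_+ \in I$ and setting $x := \tfrac12 (b_0^{-1} \cdot a_0)$ — or whatever normalization makes $e_+(x)$ land correctly — compute $e_+(x)(J_-)$ using \cref{Image e_+} (with $s = t = 0$), obtaining exactly $\{ b_- + V_{x,b} - \tfrac12 V_{x,b}(x)_+ \mid b \in J\}$, and verify this is an inner ideal (the ``moreover'' clause) directly: $e_+(x)$ is an automorphism by \cref{csa is algebraic}, and it maps the inner ideal $J_-$ to an inner ideal. Then show $e_+(x)^{-1}(I)$ is an inner ideal containing a nonzero element of $J_-$, hence contains $J_-$ by \cref{JordanInner}; being proper and containing $J_-$, \cref{Inner containing J} (applied with roles swapped) forces $e_+(x)^{-1}(I) = J_-$, i.e. $I = e_+(x)(J_-)$. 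Uniqueness of $x$ follows because $e_+(x)(J_-) = e_+(x')(J_-)$ implies $e_+(x'-x)$ stabilizes $J_-$, and a direct computation via \cref{Image e_+} shows $e_+(y)(J_-) \subseteq J_-$ forces $y = 0$ (the $\Inst(J)$-component of $e_+(y)(b_-)$ is $V_{y,b} + \cdots$, which vanishes for all $b$ only if $y = 0$, using that $J$ is a division algebra and \cref{V zero}-type nondegeneracy, or more simply $V_{y,y} = U_{y\bar y} \neq 0$).

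The main obstacle I anticipate is the middle step: showing that an arbitrary element $x = b_- + V + a_+$ of $I$ with $b \neq 0$ is \emph{forced} to have the rigid shape $V = V_{x_0,b}$ and $a_+ = -\tfrac12 V_{x_0,b}(x_0)$ for a single common $x_0$ independent of the element. The slick way around grinding this out elementwise is to do the normalization \emph{first}: from $b \neq 0$ in $I$, one knows $I \cap J_- \neq 0$ would already give $I \supseteq J_-$ and we'd be done with $x = 0$, so assume not; then find the automorphism $e_+(x)$ that ``straightens'' $I$ by matching it against one fixed nonzero element, and transport the whole problem via $e_+(x)^{-1}$ to the already-solved situation $J_- \subseteq I'$. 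This reduces everything to (a) correctly identifying $x$ from the data of one element of $I$ — which is a short computation with \cref{Image e_+} solving $e_+(x)(b_-) = b_- + V + a_+$ for $x$ given $(b, V, a)$, yielding $V = V_{x,b}$ and $a = -\tfrac12 V_{x,b}(x)$, solvable because $b$ is invertible — and (b) the clean automorphism-transport argument. I would also remark that the formula for $e_+(x)(J_-)$ is just the specialization of \cref{Image e_+} to $s = t = 0$, so no new computation is needed there.
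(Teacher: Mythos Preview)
Your overall architecture matches the paper's: show every nonzero element of $I$ has nonzero $(-1)$-component, then conjugate by some $e_+(x)$ to land inside $J_-$ and invoke \cref{JordanInner,Inner containing J}. The ``moreover'' and uniqueness parts are also essentially as in the paper.

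There is, however, a genuine gap at the step you flag as the main obstacle. You write that identifying $x$ amounts to ``solving $e_+(x)(b_-) = b_- + V + a_+$ for $x$ \dots\ solvable because $b$ is invertible''. But this is not solvable for an arbitrary $V \in \Inst(J)$: the map $x \mapsto V_{x,b}$ from $J$ to $\Inst(J)$ is injective (via $U_b$), not surjective---indeed $\dim \Inst(J) = \dim J + \dim \Inder(J)$, and $\Inder(J)$ is typically nonzero. So ``$b$ invertible'' gives you nothing here. What makes the equation solvable is precisely that $V$ is \emph{not} arbitrary: it arises as the $0$-component of an element of a proper inner ideal. The paper extracts this constraint by the single computation
\[
[\,b_- + V + a_+ ,\,[\,b_- + V + a_+ ,\, b^{-1}_+\,]\,]
  = -b_- - V_{V(b^{-1}),b} + \bigl(V^2(b^{-1}) + a\bigr)_+,
\]
then adds the original element to get $\bigl(V - V_{V(b^{-1}),b}\bigr) + \bigl(V^2(b^{-1}) + 2a\bigr)_+ \in I$. \Cref{Inner01,JordanInner,Inner containing J} force this to vanish, yielding $V = V_{V(b^{-1}),b}$ and $a = -\tfrac{1}{2}V_{V(b^{-1}),b}(V(b^{-1}))$. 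In other words, the inner ideal property \emph{produces} the candidate $x = V(b^{-1})$; it is not found by inverting a linear map. This is exactly the ``grinding'' you hoped to sidestep, but it is one short computation and it is the crux of the argument---your ``slick way'' presupposes its conclusion.
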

\begin{proof}
	Since $J_-$ is clearly an inner ideal of $\LL$, we get that $e_+(x)(J_-)$ is an inner ideal for all $x\in J$, using $e_+(x)\in\Aut (\LL)$.
	
	Consider an arbitrary proper non-trivial inner ideal $I\neq J_+$ of $\LL$ and let $0\neq b_-+V+a_+\in I$ be arbitrary.
	If $b=0$, then \cref{Inner01} yields $V=0$.
	But this is also impossible since $I$ then contains $J_+$ and is, by \cref{Inner containing J}, equal to $\LL$.
	So $b\neq 0$ and $I$ contains
	\begin{align*}
	 [b_-+V+a_+,[b_-+V+a_+,b^{-1}_+]]&=[b_-+V+a_+,-\id+V(b^{-1})_+]\\
	 &=-b_--V_{V(b^{-1}),b}+(V^2(b^{-1})+a)_+ .
	\end{align*}
	Hence, $I$ contains $W+c_+:=(V-V_{V(b^{-1}),b})+(V^2(b^{-1})+2a)_+$.
	If $W=0$ and $c\neq 0$, \cref{JordanInner} and \cref{Inner containing J} imply $I=\LL$, a contradiction.
	If $W\neq 0$, then \cref{Inner01} implies $I=\LL$ which yields a contradiction.
	Hence $W=0$ and $c=0$ and thus $V=V_{V(b^{-1}),b}$ and 
	\[
	a=-\tfrac{1}{2}V_{V(b^{-1}),b}(V(b^{-1})) .
	\]
	Hence $(e_+(V(b^{-1})))^{-1}(I)\cap J_- \neq 0$ and by \cref{Inner containing J,JordanInner}, we get $(e_+(V(b^{-1})))^{-1}(I)=J_-$, or equivalently, $I=e_+(V(b^{-1}))(J_-)$.
	
	The uniqueness claim follows since $V_{x,b}=V_{y,b}$ implies $V_{x-y,b}=0$ and hence $x=y$, by \cref{V zero}.
\end{proof}

\begin{theorem}
\label{Set Inner are Moufang}
	Let $J$ be a Jordan division algebra. Then the set of all proper non-trivial inner ideals of $\LL=K(J)$ forms a Moufang set, with root groups 
	\begin{align*} 
		U_{J_+}&=E_+(J); \\
		U_{e_+(j)(J_-)}&=E_-(J)^{e_+(j)}, \quad \text{for all } j\in J .
	\end{align*}
	This Moufang set is isomorphic to the Moufang set $\mathbb{M}(J)$ as defined in \cite[\S 4]{DeMedts2006}.
\end{theorem}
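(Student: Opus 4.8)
The plan is to exhibit an explicit set $X$ together with root groups and then verify the two Moufang-set axioms directly, using the classification of inner ideals obtained in \cref{Set Inner ideals} and the group theory already developed in \cref{convention multiplication}. Concretely, I would take $X$ to be the set of all proper non-trivial inner ideals of $\LL = K(J)$; by \cref{Inner containing J} and \cref{Set Inner ideals} this set is precisely $\{J_+\} \cup \{ e_+(j)(J_-) \mid j \in J\}$, and the parametrization $j \mapsto e_+(j)(J_-)$ is a bijection by the uniqueness assertion in \cref{Set Inner ideals}. Thus $X$ is in natural bijection with $J \cup \{\infty\}$, with $\infty \leftrightarrow J_+$ and $j \leftrightarrow e_+(j)(J_-)$. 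Since $J$ is a division algebra it has at least two elements (in fact $1 \neq 0$), so $|X| \geq 3$ as required.

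The first axiom asks that each root group fixes its base point and acts sharply transitively on the complement. For $U_{J_+} = E_+(J)$: an element $e_+(b)$ maps $J_+$ to itself (it acts trivially on $\LL_1$ since $[b_+, J_+] = 0$), and by \cref{Image e_+}-type computations (here in the Jordan case, where $\SS = 0$ so $e_+(b) = \exp(\ad b_+)$) one has $e_+(b)\bigl(e_+(j)(J_-)\bigr) = e_+(b+j)(J_-)$ using the addition law $e_+(a)e_+(b) = e_+(a+b)$ from \cref{convention multiplication}. Hence $E_+(J)$ acts on $X \setminus \{J_+\}$ the way $J$ acts on itself by translation, which is sharply transitive, and $E_+(J) \cong (J,+)$. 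For the root group at a point $e_+(j)(J_-)$, we define it as the conjugate $E_-(J)^{e_+(j)}$; since conjugation by the automorphism $e_+(j)$ carries the geometry to itself, it suffices to treat $U_{J_-} = E_-(J)$. That $E_-(J)$ fixes $J_-$ and acts sharply transitively on $X \setminus \{J_-\}$ follows by the same kind of argument: one computes $e_-(c)(J_+)$ and $e_-(c)(e_+(j)(J_-))$, and the point is that $E_-(J)$ together with $E_+(J)$ generates a group acting transitively, while sharpness comes from the uniqueness in \cref{Set Inner ideals} (an automorphism in $E_-(J)$ fixing two distinct inner ideals must be the identity). I would phrase this last step so as to invoke the known structure of $\mathbb{M}(J)$ rather than recompute it.

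The cleanest route for the second axiom (conjugation-invariance of the family $\{U_x\}$) and for the final identification with $\mathbb{M}(J)$ is to compare directly with the construction in \cite[\S 4]{DeMedts2006}. There, $\mathbb{M}(J)$ is built on $J \cup \{\infty\}$ with $U_\infty$ the translations $x \mapsto x + b$ and $U_0$ obtained by conjugating $U_\infty$ with the ``$\tau$-map'' coming from the inversion $x \mapsto -x^{-1}$; the little projective group is generated by these. Our $E_+(J)$ matches $U_\infty$ under the bijection $X \cong J \cup \{\infty\}$ by the translation computation above, and $E_-(J)$ matches $U_0$ because the automorphism implementing the swap of $J_+$ and $J_-$ (the Weyl-group element of the TKK $5$-grading) induces exactly the inversion $j \mapsto -j^{-1}$ on the parametrizing set — this is a standard feature of the Jordan TKK construction and can be checked on $e_+(j)(J_-)$ using \cref{V zero} and \cref{V operator symmetry}. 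Once the two families of root groups are identified with those of $\mathbb{M}(J)$ and the parametrizations match, the Moufang-set axioms for our geometry follow from those for $\mathbb{M}(J)$, and the isomorphism is immediate. The main obstacle I anticipate is the bookkeeping in verifying $e_+(b)\bigl(e_+(j)(J_-)\bigr) = e_+(b+j)(J_-)$ and the analogous formula for $E_-(J)$ — purely a matter of pushing the explicit formula from \cref{Set Inner ideals} through the automorphism and recognizing the result, but one must be careful that the $\Inst(J)$-component and $J_+$-component reassemble correctly into $V_{b+j,\,\cdot}$ and its half-image; the identity $V_{x,b}(y) = V_{y,b}(x)$ valid for Jordan algebras is what makes this work.
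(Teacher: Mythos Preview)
Your proposal is correct and takes essentially the same approach as the paper: both use \cref{Set Inner ideals} to parametrize $X$ by $J \cup \{\infty\}$, verify sharp transitivity of $E_+(J)$ and $E_-(J)$ directly, and then handle the conjugation axiom and the identification with $\mathbb{M}(J)$ via the $\tau$/$\mu$-map structure (the paper cites \cite[Theorem 5.1.1]{Boelaert2019} for the relation $U_{J_-}^{e_+(x)} = U_{J_+}^{e_-(y)}$, which is exactly your ``Weyl element induces $j \mapsto -j^{-1}$'' computation). The only cosmetic difference is that the paper first verifies the Moufang axioms and then deduces the isomorphism via abstract rank one groups, whereas you propose to match with $\mathbb{M}(J)$ first and import the axioms; both routes rest on the same key ingredient.
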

\begin{proof}
	Let $X$ be the set of proper non-trivial inner ideals.
	By \cref{Set Inner ideals} we see that $U_{J_+}$ acts sharply transitively on the set of all proper non-trivial inner ideals different from $J_+$.
	Similarly, $U_{J_-}$ acts sharply transitively on the set of all proper non-trivial inner ideals different from $J_-$. 
	Clearly $E_-(J)^{e_+(j)}$ fixes $e_+(j)(J_-)$ and acts sharply transitively on the set of all other proper non-trivial inner ideals, for $j\in J$ arbitrary.\footnote{Recall \cref{convention multiplication}.}
	By definition of the root groups, we have $G^+=\langle E_-(J),E_+(J)\rangle$.
	So in order to prove $U_x^g=U_{x.g}$ for all $g\in G^+$ and $x\in X$ it suffices to show this for all $g\in E_-(J)$ and $x\in X$, since it is clear for all $g\in E_+(J)$ by construction.
	By the last equation in \cite[Theorem 5.1.1]{Boelaert2019} we find, for each $0\neq x\in J$, an element $y\in J$ such that $U_{J_-}^{e_+(x)}=U_{J_+}^{e_-(y)}$ .
	Since $U_{J_+}^{e_-(y)}$ fixes $e_-(y)(J_+)$ we get $U_{e_-(y)(J_+)}=U_{J_-}^{e_+(x)}=U_{J_+}^{e_-(y)}$.
	Now it is clear that $U_x^g=U_{x.g}$ for all $g\in E_-(J)$ and $x\in X$ as well.
	The last claim now follows since the corresponding abstract rank one groups coincide; see \cite[Lemma 1.1.12 and \S 5.1]{Boelaert2019}.
\end{proof}

\section{Moufang sets with non-abelian root groups}
\label{sec 5}

We continue with our investigation of structurable division algebras.
Throughout this section, $\A$ is a central simple structurable division algebra with $\SS\neq 0$.
Notice that \cref{ass:skew} is trivially satisfied.
Set $\LL=K(\A)$.

\begin{lemma}
\label{Set Assumption}
	$\A$ satisfies \cref{ass:V0}.
\end{lemma}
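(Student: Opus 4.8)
The plan is to show the hypothesis of \cref{ass:V0} is vacuous in this setting, i.e.\ that no nonzero $V\in\Inst(\A)$ can satisfy both $V^\delta(\SS)=0$ and $V^2=0$. Suppose for contradiction that such a $V$ exists. First I would use the decomposition $\Inst(\A)=\{T_x\mid x\in\A\}\oplus\Inder(\A)$ from \cref{S1 Decom Inst}, writing $V=T_x+D$ with $D$ an inner derivation. The condition $V^2=0$ says in particular that $V^2(a_+)=0$ for all $a$, which by the TKK bracket rules forces $V$ to act nilpotently on $\A_+$; similarly $V^\delta(\SS)=0$ controls the action on $\SS_+$. The key point is that in a \emph{division} algebra these vanishing conditions should be very rigid.

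The cleanest route I expect is to feed this $V$ back into the $5$-grading of $\LL=K(\A)$. Since $V\in\Inst(\A)=\LL_0$, we have $\ad_V^3=0$ would follow once we know $V$ acts nilpotently in a controlled way; more directly, observe that if $V\neq 0$ with $V^\delta(\SS)=0$ and $V^2=0$, then (as in the proof of \cref{Containing S}) for every $a\in\A$ and $0\neq s\in\SS$ the element $V(sa)_+=[V,[s_+,a_-]]$ lies in the span $\langle V\rangle$ is \emph{not} automatically true — rather, the relevant object is the inner ideal generated by $V$. Here I would instead argue: by \cref{Inner0}-type reasoning adapted to the structurable (non-Jordan) case, or directly, the set $\langle V\rangle$ being a $1$-dimensional inner ideal would make $V$ an extremal element of $\LL$; but then $V$ is an absolute zero divisor after a suitable conjugation, contradicting non-degeneracy of $K(\A)$ (\cref{A nondeg}). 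So the real content is: a nonzero $V$ with $V^2=0$ and $V^\delta(\SS)=0$ would span a $1$-dimensional inner ideal of $\LL$, and such things cannot exist by \cref{A nondeg}.

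Concretely, to show $\langle V\rangle$ is an inner ideal I must check $[V,[V,\LL]]\leq\langle V\rangle$. Grading-component by grading-component: $[V,[V,\LL_0]]\subseteq\LL_0$ and equals $[V,[V,W]]$ for $W\in\Inst(\A)$; $[V,[V,a_+]]=V^2(a)_+=0$; $[V,[V,s_+]]$ is governed by $V^\delta$, which vanishes; and the negative components are handled by \cref{V equiv with V^epsilon,V^2 equiv with (V^epsilon)^2} (since in a division algebra every nonzero element is conjugate invertible and every nonzero skew element is conjugate invertible, the hypotheses of those lemmas are met), giving $(V^\epsilon)^2=0$ and $V^{\epsilon\delta}(\SS)=0$ so that $[V,[V,a_-]]=0$ and $[V,[V,s_-]]=0$. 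Thus the only surviving component of $[V,[V,\LL]]$ lies in $\LL_0$, and it equals $\{[V,[V,W]]\mid W\in\Inst(\A)\}$. If this is contained in $\langle V\rangle$ then $\langle V\rangle$ is a $1$-dimensional (hence abelian) inner ideal, so $V$ is extremal and, by homogeneity under $E(\A)$ together with non-degeneracy (\cref{A nondeg}), one derives a contradiction; if instead $[V,[V,W]]\not\leq\langle V\rangle$ for some $W$, then $U:=[V,[V,W]]$ is a nonzero element of $\Inst(\A)$ not in $\langle V\rangle$, and — applying the same analysis to $U$ in place of $V$, using $U^\delta(\SS)=0$ and $U^2=0$ which follow from \cref{V^2 equiv with (V^epsilon)^2} and the grading as in the proof of \cref{Containing S} — we would again get $U^2=0$, contradicting the required $U^2\neq 0$. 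Either way the supposed bad $V$ cannot exist, so Assumption 3.9 holds vacuously.

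The main obstacle I anticipate is verifying cleanly that $V^2=0$ together with $V^\delta(\SS)=0$ forces the \emph{positive-degree} pieces of $[V,[V,\LL]]$ to vanish and the negative-degree pieces to vanish as well — i.e.\ that the only place anything can survive is $\LL_0$ — and then extracting the contradiction from non-degeneracy. This is exactly the computation carried out inside the proof of \cref{Containing S} (the paragraph beginning ``Now assume that $\A$ satisfies \cref{ass:V0}''), so the honest statement of the lemma's proof is: run that computation \emph{without} assuming \cref{ass:V0}, note that the chain of implications $V\neq0\Rightarrow V^2=0,V^\delta(\SS)=0\Rightarrow(V^\epsilon)^2=0$ produces a nonzero element generating a proper inner ideal that must be abelian and in fact extremal, contradicting \cref{A nondeg}. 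I would structure the writeup as: (1) reduce to a putative nonzero $V$ with $V^2=0$, $V^\delta(\SS)=0$; (2) derive $(V^\epsilon)^2=0$ and $V^{\epsilon\delta}(\SS)=0$ from \cref{V equiv with V^epsilon,V^2 equiv with (V^epsilon)^2}; (3) conclude $\langle V\rangle$ or $\langle V, [V,[V,W]]\rangle$ yields a contradiction with non-degeneracy of $K(\A)$ from \cref{A nondeg}.
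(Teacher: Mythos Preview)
Your overall strategy---show that no nonzero $V$ with $V^2=0$ and $V^\delta(\SS)=0$ can exist---matches the paper's. But both branches of your case split have genuine gaps.

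In your Case A (where $[V,[V,W]]\subseteq\langle V\rangle$ for all $W$), you conclude that $V$ is extremal and then claim this contradicts non-degeneracy via \cref{A nondeg}. It does not: extremal elements are \emph{not} absolute zero divisors, and non-degenerate Lie algebras routinely contain extremal elements (indeed, \cref{sec 7} of this very paper is built around them). So this branch does not close.

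In your Case B, you take $U:=[V,[V,W]]\notin\langle V\rangle$, argue that $U$ inherits the same vanishing properties as $V$ and hence $U^2=0$, and say this ``contradicts the required $U^2\neq 0$''. But nothing \emph{requires} $U^2\neq 0$; that is precisely the conclusion of \cref{ass:V0} you are trying to establish. Deducing $U^2=0$ for some particular $W$ is not a contradiction---it just means that $W$ fails the conclusion. This branch is circular.

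The paper's argument avoids the case split entirely by making one clever choice of $W$. After deducing $(V^\epsilon)^2=0$ from \cref{V^2 equiv with (V^epsilon)^2}, take any nonzero $a\in\A$ (conjugate invertible, since $\A$ is division) and write $\id=V_{a,\hat a}$. Then
\[
0=[V,[V,\id]]=[V,[V,V_{a,\hat a}]]=2V_{V(a),V^\epsilon(\hat a)},
\]
using $V^2(a)=0$ and $(V^\epsilon)^2(\hat a)=0$. Now the division hypothesis bites: every nonzero element is conjugate invertible, so by \cref{V equiv with V^epsilon} the vanishing of $V_{V(a),V^\epsilon(\hat a)}$ forces $V(a)=0$ or $V^\epsilon(\hat a)=0$, and the same lemma makes these equivalent. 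Hence $V(a)=0$ for all $a$, so $V=0$. The missing idea in your attempt is this specific choice $W=V_{a,\hat a}$ together with the first clause of \cref{V equiv with V^epsilon}, which converts $V_{x,y}=0$ into $x=0$ or $y=0$ in a division algebra.
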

\begin{proof}
	Consider $0\neq V\in\Inst(\A)$ arbitrary with $V^2=0$ and $V^\delta (\SS)=0$.
	By \cref{V^2 equiv with (V^epsilon)^2} we get $(V^\epsilon)^2=0$.
	Consider $a\in\A$ arbitrary, with $a\neq 0$.
	Then, using $V^2(a)=0$ and $(V^{\epsilon})^2(\hat a)=0$, we get  
	\[ 0=[V,[V,\id]]=[V,[V,V_{a,\hat a}]]=2V_{V(a),V^\epsilon(\hat a)}.\]
	Since $\A$ is division, \cref{V equiv with V^epsilon} implies $V(a)=0$ or $V^\epsilon (a)=0$.
	By the same lemma we get $V=0$, a contradiction. 
	Hence \cref{ass:V0} is trivially satisfied. 
\end{proof}

\begin{lemma}
\label{Moufang non S}
	The only inner ideal strictly containing $\SS_+$ is $\LL$ itself.
\end{lemma}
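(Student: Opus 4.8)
The plan is to argue by contradiction, feeding the hypotheses straight into the reduction machinery of \cref{sec 3}. Suppose $I$ is a proper inner ideal of $\LL = K(\A)$ that strictly contains $\SS_+$ (note $\SS_+$ is genuinely an inner ideal, since the bracket relations of \cref{def:Lie alg} give $[\SS_+,\LL]\subseteq \SS_+\oplus\A_+\oplus\Inst(\A)$ and then $[\SS_+,[\SS_+,\LL]]\subseteq\SS_+$, so the statement is not vacuous). Because $\A$ is a central simple structurable division algebra with $\SS\neq 0$, \cref{ass:skew} holds trivially, and by \cref{Set Assumption} the technical \cref{ass:V0} holds as well. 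Hence I may apply \cref{Containing S}: it yields $I = I_0\oplus I_1\oplus\SS_+$ with $I_0\leq\Inst(\A)$ an inner ideal and $0\neq I_1\leq\A_+$, and since \cref{ass:V0} is satisfied, $I_0 = 0$. So $\SS_+\subsetneq I\leq \A_+\oplus\SS_+ = \LL_1\oplus\LL_2$, and in particular $I$ contains some $a_+$ with $a\in\A\setminus\{0\}$.

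Next I would produce a nonzero element of $\LL_0 = \Inst(\A)$ inside $I$, contradicting $I\leq\LL_1\oplus\LL_2$. Pick any $s\in\SS\setminus\{0\}$; as $\A$ is a division algebra, both $a$ and $s$ are conjugate invertible. The inner ideal property forces $[a_+,[a_+,s_-]]\in I$, and a one-line computation with the bracket relations of \cref{def:Lie alg} (namely $[s_-,a_+] = (sa)_-$ and $[a_+,b_-] = V_{a,b}$) shows that $[a_+,[a_+,s_-]] = -V_{a,sa}\in\Inst(\A)$. Comparing $5$-grading components, this element lies in $I\cap\LL_0 = 0$, so $V_{a,sa} = 0$. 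But \cref{a invertible V_{a,sa}} says $V_{a,sa}\neq 0$ whenever $a$ and $s$ are conjugate invertible — a contradiction. Therefore no proper inner ideal strictly contains $\SS_+$, i.e.\ the only inner ideal strictly containing $\SS_+$ is $\LL$ itself.

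I do not expect a genuine obstacle here: the substantive work is already contained in \cref{Containing S} and in \cref{Set Assumption}, and the remaining argument is just a grading/sign bookkeeping check on the bracket $[a_+,[a_+,s_-]]$. The only point requiring a moment's care is invoking the earlier results in the correct order — first reduce the shape of $I$ via \cref{Containing S} (which needs \cref{ass:V0}, hence \cref{Set Assumption}), and only then exploit the division property through \cref{a invertible V_{a,sa}}.
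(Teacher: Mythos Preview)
Your proof is correct, but it takes a different route from the paper's. The paper also starts from \cref{Containing S}, but only uses its first conclusion (that some nonzero $a_+\in I$), not the stronger $I_0=0$ part, and therefore does not need \cref{ass:V0}/\cref{Set Assumption} at this point. Instead of bracketing with $s_-\in\SS_-$, the paper brackets with $b_-\in\A_-$: the element $[a_+,[a_+,b_-]]=-U_a(b)_+$ lies in $\LL_1$, and since $a$ is conjugate invertible $U_a$ is invertible, so $\A_+\leq I$. Then properness of $I$ forces $I$ to be abelian (\cref{Inner abelian}), hence $\psi(\A,\A)_+=[\A_+,\A_+]\leq [I,I]=0$, contradicting \cref{psi non-degenerate}. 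Your argument, by contrast, brackets with $s_-$ to land in $\LL_0$ and then invokes $I\cap\LL_0=0$, which genuinely requires the $I_0=0$ clause of \cref{Containing S} and hence \cref{Set Assumption}; the payoff is that the contradiction comes directly from \cref{a invertible V_{a,sa}} without passing through abelianness or non-degeneracy of~$\psi$. Both approaches are short and clean; the paper's is marginally more economical in its hypotheses, while yours leverages machinery that has just been set up anyway.
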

\begin{proof}
	Let $I$ be a proper inner ideal containing $\SS_+$ properly.
	By \cref{Containing S} $a_+\in I$.
	Thus $I$ also contains $[a_+,[a_+,b_-]]=U_a(b)_+$.
	Since $0\neq a$ is conjugate invertible $U_a$ is invertible, we get $\A_+\leq I$.
	Hence $\psi(\A,\A)=[\A_+,\A_+]\leq [I,I]=0$, we get a contradiction by \cref{psi non-degenerate}. 
\end{proof}

\begin{proposition}
\label{Moufang skews inner}
	If $I$ is an inner ideal of $\LL$ distinct from $\mathcal S_+$, then $I=e_+(a,s)(\SS_-)$, for unique $a\in\mathcal A$, $s\in\mathcal S$.
\end{proposition}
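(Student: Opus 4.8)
The plan is to use the reduction machinery of \cref{sec 3} to move $I$ onto $\SS_+$ by an automorphism, and then to read off the $E(\A)$\dash orbit of $\SS_+$ from the rank one group structure of $\bigl(E(\A),E_+(\A),E_-(\A)\bigr)$ established in \cite{Boelaert2019}.

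\emph{Reduction.} Since $\A$ is a structurable division algebra, \cref{ass:skew} holds trivially and, by \cref{Set Assumption}, so does \cref{ass:V0}; hence \cref{Reducing to S_+} applies to the (proper non\dash trivial) inner ideal $I$ and produces $\varphi\in E(\A)$ with $\SS_+\leq\varphi(I)$. As $\varphi(I)$ is again proper, \cref{Moufang non S} forces $\varphi(I)=\SS_+$, so $I=\varphi^{-1}(\SS_+)$ with $\varphi\in E(\A)$; in particular $I$ is a minimal inner ideal, being an automorphic image of the minimal inner ideal $\SS_+$ (\cref{Inner containing s}). It thus remains to determine the orbit $E(\A)\cdot\SS_+$ and to show that its elements different from $\SS_+$ are parametrised bijectively by $(a,s)\in\A\times\SS$ via $e_+(a,s)(\SS_-)$.

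\emph{The orbit.} First, $E_+(\A)$ fixes $\SS_+=\LL_2$ pointwise, because $[\A_+,\SS_+]=[\SS_+,\SS_+]=0$, and dually $E_-(\A)$ fixes $\SS_-=\LL_{-2}$ pointwise. Also $\SS_-$ is a proper non\dash trivial (minimal) inner ideal, so by the reduction step it lies in $E(\A)\cdot\SS_+$; applying $E_+(\A)$ yields $\{\SS_+\}\cup\{e_+(a,s)(\SS_-)\mid a\in\A,\ s\in\SS\}\subseteq E(\A)\cdot\SS_+$. For the reverse inclusion I would invoke \cite{Boelaert2019}: $\bigl(E(\A),E_+(\A),E_-(\A)\bigr)$ is the rank one group of the Moufang set $\mathbb M(\A)$; a $\mu$\dash map $\mu\in E(\A)$ reverses the $5$\dash grading of $K(\A)$, so $\mu(\SS_+)=\SS_-$, whereas the Hua subgroup $H\leq E(\A)$ preserves the $5$\dash grading and normalises $E_+(\A)$, so $H$ fixes both $\SS_+$ and $\SS_-$ and $\operatorname{Stab}_{E(\A)}(\SS_+)\supseteq E_+(\A)\rtimes H=:P$. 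The Bruhat decomposition $E(\A)=P\ \sqcup\ E_+(\A)\,\mu\,P$ then shows that any $\varphi^{-1}\notin\operatorname{Stab}_{E(\A)}(\SS_+)$ lies in $E_+(\A)\mu P$, and a short bookkeeping computation using the three displayed properties of $E_+(\A)$, $\mu$ and $H$ gives $\varphi^{-1}(\SS_+)=e_+(a,s)(\SS_-)$ for suitable $(a,s)$; if instead $\varphi^{-1}\in\operatorname{Stab}_{E(\A)}(\SS_+)$, then $I=\SS_+$, which is excluded.

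\emph{Uniqueness.} If $e_+(a,s)(\SS_-)=e_+(a',s')(\SS_-)$, then $e_+(a',s')^{-1}e_+(a,s)$ belongs to $E_+(\A)\cap\operatorname{Stab}_{E(\A)}(\SS_-)$; this intersection is trivial, since $E_+(\A)$, the root group at $\SS_+$, acts sharply transitively on the remaining points of $\mathbb M(\A)$ — among them $\SS_-$ — hence freely. By \cref{convention multiplication} this forces $(a,s)=(a',s')$. Finally $e_+(a,s)(\SS_-)\neq\SS_+$: by \cref{Image e_+}, $e_+(a,s)(t_-)$ has $\LL_{-2}$\dash component $t_-$, so $e_+(a,s)(\SS_-)\not\subseteq\LL_2=\SS_+$. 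Together with the reduction step this proves the proposition. The delicate part is the second paragraph: transporting the abstract rank one / Moufang set picture of \cite{Boelaert2019} to the concrete action of $E(\A)$ on the inner ideals of $K(\A)$ — concretely, recognising $\operatorname{Stab}_{E(\A)}(\SS_+)$ as a ``Borel'' $E_+(\A)\rtimes H$ and pinning down the grading behaviour of the $\mu$\dash maps and of the Hua subgroup. Everything else is either the reduction theorem of \cref{sec 3} or routine verification.
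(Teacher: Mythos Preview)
Your reduction step and overall strategy are sound, and the argument is correct, but it takes a genuinely different route from the paper's own proof.

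The paper does not invoke the Bruhat decomposition of the rank one group at all. Instead, after the same reduction to $I\in E(\A)\cdot\SS_+$, it shows directly that the set $S:=\{\SS_+\}\cup E_+(\A)(\SS_-)$ is $E_-(\A)$\dash stable (and hence $E(\A)$\dash stable, since $E_+(\A)$ visibly preserves $S$). Concretely, for each $e_-(a,s)$ with $(a,s)\neq(0,0)$ it exhibits an element of $e_-(a,s)(\SS_+)$ with non\dash zero $\LL_{-2}$\dash component, via a short case analysis using \cref{Image e_+} and the division hypothesis (through $\psi(a,U_a(sa))\neq 0$), and then applies \cref{-2 -1 0 1 2} to land in $E_+(\A)(\SS_-)$. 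The uniqueness is read off from the explicit formula for $e_+(a,s)(t_-)$: the $\LL_{-1}$\dash component $(-ta)_-$ determines $a$, and the $\LL_0$\dash component $L_sL_t$ determines $s$, because non\dash zero $t\in\SS$ is conjugate invertible.

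Your approach instead imports the full rank one group structure from \cite{Boelaert2019}: you need that the $\mu$\dash maps reverse the $5$\dash grading and that the Hua subgroup preserves it, so that $H$ fixes $\SS_\pm$ and $\mu(\SS_+)=\SS_-$. These facts are indeed available there, so the argument is legitimate and conceptually cleaner; the trade\dash off is that the paper's computation is entirely self\dash contained modulo \cref{sec 3}.

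One small point: your uniqueness argument via sharp transitivity on $\mathbb M(\A)$ is mildly circular, since identifying $\SS_-$ with a point of $\mathbb M(\A)$ is precisely what this proposition is setting up. It is cleaner (and no harder) to argue as the paper does: if $e_+(b,t)(\SS_-)=\SS_-$, then for every $u\in\SS$ the element $e_+(b,t)(u_-)$ lies in $\SS_-$, so its $\LL_{-1}$\dash component $(-ub)_-$ and $\LL_0$\dash component $L_tL_u$ vanish, forcing $b=0$ and $t=0$.
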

\begin{proof}
	Consider $0\neq a\in\A$ and $0\neq s\in\SS$ arbitrary.
	Since $\A$ is division, \cite[Lemma 3.3.4(ii)]{Boelaert2019} shows that $\psi(a,U_a(sa))\neq 0$.
	By \cref{Reducing to S_+,Set Assumption,Moufang non S} it suffices to show $E(\A)(\SS_+)=\{\SS_+\}\cup E_+(\A)(\SS_-)=:S$.
	Since $E_+(\A)(S)=S$ it suffices to show $E_-(\A)(S)=S$.
	
	Consider $a\in\A$ and $s\in\SS$ arbitrary with $(a,s)\neq (0,0)$.
	We show that $e_-(a,s)(\SS_+)\in E_+(\A)(\SS_-)$.
	By \cref{-2 -1 0 1 2} it suffices to show that $e_-(a,s)(\SS_+)$ has an element with non-zero $(-2)$-component.
	Recall \cref{Image e_+} for a precise description of the elements inside $e_-(a,s)(\SS_+)$.
	Assume first $s=0$, then $e_-(a,0)(t_+)$ has $(-2)$-component $\frac{1}{24}\psi (a,U_a(ta))$ which is non-zero as soon as $t$ is non-zero, as noted before.
	Assume now $a=0$, then $e_-(0,s)(\hat s_+)$ has non-zero $(-2)$-component $s$.
	So we may assume $a\neq 0$ and $s\neq 0$.
	Then $e_-(a,s)(\hat s_+)$ has $(-2)$-component $s+\frac{1}{24}\psi (a,U_a(\hat s a))$.
	If this is non-zero we are done, hence assume it is zero.
	Note that the $(-1)$-component equals $b:=a+\frac 1 6 U_a(\hat s a)$.
	If this component is also $0$, we would get $\frac{1}{24}\psi (a,U_a(\hat s a))=\frac{1}{4}\psi (a,-a)=0$ and hence $s=0$, a contradiction. 
	Now note that the inner ideal $e_-(a,s)(\SS_+)$ contains $[e_-(a,s)(\hat s_+),[e_-(a,s)(\hat s_+),V_{t b,b}]]$, for $0\neq t\in\SS$.
	This element has $(-2)$-component $\psi (b,U_b(tb))$, which is non-zero.
	So clearly $E_-(\A)(\SS_+)\leq S$.
	This argument also shows that for any $a\in\A$ and $s\in\SS$ with $(a,s)\neq (0,0)$ we have $e_+(a,s)(\SS_-)\in E_-(\A)(\SS_+)$.
	Together with $e_-(\A)(\SS_-)=\SS_-$ this shows $E_-(\A)(S)=S$.
	
	The uniqueness claim follows from the fact that $ta=tb$ implies $a=b$  and $L_sL_t=L_{s'}L_t$ implies $s=s'$ for non-zero $t\in\SS$, since $t$ is conjugate invertible.
\end{proof}

\begin{theorem}
\label{non-ab Moufang}
	Let $\A$ be a structurable division algebra with $\mathcal S\neq 0$. Then the set of all proper non-trivial inner ideals of $\LL=K(\A)$ forms a Moufang set, with root groups 
	\begin{align*} 
		U_{\SS_+}&=E_+(\A);\\
		 U_{e_+(a,s)(\SS_-)}&=E_-(\A)^{e_+(a,s)}, \quad \text{for all } a\in\A, s\in\SS.
	\end{align*}
	This Moufang set is isomorphic to the Moufang set $\mathbb{M}(\A)$ as defined in \cite[Theorem~5.1.6]{Boelaert2019}.
\end{theorem}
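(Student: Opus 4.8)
\emph{Proof proposal.} The plan is to mimic the proof of \cref{Set Inner are Moufang}, with \cref{Moufang skews inner} and \cref{Moufang non S} (together with \cref{Set Assumption} and \cref{Reducing to S_+}) now playing the role that \cref{Set Inner ideals} and \cref{Inner containing J} played in the Jordan case. Write $X$ for the set of proper non-trivial inner ideals of $\LL$. First I would record that $\SS_+\in X$ (it is an inner ideal because $[\SS_+,[\SS_+,\LL]]\leq\SS_+$ is immediate from \cref{def:Lie alg}), that each element of $E_+(\A)$ fixes $\SS_+$ pointwise (since $\ad(a_++s_+)$ annihilates $\SS_+$), and that by \cref{Moufang skews inner} every element of $X$ is either $\SS_+$ or of the form $e_+(a,s)(\SS_-)$ for a unique pair $(a,s)\in\A\times\SS$ (note $\SS_-=e_+(0,0)(\SS_-)$); in particular the two formulas in the statement well-define a root group $U_Y$ for each $Y\in X$. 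From the uniqueness clause of \cref{Moufang skews inner} it follows that the stabiliser of $\SS_-$ in $E_+(\A)$ is trivial, so $U_{\SS_+}=E_+(\A)$ fixes $\SS_+$ and acts sharply transitively on $X\setminus\{\SS_+\}$. By the symmetric statement (interchange $\A_+,\SS_+$ with $\A_-,\SS_-$), the group $E_-(\A)$ fixes $\SS_-$ and acts sharply transitively on $X\setminus\{\SS_-\}$; conjugating by $e_+(a,s)\in E(\A)$ then shows that $U_{e_+(a,s)(\SS_-)}=E_-(\A)^{e_+(a,s)}$ fixes $e_+(a,s)(\SS_-)$ and acts sharply transitively on its complement. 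This is the first Moufang-set axiom.

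For the second axiom, note that $G^+:=\langle U_Y\mid Y\in X\rangle=\langle E_+(\A),E_-(\A)\rangle=E(\A)$, since $E_+(\A)=U_{\SS_+}$ and $E_-(\A)=U_{\SS_-}$ are root groups while every $U_{e_+(a,s)(\SS_-)}$ is an $E(\A)$-conjugate of $E_-(\A)$. Since the elements $g$ with $U_Y^g=U_{Y.g}$ for all $Y$ form a subgroup of $G^+$, it suffices to treat $g\in E_+(\A)\cup E_-(\A)$. The case $g\in E_+(\A)$ is purely formal: one has $U_{\SS_+}^g=E_+(\A)=U_{\SS_+.g}$, and for $Y=e_+(a,s)(\SS_-)$ the multiplication law of \cref{convention multiplication} gives $U_Y^g=E_-(\A)^{e_+(a,s)g}$ with $e_+(a,s)g\in E_+(\A)$ and $Y.g=(e_+(a,s)g)(\SS_-)$, so the two sides agree.

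For $g\in E_-(\A)$ I would invoke the structurable-division-algebra analogue of the identity from \cite[Theorem 5.1.1]{Boelaert2019} used in the Jordan case --- now obtained from \cite[Theorem 5.1.6]{Boelaert2019} and the accompanying $\mu$-map machinery --- namely that every point $P\in X\setminus\{\SS_+,\SS_-\}$ admits, besides $U_P=E_-(\A)^{e_+(a,s)}$, a second description $U_P=E_+(\A)^{e_-(b,t)}$, where $P=e_+(a,s)(\SS_-)=e_-(b,t)(\SS_+)$ (the common fixed point being forced, since a root group has a unique fixed point). With both descriptions at hand, the verification for $g\in E_-(\A)$ runs exactly as for $g\in E_+(\A)$, using closure of $E_-(\A)$ under the group multiplication, together with the two boundary cases $Y=\SS_-$ (where $g$ fixes $\SS_-$) and $Y=\SS_+$ (where $g=e_-(b,t)$ sends $\SS_+$ to the matching $e_+(a,s)(\SS_-)$ and $E_+(\A)^{e_-(b,t)}=E_-(\A)^{e_+(a,s)}$). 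This establishes both Moufang-set axioms. Finally, the stated isomorphism with $\mathbb{M}(\A)$ follows because in both constructions the associated abstract rank one group is $\langle E_+(\A),E_-(\A)\rangle$ acting on the same set; see \cite[Lemma~1.1.12 and \S 5.1]{Boelaert2019}.

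The step for $g\in E_-(\A)$ is the main obstacle: it is the single place where formal bookkeeping stops and genuine input about $\mathbb{M}(\A)$ from \cite{Boelaert2019} --- the $\mu$-map identity $E_-(\A)^{e_+(a,s)}=E_+(\A)^{e_-(b,t)}$ --- is required. Two minor points of care remain: the conventions $fg=g\circ f$ and $x.f=f(x)$ from \cref{convention multiplication} must be tracked so that $U_Y^g$ really fixes $Y.g$, and one should confirm that the symmetric version of \cref{Moufang skews inner} is available, so that $E_-(\A)$ is indeed sharply transitive on $X\setminus\{\SS_-\}$.
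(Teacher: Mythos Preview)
Your proposal is correct and follows essentially the same approach as the paper: the paper's proof is literally a \emph{mutatis mutandis} copy of the proof of \cref{Set Inner are Moufang}, replacing \cref{Set Inner ideals} by \cref{Moufang skews inner} and the reference to \cite[Theorem~5.1.1]{Boelaert2019} by the structurable-division analogue, which is precisely what you do. Your write-up is more explicit than the paper's one-line proof, but the logical skeleton---sharp transitivity from \cref{Moufang skews inner}, reduction of the conjugation axiom to $g\in E_-(\A)$, and the $\mu$-map identity from \cite{Boelaert2019} to handle that case---is identical.
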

\begin{proof}
	The proof of this claim is a \emph{mutatis mutandis} copy of the proof of \cref{Set Inner are Moufang}, replacing $J$ with $\mathcal A$, $e_+(j)$ with $e_+(a,s)$ etc., using \cref{Moufang skews inner} in place of \cref{Set Inner ideals}.
\end{proof}

\section{Moufang triangles}
\label{sec 6}

We now proceed to the next case, which will give rise to Moufang triangles, i.e., to Moufang projective planes.
In fact, this case will be somewhat peculiar: the geometry we will obtain, will be the \textit{dual double} of a projective plane (which is, in fact, a \textit{thin} generalized hexagon).
Likewise, the structurable algebras we will use will be the double of the division algebra coordinatizing the projective plane. (Notice, however, that they are equipped with an involution exchanging the two components; they are still central simple, as we will show below.)

\begin{remark}\label{rem:dualdouble}
    Notice that it is impossible to find the Moufang triangles directly as the geometry of the poset of all inner ideals of the TKK Lie algebra of any central simple structurable algebra.
    Indeed, the inner ideals $\SS_+$ and $\SS_-$ would either both correspond to points or both to lines.
    In the former case, there cannot be a proper inner ideal containing both $\SS_+$ and $\SS_-$ (since such an inner ideal would have to be abelian).
    In the latter case, the intersection of $\SS_+$ and $\SS_-$ is trivial and hence there would be no point incident with both lines.
    (A similar argument applies if the structurable algebra is a Jordan algebra.)
\end{remark}

\begin{construction}
\label{construction exchange algebra}
	Let $F$ be an alternative division algebra over a field of characteristic different from $2$ and $3$.
	Set $k=Z(F)$.
	Then $\A:=F\oplus F$ is a $k$-algebra, with multiplication
	\[ (a,b).(c,d)=(ac,db),\]
	for all $a,b,c,d\in F$.
	This algebra has the involution 
	\[ (x,y)\mapsto (y,x).\]
	In particular, the subspace of skew elements is
	\begin{equation}\label{triangle S}
	   \SS = \{ (x, -x) \mid x \in F \} .
	\end{equation}
	Set $\LL=K(\A)$.
\end{construction}

\begin{lemma}[{\cite[(9.15), (9.22)]{Tits2002}}, {\cite[(3.4)]{Schafer1966}}]
\label{alternative identities}
For any alternative algebra $F$ we have: 
\begin{enumerate}[\rm (i)]
	\item $[e_{\sigma (1)},e_{\sigma (2)},e_{\sigma (3)}]=\sign (\sigma)[e_1,e_2,e_3]$, for all $e_1, e_2, e_3\in F$ and $\sigma\in \Sym(3)$.
	\item\label{alt-id-inv} $e^{-1}(ef)=f=(fe)e^{-1}$, for all $e \in F^\times$ and all $f\in F$.
	\item $(fef)g=f(e(fg))$, for all $e,f,g\in F$.
\end{enumerate}
\end{lemma}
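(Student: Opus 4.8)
These three identities are classical facts about alternative algebras, quoted in the statement with references, so the plan is essentially to invoke those; but here is the argument I would give. Throughout, write $(x,y,z):=[x,y,z]=(xy)z-x(yz)$ for the associator, so that ``$F$ is alternative'' means exactly $(x,x,y)=(y,x,x)=0$ for all $x,y\in F$. The one tool behind all three parts is that in an alternative algebra the associator is an \emph{alternating} trilinear map; I would prove this first, since (i) is just its restatement and (ii) follows from its standard corollary.

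For (i): the associator is trilinear, so linearising $(x,x,y)=0$ via $x\mapsto x+z$ gives $(x,z,y)+(z,x,y)=0$, i.e.\ the associator changes sign under transposing its first two arguments; similarly $(y,x,x)=0$ gives a sign change under transposing the last two. Since the transpositions $(1\,2)$ and $(2\,3)$ generate $\Sym(3)$, the associator is alternating, which is precisely $[e_{\sigma(1)},e_{\sigma(2)},e_{\sigma(3)}]=\sign(\sigma)[e_1,e_2,e_3]$; in particular, transposing the two \emph{outer} arguments and using $\Char k\neq 2$ yields the flexible law $(x,y,x)=0$, which I will use in (iii).

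For (ii): the alternating property gives Artin's theorem---any subalgebra of an alternative algebra generated by two elements is associative---in the usual way. Fix $e\in F^\times$ and $f\in F$. Since $F$ is a division algebra, the subalgebra $k[e]$ (commutative and associative, being generated by a single element) is a finite-dimensional integral domain over $k$, hence a field, so $e^{-1}\in k[e]$; consequently $e$, $e^{-1}$ and $f$ all lie in the associative subalgebra of $F$ generated by $e$ and $f$. Therefore $e^{-1}(ef)=(e^{-1}e)f=f$ and $(fe)e^{-1}=f(ee^{-1})=f$.

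For (iii): this is the \emph{left Moufang identity} $L_{fef}=L_fL_eL_f$, where $fef:=(fe)f=f(ef)$ is unambiguous by the flexible law. Expanding $((fe)f)g-f(e(fg))$ into associators and using $(f,e,f)=0$ rewrites this difference as $(f,ef,g)+f(e,f,g)$, so (iii) is equivalent to the single relation $f(e,f,g)=-(f,ef,g)$. This is the only one of the three statements that is not a one-line consequence of the alternating property, and proving it is where the real work lies: I would obtain it from (i) together with the Teichmüller identity $(ab,c,w)-(a,bc,w)+(a,b,cw)=(a,b,c)w+a(b,c,w)$ (which holds in every algebra) by the standard sequence of associator manipulations killing the alternator and flexible terms; equivalently, and more economically, I would simply invoke \cite[(3.4)]{Schafer1966} (see also \cite[(9.22)]{Tits2002}).
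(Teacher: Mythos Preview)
The paper gives no proof of this lemma at all: it is stated with the citations to \cite{Tits2002} and \cite{Schafer1966} and used as a black box. Your proposal therefore goes well beyond what the paper does, and your arguments for (i) and (iii) are the standard ones and are correct (your reduction of (iii) to $f(e,f,g)=-(f,ef,g)$ via flexibility is exactly right, and deferring the remaining associator calculus to Schafer is entirely appropriate).

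One small point on (ii): your argument that $e^{-1}\in k[e]$ uses that $F$ is a finite-dimensional division algebra, whereas the lemma is phrased ``for any alternative algebra $F$''. In full generality one instead derives $L_{e^{-1}}=L_e^{-1}$ from the Moufang identity (setting $f=e$, then $f=e^{-1}$, in (iii)) or from Artin's theorem applied to the pair $\{e,e^{-1}\}$ together with a separate injectivity argument for $L_e$. That said, the paper only ever applies this lemma to the alternative \emph{division} algebra $F$ of Construction~\ref{construction exchange algebra}, and under the paper's standing finite-dimensionality assumption your argument is perfectly valid in that context.
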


\begin{lemma}
\label{triangle central simple structurable}
	$\A$ is a central simple structurable algebra over $k$.
\end{lemma}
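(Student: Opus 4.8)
The plan is to verify the three defining properties of a central simple structurable algebra in turn: that $\A$ is structurable, that it is simple, and that its centre equals $k1$.

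For structurability, I would first observe that, rewriting the second copy of $F$ with its opposite product, $\A$ is exactly the algebra $F\oplus F^{\mathrm{op}}$ equipped with the exchange involution $(x,y)\mapsto(y,x)$. Since $F^{\mathrm{op}}$ is again alternative and a direct sum of alternative algebras is alternative, $\A$ is an alternative algebra, and the exchange map is immediately seen to be an involution. I would then invoke the classical fact that every alternative algebra with involution is structurable (see \cite{Allison1978}), which gives structurability at once. If one prefers a self-contained argument, the defining identity $[V_{x,y},V_{z,w}]=V_{V_{x,y}(z),w}-V_{z,V_{y,x}(w)}$ can be checked directly: evaluating $V_{(a,b),(c,d)}$ on a general element $(e,f)$ with the multiplication rule shows that $V_{(a,b),(c,d)}$ stabilizes each of $F\oplus 0$ and $0\oplus F$, so the identity need only be checked on those two subspaces, where it unwinds to a list of alternative-algebra identities provable from the alternating sign rule for associators and the Moufang identity $(fef)g=f(e(fg))$ of \cref{alternative identities}. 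I expect this direct verification to be the only genuinely computational part, and hence the main obstacle if one avoids quoting the general result.

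For simplicity, I would use that $e_1:=(1,0)$ and $e_2:=(0,1)$ are central idempotents of $\A$ with $e_1+e_2=1$ and $e_1e_2=0$, so that $\A=e_1\A\oplus e_2\A$ as an algebra with block-diagonal multiplication, each summand being isomorphic to $F$. Every ideal $I$ of $\A$ then splits as $I=(I\cap e_1\A)\oplus(I\cap e_2\A)$ with each factor a two-sided ideal of a copy of $F$; since $F$ is a division algebra, the only possibilities are $0$, $e_1\A$, $e_2\A$ and $\A$. The exchange involution interchanges $e_1\A$ and $e_2\A$, so only $0$ and $\A$ are stable under the involution, and $\A$ is simple.

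For centrality, from \eqref{triangle S} the hermitian part is $\HH=\{(a,a)\mid a\in F\}$, so I would take $z=(a,a)$ and suppose $z\in Z(\A)$. Using the decomposition $\A=e_1\A\oplus e_2\A$ (and that $F$ and $F^{\mathrm{op}}$ share the same nucleus), each of the conditions $[z,\A]=0$, $[z,\A,\A]=0$, $[\A,z,\A]=0$, $[\A,\A,z]=0$ descends to the corresponding condition on $a$ in $F$: the commutator condition forces $a$ to commute with all of $F$, and any one of the associator conditions forces $a$ to lie in the nucleus of $F$ (in an alternative algebra all three associator conditions are equivalent to nuclearity of $a$). Hence $a\in Z(F)=k$, so $Z(\A)=k1$; combining the three parts, $\A$ is a central simple structurable algebra over $k$.
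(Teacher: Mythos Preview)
Your proof is correct and follows the same overall three-part structure (structurable, simple, central) as the paper, with simplicity and centrality handled essentially identically. The one genuine difference is in the structurability step: you observe that $\A\cong F\oplus F^{\mathrm{op}}$ is alternative and then either appeal to the general fact that alternative algebras with involution are structurable or propose a direct verification of the $V$-identity; the paper instead invokes Allison's criterion \cite[Theorem~13]{Allison1978}, which (together with alternativity) reduces the check to the single identity $D_{a^2,a}(\HH)=0$ for $a\in\HH$, and this is immediate from the Moufang identity $e(e^2f)=e(e(ef))=(ee^2)f$. The paper's reduction via Theorem~13 is quicker than your proposed direct verification of the full $[V_{x,y},V_{z,w}]$ identity, while your observation that $\A$ is alternative (which the paper uses implicitly but never states) makes the whole situation more transparent; if the general ``alternative with involution $\Rightarrow$ structurable'' statement is indeed quotable from \cite{Allison1978}, your first route is the shortest of all.
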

\begin{proof}
    In principle, this follows by showing that $\A$ is isomorphic to one of the known central simple structurable algebras (distinguishing between whether $F$ is associative or not and relying on the classification of alternative division algebras), but we believe that the following short and direct proof is instructive. However, see \cref{F+F isom} below.

	By \cite[Theorem 13]{Allison1978} and \cref{alternative identities} it suffices to show $D_{a^2,a}(\HH)=0$ for all $a\in\HH$.
	Since $F$ is power-associative, $[a,a^2]=0$.
	Hence $D_{(e,e),(e,e)^2}(f,f)=2[(e,e),(e^2,e^2),(f,f)]$, for all $e,f\in F$.
	Now $e(e^2f)=e(e(ef))=(ee^2)f$ by \cref{alternative identities}.
	Hence $D_{a^2,a}(\HH)=0$ and $\A$ is a structurable algebra.
	
	Since $k=Z(F)$, we get $Z(\A)\leq k\oplus k$.
	The fact that $Z(\A)$ is contained in $\HH$ shows $Z(\A)=k1$.
	Let $I$ be an ideal of $\A$.
	Consider $0\neq (e,f)\in I$ arbitrary. 
	If $e\neq 0$, then $e$ is invertible and hence $(g,0)=(ge^{-1},0)(e,f)\in I$, for all $g\in F$, by \cref{alternative identities}.
	Since $I$ is closed under the involution, we get $I=\A$. 
	Similarly if $f\neq 0$.
\end{proof}

\begin{remark}\label{F+F isom}
	If $F$ is a \emph{composition} (division) algebra, then $\A$ is isomorphic to $F\otimes (k\oplus k)$, where $k\oplus k$ is the split $2$-dimensional composition algebra with involution $(a,b)\mapsto (b,a)$.
	Indeed, mapping $(e,f)\in \A$  to the element $e\otimes (1,0)+\overline f\otimes (0,1)$ of $F\otimes (k\oplus k)$ yields an isomorphism of structurable algebras.
	
	If $F$ is an \emph{associative} (division) algebra, then $\A$ is a central simple associative algebra with involution, and hence belongs to one of the (classical) classes of structurable algebras. 
\end{remark}

\begin{lemma}
\label{triangle invertiblity}
	All elements $(e,f)\in\A$ with $e,f\neq 0$ are conjugate invertible, with $\widehat{(e,f)} = (f^{-1},e^{-1})$.
\end{lemma}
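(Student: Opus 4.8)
The plan is to verify directly that $V_{u,v} = \id$ for $u := (e,f)$ and $v := (f^{-1}, e^{-1})$; since the conjugate inverse of a conjugate invertible element is uniquely determined, this shows at once that $(e,f)$ is conjugate invertible with $\widehat{(e,f)} = (f^{-1}, e^{-1})$. Because $F$ is a division algebra, $e$ and $f$ are invertible, so $v$ is well defined. Write an arbitrary element of $\A$ as $z = (x,y)$, and recall that the product on $\A$ is $(a,b)(c,d) = (ac, db)$ and the involution is $(a,b) \mapsto (b,a)$, so that $\bar u = (f,e)$ and $\bar v = (e^{-1}, f^{-1})$.

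I would then compute the three summands of $V_{u,v}(z) = (u\bar v)z + (z\bar v)u - (z\bar u)v$ separately. First, $u\bar v = (e,f)(e^{-1}, f^{-1}) = (e e^{-1},\, f^{-1} f) = (1,1)$ is the unit of $\A$, so $(u\bar v)z = z$. Second, $z\bar v = (x,y)(e^{-1}, f^{-1}) = (x e^{-1},\, f^{-1} y)$, hence
\[ (z\bar v)u = (x e^{-1},\, f^{-1} y)(e,f) = \bigl( (x e^{-1}) e,\; f(f^{-1} y) \bigr) = (x,y) = z , \]
where the middle equality is an instance of \cref{alternative identities}\cref{alt-id-inv} in each coordinate: $(x e^{-1})e = x$ (put $e^{-1}$ in the role of $e$ and $x$ in the role of $f$ in $(fe)e^{-1}=f$) and $f(f^{-1}y) = y$ (from $e^{-1}(ef)=f$ with $f^{-1}$ in the role of $e$ and $y$ in the role of $f$). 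Third, $z\bar u = (x,y)(f,e) = (xf,\, ey)$ and likewise
\[ (z\bar u)v = (xf,\, ey)(f^{-1}, e^{-1}) = \bigl( (xf)f^{-1},\; e^{-1}(ey) \bigr) = (x,y) = z , \]
again by \cref{alternative identities}\cref{alt-id-inv}. Summing, $V_{u,v}(z) = z + z - z = z$ for every $z \in \A$, so $V_{u,v} = \id$ and the claim follows.

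This is a purely routine computation; the only points demanding care are keeping track of the order reversal in the second coordinate of both the multiplication and the involution on $\A$, and ensuring that each of the four scalar simplifications above is a genuine instance of the alternative identity $e^{-1}(ef)=f=(fe)e^{-1}$ rather than an (unavailable) appeal to full associativity of $F$. Accordingly there is no real obstacle, and in particular no case distinction on whether $F$ is associative is needed — in contrast to what one might anticipate from \cref{F+F isom}.
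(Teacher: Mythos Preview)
Your proof is correct and is precisely the computation the paper has in mind: the paper's own proof is the single line ``By \cref{alternative identities}\cref{alt-id-inv}, we have $V_{(e,f),(f^{-1},e^{-1})}=\id$,'' and you have simply unpacked this verification term by term. There is nothing to add.
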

\begin{proof}
	By \cref{alternative identities}\cref{alt-id-inv}, we have $V_{(e,f),(f^{-1},e^{-1})}=\id$.
\end{proof}

\begin{lemma}
\label{triangle U operator}
	Let $0 \neq f\in F$. Then $U_{(f,0)}(\A)=(F,0)$ and $U_{(0,f)}(\A)=(0,F)$.
\end{lemma}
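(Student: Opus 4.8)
The plan is to evaluate the operator $U_u$ directly on an arbitrary element of $\A$ in each of the two cases, using the identity $U_u(y) = V_{u,y}(u)$, which by \cref{def:struct algebra} equals $2(u\overline y)u - (u\overline u)y$, together with the multiplication rule $(x,y)(z,w) = (xz, wy)$ and the involution $\overline{(x,y)} = (y,x)$ of \cref{construction exchange algebra}.

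For $u = (f,0)$ the key observation is that $u\overline u = (f,0)(0,f) = (0,0)$, so the second term vanishes; hence for $y = (c,d) \in \A$ one gets $U_{(f,0)}(y) = 2\bigl((f,0)(d,c)\bigr)(f,0) = 2(fd,0)(f,0) = \bigl(2(fd)f,\,0\bigr)$, which already shows $U_{(f,0)}(\A) \subseteq (F,0)$. For the reverse inclusion I would use that $f$ is invertible (as $F$ is a division algebra) and that, by \cref{alternative identities}\cref{alt-id-inv}, both left multiplication and right multiplication by $f$ are bijections of $F$; hence $d \mapsto (fd)f$ is a bijection of $F$, and since $\Char(k) \neq 2$ the set $\{2(fd)f \mid d \in F\}$ is all of $F$. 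This yields $U_{(f,0)}(\A) = (F,0)$. The case $u = (0,f)$ is handled by the mirror-image computation: here $\overline{(0,f)} = (f,0)$, again $u\overline u = (0,f)(f,0) = (0,0)$, and $U_{(0,f)}(y) = 2\bigl((0,f)(d,c)\bigr)(0,f) = 2(0,cf)(0,f) = \bigl(0,\,2f(cf)\bigr)$, whose image over all $y \in \A$ is $(0,F)$ by the analogous surjectivity argument. (Alternatively, the second identity can be deduced from the first by applying the involution of $\A$.)

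There is no real obstacle in this statement; the only points requiring care are the asymmetry of the multiplication on $\A$ (the second coordinate multiplies in the reversed order) and keeping track of which cancellation law from \cref{alternative identities}\cref{alt-id-inv} is invoked for left versus right multiplication by $f$.
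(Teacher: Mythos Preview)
Your proof is correct and follows essentially the same approach as the paper: both compute $U_u(y) = 2(u\overline y)u - (u\overline u)y$ directly and use the key observation that $u\overline u = 0$ for $u = (f,0)$ or $u = (0,f)$. The only cosmetic difference is that for surjectivity the paper exhibits an explicit preimage, namely $U_{(f,0)}(0,\,f^{-1}gf^{-1}) = 2(g,0)$, whereas you argue via bijectivity of $d \mapsto (fd)f$; both are immediate from \cref{alternative identities}\cref{alt-id-inv}.
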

\begin{proof}
	Clearly, $U_{(f,0)}(\A)\leq (F,0)$. Conversely, let $g\in F$ be arbitrary. Using \cref{alternative identities}, we have
	\begin{align*}
	   U_{(f,0)}(0,f^{-1}gf^{-1}) &= 2((f,0)(f^{-1}gf^{-1},0))(f,0)-((f,0)(0,f))(0,f^{-1}gf^{-1}) \\
	       &= 2(g,0).
	\end{align*}
	The proof of the other statement is similar (or follows by applying the involution).
\end{proof}

\begin{lemma}
\label{triangle derivation}
	Let $D\in\Der(\A)$. Then there is some derivation $D'$ of $F$ such that $D(e,f)=(D'(e),D'(f))$ for all $e,f\in F$.
	In particular, $D(\lambda,\mu)=0$ for all $\lambda,\mu \in k$.
\end{lemma}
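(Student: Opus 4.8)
The plan is to exploit the two orthogonal idempotents $(1,0)$ and $(0,1)$ of $\A$. They are interchanged by the involution and their sum is $1=(1,1)$. First I would show that $D$ kills both of them. Applying $D$ to $1=1\cdot 1$ gives $D(1)=2D(1)$, so $D(1)=0$. Writing $D(1,0)=(a,b)$ and applying $D$ to $(1,0)=(1,0)(1,0)$, the multiplication rule $(p,q)(r,s)=(pr,sq)$ gives $(a,b)=(a,b)(1,0)+(1,0)(a,b)=(2a,0)$, whence $a=2a$ and $b=0$, so (using $\Char(k)\neq 2$) $D(1,0)=0$; then $D(0,1)=D\bigl(\overline{(1,0)}\bigr)=\overline{D(1,0)}=0$.

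Next I would observe that $(1,0)$ is a two-sided identity for the subalgebra $(F,0)$ and that $(1,0)\A=(F,0)$. Since $D(1,0)=0$, the Leibniz rule gives $D(x,0)=D\bigl((1,0)(x,0)\bigr)=(1,0)\,D(x,0)\in(F,0)$ for every $x\in F$; symmetrically $D\bigl((0,F)\bigr)\subseteq(0,F)$. Hence $D$ restricts to $k$-linear maps $D_1,D_2\colon F\to F$ characterised by $D(x,0)=(D_1(x),0)$ and $D(0,y)=(0,D_2(y))$.

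I would then verify that $D_1$ and $D_2$ are derivations of $F$. Since $(x,0)(y,0)=(xy,0)$, the Leibniz rule for $D$ yields $D_1(xy)=D_1(x)y+xD_1(y)$, and since $(0,x)(0,y)=(0,yx)$ it yields $D_2(yx)=D_2(y)x+yD_2(x)$; no associativity of $F$ is used, as each component is just a copy of $F$. To identify $D_1$ with $D_2$, I would apply the compatibility $D(\overline a)=\overline{D(a)}$ with $a=(x,0)$: the left-hand side is $D(0,x)=(0,D_2(x))$ and the right-hand side is $\overline{(D_1(x),0)}=(0,D_1(x))$, so $D_1=D_2=:D'$. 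Therefore $D(e,f)=D(e,0)+D(0,f)=(D'(e),D'(f))$ for all $e,f\in F$. Finally, for $\lambda,\mu\in k$ we have $(\lambda,\mu)=\lambda(1,0)+\mu(0,1)$ in $\A$, so $D(\lambda,\mu)=\lambda D(1,0)+\mu D(0,1)=0$ (equivalently, the $k$-linear derivation $D'$ of $F$ kills $1_F$, hence kills $k1_F$).

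I expect no serious obstacle here; the only point requiring a little care is the well-definedness of the restrictions $D_i$, which relies on identifying $(1,0)$ and $(0,1)$ as the identity elements of the two component subalgebras and on the vanishing $D(1,0)=D(0,1)=0$ established at the outset.
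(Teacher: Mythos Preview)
Your proof is correct and follows essentially the same approach as the paper: both arguments show $D(1,0)=D(0,1)=0$, use this to see that $D$ preserves each factor, obtain two derivations of $F$, and identify them via the involution condition. The only minor difference is that the paper deduces $D(1,0)=0$ indirectly by first showing $D(1,-1)=0$ from $(1,-1)^2=(1,1)$, whereas you obtain it directly from the idempotent equation $(1,0)^2=(1,0)$; your route is slightly more direct but the two arguments are otherwise identical.
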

\begin{proof}
	Clearly $D(1,1)=0$. Since $(1,1) = (1,-1)^2$, we have
	\[ 0 = D(1,1) = D(1,-1)(1,-1) + (1,-1)D(1,-1) = 2(1,-1)D(1,-1) . \]
	Left multiplying by $(1,-1)$ implies $D(1,-1)=0$.
	Since $D$ is $k$-linear, this already implies that $D(\lambda,\mu)=0$ for all $\lambda,\mu \in k$.
	In particular, $D(1,0) = D(0,1) = 0$.
	
	For each $e \in F$, we now have $D(e,0)=D((1,0)(e,0))=(1,0)D(e,0)$.
	Hence there is some derivation $D' \in \Der(F)$ such that $D(e,0)=(D'(e),0)$ for all $e\in F$.
	Similarly, there is some derivation $D'' \in \Der(F)$ such that $D(0,f)=(0,D''(f))$ for all $f\in F$.
	Hence $D(e,f) = (D'(e), D''(f))$ for all $e,f \in F$.
	Expressing that $D(\overline{x}) = \overline{D(x)}$ for all $x \in \A$ now implies $D' = D''$.
\end{proof}

\begin{theorem}
\label{triangle Assumption (i)}
	$\A$ satisfies \cref{ass:V0}.
\end{theorem}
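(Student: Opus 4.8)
The goal is to verify \cref{ass:V0} for $\A = F \oplus F$ with the exchange involution. So I must show: if $V \in \Inst(\A)$ satisfies $V^\delta(\SS) = 0$ and $V^2 = 0$, then there is some $W \in \Inst(\A)$ with $U := [V,[V,W]]$ satisfying $U^2 \neq 0$. The natural first move is to understand what operators $V \in \Inst(\A)$ with $V^\delta(\SS) = 0$ look like. By \cref{S1 Decom Inst}, we can write $V = T_x + D$ with $x \in \A$ and $D \in \Inder(\A)$; by \cref{triangle derivation}, $D(e,f) = (D'(e), D'(f))$ for a derivation $D'$ of $F$, so $D$ kills $\SS$ only if… — here I would compute $V^\delta$ on $\SS$ explicitly. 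Actually the cleaner route is to use \cref{formula:V delta}: writing $V$ as a linear combination $\sum V_{a_i,b_i}$, the condition $V^\delta(s) = 0$ for all $s \in \SS$ says $\sum \psi(a_i, s b_i) = 0$ for all skew $s$. Using the explicit form \eqref{triangle S} of $\SS$ and the multiplication in $\A$, I would translate this into a concrete condition on the components of $V$ acting on $(F,0)$ and $(0,F)$ separately.

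The key structural observation I expect to exploit is that $\A = (F,0) \oplus (0,F)$ decomposes $\A$ as a sum of two "one-sided ideals" in a suitable sense, and that by \cref{triangle U operator} we have $U_{(f,0)}(\A) = (F,0)$ and $U_{(0,f)}(\A) = (0,F)$. So I would first show that the hypotheses $V^\delta(\SS)=0$ and $V^2 = 0$ force $V$ to respect this decomposition in a controlled way — presumably $V$ maps $(F,0)$ into $(F,0)$ (or into $(0,F)$) and likewise on the other factor, and nilpotency of $V$ then pins it down further. In fact, my expectation is that one can reduce to a statement purely about $F$: an operator on $F$ of the form relevant here, if squaring to zero, is an inner derivation / multiplication-type operator on $F$, and since $F$ is a division algebra such an operator cannot be "too small" — this should let me produce the witness $W$, perhaps even $W = T_y$ for a suitable $y \in \A$, such that $[V,[V,W]]$ has nonzero square.

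The concrete plan for producing $W$: once I know $V$ is, say, a nilpotent left-multiplication-like operator coming from an element of $F$, I would pick $W = V_{c,d}$ or $W = T_z$ with components chosen so that $[V,[V,W]]$ involves $U_a$ for some nonzero $a \in \A$ (invoking conjugate invertibility of elements $(e,f)$ with $e,f \neq 0$ from \cref{triangle invertiblity}), and then invertibility of that $U$-operator forces $U^2 \neq 0$. This is the same mechanism used repeatedly in \cref{Inner0} and in the proof of \cref{Set Assumption}: reduce to an expression of the form $V_{p,q}$ with $p$ conjugate invertible and $q \neq 0$, which is automatically nonzero and has nonzero square by the analogue of \cref{V zero}. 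I would also need to rule out the degenerate scenario $V = 0$ separately (trivial) and handle the possibility that $V$ swaps the two factors $(F,0) \leftrightarrow (0,F)$.

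**Main obstacle.** The hard part will be the first step: showing that the two hypotheses on $V$ genuinely constrain $V$ to a manageable form. The space $\Inst(\A)$ is large, and a priori $V$ is an arbitrary span of $V_{a,b}$'s; translating $V^\delta(\SS) = 0$ together with $V^2 = 0$ into a usable normal form for $V$ — likely by carefully analyzing the action of $V$ on the summands $(F,0)$ and $(0,F)$, using alternativity identities from \cref{alternative identities} to manage the non-associativity of $F$ — is where the real work lies. Once $V$ is in normal form, constructing $W$ and checking $U^2 \neq 0$ should be a short computation in the spirit of the earlier lemmas. A secondary subtlety is that $F$ need not be associative, so I must be careful to use only the alternative-algebra identities and not inadvertently assume associativity when manipulating triple products inside $F$.
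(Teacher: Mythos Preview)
Your plan is on the same track as the paper's proof, which also begins with $V = T_x + D$ via \cref{S1 Decom Inst} and \cref{triangle derivation}. The step you flag as the ``main obstacle'' is cleaner than you fear: evaluating $V^\delta$ on $(1,-1)$ gives $x = (e,-e) \in \SS$; evaluating $V^\delta$ on $(e,-e)$ then gives $D'(e)=0$; and $V^2(1,0) = (9e^2,0) = 0$ forces $e=0$ since $F$ is division. Thus $V = D$ is a \emph{pure derivation}, and in particular $V$ preserves each of $(F,0)$ and $(0,F)$ by \cref{triangle derivation} --- your worry about $V$ swapping the two summands is a red herring and that case never arises. For the witness $W$, the paper takes $W = V_{y,y}$ with $y = (f,f) \in \HH$ chosen so that $a := D(y) \neq 0$; since $V^2 = 0$ gives $V(a)=0$, one gets $U = [V,[V,V_{y,y}]] = 2V_{a,a} = 2L_{a^2}$, and then $U^2(1) = 4a^4 \neq 0$ by power-associativity and the division property of $F$. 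Your alternative idea of forcing a $U_a$-type operator with $a$ conjugate invertible would also succeed here (indeed $a = (D'(f),D'(f))$ is conjugate invertible by \cref{triangle invertiblity}), but the $L_{a^2}$ argument is shorter.
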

\begin{proof}
	By \cref{triangle central simple structurable}, $\A$ is central simple.
	By \cref{triangle S,triangle invertiblity}, \cref{ass:skew} is satisfied.
	
	Consider an arbitrary non-zero $V\in\Inst (\A)$ with $V^\delta(\SS)=0$ and $V^2=0$.
	By \cref{S1 Decom Inst}, we have $V=T_x+D$, for some $D\in \Der(\A)$, with $x=(e,f)$, for some $e,f\in F$.
	By \cref{triangle derivation}, there is a derivation $D'$ of $F$ such that $D(g,h)=(D'(g),D'(h))$ for all $g,h\in F$.
	Note that $D^\delta=D+R_{\overline{D(1)}}=D$.
	By assumption,
	\[ 0=V^\delta(1,-1)=-\psi(x,(1,-1))+D(1,-1)=(e+f,-e-f) ; \]
	using \cref{formula:V delta}. Hence $f=-e$ so $x = (e, -e) \in \SS$.
	Then
	\[ 0=V^\delta(e,-e)=-\psi((e,-e),(e,-e))+D(e,-e)=(D'(e),-D'(e)) \]
	and thus $D'(e)=0$.
	
	Now $V(1,0)=T_x(1,0)=(3e,0)$.
	Since $D'(e)=0$ and $V^2 = 0$, this implies
	\[ 0 = V^2(1,0) = T_x(3e,0) + D(3e,0) = 3 T_x(e,0) = (9e^2,0) . \]
	Hence $e^2=0$ and $e=0$, since $F$ is division.
	We conclude that $V=D$ is a derivation.
	In particular, $V^\epsilon = V$.
	
	Since $V\neq 0$, there exists $f\in F$ such that $D'(f)\neq 0$.
	Set $y = (f,f)$ and $a = V(y) = (D'(f),D'(f))$.
	Then $V(a) = 0$ and hence
	\[ U:=[V,[V,V_{y,y}]]=2V_{a,a}.\]
	Since $\overline a=a$, we get $V_{a,a} = L_{a^2}$ by \cref{V operator symmetry}. Since $\A$ is power-associative, this implies
	$V_{a,a}^2(1) = a^4$. 
	If $U^2=0$ we get $4a^4=0$ and thus $a=0$, a contradiction.
\end{proof}

\begin{lemma}
\label{triangle containing S}
	The only proper inner ideals of $\LL$ properly containing $\SS_+$ are the two inner ideals $(F,0)_+ \oplus \SS_+$ and $(0,F)_+ \oplus \SS_+$.
\end{lemma}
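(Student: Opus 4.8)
The plan is to combine the reduction result \cref{Containing S} with the transparent structure of $\A = F\oplus F$. By \cref{triangle Assumption (i)}, $\A$ satisfies \cref{ass:V0}, so \cref{Containing S} tells us that any proper inner ideal of $\LL$ properly containing $\SS_+$ is of the form $I = I_1 \oplus \SS_+$ for some nonzero subspace $I_1 \leq \A_+$, which we identify with a subspace of $\A$. Since $(F,0)_+\oplus\SS_+$ and $(0,F)_+\oplus\SS_+$ are clearly proper subspaces of $\LL$, the task then splits into two halves: (a) show that $I_1$ must equal $(F,0)_+$ or $(0,F)_+$, and (b) check that these two subspaces do give inner ideals.

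For (a) I would exploit that $I \subseteq \LL_1\oplus\LL_2$, so bracketing twice with $a_+ \in I_1$ against the bottom of the grading yields only a little, but enough: exactly as in the proof of \cref{JordanInner} one gets $[a_+,[a_+,c_-]] = -(U_a(c))_+$ for $c\in\A$ and $[a_+,[a_+,t_-]] = -V_{a,ta}$ for $t\in\SS$. The first forces $U_a(\A)\subseteq I\cap\A_+ = I_1$. The second forces $V_{a,ta} \in I\cap\Inst(\A) = 0$ for all $t\in\SS$, so by \cref{a invertible V_{a,sa}} no element of $I_1$ can be conjugate invertible; hence by \cref{triangle invertiblity} every element of $I_1$ lies in $(F,0)\cup(0,F)$, and a subspace contained in a union of two subspaces lies in one of them (otherwise a sum $(p,0)+(0,q)$ with $p,q\neq 0$, which is conjugate invertible by \cref{triangle invertiblity}, would lie in $I_1$). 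Say $I_1 \leq (F,0)_+$, and pick $0\neq(p,0)\in I_1$. Then \cref{triangle U operator} gives $(F,0) = U_{(p,0)}(\A) \subseteq I_1 \subseteq (F,0)$, so $I_1 = (F,0)_+$; the other case is symmetric.

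For (b) I would verify directly that $I := (F,0)_+\oplus\SS_+$ is an inner ideal; by the symmetry of \cref{construction exchange algebra} in the two copies of $F$, the same then holds for $(0,F)_+\oplus\SS_+$. Using the $5$-grading and the bracket table of \cref{def:Lie alg}, the inclusion $[I,[I,\LL_i]] \leq I$ is immediate for $i\geq 0$, and for $i=-1,-2$ it reduces to the three identities $V_{a,sb}=0$, $s(ta)\in(F,0)$ and $V_{b,c}(a)\in(F,0)$, valid for all $a,b\in(F,0)$, $c\in\A$ and $s,t\in\SS$. Each of these is a one-line computation from the multiplication rule $(x,y)(x',y')=(xx',y'y)$ together with $\overline{(x,0)}=(0,x)$ and the observations that $(F,0)\cdot(0,F)=(0,F)\cdot(F,0)=0$ and $\SS\cdot(F,0)\subseteq(F,0)$.

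I do not expect a real obstacle here; the only care needed is in bookkeeping the brackets across the $5$-grading, and in isolating the two consequences that drive part (a): $U_a(\A)\subseteq I_1$ (from $[a_+,[a_+,c_-]]$) and the absence of conjugate invertible elements in $I_1$ (from $[a_+,[a_+,t_-]]\in I\cap\Inst(\A)=0$). All the ring-theoretic input — conjugate invertibility of $(e,f)$ with $e,f\neq0$, the identity $U_{(p,0)}(\A)=(F,0)$, and $V_{a,ta}\neq0$ for conjugate invertible $a$ and $0\neq t\in\SS$ — is already available in the text.
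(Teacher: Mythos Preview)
Your proposal is correct and follows essentially the same route as the paper. The only cosmetic difference is in part~(a): to rule out conjugate invertible elements in $I_1$, the paper argues that invertibility of $U_a$ would force $\A_+\leq I$, contradicting abelianness via \cref{psi non-degenerate}, whereas you extract $V_{a,ta}=0$ from $I\cap\Inst(\A)=0$ and invoke \cref{a invertible V_{a,sa}}; and the paper shows $(F,0)_+\leq I$ first and then excludes $(0,F)$ via a bracket computation, while you first confine $I_1$ to $(F,0)\cup(0,F)$ as a set and use the subspace-in-a-union argument. Your verification in~(b) matches the paper's checks \cref{F1,F2,F3,F4} exactly.
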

\begin{proof}
	Let $I$ be a proper non-trivial inner ideal properly containing $\SS_+$.
	By \cref{Containing S,triangle Assumption (i)}, $I\leq \A_+ \oplus \SS_+$, so $I$ contains some non-zero $a_+\in\A_+$.
	If $a$ were conjugate invertible, then the operator $U_a$ would be invertible, but then the fact that $-U_{a}(b)_+=[a_+,[a_+,b_-]]\in I$ for all $b\in \A$ would imply that $\A_+\leq I$.
	Since $I$ is abelian, this is in contradiction with \cref{psi non-degenerate}.
	Hence $a$ is not conjugate invertible, so by \cref{triangle invertiblity}, $a$ is contained in $(F,0)$ or $(0,F)$.
	Assume without loss of generality that $a \in (F,0)$.
	By \cref{triangle U operator}, we get $(F,0)_+ \leq I$.
	If moreover $(0,f)_+\in I$ for some some non-zero $f\in F$, then $[(0,f)_+,(f,0)_+]=\psi ((0,f),(f,0))_+=(-f^2,f^2)_+\neq 0$ yields a contradiction with the fact that $I$ is abelian.
	Hence $I=(F,0)_+\oplus \SS_+$.
	
	It only remains to show that $J:=(F,0)_+\oplus \SS_+$ is an inner ideal, i.e., that
	\[ [(F,0)_+ \oplus \SS_+, [(F,0)_+ \oplus \SS_+, \LL]] \leq (F,0)_+ \oplus \SS_+ = (F,0)_+ \oplus \LL_2 . \]
	Using the $5$-grading of $\LL$, this means that we only have to verify the following inclusions:
	\begin{align}
	   [(F,0)_+, [(F,0)_+, \A_-]] &\leq (F,0)_+, \label{F1} \\
	   [(F,0)_+, [(F,0)_+, \SS_-]] &= 0, \label{F2} \\
	   [(F,0)_+, [\SS_+, \SS_-]] &\leq (F,0)_+, \label{F3} \\
	   [\SS_+, [(F,0)_+, \SS_-]] &\leq (F,0)_+. \label{F4}
	\end{align}
	Let $e,f,x,y \in F$ be arbitrary and let $s = (x, -x)$ and $t = (y, -y)$.
	To show \cref{F1}, we compute that
	\[ [(e,0)_+ , [(f,0)_+ , (x,y)_-]] = -V_{(f,0), (x,y)}(e,0)_+ = (-fy.e - ey.f, 0)_+ \subseteq (F,0)_+ . \]
	To show \cref{F2}, we compute that
	\[ [(e,0)_+ , [(f,0)_+ , s_-]] = -V_{(e,0),(xf,0)} = 0 . \]
	To show \cref{F3}, we compute that
	\[ [(e,0)_+ , [s_+ , t_-]] = [(e,0)_+ , L_s L_t] = (-x.ye, 0)_+ \in (F,0)_+ . \]
	Finally, \cref{F4} follows from \cref{F3} by the Jacobi identity because $[\SS_+, (F,0)_+] = 0$.
\end{proof}

We are now ready to construct a point-line geometry from the proper non-trivial inner ideals of $K(\A)$.

\begin{definition}\label{MF}
	We define the point-line geometry $\Gamma = (\M, \F)$ with as points
	\[ \M := \M(\LL) := \{I\mid I\text{ is a minimal proper non-trivial inner ideal of } \LL\} \]
	and as lines
	\[ \F := \F(\LL) := \{I\mid I\text{ is a non-minimal proper non-trivial inner ideal of } \LL\} , \]
	with inclusion as incidence.
\end{definition}

\begin{lemma}
\label{triangle line is full}
	Let $J\in\F$.
	Then $J$ is the union of the elements of $\M$ it contains.
	Moreover, any two distinct such elements intersect trivially and span~$J$.
\end{lemma}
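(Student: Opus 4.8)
The plan is to combine the reduction theorem (\cref{Reducing to S_+}) with the explicit classification of inner ideals properly containing $\SS_+$ obtained in \cref{triangle containing S}. First I would fix a line $J\in\F$, i.e., a non-minimal proper non-trivial inner ideal. By \cref{Reducing to S_+}\cref{main:2} combined with \cref{triangle Assumption (i)}, there is an automorphism $\varphi\in E(\A)$ with $\SS_+\leq\varphi(J)\leq\A_+\oplus\SS_+$. Since $J$ is non-minimal, $\varphi(J)$ properly contains $\SS_+$, so \cref{triangle containing S} forces $\varphi(J)$ to be one of the two inner ideals $(F,0)_+\oplus\SS_+$ or $(0,F)_+\oplus\SS_+$. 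Automorphisms preserve the poset of inner ideals (and their dimensions), so it suffices to verify the statement for these two model lines, and by symmetry only for $L:=(F,0)_+\oplus\SS_+$.

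Next I would identify the minimal inner ideals contained in $L$. Since $\SS_+$ is minimal (every non-zero element of $\SS$ is conjugate invertible, so \cref{Inner containing s} applies), $\SS_+$ is one of them. For the others, I would note that for each $0\neq f\in F$, the element $(f,0)_+\in\A_+$ is not conjugate invertible (\cref{triangle invertiblity}), yet $U_{(f,0)}$ maps $\A$ onto $(F,0)$ (\cref{triangle U operator}); a short computation as in \cref{triangle containing S} should show that $(f,0)_+\oplus\SS_+$ is an inner ideal, and minimality would follow because any proper subideal must avoid conjugate-invertible elements of $\A_+$ and hence, using $U_{(f,0)}$, cannot properly contain $\SS_+$ nor can it be a line by \cref{triangle containing S}. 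Thus the minimal inner ideals inside $L$ are exactly $\SS_+$ and the subspaces $\langle f\rangle_+\oplus\SS_+$ for $0\neq f\in F$ (these exhaust $L$ as a set since every $x\in L$ lies in $\langle x_1\rangle_+\oplus\SS_+$ where $x_1$ is its $(F,0)_+$-component). This immediately gives that $L$ is the union of the minimal inner ideals it contains.

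For the "moreover" part, distinctness of two such minimal inner ideals $P,Q\subseteq L$ means either one of them is $\SS_+$ and the other is $\langle f\rangle_+\oplus\SS_+$, or they are $\langle f\rangle_+\oplus\SS_+$ and $\langle g\rangle_+\oplus\SS_+$ with $g\notin\langle f\rangle$. In the first case $P\cap Q=\SS_+$, which is trivial as a \emph{point}? No — here I should be careful: "intersect trivially" in this dual-double setting means their intersection contains no point, i.e., is not itself a member of $\M$, equivalently their set-theoretic intersection is $\SS_+$ and contains no minimal inner ideal other than... wait, $\SS_+$ itself is a point. I would instead interpret the intersection of the underlying subspaces: $\SS_+\cap(\langle f\rangle_+\oplus\SS_+)=\SS_+$, which is a point, and $(\langle f\rangle_+\oplus\SS_+)\cap(\langle g\rangle_+\oplus\SS_+)=\SS_+$ as well. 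So the correct reading is that in the \emph{dual double}, the "points" on the line $J$ are the minimal inner ideals other than $\SS_+$ together with... I would need to re-read the dual-double convention in the paper; most likely the line $J$ corresponds to a flag and its points are the two "ends", so that two distinct points on $J$ span $J$ and meet in the trivial subspace $0$. Concretely, any two distinct minimal inner ideals among $\{\SS_+\}\cup\{\langle f\rangle_+\oplus\SS_+\}$ — no, their pairwise intersections all contain $\SS_+$. The resolution is that in the dual double only two of these are genuine points of the hexagon; spanning and trivial intersection then hold on the nose.

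The main obstacle I anticipate is precisely pinning down the combinatorial bookkeeping of the dual double: which minimal inner ideals count as "points on $J$" in the thin generalized hexagon, so that the three assertions (union, pairwise trivial intersection, pairwise spanning) come out correctly. The algebraic content — the classification of minimal inner ideals inside $L$ via $U$-operators and conjugate-invertibility — is routine given \cref{triangle U operator,triangle invertiblity,triangle containing S,psi non-degenerate}. Once the correct pair of points on each line is identified (presumably $\SS_+$ and one of the $\langle f\rangle_+\oplus\SS_+$, or rather the two lines $(F,0)_+\oplus\SS_+$ and $(0,F)_+\oplus\SS_+$ through a common point in the dual picture), the spanning claim is the observation $\langle f\rangle_+ + \SS_+ = L$ is false — rather $L = (F,0)_+\oplus\SS_+$ is spanned by $\SS_+$ together with any one $\langle f\rangle_+\oplus\SS_+$ only when $F$ is one-dimensional, so the genuine points must be objects whose direct sum is $L$; this strongly suggests that in the dual double the points incident to the line $J=(F,0)_+\oplus\SS_+$ are $\SS_+$ and $(F,0)_+\oplus\SS_+$ itself viewed at a different level, or more likely the points are $\SS_+$ and a maximal inner ideal — hence I would, before writing the final proof, carefully consult \cref{rem:dualdouble} and the root-group description in \cref{triangle root groups} to fix the incidence, and then the three claims follow by direct inspection of the two model lines and transport of structure along $E(\A)$.
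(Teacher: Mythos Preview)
Your reduction to $L=(F,0)_+\oplus\SS_+$ is correct, but your identification of the minimal inner ideals inside $L$ is wrong, and this is where the argument collapses. The subspaces $\langle f\rangle_+\oplus\SS_+$ that you propose are \emph{not} inner ideals: by \cref{triangle containing S}, the only proper inner ideals of $K(\A)$ properly containing $\SS_+$ are the two full lines $(F,0)_+\oplus\SS_+$ and $(0,F)_+\oplus\SS_+$. Moreover $\dim(\langle f\rangle_+\oplus\SS_+)=1+\dim F$, whereas every minimal inner ideal has dimension $\dim\SS=\dim F$, so these subspaces have the wrong dimension anyway. All your subsequent confusion about the dual double and about intersections containing $\SS_+$ stems from this misidentification.

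The actual minimal inner ideals contained in $L$ are of a quite different shape. One of them is $(F,0)_+$ itself (this is an inner ideal by the computations \cref{F1,F2} in the proof of \cref{triangle containing S}, and it is $\dim F$-dimensional). The others are the images $e_-((f,0),0)(\SS_+)$ for $f\in F$; since $V_{(gf,0),(f,0)}=0$, one computes $e_-((f,0),0)\bigl((g,-g)_+\bigr)=(g,-g)_+-(gf,0)_+$, so these are ``tilted'' copies of $\SS_+$ inside $L$, each of dimension $\dim F$. Every element of $L\setminus(F,0)_+$ has a nonzero $\SS_+$-component $(g,-g)_+$ and hence can be written in this form for suitable $f$, which gives the covering statement. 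For the ``moreover'' part, the intersection of two inner ideals is again an inner ideal, so two distinct minimal ones intersect in $0$; and since $\dim L=2\dim F$ while each minimal inner ideal has dimension $\dim F$, any two distinct ones span $L$.
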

\begin{proof}
	By \cref{triangle Assumption (i),Reducing to S_+,triangle containing S}, we may assume that $J = (F,0)_+ \oplus \SS_+$.
	Since $\SS_+\in \M$, we get $e_-((f,0),0)(\SS_+)\in \M$, for all $f\in F$. 
	Note that for each $(g,-g)\in\SS$, we have
	\[ e_-((f,0),0)((g,-g)_+)=(g,-g)_+-(gf,0)_+ , \]
	since $V_{(gf,0),(f,0)}=0$.
	Moreover, by \cref{F1,F2}, also $(F,0)_+$ is an inner ideal contained in $J$.
	Since each element of $J \setminus (F,0)_+$ can be written as $(g,-g)_+-(gf,0)_+$ for some $f,g \in F$, the first claim is proved.
	
	The second claim is obvious since the intersection of inner ideals is again an inner ideal, the elements of $\M$ are minimal inner ideals and the dimension of $J$ is twice the dimension of an element of $\M$.
\end{proof}

\begin{lemma}
\label{triangle distance 1}
	Two distinct points $I,J\in\M$ are collinear if and only if $[I,J]=0$.
\end{lemma}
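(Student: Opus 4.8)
The statement to prove is: two distinct points $I, J \in \M$ are collinear if and only if $[I,J] = 0$. Recall that points of $\M$ are minimal proper non-trivial inner ideals and lines of $\F$ are the non-minimal ones, with incidence given by inclusion; so $I$ and $J$ are collinear precisely when there exists $L \in \F$ with $I \leq L$ and $J \leq L$.

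\textbf{The easy direction.} First I would prove that collinearity implies $[I,J]=0$. If $I$ and $J$ both lie in some $L \in \F$, then $L$ is a proper non-trivial inner ideal of $\LL = K(\A)$, hence abelian by \cref{Inner abelian} (using that $\A$ is central simple, which holds by \cref{triangle central simple structurable}). Since $I + J \leq L$, we get $[I,J] \leq [L,L] = 0$.

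\textbf{The harder direction.} For the converse, suppose $I, J \in \M$ are distinct with $[I,J] = 0$; I must produce a line containing both. The natural candidate is $L := I \oplus J$ (the sum is direct because $I \cap J$ is an inner ideal strictly contained in the minimal inner ideal $I$, hence zero). The plan is to show that $L$ is an inner ideal: since $[I,J]=0$ and each of $I, J$ is an inner ideal, one computes for $x \in I$, $y \in J$ that $[x+y, [x+y, \LL]] = [x,[x,\LL]] + [y,[y,\LL]] + [x,[y,\LL]] + [y,[x,\LL]]$; the first two terms land in $I$ and $J$ respectively, and the cross terms need handling. Using $[I,J]=0$ and the Jacobi identity, $[x,[y,z]] = [y,[x,z]] + [[x,y],z] = [y,[x,z]]$, so it suffices to control $[x,[y,\LL]] + [y,[x,\LL]] = [x,[y,\LL]]+[y,[x,\LL]]$ — here one should exploit the reduction machinery: by \cref{Reducing to S_+} together with \cref{triangle Assumption (i)}, after applying an automorphism in $E(\A)$ we may assume $I = \SS_+$, and then by \cref{triangle containing S} any inner ideal strictly containing $\SS_+$ is one of $(F,0)_+ \oplus \SS_+$ or $(0,F)_+ \oplus \SS_+$. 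Since $[J, \SS_+]=0$ forces (via conjugate-invertibility of nonzero skew elements, as in the proof of \cref{Containing S}) $J \leq \Inst(\A) \oplus \A_+ \oplus \SS_+$, and then $J$ abelian and minimal pins down $J$; one checks $\SS_+ \oplus J$ is exactly one of the two inner ideals of \cref{triangle containing S}, which lies in $\F$ by \cref{triangle line is full}. Transporting back by the automorphism gives a line through the original $I$ and $J$.

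\textbf{Main obstacle.} The delicate point is the converse direction: showing that $[I,J]=0$ is enough to force $I \oplus J$ to be an inner ideal (a priori the cross-bracket terms $[x,[y,\LL]]$ need not lie in $I \oplus J$). I expect the cleanest route is not a direct bracket computation but the normalization argument above: reduce to $I = \SS_+$ via \cref{Reducing to S_+}, use that $J$ is a minimal inner ideal commuting with $\SS_+$ to identify $\SS_+ \oplus J$ with one of the two explicit inner ideals of \cref{triangle containing S}, and invoke \cref{triangle line is full}. The subtlety to watch is that $J$ need not already contain $\SS_+$, so one first argues $J$ lies in a complement-type position and that $\SS_+ \oplus J$ is forced to have the right dimension ($2\dim\SS$) and hence coincides with the known line; minimality of $J$ and \cref{triangle line is full} (every line is the span of the minimal inner ideals it contains, any two of which meet trivially) close the loop.
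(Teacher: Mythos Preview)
Your overall strategy matches the paper's: the forward direction is exactly as you say, and for the converse you correctly reduce to $I=\SS_+$ via \cref{Reducing to S_+} and observe that $[\SS_+,J]=0$ forces $J\leq \Inst(\A)\oplus\A_+\oplus\SS_+$. However, the step where you write ``$J$ abelian and minimal pins down $J$; one checks $\SS_+\oplus J$ is exactly one of the two inner ideals of \cref{triangle containing S}'' is circular. \Cref{triangle containing S} classifies proper inner ideals properly containing $\SS_+$; applying it to $\SS_+\oplus J$ presupposes that $\SS_+\oplus J$ \emph{is} an inner ideal, which is precisely the collinearity statement you are trying to establish. Invoking \cref{triangle line is full} does not help either: that lemma describes the points on a given line, it does not produce a line containing a given $J$.

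The paper avoids this circularity by working inside $J$ itself rather than with $\SS_+\oplus J$. After reaching $J\leq\LL_0\oplus\LL_1\oplus\LL_2$, one must first eliminate the $\Inst(\A)$-part: if some $x=V+a_++s_+\in J$ has $V\neq 0$, then $V^\delta(\SS)=0$ (from $[\SS_+,J]=0$), and either $V^2\neq 0$ gives $[x,[x,b_-]]\in J$ with a nonzero $(-1)$-component (impossible since $J\leq\LL_{\geq 0}$), or $V^2=0$ and \cref{triangle Assumption (i)} furnishes $W$ with $[V,[V,W]]^2\neq 0$, reducing to the previous case. Thus $J\leq\A_+\oplus\SS_+$. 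Then for $a_++s_+\in J$ with $a\neq 0$, computing $[a_++s_+,[a_++s_+,t_-]]$ shows its $0$-component $-V_{a,ta}$ must vanish, so by \cref{a invertible V_{a,sa}} and \cref{triangle invertiblity} one gets $a\in(F,0)$ or $a\in(0,F)$. Hence $J$ meets, and by minimality is contained in, one of the known lines $(F,0)_+\oplus\SS_+$ or $(0,F)_+\oplus\SS_+$. This is the missing computational core of your argument.
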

\begin{proof}
	If $I$ and $J$ are collinear, then they are contained in a proper inner ideal.
	Because this inner ideal is abelian, we get $[I,J]=0$.
	
	Conversely, let $I,J\in\M$ with $[I,J]=0$.
	By \cref{Reducing to S_+}, we may assume $I=\SS_+$.
	By $[I,J]=0$ and the fact that all non-zero elements of $\SS$ are conjugate invertible, we get $J\leq \LL_0\oplus \LL_1\oplus \LL_2$.
	Suppose that there is an $x := V + a_+ + s_+ \in J$ with $s\in\SS$, $a\in\A$, $V\in\Inst (\A)$ and $V\neq 0$.
	Since $[I,J]=0$, we have $V^\delta(\SS)=0$.
	If $V^2\neq 0$, then there exists some $b\in\A$ such that $[x,[x,b_-]]\in J$ has a non-zero $(-1)$-component, a contradiction.
	If $V^2=0$, then by \cref{triangle Assumption (i)}, we can find $0\neq U\in\Inst(\A)$ such that $W:=[V,[V,U]]$ satisfies $W^2\neq 0$.
	Hence $y := [x,[x,U]] \in J$ has $0$-component $W$, so we can repeat the previous argument with $x$ replaced by~$y$ and $V$ replaced by~$W$ to get a contradiction.
	We conclude that $J\leq \A_+\oplus \SS_+$.
	
	Since $J\neq I$, there exists a non-zero $a\in\A$ and an $s\in\SS$ such that $y := a_+ + s_+\in J$.
	For each $t\in\SS$, the inner ideal $J$ contains the element $[y,[y,t_-]]$, which has $0$\dash component $-V_{a,ta}$.
	Since $J \leq \LL_1 \oplus \LL_2$, we must have $V_{a,ta}=0$ for all $t \in \SS$ and hence, by \cref{a invertible V_{a,sa}}, we get $a\in (F,0)$ or $a\in (0,F)$.
	Hence $J$ intersects one of the inner ideals $(F,0)_+\oplus\SS_+$ and $(0,F)_+\oplus\SS_+$ non-trivially.
	Since $J$ is a minimal inner ideal, we conclude that either $J \subseteq (F,0)_+\oplus \SS_+$ or $J \subseteq (0,F)_+\oplus\SS_+$.
	Since these inner ideals both contain $I = \SS_+$, the points $I$ and $J$ are indeed collinear.
\end{proof}

\begin{lemma}
\label{triangle distance 3}
	Two points $I,J\in\M$ that satisfy $[I,[I,J]]\neq 0$ are at distance~$3$ in~$\Gamma$.
	Moreover, we then have $[i,j] \neq 0$ for all non-zero $i \in I, j \in J$.
\end{lemma}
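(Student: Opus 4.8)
The plan is to prove the two assertions together, bootstrapping from the already-established geometry of the previous lemmas. By \cref{triangle Assumption (i)} and \cref{Reducing to S_+} we may assume $I = \SS_+$. The hypothesis $[I,[I,J]] \neq 0$ means there is some $j \in J$ and some $s \in \SS$ with $[\SS_+,[\SS_+,j]] \neq 0$; in particular $j$ has a non-zero $(-2)$-component, since for $x := t_- + b_- + V + a_+ + r_+ \in \LL$ the bracket $[s_+,[s_+,x]]$ only sees the $\LL_{-2}$-part $t_-$ (it equals $(-2sts)_+$, using \cref{formula:Ls delta}). So $J$ contains an element with non-zero $2$-component \emph{after} we also record that, by the same computation applied in the opposite grading, $J$ contains an element with non-zero $(-2)$-component. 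Since $J$ is a minimal inner ideal, \cref{0 -1 -2} (applied with the roles of $\pm$ swapped, as permitted by the symmetry remark opening \cref{sec 3}) forces $J \cap (\LL_2 \cup \LL_1 \cup \LL_0 \cup \LL_{-1}) = 0$, i.e.\ $J = [j,[j,\SS_+]]$ consists of elements all of whose components in gradings $\geq -1$ vanish. Dually, $I = \SS_+$ lives purely in $\LL_2$.

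Next I would show $I$ and $J$ are non-collinear and moreover at distance exactly $3$. Non-collinearity is immediate from \cref{triangle distance 1}: if they were collinear we'd have $[I,J] = 0$, contradicting $[I,[I,J]] \neq 0$. For the distance, I'd exhibit a common neighbour of a neighbour of each. Concretely: $J \leq \LL_{-2} \oplus \LL_{-1} \oplus \LL_0$ and $I = \SS_+ \leq \LL_2$. Since $J$ is minimal and contains an element with non-zero $(-2)$-component, \cref{-2 -1 0 1 2} (again in the swapped grading) gives an element of $E_+(\A)$ mapping $J$ to $\SS_-$. So after a change of coordinates fixing $I = \SS_+$ only up to the action of $E_+(\A)$ — here I need to be a little careful and instead argue directly — we may as well analyse the pair $(\SS_+, \SS_-)$: we have $[s_+, \hat s_-] = L_s L_{\hat s} = -\id \neq 0$, so $[\SS_+,\SS_-] \ni -\id$, and $[\SS_+,[\SS_+,\hat s_-]] = [\SS_+, -\id] = \SS_+ \neq 0$, confirming the hyperbolic relation. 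By \cref{triangle containing S}, the inner ideals $(F,0)_+ \oplus \SS_+$ and $(0,F)_+ \oplus \SS_+$ are lines through $\SS_+$; by \cref{triangle line is full} each such line is a union of minimal inner ideals, giving points $P_1 \in (F,0)_+ \oplus \SS_+$ and $P_2$; symmetrically there are lines through $\SS_-$ containing points $Q_1, Q_2$. A direct check (using \cref{Image e_+} for the explicit elements of $e_-((f,0),0)(\SS_-)$, etc.) shows that one of the $P$'s is collinear with one of the $Q$'s, producing a path $\SS_+ - \ell - P - \ell' - Q - \ell'' - \SS_-$ of length $3$ in $\Gamma$; together with non-collinearity and the fact (from \cref{triangle distance 1}) that distance-$2$ would force a common neighbour $K$ with $[\SS_+,K] = 0 = [\SS_-,K]$, hence $K \leq \Inst(\A) \cap (\Inst(\A)\oplus\A_+\oplus\SS_+) \cap (\SS_-\oplus\A_-\oplus\Inst(\A))$, i.e.\ $K \leq \Inst(\A)$ — and then for $0 \neq V \in K$ one gets $V^2 = 0$ and $V^\delta(\SS) = 0$, so $\A$ would violate \cref{ass:V0}, contradicting \cref{triangle Assumption (i)} — we conclude the distance is exactly $3$.

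For the "moreover" clause, suppose $[i,j] = 0$ for some non-zero $i \in I = \SS_+$, $j \in J$. Write $i = s_+$ with $0 \neq s \in \SS$; since every non-zero element of $\SS$ is conjugate invertible, $s_+$ generates $\SS_+ = I$ (as in \cref{Inner containing s}), so in fact $[s_+, J] = 0$ would follow once we know $[s_+,-]$ kills all of $J$ — but we only assumed it kills $j$. Here I'd use that $J = [j_0,[j_0,\SS_+]]$ for a suitable $j_0 \in J$ with non-zero $(-2)$-component: every element of $J$ has the form $[j_0,[j_0,u_-]]$ with $u \in \SS$, and its $(-2)$-component is (up to scalar) of the shape $u' s_0 u'$ where $s_0$ is the $(-2)$-component of $j_0$; conjugate-invertibility of $s_0$ makes the $(-2)$-component of a general element of $J$ range over all of $\SS$. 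Now $[s_+, j']$ for $j' \in J$ has $0$-component $L_s L_{r}$ where $r_-$ is the $(-2)$-component of $j'$ (by $[s_+, r_-] = L_s L_r$), and $L_s L_r = 0$ forces $r = 0$ since $s$ is conjugate invertible. So $[s_+, j'] = 0$ for a single $j'$ with non-zero $(-2)$-component is already impossible; hence $[i,j] \neq 0$ for all non-zero $i,j$. The main obstacle I anticipate is the bookkeeping in the second paragraph: pinning down the distance-$3$ path requires either an explicit collinear pair among the four points $P_1,P_2,Q_1,Q_2$ (a short but fiddly computation with \cref{Image e_+} and the multiplication of \cref{construction exchange algebra}) or a cleaner structural argument that every point at distance $2$ from $\SS_+$ along one of its two lines is collinear with some point on a line through $\SS_-$; I'd try the structural route first and fall back on the explicit one.
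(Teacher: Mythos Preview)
Your approach is essentially the paper's, but you talk yourself out of the one simplification that makes everything short. After reducing to $I=\SS_+$ and noting (via \cref{-2 -1 0 1 2} in the swapped grading) that some element of $E_+(\A)$ sends $J$ to $\SS_-$, you worry that this might move $I$. It does not: $E_+(\A)$ fixes $\SS_+$ elementwise, since $\ad(a_++s_+)$ kills $\LL_2$. So you may assume $I=\SS_+$ and $J=\SS_-$ simultaneously, and this is exactly what the paper does.

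From there the rest collapses. The ``moreover'' is immediate: $[s_+,t_-]=L_sL_t$ is non-zero for non-zero $s,t$ because $s$ is conjugate invertible. Your component-chasing via \cref{0 -1 -2} reaches the same conclusion but is unnecessary. For the distance-$3$ path, your ``fiddly computation'' is just the observation that $(F,0)_+$ and $(F,0)_-$ are minimal inner ideals (each has dimension $\dim\SS$; that $(F,0)_+$ is inner uses that $\Inst(\A)$ preserves $(F,0)$, together with \cref{F1,F2}) and that $V_{(f,0),(g,0)}=0$, so $[(F,0)_+,(F,0)_-]=0$ and \cref{triangle distance 1} gives the path $(\SS_+,(F,0)_+,(F,0)_-,\SS_-)$. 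For ruling out distance~$2$, the paper is cleaner: a point $K$ collinear with $\SS_+$ lies on a line properly containing $\SS_+$, hence $K\leq\A_+\oplus\SS_+$ by \cref{Containing S}; dually $K\leq\SS_-\oplus\A_-$, so $K=0$. Your route through $K\leq\Inst(\A)$ and \cref{ass:V0} also works, but your phrasing ``$\A$ would violate \cref{ass:V0}'' is inverted---what you mean is that the $U$ supplied by \cref{ass:V0} would lie in $K$ and hence satisfy $U^2=0$, a contradiction.
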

\begin{proof}
	By \cref{Reducing to S_+}, we may assume that $I=\SS_+$.
	The condition $[I,[I,J]]\neq 0$ then implies that $J$ contains an element with non-zero $(-2)$-component.
	By \cref{-2 -1 0 1 2} and the fact that $\SS_+$ is fixed by $E_+(\A)$, we may assume that $J=\SS_-$.
	Since all non-zero elements of $\SS$ are conjugate invertible, this already shows that $[i,j] \neq 0$ for all $0 \neq i \in I$ and $0 \neq j \in J$.
	
	By \cref{triangle distance 1}, the points $(F,0)_+$ and $(F,0)_-$ are collinear, so we can consider the path $(\SS_+,(F,0)_+,(F,0)_-,\SS-)$ to see that the points $I$ and $J$ are at distance at most $3$.
	By \cref{triangle distance 1}, the points $I$ and $J$ are not collinear.

	Now suppose that $I$ and $J$ are at distance $2$ and let $K\in\M$ be a point collinear with both $I$ and $J$.
	By \cref{Containing S} we get $K\leq (\SS_-\oplus \A_-)\cap (\A_+\oplus \SS_+)=0$, a contradiction.
\end{proof}

The following lemma is a stronger version of \cref{triangle distance 1}.
\begin{lemma}
\label{triangle distance 1 looser}
	Two distinct points $I,J\in\M$ are collinear if and only if there is some $0 \neq j\in J$ such that $[I,j]=0$.
\end{lemma}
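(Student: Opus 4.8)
\emph{Forward direction.} If $I$ and $J$ are collinear, then $[I,J]=0$ by \cref{triangle distance 1}, so $[I,j]=0$ for every $j\in J$; any non-zero $j$ works.

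\emph{Converse.} Suppose there is some $0\neq j\in J$ with $[I,j]=0$, and assume for contradiction that $I$ and $J$ are \emph{not} collinear. By \cref{triangle distance 1} this gives $[I,J]\neq 0$. If we had $[I,[I,J]]\neq 0$, then \cref{triangle distance 3} would give $[i,j']\neq 0$ for all non-zero $i\in I$ and $j'\in J$; but $[i,j]=0$ for our chosen $j$ and \emph{every} $i\in I$, and $I\neq 0$. Hence $[I,[I,J]]=0$. So it suffices to derive a contradiction from the following data: distinct points $I,J\in\M$ with $[I,J]\neq 0$, with $[I,[I,J]]=0$, and with a non-zero $j\in J$ satisfying $[I,j]=0$.

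Since $\A$ satisfies \cref{ass:V0} by \cref{triangle Assumption (i)}, \cref{Reducing to S_+} applies, and after replacing $I$, $J$, $j$ by their images under a suitable element of $E(\A)$ (which preserves incidence, membership in $\M$, and the three bracket conditions above) we may assume $I=\SS_+$. The point of normalizing $I$ rather than $J$ is that the hypothesis $[I,j]=0$ now reads $[s_+,j]=0$ for \emph{all} $s\in\SS$. Writing $j=t_-+b_-+V+a_++u_+$ with respect to the $5$-grading and expanding $[s_+,j]$ by means of the bracket relations and \cref{formula:Ls delta,formula:V delta}, and using that each non-zero $s\in\SS$ is conjugate invertible — so that $L_s$, and hence $R_s$, is invertible — one reads off $t=0$, $b=0$ and $V^\delta(\SS)=0$; that is, $j\in\Inst(\A)\oplus\A_+\oplus\SS_+$ with $V^\delta(\SS)=0$. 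Similarly, expanding $[s_+,[s'_+,y]]$ for $y\in J$ and invoking $[I,[I,J]]=0$ shows that the $\SS_-$-component of every element of $J$ vanishes, i.e.\ $J\leq\LL_{-1}\oplus\LL_0\oplus\LL_1\oplus\LL_2$.

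It remains to upgrade this to $J\leq\A_+\oplus\SS_+$: once this is shown, $[I,J]=[\SS_+,\A_+\oplus\SS_+]=0$, contradicting $[I,J]\neq 0$, and this contradiction proves that $I$ and $J$ are collinear. To this end, observe that since $J$ is a minimal inner ideal and $j\neq 0$, $J$ is the inner ideal generated by $j$. One now propagates the information about $j$ to all of $J$ by feeding elements of $J$ back into the inner-ideal relation: extracting the lowest-degree components of iterated brackets $[j,[j,z]]$ (as in the proofs of \cref{triangle distance 1,triangle distance 3}), using \cref{V^2 equiv with (V^epsilon)^2} to relate the squares of an $\Inst(\A)$-component and of its $\epsilon$-twist, and invoking \cref{triangle Assumption (i)} to rule out a non-zero square-zero element of $\Inst(\A)$ occurring as such a component — this is precisely the maneuver in the last paragraph of the proof of \cref{triangle distance 1}, and it is where \cref{ass:V0} is indispensable — one obtains first $V=0$, so that $j\in\A_+\oplus\SS_+$, and then that all of $J$ lies in $\A_+\oplus\SS_+$; finally \cref{triangle containing S,triangle line is full} identify $J$ completely. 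I expect this last propagation step to be the only genuinely technical part: unlike in \cref{triangle distance 1}, here we start with $[I,j]=0$ for a \emph{single} $j$, and turning this into a statement about all of $J$ requires more than the grading bookkeeping and really uses the minimality of $J$ together with \cref{ass:V0}.
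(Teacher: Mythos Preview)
Your setup is right: the forward direction, the reduction of $[I,[I,J]]\neq 0$ via \cref{triangle distance 3}, the normalization $I=\SS_+$, and the conclusions $j=V+a_++s_+$ with $V^\delta(\SS)=0$ and $J\leq\LL_{-1}\oplus\LL_0\oplus\LL_1\oplus\LL_2$ are all exactly as in the paper. The gap is in your final paragraph, which is only a sketch and, as sketched, does not work.

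You claim to ``obtain first $V=0$'' by repeating ``the maneuver in the last paragraph of the proof of \cref{triangle distance 1}''. But that maneuver worked there because one already knew $J\leq\LL_0\oplus\LL_1\oplus\LL_2$: a non-zero $(-1)$-component of $[x,[x,b_-]]$ was then an immediate contradiction. Here you only have $J\leq\LL_{-1}\oplus\LL_0\oplus\LL_1\oplus\LL_2$, so producing an element of $J$ with a non-zero $(-1)$-component proves nothing, and there is no reason $V$ should vanish for your given $j$. Likewise, even if you did manage $j\in\A_+\oplus\SS_+$, your step ``and then that all of $J$ lies in $\A_+\oplus\SS_+$'' is not justified: minimality of $J$ alone does not give this.

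What the paper actually does is different and avoids both problems. One brackets $j$ against $\LL_1$ rather than $\LL_{-1}$: the element $[j,[j,c_+]]\in\LL_1\oplus\LL_2$ has $1$-component $V^2(c)_+$, so if $V^2\neq 0$ this produces some $x=c_++t_+\in J$ with $c\neq 0$; if $V^2=0$, \cref{ass:V0} lets one replace $j$ by $j'=[j,[j,W]]$ (still in $\LL_0\oplus\LL_1\oplus\LL_2$, with $0$-component squaring non-trivially) and repeat. Having such an $x$, one then computes that $[x,[x,d_-]]$ has $1$-component $-U_c(d)_+$; if $c$ were conjugate invertible, the projection of $J$ onto $\A_+$ would be all of $\A_+$, contradicting $\dim J=\dim\SS<\dim\A$. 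Hence $c\in(F,0)$ or $c\in(0,F)$, so the minimal inner ideal $J$ meets one of the two lines $(F,0)_+\oplus\SS_+$, $(0,F)_+\oplus\SS_+$ non-trivially and is therefore contained in it, giving $[I,J]=0$. The missing ingredient in your sketch is precisely this dimension argument on $U_c$; nothing forces $V=0$.
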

\begin{proof}
	Let $0 \neq j \in J$ be such that $[I,j]=0$; we have to show that $[I,J] = 0$.
	By \cref{triangle distance 3}, we already know that $[I,[I,J]] = 0$.
	
	By \cref{Reducing to S_+}, we may assume that $I=\SS_+$.
	Then $[I,[I,J]]=0$ yields $J \leq \LL_{-1} \oplus \LL_0 \oplus \LL_1 \oplus \LL_2$.
	Hence $j = b_- + V + a_+ + s_+$ for some $a,b\in\A$, $V\in\Inst (\A)$ and $s\in\SS$.
	Since all non-zero elements of $\SS$ are conjugate invertible, $[I,j]=0$ implies $b=0$ and $V^\delta (\SS)=0$.
	
	We claim that $J$ contains an element in $\LL_1 \oplus \LL_2$ with non-zero $1$-component.
	For each $c\in \A$, $J$ contains $[j,[j,c_+]]\in \LL_1 \oplus \LL_2$, which has $1$-component $V^2(c)_+$,
	so if $V^2\neq 0$, then the claim holds.
	If $V^2=0$, we use \cref{triangle Assumption (i)} to find $W\in\Inst (\A)$ such that $j' := [j,[j,W]]\in \LL_0 \oplus \LL_1 \oplus \LL_2$ has $0$-component $U$ with $U^2\neq 0$.
	Replacing $j$ by $j'$ then shows the claim.
	
	Denote this element of $J$ by $x = c_+ + t_+$ where $0 \neq c \in \A$ and $t \in \SS$.
	Then for each $d \in \A$, $J$ also contains $[x, [x, d_-]]$, which has $1$-component $-U_c(d)_+$.
	If $c$ were conjugate invertible, then $U_c$ would be an invertible operator, hence the projection of $J$ onto $\A_+$ would be all of $\A_+$.
	This contradicts the fact that $\dim J = \dim \SS < \dim \A$.
	Hence $c$ is not conjugate invertible, so $c \in (F,0)$ or $c \in (0,F)$.
	It follows that the minimal inner ideal $J$ intersects one of the two non-minimal inner ideals $(F,0)_+\oplus\SS_+$ or $(0,F)_+\oplus\SS_+$ non-trivially and is hence contained in it.
	We conclude that indeed $[I,J]=0$.
\end{proof}

\begin{lemma}
\label{triangle distance 2}
	Let $I,J\in\M$ be two distinct points such that $[J,I]\neq 0$ and $[J,[J,I]]=0$.
	Then there is exactly one point $K \in \M$ collinear with both $I$ and~$J$.
\end{lemma}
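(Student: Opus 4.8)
The plan is to prove this ``distance $2$'' statement along the same lines as \cref{triangle distance 1,triangle distance 3}: reduce to a normal position with respect to $\SS_+$, locate $I$ precisely, and then read off the common neighbour and its uniqueness. By \cref{Reducing to S_+} I would replace $I$ and $J$ by a common $E(\A)$\dash translate so that $J=\SS_+$; since automorphisms of $\LL$ carry common neighbours to common neighbours, this loses nothing. For $i\in I$ with $\LL_{-2}$\dash component $u_-$, a computation with \cref{formula:Ls delta} shows that the $\LL_2$\dash component of $[s_+,[s_+,i]]$ equals $-2(s(us))_+$; as every non-zero skew element is conjugate invertible and $L_sL_{\hat s}=-\id$, the hypothesis $[J,[J,I]]=0$ forces $u=0$, so $I\leq \LL_{-1}\oplus\LL_0\oplus\LL_1\oplus\LL_2$, and by \cref{triangle distance 1} the condition $[J,I]\neq 0$ says that $I$ is not collinear with $\SS_+$.

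Next, running the same kind of case analysis as in the proofs of \cref{triangle distance 1,triangle distance 1 looser} — replacing a candidate element of $I$ by $[i,[i,c_+]]$ when its $\LL_0$\dash component $V$ satisfies $V^2\neq 0$, invoking \cref{triangle Assumption (i)} when $V^2=0$, and using the fact (\cref{triangle invertiblity,triangle U operator}) that a non-conjugate-invertible element of $\A=F\oplus F$ lies in $(F,0)$ or in $(0,F)$ — I would show that after applying a further automorphism in $E_+(\A)$ (which fixes $\SS_+=J$) we may assume $I\leq \LL_{-1}=\A_-$. One then checks, using that $F$ is a division algebra and that $\Inst(\A)$ and $\Der(\A)$ preserve both $(F,0)$ and $(0,F)$ (see \cref{triangle U operator,triangle derivation}), that the only minimal inner ideals contained in $\A_-$ are $(F,0)_-$ and $(0,F)_-$, so that $I$ equals one of them; say $I=(F,0)_-$. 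Then $(F,0)_+$ is a minimal inner ideal (again because $F$ is division), it is one of the points on the line $L_1:=(F,0)_+\oplus\SS_+$ through $J$ (as noted in the proof of \cref{triangle line is full}), and $[(F,0)_+,(F,0)_-]=0$ since $V_{(f,0),(g,0)}=0$ for all $f,g\in F$; hence $K:=(F,0)_+$ is a common neighbour of $I$ and $J$.

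For uniqueness, let $K$ be any point collinear with both $I=(F,0)_-$ and $J=\SS_+$. By \cref{triangle containing S} the only lines through $\SS_+$ are $L_1$ and $L_2:=(0,F)_+\oplus\SS_+$, and by \cref{triangle line is full} the points on $L_1$ (resp.\ on $L_2$) are $\SS_+$, $(F,0)_+$ and the $e_-((f,0),0)(\SS_+)$ with $0\neq f\in F$ (resp.\ the analogous list with $(0,F)$). A direct bracket computation, using the description of these points from \cref{Image e_+} together with $[s_+,b_-]=(sb)_+$ and $V_{(f,0),(g,0)}=0$, shows that among all of them only $(F,0)_+$ has trivial bracket with $(F,0)_-$; by \cref{triangle distance 1} this forces $K=(F,0)_+$. (Symmetrically, if the normalisation in the previous step gives $I=(0,F)_-$, then the unique common neighbour is $(0,F)_+$.)

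The main obstacle is the normalisation of $I$ into $\A_-$ in the second step: this needs a case analysis in the spirit of \cref{triangle distance 1,triangle distance 1 looser} but somewhat longer, keeping track of possible $\LL_0$\dash components and repeatedly applying \cref{triangle Assumption (i)}, together with the auxiliary observation that the only minimal inner ideals inside $\A_+$ (and inside $\A_-$) are $(F,0)_\pm$ and $(0,F)_\pm$; once $I$ has been pinned down, both existence and uniqueness are short explicit computations.
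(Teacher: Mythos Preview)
Your strategy is genuinely different from the paper's and the ``main obstacle'' you flag is a real gap, not just a longer computation.

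The paper normalises $I=\SS_+$ (not $J$) and then \emph{does not} try to pin down the other point at all. Instead it takes any $i\in I$, $j\in J$ with $e:=[j,i]\neq 0$, observes that the $(-1)$\dash component $b$ of $j$ must lie in $(F,0)$ or $(0,F)$ (because $[j,[j,c_+]]\in J$ has $(-1)$\dash component $-U_b(c)_-$ and $\dim J=\dim\SS<\dim\A$), so that $e\in (F,0)_+\oplus\SS_+$ or $e\in (0,F)_+\oplus\SS_+$; then \cref{triangle line is full} produces the unique point $E\in\M$ through $e$, and \cref{triangle distance 1 looser} (with the single element $e$) gives $[J,e]=0\Rightarrow J$ collinear with $E$. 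Uniqueness is handled by a projection argument onto $\A_+$, not by enumerating the points on $L_1$ and $L_2$.

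Your extra normalisation ``$I\leq\A_-$ after applying some $e_+(a,s)$'' is not covered by the case analysis you cite. Every $e_+(a,s)$ preserves the filtration $\LL_{\geq k}$ and acts as the identity on each graded piece, so the $\LL_{-1}$\dash projection of $I$ is unchanged under $E_+(\A)$; to land in $\A_-$ you would first have to prove that this projection already equals $(F,0)_-$ or $(0,F)_-$ and that $I$ is the unique minimal inner ideal with that projection inside $\LL_{\geq -1}$ --- neither of which follows from \cref{triangle distance 1,triangle distance 1 looser,triangle Assumption (i)} by a routine repetition. You would also need separately that $(F,0)_-$ and $(0,F)_-$ are the only minimal inner ideals contained in $\A_-$. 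All of this can presumably be done, but it is substantially more work than ``somewhat longer'', and the paper's direct construction of the common neighbour from $[j,i]$ sidesteps it entirely.
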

\begin{proof}
	By \cref{Reducing to S_+}, we may assume $I=\SS_+$.
	Since $[J,I]\neq 0$ we can find $j\in J$ and $i\in I$ such that $e:=[j,i]\neq 0$.
	By $[J,[J,I]]=0$ we get $J \leq \LL_{-1} \oplus \LL_0 \oplus \LL_1 \oplus \LL_2$.
	Hence $e \in \LL_1 \oplus \LL_2$ and $j = b_- + V + a_+ + s_+$ for some $a,b\in\A$, $s\in\SS$ and $V\in\Inst (\A)$.
	Note that for each $c \in \A$, $J$ contains $[j,[j,c_+]]$, which has $(-1)$\dash component $-U_b(c)_-$.
	Since $\dim J= \dim \SS < \dim \A$, \cref{triangle invertiblity} implies that $b$ is contained in $(F,0)$ or $(0,F)$.
	Hence $e$ is contained in either $(F,0)_+ \oplus \SS_+$ or $(0,F)_+ \oplus \SS_+$.
	By \cref{triangle line is full}, $e$ is contained in a unique element $E\in\M$.
	Since $I=\SS_+$, we get $[E,I]=0$, so $E$ and $I$ are collinear by \cref{triangle distance 1}.
	Since $[J,[J,I]]=0$, we get $[J,e]=0$, so \cref{triangle distance 1 looser} implies that $J$ and $E$ are collinear.
	Since $[I,J] \neq 0$, it already follows from \cref{triangle distance 1} that $I$ and $J$ are at distance $2$ in the collinearity graph of $\Gamma$.
	
	Suppose now that $K \in \M$ is another point collinear with both $I$ and $J$.
	By \cref{triangle containing S}, $E$ and $K$ are contained in either $(F,0)_+ \oplus \SS_+$ or $(0,F)_+ \oplus \SS_+$.
	Assume first that both $E,K\leq (F,0)_+ \oplus \SS_+$.
	Then \cref{triangle line is full} implies that $E\oplus K= (F,0)_+ \oplus \SS_+$.
	Since $[E,J]=0=[K,J]$, this implies that also $[I,J]=[\SS_+,J]=0$, a contradiction.
	
	Hence we may assume that $E\leq (F,0)_+ \oplus \SS_+$ and $K\leq (0,F)_+ \oplus \SS_+$.
	Since $E\cap \SS_+=0$ by \cref{triangle line is full}, the projection of $E$ onto $\A_+$ is $(F,0)_+$.
	Similarly, the projection of $K$ onto $\A_+$ is $(0,F)_+$.
	Hence the projection of $E \oplus K \leq \A_+ \oplus \SS_+$ onto $\A_+$ is all of $\A_+$.
	Since $[E \oplus K,J]=0$, this implies that for each $c \in \A$, we can find some $t \in \SS$ such that $[c_+ + t_+, J] = 0$.
	Now let $j = b_- + V + a_+ + s_+ \in J$ be arbitrary.
	Then $[c_+ + t_+, j]$ has $0$-component $V_{c,b}$, hence $V_{c,b}=0$ for all $c\in\A$ and in particular $U_c(b) = V_{c,b}(c) = 0$ for all $c \in \A$. By choosing $c$ conjugate invertible, we get $b=0$.
	This implies that $[c_+ + t_+, j]$ has $1$-component $V(c)$, hence $V(c) = 0$ for all $c \in \A$, so also $V = 0$.
	So $J \leq \A_+ \oplus \SS_+$, but then $[I,J]=0$, which is again a contradiction.
\end{proof}

\begin{corollary}
\label{triangle distances}
	Let $I,J\in\M$ be two distinct points at distance $d$ in the collinearity graph of $\Gamma$. Then
	\begin{align*}
	   d = 1 &\iff [I,J]=0, \\
	   d = 2 &\iff [I,J]\neq 0 \text{ and } [I,[I,J]]=0, \\
	   d = 3 &\iff [I,[I,J]]\neq 0.
	\end{align*}
\end{corollary}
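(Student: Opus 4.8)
The plan is to deduce the corollary purely formally from \cref{triangle distance 1,triangle distance 2,triangle distance 3}, once one notices two elementary points: first, that the three conditions on the right-hand side are mutually exclusive and together exhaust all possibilities; and second, that although \cref{triangle distance 2} is phrased with its hypotheses placed on the \emph{second} of its two named points, it applies equally well to the pair $(I,J)$ with the roles of the two points interchanged, since $I$ and $J$ are arbitrary distinct points of $\M$. No new computation inside $K(\A)$ should be needed.

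Concretely, I would argue as follows. The equivalence $d=1\iff[I,J]=0$ is exactly \cref{triangle distance 1}. Next I observe that for distinct $I,J\in\M$ exactly one of the following holds: (a) $[I,J]=0$; (b) $[I,J]\neq 0$ and $[I,[I,J]]=0$; (c) $[I,[I,J]]\neq 0$. Indeed, if $[I,J]=0$ then $[I,[I,J]]=0$ automatically, so (a) excludes (b) and (c), while (b) and (c) are the two complementary subcases among pairs with $[I,J]\neq 0$. In case (a) we have $d=1$ by \cref{triangle distance 1}. In case (c) we have $d=3$ by \cref{triangle distance 3} applied to the pair $(I,J)$. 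In case (b) I apply \cref{triangle distance 2} with the roles of its two points swapped: taking its ``$I$'' to be our $J$ and its ``$J$'' to be our $I$, the hypotheses ``$[J,I]\neq 0$'' and ``$[J,[J,I]]=0$'' read precisely as $[I,J]\neq 0$ and $[I,[I,J]]=0$, which hold by assumption; hence $I$ and $J$ admit a common neighbour, and since $[I,J]\neq 0$ prevents them from being collinear (\cref{triangle distance 1}), we get $d=2$.

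Finally, since the cases (a), (b), (c) are exhaustive and mutually exclusive and produce the pairwise distinct values $d=1,2,3$ respectively, each implication ``(case) $\Rightarrow$ ($d=$ the stated value)'' automatically upgrades to an equivalence: if $d$ took a different value, the pair $(I,J)$ would satisfy a different one of (a)--(c), contradicting the corresponding implication. This is exactly the three equivalences claimed (and, as a byproduct, it shows that the collinearity graph of $\Gamma$ is connected of diameter at most $3$). The only point requiring a moment's care — and hence the only ``obstacle'' — is the asymmetry between the way \cref{triangle distance 2} is stated (hypotheses on the second point) and the symmetric form in which the corollary records the bracket conditions; this is resolved by the role-swap described above.
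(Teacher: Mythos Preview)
Your argument is correct and is exactly the formal deduction the paper has in mind: the corollary is stated without proof precisely because it follows immediately from \cref{triangle distance 1}, \cref{triangle distance 3}, and \cref{triangle distance 2} by the trichotomy you describe. Your observation about the labeling asymmetry in \cref{triangle distance 2} and its resolution by swapping the roles of the two points is the only thing that needs to be said.
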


\begin{lemma}
\label{triangle no n-gons}
	The geometry $\Gamma$ does not contain triangles, quadrangles or pentagons.
\end{lemma}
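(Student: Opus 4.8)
\emph{Plan.} In each case we assume such an $m$-gon exists, move one of its vertices to $\SS_+$ by an automorphism in $E(\A)$, and derive a contradiction from the bracket conditions. We use two facts established above. First, $E(\A)$ acts transitively on the point set $\M$: by \cref{Reducing to S_+} — using \cref{triangle Assumption (i)} so that \cref{ass:V0} holds — any minimal inner ideal is carried by some element of $E(\A)$ into an inner ideal $J$ with $\SS_+\leq J\leq\A_+\oplus\SS_+$, and minimality together with \cref{Inner containing s} forces $J=\SS_+$. Second, by \cref{triangle containing S} every point of $\Gamma$ lies on exactly two lines, which after conjugating the point to $\SS_+$ are $L_1:=(F,0)_+\oplus\SS_+$ and $L_2:=(0,F)_+\oplus\SS_+$. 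An ordinary $m$-gon is a cycle $(p_1,M_1,p_2,M_2,\dots,p_m,M_m,p_1)$ in the incidence graph with the $p_i$ and the $M_i$ all distinct and $p_i$ incident with $M_{i-1}$ and $M_i$ (indices mod $m$); since $p_i$ lies on only two lines, these are exactly $M_{i-1}$ and $M_i$. Collinearity and its associated distance in $\Gamma$ are controlled by \cref{triangle distances}.

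For a triangle, conjugate $p_1$ to $\SS_+$, so that $\{M_1,M_3\}=\{L_1,L_2\}$. Since $p_2\subseteq M_1$ and $p_2\neq p_1$, $p_2$ contains an element whose degree-one component is a nonzero element of $M_1\cap\A_+$, i.e.\ of $(F,0)_+$ or of $(0,F)_+$ (otherwise $p_2\subseteq\SS_+$, whence $p_2=\SS_+$ by minimality of $\SS_+$); likewise $p_3\subseteq M_3$ contains an element with degree-one component a nonzero element of the \emph{other} of $(F,0)_+$, $(0,F)_+$. The bracket of these two elements lies in $\SS_+$ — all other brackets among $\A_+\oplus\SS_+$ vanishing — and equals $\psi$ of the two degree-one parts, which is nonzero since $\psi((e,0),(0,f))=(ef,-ef)$ and $\psi((0,f),(e,0))=(-fe,fe)$ are nonzero for $e,f\in F^\times$. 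Hence $[p_2,p_3]\neq 0$, contradicting $p_2\sim p_3$ (both lie on $M_2$) via \cref{triangle distances}. The quadrangle case runs identically up to the conclusion $[p_2,p_4]\neq 0$, where now $\{M_1,M_4\}=\{L_1,L_2\}$ and $p_2\subseteq M_1$, $p_4\subseteq M_4$ sit in different copies of $F$; thus $p_2$ and $p_4$ are at collinearity-distance $\geq 2$. But $p_2\sim p_3\sim p_4$ forces this distance to equal $2$, so $[p_2,[p_2,p_4]]=0$ by \cref{triangle distances}, and \cref{triangle distance 2} shows $p_2$ and $p_4$ have a \emph{unique} common neighbor. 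This is impossible, because $p_1=\SS_+$ (collinear with $p_2$ via $M_1$ and with $p_4$ via $M_4$) and $p_3$ (collinear with $p_2$ via $M_2$ and with $p_4$ via $M_3$) are two distinct common neighbors.

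For the pentagon these local arguments no longer suffice, and the extra ingredient is that the line set $\F$ is a disjoint union of two $E(\A)$-orbits, the orbit $\mathcal{O}_1$ of $L_1$ and the orbit $\mathcal{O}_2$ of $L_2$, and that these orbits are genuinely different. Granting this, the two lines through any point $p$ are $\varphi(L_1)$ and $\varphi(L_2)$ for any $\varphi\in E(\A)$ with $\varphi(\SS_+)=p$, hence one lies in $\mathcal{O}_1$ and the other in $\mathcal{O}_2$; therefore along any ordinary $m$-gon the lines $M_1,\dots,M_m$ alternate between the two orbits, and as $M_m$ and $M_1$ are the two lines through $p_1$ they also lie in different orbits. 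This forces $m$ to be even, which rules out $m=5$ (and re-proves the triangle case). The main obstacle is precisely establishing $\mathcal{O}_1\neq\mathcal{O}_2$, i.e.\ that no element of $E(\A)$ maps $L_1$ to $L_2$ — the intrinsic manifestation of the fact that the little projective group of the coordinatizing projective plane contains no duality. One first checks that $E_+(\A)$ fixes both $L_1$ and $L_2$: since $\ad(a_++s_+)$ maps $\A_+$ into $\SS_+$ and annihilates $\SS_+$, the automorphism $e_+(a,s)$ fixes $\SS_+$ pointwise and sends $(e,0)_+$ to $(e,0)_++\psi(a,(e,0))_+\in L_1$. It then remains to rule out that some $e_-(a,s)$ carries $L_1$ onto $L_2$, which is done by an explicit computation of the action of $E_-(\A)$ on inner ideals in the spirit of \cref{Image e_+}.
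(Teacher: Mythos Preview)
Your triangle argument is essentially the paper's: put one vertex at $\SS_+$, see its two neighbours live in $(F,0)_+\oplus\SS_+$ and $(0,F)_+\oplus\SS_+$, and compute $\psi$. For quadrangles the paper simply says ``by \cref{triangle distance 2,triangle distances}'': once there are no triangles, opposite vertices of a quadrangle are non-collinear and hence at distance~$2$ with a unique common neighbour, which is already a contradiction. Your version rederives non-collinearity of $p_2,p_4$ via $\psi$, which is harmless but unnecessary.

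The pentagon case is where your proposal has a genuine gap. The orbit-parity idea is appealing, but you do not prove $\mathcal O_1\neq\mathcal O_2$, and your proposed reduction is invalid: knowing that $E_+(\A)$ fixes $L_1$ and $L_2$ and that no single $e_-(a,s)$ sends $L_1$ to $L_2$ says nothing about longer words in $E_+(\A)$ and $E_-(\A)$. After one application of $e_-$ the image of $L_1$ no longer contains $\SS_+$, so the ``$E_+$ fixes both'' observation gives no control over the rest of the word. To make the orbit argument work you would need a genuine $E(\A)$-invariant separating $L_1$ from $L_2$ (morally, the point/line type in the underlying projective plane), and producing such an invariant \emph{before} you know the geometry is the dual double of a triangle is essentially as hard as the theorem itself.

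The paper avoids this entirely by a direct normalisation. Given a pentagon $(I,J,M,N,O)$, move $I$ to $\SS_+$; then use an element of $E_+(\A)$ fixing $\SS_+$ to move $J$ to $(F,0)_+$; then use another element of $E_+(\A)$ fixing both $\SS_+$ and $(F,0)_+$ to move $M$ to $(F,0)_-$. Now $O$, being on the second line through $I$, contains some $s_++(0,f)_+$ with $f\neq 0$, while $O$ is at distance $\leq 2$ from $M=(F,0)_-$, so $[O,[O,M]]=0$. A direct computation of $[s_++(0,f)_+,[s_++(0,f)_+,(g,0)_-]]$ shows its $1$-component is $-U_{(0,f)}(g,0)=-2(0,fgf)_+\neq 0$, a contradiction. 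This is elementary and does not require any global orbit analysis.
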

\begin{proof}
    Let $I \in \M$ be a point and suppose that $I$ would be contained in a triangle $(I,J,K)$ in $\Gamma$.
    By \cref{triangle Assumption (i),Reducing to S_+}, we may assume that $I = \SS_+$.
	By \cref{triangle containing S}, there are only two lines through $I$, namely $(F,0)_+ \oplus \SS_+$ and $(0,F)_+ \oplus \SS_+$,
	so we must have $J \subseteq (F,0)_+ \oplus \SS_+$ and $K \subseteq (0,F)_+ \oplus \SS_+$ (or conversely).
	Since $J \neq I \neq K$, there is some $(e,0)_+ + s_+ \in J$ and $(0,f)_+ + t_+ \in K$ with $e,f \neq 0$.
	Since $J$ and $K$ are collinear, we have $[J,K] = 0$ by \cref{triangle distance 1}, so in particular
	$0 = [(e,0)_+ + s_+, (0,f)_+ + t_+] = \psi((e,0),(0,f))_+$, which is a contradiction.
	Hence $\Gamma$ does not contain triangles.
	
	By \cref{triangle distance 2,triangle distances}, $\Gamma$ does not contain quadrangles.
	
	Suppose now that $\Gamma$ would contain a pentagon $(I,J,M,N,O)$ for certain points $I,J,M,N,O \in \M$.
	By \cref{triangle Assumption (i)}, we may again assume that $I=\SS_+$.
    By \cref{triangle containing S}, we may then assume that $J$ contains some $j = a_+ + s_+$ with $a=(f,0)$ for some non-zero $f\in F$.
    Write $s=(g,-g)$ with $g\in F$.
    Then
    \[ e_+((0,f^{-1}g),0)(a_++s_+)=a_+ . \]
    Since $e_+((0,f^{-1}g),0)$ fixes $I=\SS_+$, we may assume that $J=(F,0)_+$ as well.
    Notice that $\langle (F,0)_+,(F,0)_-\rangle$ is a line through $I$ and that any point lies on exactly two lines by \cref{triangle containing S}; hence $M\leq \langle (F,0)_+,(F,0)_-\rangle$.
    So $M$ contains some $y:=(f,0)_++(g,0)_-$ with $f, g\in F$.
    Note that $g \neq 0$ since $J \cap M = 0$.
    We now observe that
    \[ e_+ \bigl( 0,(-fg^{-1},fg^{-1}) \bigr)(y) = (g,0)_- \]
    and that $e_+(0,(-fg^{-1},fg^{-1}))$ preserves $I$ and $J$. We may thus assume that $M=(F,0)_-$.
    
    Since $O$ is collinear with $I$ and not on the line $IJ$, \cref{triangle containing S} shows that $O$ contains some element $s_++(0,f)_+$ with $s\in\SS$ and $0 \neq f \in F$.
    Since $O$ and $M$ are at distance at most $2$, we get $[O,[O,M]]=0$.
    Hence
    \[ [s_++(0,f)_+,[s_++(0,f)_+,(g,0)_-]]=0 \]
    for all $g\in F$.
    In particular, the $1$-component $-U_{(0,f)}(g,0)$ must be $0$ for all $g\in F$.
    Since $U_{(0,f)}(g,0)=2(0,fgf)$, we get a contradiction.
    We conclude that $\Gamma$ does not contain pentagons.
\end{proof}

\begin{theorem}
\label{triangle root groups}
	Let $\A$ be the structurable algebra from \cref{construction exchange algebra} and set $\LL=K(\A)$. 
	Consider the graph $\Omega=(V,E)$, with
	\begin{align*}
	   V &= \{ I \mid I \text{ is a proper non-minimal non-trivial inner ideal of $\LL$} \} , \\
	   E &= \{ \{ I,J \} \mid I \cap J\neq 0 \} . 
	\end{align*}
	Then $\Omega$ is the incidence graph of a Moufang triangle.
	Its root groups can be identified with
	\[ U_1=e_-((F,0),0), U_2=e_-(0,\SS), U_3=e_-((0,F),0),  \]
	and the commutator relations are as in $E_-(\A)$.
	
	Moreover, the geometry $\Gamma$ (as introduced in \cref{MF}) is a thin generalized hexagon, which is the dual double%
	\footnote{The \textit{dual double} of a point-line geometry $\Delta$ is the geometry with as point set the \textit{flags} of $\Delta$, i.e., the incident point-line pairs, and as line set the union of the point set and the line set of $\Delta$.}
	of this Moufang triangle.
\end{theorem}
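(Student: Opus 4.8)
The plan is to combine \cref{Reducing to S_+} with the geometric results already proved in this section, in four steps: (i) classify the proper non-trivial inner ideals of $\LL$; (ii) recognize the geometry $\Gamma$ of \cref{MF} as a thin generalized hexagon; (iii) realize $\Gamma$ as the (dual) double of a generalized triangle $\Delta$ whose incidence graph is the graph $\Omega$; (iv) identify $\Delta$ together with its root groups via the action of $E_-(\A)$. First I would record the classification. Since $\A$ satisfies \cref{ass:skew} and \cref{ass:V0} by \cref{triangle Assumption (i)}, \cref{Reducing to S_+} shows that every proper non-trivial inner ideal of $\LL$ is $E(\A)$-conjugate to one squeezed between $\SS_+$ and $\A_+\oplus\SS_+$, and by \cref{Inner containing s,triangle containing S} these are exactly $\SS_+$, $(F,0)_+\oplus\SS_+$ and $(0,F)_+\oplus\SS_+$. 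Hence $\M$ is the $E(\A)$-orbit of $\SS_+$ and $\F$ is the union of the $E(\A)$-orbits of the other two. By \cref{triangle line is full} every line of $\Gamma$ is the disjoint union of the $\ge |F|+1\ge 3$ points it contains, any two of which span it; by \cref{triangle containing S} together with the transitivity of $E(\A)$ on $\M$ every point of $\Gamma$ lies on exactly two lines; and $\Gamma$ is a partial linear space, since two collinear points span a unique line of the correct dimension.

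Next I would show that $\Gamma$ is a thin generalized hexagon and deduce the ``double'' structure. The incidence graph $\tilde\Gamma$ is bipartite and connected (by \cref{triangle distances} the collinearity graph has diameter at most $3$); it has girth $12$, because \cref{triangle no n-gons} rules out ordinary $m$-gons for $m\le 5$ while
\[ \SS_+,\ (F,0)_+,\ (F,0)_-,\ \SS_-,\ (0,F)_-,\ (0,F)_+ \]
is an ordinary hexagon, its consecutive points being collinear by \cref{triangle distance 1} and its six connecting lines being pairwise distinct; and it has diameter $6$, again by \cref{triangle distances}. By the standard characterization of weak generalized polygons via their incidence graphs, $\Gamma$ is a generalized hexagon, and since every point lies on exactly two lines while every line carries at least three points, $\Gamma$ is thin and is the double of a generalized triangle $\Delta$. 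Concretely, the two parity classes of the (even) incidence-graph distance between lines of $\Gamma$ --- these are also the two $E(\A)$-orbits inside $\F$, which are distinct because the lines $(F,0)_+\oplus\SS_+$ and $(0,F)_+\oplus\SS_+$ meet in the point $\SS_+$ and hence receive different parities --- are the points and the lines of $\Delta$, two of them incident in $\Delta$ precisely when they meet in a (necessarily unique) point of $\Gamma$, and the points of $\Gamma$ are exactly the flags of $\Delta$. Thus $\Omega$ is the incidence graph of $\Delta$, and $\Gamma$ is the dual double of $\Delta$ in the sense of the footnote.

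Finally I would identify $\Delta$ and its root groups. The group $E_-(\A)\le \Aut(\LL)$ acts on $\LL$ by automorphisms, hence on $\Gamma$ and on $\Omega$. Fix the apartment of $\Omega$ consisting of the six lines $(F,0)_+\oplus\SS_+$, $\langle (F,0)_+,(F,0)_-\rangle$, $(F,0)_-\oplus\SS_-$, $(0,F)_-\oplus\SS_-$, $\langle (0,F)_+,(0,F)_-\rangle$, $(0,F)_+\oplus\SS_+$. A direct computation with the bracket relations of \cref{def:Lie alg} (and, where elations must be followed through, with \cref{Image e_+}) shows that $e_-((F,0),0)$ fixes pointwise the lines $\langle (F,0)_+,(F,0)_-\rangle$ and $(F,0)_-\oplus\SS_-$, that $e_-(0,\SS)$ fixes pointwise $(F,0)_-\oplus\SS_-$ and $(0,F)_-\oplus\SS_-$, and that $e_-((0,F),0)$ fixes pointwise $(0,F)_-\oplus\SS_-$ and $\langle (0,F)_+,(0,F)_-\rangle$; fixing a line of $\Gamma$ pointwise fixes every point on it and hence every $\Omega$-vertex adjacent to it, so each of these three subgroups is contained in the corresponding root group $U_1,U_2,U_3$ of $\Delta$ along this apartment. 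Since these subgroups already act transitively on the relevant sets of $\Omega$-vertices --- transitivity inherited from the $E(\A)$-action together with \cref{Reducing to S_+} --- \cref{Moufang general remark} forces equality, so $\Delta$ is Moufang with exactly the stated root groups. The commutator relations then follow from \cref{convention multiplication}: because $\psi((e,0),0)=\psi(0,(x,-x))=0$ one gets $[U_1,U_2]=[U_2,U_3]=1$, while $\psi((e,0),(0,f))=(ef,-ef)\in\SS$ gives $[U_1,U_3]\le U_2$ with commutator map induced by the multiplication of $F$; these are precisely the defining relations of the Moufang triangle coordinatized by the alternative division algebra $F$ (see \cite{Tits2002}), which completes the identification.

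The principal difficulty lies in the last step: showing that the three subgroups are the \emph{full} root groups (not merely subgroups of them) requires the somewhat tedious bracket bookkeeping sketched above, and throughout one must keep careful track of the two types of lines of $\Gamma$ so that the passage between $\Gamma$, $\Omega$ and $\Delta$ via the general ``double'' correspondence is set up correctly.
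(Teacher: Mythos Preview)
Your overall strategy matches the paper's: verify that $\Gamma$ is a thin generalized hexagon via \cref{triangle distances,triangle no n-gons} and the explicit hexagon through $\SS_+,(F,0)_+,(F,0)_-,\SS_-,(0,F)_-,(0,F)_+$, pass to the triangle $\Delta$ with incidence graph $\Omega$, and then pin down the root groups inside $E_-(\A)$ using \cref{Moufang general remark}. The commutator computation at the end is also fine.

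The one genuine gap is your transitivity claim. You assert that $e_-((F,0),0)$, $e_-(0,\SS)$, $e_-((0,F),0)$ ``already act transitively on the relevant sets of $\Omega$-vertices --- transitivity inherited from the $E(\A)$-action together with \cref{Reducing to S_+}''. That inference does not go through: \cref{Reducing to S_+} only tells you that \emph{some} element of the full group $E(\A)$ maps a given inner ideal to one containing $\SS_+$; it says nothing about the orbit of a specific one-parameter subgroup such as $e_-((F,0),0)$ on the neighbours of a specific vertex of $\Omega$. In the paper this step is done by hand: one checks, for example, that $e_-((f,0),0)$ sends $(g,-g)_+$ to $(g,-g)_+ - (gf,0)_+$ (as already computed in the proof of \cref{triangle line is full}), so that $e_-((F,0),0)$ is sharply transitive on the minimal inner ideals in $(F,0)_+\oplus\SS_+$ other than $(F,0)_+$; the analogous facts for $e_-(0,\SS)$ and $e_-((0,F),0)$ use $[(f,-f)_-,(g,0)_+]=(fg,0)_-$ and the obvious symmetric formula. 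Without these direct checks you have only shown $U_i\supseteq e_-(\cdots)$, not equality, and you have not yet established the Moufang property for $\Delta$. The paper closes the loop by also proving that $E(\A)$ is transitive on the $6$-cycles of $\Omega$ (using \cref{Reducing to S_+} and \cref{-2 -1 0 1 2}), so that verifying the root-group condition along a single apartment suffices; you should either supply that argument or, equivalently, do the explicit transitivity calculations above.
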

\begin{proof}
	By \cref{triangle distance 1}, the cycle of points
	\[ \bigl( \SS_+, \, (F,0)_+, \, (F,0)_-, \, \SS_-, \, (0,F)_-, \, (0,F)_+, \, \SS_+ \bigr) \]
	forms a hexagon in $\Gamma$.
	Then \cref{triangle Assumption (i),triangle containing S,triangle distances,triangle no n-gons} show that $\Gamma$ is a thin generalized hexagon.
	
	Note that the intersection of two non-minimal proper inner ideals is a minimal inner ideal if this intersection is non-empty. This intersection is a point of the thin generalized hexagon $\Gamma$.
	Therefore, $\Omega$ is the incidence graph of a generalized triangle and $\Gamma$ is the dual double of this triangle.

	Consider the following $6$-cycle in $\Omega$:
	\begin{multline*} 
		(x_0,\dots,x_5,x_0) = \bigl( (0,F)_+\oplus \SS_+, \ \SS_+ \oplus (F,0)_+, \ (F,0)_+\oplus (F,0)_-, \ (F,0)_-\oplus \SS_-,\\
		  \SS_-\oplus (0,F)_-, \ (0,F)_-\oplus (0,F)_+, \ (0,F)_+\oplus \SS_+ \bigr) .
	\end{multline*}
	We now show that $e_-((F,0),0)$ fixes all neighbors of $x_2$ and $x_3$ and acts transitively on the set of all neighbors of $x_1$ distinct from $x_2$.
	Note that if $I$ is a proper non-minimal inner ideal, we can identify a neighbor $J$ of $I$ with the minimal inner ideal $I\cap J$, since $\Gamma$ is a thin generalized hexagon.
	Moreover, any automorphism fixing $I$ and $I\cap J$ fixes $J$.
	So in order to check that $e_-((F,0),0)$ fixes all neighbors of $x_i$ it suffices to check that it fixes $x_i$ itself and all the minimal inner ideals it contains, for $i=2,3$, and this is
	clear since it fixes the inner ideals $x_2$ and $x_3$ elementwise.
	Similarly, in order to check that that $e_-((F,0),0)$ acts transitively on the set of all neighbors of $x_1$ distinct from $x_2$, it suffices to show that it acts transitively on the set of all minimal inner ideals in $x_1$ distinct from ${x_1}\cap {x_2}=(F,0)_+$.
	This follows from the fact that $[(f,0)_-,(g,-g)_+]=(-fg,0)_+$ for all $f,g \in F$ and a short computation.

	By \cref{Moufang general remark}, it now follows that the root group $U_1$ coincides with $e_-((F,0),0)$;
	similarly, the root group $U_3$ coincides with $e_-((0,F),0)$.
	In order to determine the root group $U_2$, note that $e_-(0,\SS)$ fixes all neighbors of $x_3$ and $x_4$ since $[\SS_-,x_i]=0$, for $i=3,4$.
	Using the fact that $[(f,-f)_-,(g,0)_+]=(fg,0)_-$, we deduce that it acts transitively on the set of all neighbors of $x_2$ different from $x_3$. 
	
	In order to show that $\Omega$ is the incidence graph of a Moufang triangle it suffices to show that $E(\A)$ acts transitively on the set of all cycles of length $6$.
	Consider any $6$-cycle $(y_0,\dots,y_5,y_0)$.
	By \cref{Reducing to S_+}, the minimal inner ideal $y_0 \cap y_1$ can be mapped onto $\SS_+$.
	Consider the minimal inner ideal $I = y_3 \cap y_4$.
	By \cref{triangle distances}, we get $[\SS_+,[\SS_+,I]]\neq 0$.
	Then by \cref{-2 -1 0 1 2}, there exists an automorphism fixing $\SS_+$ and mapping $I$ onto $\SS_-$.
	Since there are precisely two proper inner ideals through a minimal inner ideal and $[(0,F)_+,\SS_-]\neq 0$, there exists an automorphism mapping our cycle $(y_0,\dots,y_5,y_0)$ onto the cycle $(x_0,\dots,x_5,x_0)$.
\end{proof}

\begin{remark}
	Another way to obtain the Moufang triangles associated with an \textit{octonion} division algebra $\mathbb O$, is to consider the proper non-trivial inner ideals of the exceptional Jordan algebra $H_3(\mathbb O)$ (not of its TKK Lie algebra!).
	These inner ideals are either $1$- or $10$-dimensional; the former can be identified with the points of the projective plane and the latter with the lines of the projective plane.
	This is, of course, related to the fact that these Moufang triangles arise as rank $2$ forms of buildings of type $E_6$.
	See \cite[2(B)]{Faulkner1973} and \cite[p. 34-35]{McCrimmon2004}.
\end{remark}

\begin{remark}
\label{Triangle other grading}
    As we have mentioned in the introduction, a Lie algebra can often be obtained in more than one way as the TKK Lie algebra of a structurable algebra; see \cite[Theorem 5.9]{Stavrova2017}.
    Moreover, by \cite[5.2]{Lopez2007}, \textit{any} inner ideal is the ``end'' of a $\mathbb Z$-grading, at least if $\Char k \neq 2,3,5$.
    For some choices of inner ideals, this $\mathbb Z$-grading will be different from the standard $5$-grading obtained via the TKK construction.

	In the case that we are currently considering, the inner ideal $\SS_+$ is, of course, the end of a $\mathbb Z$-grading on $\LL$.
	On the other hand, consider an inner ideal contained in $\F$ (i.e., a non-minimal inner ideal); then this inner ideal will also be the end of a $\mathbb Z$-grading on $\LL$.
	Indeed, note that $\ad_{T_{(1,-1)}}$ is a grading derivation with components
	\begin{align*} 
		\LL_{-1}&=(0,F)_-\oplus (0,F)_+ \\
		\LL_0&=\SS_-\oplus\Inst(\A)\oplus \SS_+ \\
		\LL_1&=(F,0)_-\oplus (F,0)_+.
	\end{align*}
	Then $(\LL_{-1},\LL_1)$ is a \textit{Jordan pair} (as defined in \cite[1.2]{Loos1975}).
	More precisely, one checks that $(\LL_{-1},\LL_1)$ is isomorphic to the Jordan pair $(M_{1,2}(F),M_{1,2}(F^{\operatorname{opp}} ))$ via the isomorphism 
	\begin{align*} 
		(0,f)_-+(0,g)_+&\mapsto 2 (\overline f, \overline g);\\
		(f,0)_-+(g,0)_+&\mapsto  (g,f),
	\end{align*}
	using \cite[8.15]{Loos1975}.
\end{remark}

\section{Moufang hexagons}
\label{sec 7}

We now come to the case of the Moufang hexagons.
It is known from the classification of Moufang hexagons by J.~Tits and R.~Weiss \cite{Tits2002} that Moufang hexagons are parametrized by anisotropic cubic norm structures, or equivalently, by cubic Jordan division algebras. The structurable algebra we will use will be a matrix structurable algebra constructed from this Jordan algebra.

\begin{notation}\label{notation sec 7}
	let $J$ be a cubic Jordan division algebra over a field of characteristic different from $2$ and $3$, with non-degenerate admissible form $N$, trace form $T$ and Freudenthal cross product $\times$. 
	Let $\A$ be the structurable algebra $M(J,1)$ of skew-dimension one. (Recall \cref{def:struct matrix}.)
	Let $s := \begin{psmallmatrix} 1 & 0 \\ 0 & -1 \end{psmallmatrix} \in \SS$.
\end{notation}

	Set $\LL=K(\A)$. 
	The aim of this section is to prove that the geometry of the proper non-trivial inner ideals of $\LL$, with inclusion as incidence, is a generalized hexagon.

We begin with a description of the \textit{derivations} of $\A$; this will be used in the proof of \cref{Hexagon Assumption (i)} below.
\begin{lemma}
\label{Skew dimension derivation}
	Let $D\in\Der(\A)$. Then there exist $m, n\in \End(J)$ such that
	\[  D \begin{pmatrix}
		\alpha & l \\ j & \beta\end{pmatrix} = \begin{pmatrix}
		0 & m(l) \\ n(j) & 0\end{pmatrix} \]
	for all $\alpha,\beta\in k$ and $l,j\in J$. If $D\neq 0$, then $m\neq 0\neq n$.
\end{lemma}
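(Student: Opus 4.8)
The plan is to exploit the order-two element $s$: since $s^2=1$, the algebra $\A=M(J,1)$ splits into joint eigenspaces for left and right multiplication by $s$, and a derivation annihilating $s$ must preserve each of them. Write $e_{11}=\begin{psmallmatrix}1&0\\0&0\end{psmallmatrix}$, $e_{22}=\begin{psmallmatrix}0&0\\0&1\end{psmallmatrix}$, and for $j\in J$ set $j^+=\begin{psmallmatrix}0&j\\0&0\end{psmallmatrix}$, $j^-=\begin{psmallmatrix}0&0\\j&0\end{psmallmatrix}$, so that $\A=ke_{11}\oplus ke_{22}\oplus J^+\oplus J^-$ with $J^\pm=\{j^\pm\mid j\in J\}$ and $s=e_{11}-e_{22}$. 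First I would show $D(s)=0$. Since $\A$ has skew-dimension one, $\SS=ks$, and $D(\SS)\subseteq\SS$ gives $D(s)=as$ for some $a\in k$; applying $D$ to $s^2=1$ and using $D(1)=0$ (valid for any derivation of a unital algebra) yields $2a\cdot 1=0$ in $\A$, hence $a=0$ because $\Char k\neq 2$.

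Next, $D(s)=0$ forces $D$ to commute with both $L_s$ and $R_s$ (immediate from the derivation rule, e.g.\@ $D(sx)=D(s)x+sD(x)=sD(x)$). A direct computation with the multiplication of $M(J,1)$ shows that $L_s$ and $R_s$ are commuting involutions whose four joint eigenspaces are exactly $ke_{11},ke_{22},J^+,J^-$, with pairwise distinct eigenvalue pairs $(\pm1,\pm1)$; hence $D$ preserves each of them. In particular $D(e_{11})\in ke_{11}$ and $D(e_{22})\in ke_{22}$, and from $D(e_{11})+D(e_{22})=D(1)=0$ together with the linear independence of $e_{11},e_{22}$ we obtain $D(e_{11})=D(e_{22})=0$. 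The invariance of $J^+$ and $J^-$ then produces linear maps $m,n\in\End(J)$ with $D(l^+)=m(l)^+$ and $D(j^-)=n(j)^-$, and since $\begin{psmallmatrix}\alpha&l\\j&\beta\end{psmallmatrix}=\alpha e_{11}+\beta e_{22}+l^++j^-$ this is precisely the asserted shape of $D$.

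It remains to show $m\neq 0\neq n$ whenever $D\neq 0$. Because $D(e_{11})=D(e_{22})=0$, having $m=n=0$ would force $D=0$, so at least one of $m,n$ is nonzero. To rule out that exactly one vanishes, I would apply $D$ to the identity $j^+l^-=T(j,l)\,e_{11}$ (a one-line consequence of the multiplication table of $M(J,1)$): using $D(e_{11})=0$ this gives $T(m(j),l)+T(j,n(l))=0$ for all $j,l\in J$, so that, up to sign, $n$ is the $T$-adjoint of $m$. Since the trace form $T$ of the cubic Jordan division algebra $J$ is non-degenerate, this yields $m=0\iff n=0$, which together with the previous observation finishes the proof.

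The only routine work is the $(L_s,R_s)$-eigenvalue computation on the four summands and the product identity $j^+l^-=T(j,l)e_{11}$; neither poses a real difficulty. The one point deserving a little care is the appeal to non-degeneracy of $T$ in the last step, and if one preferred to avoid it, the alternative identity $j^+l^+=(j\times l)^-$ leads in the same way to $m(j)\times l+j\times m(l)=n(j\times l)$, which couples $m$ and $n$ just as well.
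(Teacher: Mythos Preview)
Your argument is correct. The first part---killing $D(s)$ via $s^2=1$ and then using that $D$ commutes with $L_s$ and $R_s$ to trap $D$ in the joint eigenspaces---is exactly what the paper does, only phrased more conceptually; the paper writes out the relations $sD(x)=D(x)$ and $D(x)s=-D(x)$ explicitly rather than naming the eigenspace decomposition.

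For the final implication $m\neq 0\neq n$, you take a genuinely different route. The paper applies $D$ to $(l^+)^2=(2l^\sharp)^-$ to obtain $n(l^\sharp)=0$ whenever $m=0$, and then invokes that $\sharp$ hits a scalar multiple of every nonzero element (since $J$ is division) to conclude $n=0$. You instead apply $D$ to $j^+l^-=T(j,l)e_{11}$, obtaining $T(m(j),l)+T(j,n(l))=0$, so that $m$ and $n$ are negatives of $T$-adjoints and hence vanish together by non-degeneracy of $T$. Both arguments are short; yours has the mild advantage of being linear in $j$ and $l$ and appealing directly to non-degeneracy of $T$ rather than to properties of the sharp map. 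Your fallback via $j^+l^+=(j\times l)^-$ reproduces essentially the paper's argument once one sets $j=l$.
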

\begin{proof}
    The fact that $D(1)=0$ is evident. Since $D(\SS) \subseteq \SS$, we have $D(s)=\lambda s$ for some $\lambda\in k$. Hence $0=D(1)=D(s^2) = D(s) s + s D(s) = 2\lambda s^2=2\lambda$ and thus $D(s)=0$.
    This already shows that $D \begin{psmallmatrix} \alpha & 0 \\ 0 & \beta \end{psmallmatrix}$ for all $\alpha,\beta \in k$.
    
    We now consider an element of the form
    \[ x=\begin{pmatrix}
		0 & l \\ 0 & 0
	\end{pmatrix} .\]
	Then $sx=x$ and $xs=-x$. Hence $D(x) = D(s)x + sD(x) = sD(x)$ and $-D(x) = D(x)s + xD(s) = D(x)s$. This implies that
	\[ D(x) = \begin{pmatrix}
		0 & l' \\ 0 & 0
	\end{pmatrix} \]
	for some $l'\in J$. Similarly, for each $j \in J$, there is some $j' \in J$ such that $D \begin{psmallmatrix} 0 & 0 \\ j & 0 \end{psmallmatrix} = \begin{psmallmatrix} 0 & 0 \\ j' & 0 \end{psmallmatrix}$.
 	Since $D$ is $k$-linear, we conclude that there exist $m, n\in \End(J)$ such that
	\[  D \begin{pmatrix}
		\alpha & l \\ j & \beta\end{pmatrix} = \begin{pmatrix}
		0 & m(l) \\ n(j) & 0\end{pmatrix} \]
	for all $\alpha,\beta\in k$ and $l,j\in J$. 

	Assume now that $m = 0$; we will show that this implies $D = 0$. We have $x^2 = \begin{psmallmatrix} 0&0\\l\times l&0\end{psmallmatrix}$ and $l\times l=2l^\sharp$.
	Since $m = 0$, we have $D(x) = 0$.
	Hence also $D(x^2) = D(x)x+xD(x) = 0$, which implies $n(l^\sharp)=0$.
	Since $l \in J$ was arbitrary and $J$ is division, this implies that $n = 0$.
\end{proof}

\begin{theorem}
\label{Hexagon Assumption (i)}
    Let $\A$ be as in \cref{notation sec 7}. Then $\A$ satisfies \cref{ass:V0}.
\end{theorem}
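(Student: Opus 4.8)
\emph{Proof strategy.} The plan is to use the two hypotheses to force $V$ to be a non-zero derivation, and then to imitate the endgame of the proof of \cref{triangle Assumption (i)}. First one records the context: by \cref{def:struct matrix}, $\A = M(J,1)$ is central simple, and $\SS = ks$ with $s^2 = 1$, so $s$ — and hence every non-zero element of $\SS$ — is conjugate invertible; in particular \cref{ass:skew} is satisfied. Throughout, one works with the decomposition of $\Inst(\A)$ in \cref{S1 Decom Inst} and the description of $\Der(\A)$ in \cref{Skew dimension derivation}.

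\emph{Step 1: reducing to a derivation.} Write $V = T_x + D$ with $x \in \A$ and $D \in \Inder(\A) \subseteq \Der(\A)$. By (the proof of) \cref{Skew dimension derivation} one has $D(1) = D(s) = 0$, so $D^\delta = D$ and $D^\delta(s) = 0$; since $T_x = V_{x,1}$, \cref{formula:V delta} gives $T_x^\delta(s) = -\psi(x,s)$, so the hypothesis $V^\delta(\SS) = 0$ is equivalent to $\psi(x,s) = 0$. A short matrix computation in $M(J,1)$ gives $\psi(x,s) = -(\alpha+\beta)\,s$ for $x = \begin{psmallmatrix} \alpha & l \\ j & \beta \end{psmallmatrix}$, hence $\beta = -\alpha$. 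Next one uses $V^2 = 0$: expanding $T_x$ via the multiplication of $M(J,1)$, one computes that $V^2$ sends the idempotent $e_1 = \begin{psmallmatrix} 1 & 0 \\ 0 & 0 \end{psmallmatrix}$ to an element whose $(1,2)$-entry is $2j^\sharp$ and whose $(1,1)$-entry is $9\alpha^2 + T(l,j)$; since $J$ is a division algebra, $j^\sharp = 0$ forces $j = 0$ (because $(j^\sharp)^\sharp = N(j)\,j$ and $N$ is anisotropic), and then $9\alpha^2 = 0$ forces $\alpha = 0$ (here $\Char k \neq 3$ is used). With $x$ thereby reduced to $\begin{psmallmatrix} 0 & l \\ 0 & 0 \end{psmallmatrix}$, one computes $V^2(e_2) = \begin{psmallmatrix} 0 & m(l) \\ 2l^\sharp & 0 \end{psmallmatrix}$, where $e_2 = \begin{psmallmatrix} 0 & 0 \\ 0 & 1 \end{psmallmatrix}$, so $l^\sharp = 0$ and hence $l = 0$. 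Thus $x = 0$ and $V = D$ is a non-zero derivation, so by \cref{Skew dimension derivation} the associated endomorphisms $m, n$ of $J$ are both non-zero.

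\emph{Step 2: producing $W$.} Since $V = D$ is a derivation, $V^\epsilon = D^\epsilon = D = V$, so in particular $(V^\epsilon)^2 = V^2 = 0$. Choose $l \in J$ with $m(l) \neq 0$ and set $y := \begin{psmallmatrix} 0 & l \\ 0 & 0 \end{psmallmatrix}$, $a := V(y) = \begin{psmallmatrix} 0 & m(l) \\ 0 & 0 \end{psmallmatrix}$; then $V(a) = V^2(y) = 0$. Take $W := V_{y,y}$. Using the identity $[V, V_{z,w}] = V_{V(z), w} + V_{z, V^\epsilon(w)}$ (which follows from the defining identity of a structurable algebra together with \cref{formula:V epsilon}) and $V^2 = (V^\epsilon)^2 = 0$, one obtains
\[ U := [V,[V,V_{y,y}]] = 2\,V_{V(y),\,V^\epsilon(y)} = 2\,V_{a,a} = 2\,L_{a\overline{a}} \]
by \cref{V operator symmetry}. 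A direct computation yields $a\overline{a} = \begin{psmallmatrix} 0 & 0 \\ 2\,m(l)^\sharp & 0 \end{psmallmatrix}$ and then $(a\overline{a})^2 = \begin{psmallmatrix} 0 & 2\,(2\,m(l)^\sharp)^\sharp \\ 0 & 0 \end{psmallmatrix}$; since $m(l) \neq 0$ and $J$ is a division algebra, $m(l)^\sharp \neq 0$, hence $(2\,m(l)^\sharp)^\sharp \neq 0$, so $U^2(1) = 4\,(a\overline{a})^2 \neq 0$. Thus $U^2 \neq 0$ (and $U \notin \langle V \rangle$), which is exactly what \cref{ass:V0} requires.

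\emph{Main obstacle.} The only genuinely delicate step is Step~1: one must carry out the evaluations of $V^2$ on $e_1$ and $e_2$ carefully, keeping track of how $T_x$ acts on the four ``matrix-unit directions'' of $M(J,1)$ and how the trace form $T$ and the cross product $\times$ enter, and then invoke anisotropy of $J$ (via $j^\sharp = 0 \Rightarrow j = 0$) to collapse $x$ to $0$. Once $V$ is known to be a derivation, the rest is a short formal computation modelled on the proof of \cref{triangle Assumption (i)}.
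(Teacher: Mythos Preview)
Your proof is correct and follows essentially the same route as the paper's own argument: first use $V^\delta(\SS)=0$ and the evaluations of $V^2$ on the two diagonal idempotents, together with the anisotropy of $J$, to force $x=0$ and hence $V=D\in\Der(\A)$; then take $W=V_{y,y}$ for a suitable $y$ and compute $U=2V_{D(y),D(y)}=2L_{D(y)^2}$ to get $U^2(1)\neq 0$. Apart from renaming (your $y$ and $a$ are the paper's $a$ and $D(a)$), the computations, the appeals to \cref{S1 Decom Inst}, \cref{Skew dimension derivation} and \cref{V operator symmetry}, and the use of $(j^\sharp)^\sharp=N(j)j$ coincide with the paper's proof.
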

\begin{proof}
	Notice that $\A$ is central simple by \cite[\S 4, Lemma 2.1]{Allison1984}. 
	\cref{ass:skew} is now obviously satisfied.
	
	Consider $V\in\Inst(\A)$ such that $V^2=0$, $V^\delta(s)=0$ and $V\neq 0$.
	By \cref{S1 Decom Inst}, we can write $V = D+T_x$ for some $D\in\Der(\A)$ and $x\in \A$. Write
	\[ x= \begin{pmatrix}
		\alpha & l \\ j & \beta
	\end{pmatrix} \]
	for some $\alpha,\beta\in k$ and $j,l\in J$.
	Let $m,n \in \End(J)$ be as in \cref{Skew dimension derivation}.
	Then $D(1)=0=D(s)$, and thus $D^\delta(s)=0$ and $D^\epsilon=D$. On the other hand, $T_x^\delta(s)=-\psi(x,s)=xs+s\overline x=\bigl ( \begin{smallmatrix}  \alpha+\beta&0\\0&-\alpha-\beta \end{smallmatrix}\bigr )$. Hence the condition $V^\delta(s)=0$ implies $\beta=-\alpha$. In particular, $x - \overline x = 2\alpha s$.
	
	Now let $y = \begin{psmallmatrix} 1&0\\0&0\end{psmallmatrix}$. Then
	\[ T_x(y) = xy+y(x-\overline x) = xy+2\alpha ys = \begin{pmatrix}
		3\alpha &0\\j&0 
	\end{pmatrix} \]
	and hence
	\begin{align*}
	T_x^2(y) &= x T_x(y) + T_x(y) (x-\overline x) = \begin{pmatrix}
		3\alpha^2+ T(l,j) +6\alpha^2 & j\times j\\ 2\alpha j+2\alpha j &0
	\end{pmatrix} \\ 
	&=\begin{pmatrix} 9\alpha^2+ T(l,j)& 2 j^\sharp \\ 4\alpha j & 0\end{pmatrix}.	
	\end{align*}
	Using \cref{Skew dimension derivation} and the fact that $D(y) = 0$, we get 
	\[ 0 = V^2(y) = D(T_x(y)) + T_x^2(y) =  \begin{pmatrix} 9\alpha^2+ T(l,j)& 2 j^\sharp \\ 4\alpha j+n(j) & 0\end{pmatrix} . \]
    In particular, $j^\sharp = 0$; since $J$ is division, this implies $j=0$. Hence $9\alpha^2=0$, i.e., $\alpha=0$. 
	By considering $\bigl (\begin{smallmatrix}0&0\\0&1\end{smallmatrix}\bigr )$ instead of $y$, we obtain in a similar fashion that $l=0$. We conclude that $x=0$.
	
	Hence $V=D$ for some $D\in \Der (\A)$.
 	Since $V \neq 0$, it follows from \cref{Skew dimension derivation} that we can find some $j\in J$ with $l:=m(j)\neq 0$.
 	Consider $a=\bigl (\begin{smallmatrix} 0&j\\ 0&0\end{smallmatrix}\bigr )$ and notice that $D(a)=\bigl (\begin{smallmatrix} 0&l\\ 0&0\end{smallmatrix}\bigr ) \in \HH$, $D(a)^2=\bigl (\begin{smallmatrix}0&0\\ 2 l^\sharp &0\end{smallmatrix}\bigr )$ and $(D(a)^2)^2 =\bigl (\begin{smallmatrix} 0&8l^{\sharp \sharp} \\ 0 &0\end{smallmatrix}\bigr )$.
	Since $V^2=0$ and $V^\epsilon = V$, we have
	\[ U:=[V,[V,V_{a,a}]] = 2V_{D(a),D(a)} = 2 L_{D(a)^2} \]
	by \cref{V operator symmetry}.
	If $U^2 = 0$, then $L_{D(a)^2}^2 = 0$, which can be applied on $1$ to get $(D(a)^2)^2 = 0$, hence $8l^{\sharp\sharp} = 8 N(l) l=0$.
	Since $J$ is division, this implies $l=0$ and we get a contradiction. 
	Hence $U^2\neq 0$.
\end{proof}

We now consider the extremal geometry associated to $\LL$.
Recall from \cref{extremal geometry} that $\E(\LL)$ is the collection of one-dimensional inner ideals of $\LL$ and that the non-zero elements of these inner ideals are precisely the extremal elements of~$\LL$. The lines are two-dimensional subspaces of $\LL$ such that every one-dimensional subspace of it is an element of $\E(\LL)$.
\begin{proposition}
\label{Characterization extremal J +}
	Let $a$ be a non-zero element of $\A$. The following are equivalent:
	\begin{enumerate}[\rm (a)]
		\item\label{ch:a} $a_+$ is extremal;
		\item\label{ch:b} $U_a(\A)\leq \langle a\rangle$; 
		\item\label{ch:c} $U_{sa}(\A)\leq \langle sa\rangle$; 
		\item\label{ch:d} $V_{a,sa}=0$;
		\item\label{ch:e} $\langle s_+,a_+\rangle$ is a line of $\Gamma(\LL)$.
	\end{enumerate}
\end{proposition}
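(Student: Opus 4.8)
The plan is to prove the cyclic chain of implications
\eqref{ch:a} $\Rightarrow$ \eqref{ch:b} $\Rightarrow$ \eqref{ch:d} $\Rightarrow$ \eqref{ch:c} $\Rightarrow$ \eqref{ch:a}, together with \eqref{ch:a} $\Leftrightarrow$ \eqref{ch:e}. The equivalence of \eqref{ch:d} with its ``mirror'' $V_{sa,a}=0$ is automatic from $V_{a,sa}^\epsilon = -V_{sa,a}$ (\cref{formula:V epsilon}), and this is what makes \eqref{ch:b} and \eqref{ch:c} symmetric; I would record this observation first. Recall also that $s$ is conjugate invertible with $\hat s = \frac14 s$ (since $s^2 = 1$ here), so $L_s$ is an invertible operator, and $sa \neq 0$ whenever $a \neq 0$; I will use this repeatedly to pass between $a$ and $sa$.

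For \eqref{ch:a} $\Rightarrow$ \eqref{ch:b}: if $a_+$ is extremal, then $[a_+,[a_+,\LL]] \leq \langle a_+ \rangle$. Applying this to $b_- \in \LL_{-1}$ gives $[a_+,[a_+,b_-]] = [a_+, V_{a,b}] = -(V_{a,b}(a))_+ = -(U_a(b))_+ \in \langle a_+\rangle$ for all $b \in \A$, which is exactly $U_a(\A) \leq \langle a \rangle$. For \eqref{ch:b} $\Rightarrow$ \eqref{ch:d}: by \cref{V operator symmetry}, $V_{a,sa}(a) = V_{a,a}(sa) + \psi(a,a)(sa)$; but $\psi(a,a) = a\overline a - a \overline a = 0$ and $V_{a,a} = L_{a\overline a}$, so $V_{a,sa}(a) = (a\overline a)(sa)$. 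Hmm — this is not quite $U_a(sa)$, so instead I would use the cleaner identity $V_{a,b}(a) = U_a(b)$ directly: $V_{a,sa}(a) = U_a(sa) \in U_a(\A) \leq \langle a \rangle$, say $V_{a,sa}(a) = \lambda a$. Then one must upgrade ``$V_{a,sa}$ kills the line $\langle a\rangle$ up to scalar'' to ``$V_{a,sa} = 0$''. Here I expect to need that $a_+$ being extremal forces $[a_+,[a_+,K(\A)]]$ to be exactly $\langle a_+\rangle$ and then to exploit a Jordan-pair / TKK identity; alternatively, evaluate $V_{a,sa}$ on $s_-$ via $[a_+,[a_+,s_-]] = -(V_{a,sa})_0$ appearing inside the $5$-grading — the $0$-component of $[a_+,[a_+,s_-]]$ is $-V_{a,sa}$, and extremality of $a_+$ forces this to lie in $\langle a_+ \rangle \cap \LL_0 = 0$. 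That last remark is in fact the slick route and I would lead with it: **\eqref{ch:a} $\Rightarrow$ \eqref{ch:d} directly**, since $s_- \in \LL_{-2}$ and $[s_-, a_+] = (sa)_-$ so $[a_+,[a_+,s_-]] = -[a_+,(sa)_-]= -V_{a,sa} \in \LL_0$, and extremality forces $V_{a,sa}\in\langle a_+\rangle=0$ in $\LL_0$.

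Then \eqref{ch:d} $\Rightarrow$ \eqref{ch:c}: from $V_{a,sa}=0$ we get $V_{sa,a}=0$ by the $\epsilon$-identity; and $U_{sa}(b) = V_{sa,b}(sa)$, so I need $V_{sa,b}(sa) \in \langle sa\rangle$ for all $b$. Using \cref{V operator symmetry}, $V_{sa,b}(sa) = V_{sa,b}(sa)$ — I would instead run the computation symmetric to \eqref{ch:a}$\Rightarrow$\eqref{ch:d}\,$\Rightarrow$\,\eqref{ch:b} with the roles of $\SS_+$ and $\A_+$ adjusted via the automorphism $e_+(0,s)e_-(0,\hat s)e_+(0,s)$ of \cref{Image e_+}-type which swaps $\A_\pm$ and $\SS_\pm$, so that $(sa)_+$ is conjugate to $a_-$ and extremality transfers. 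Cleanest: show \eqref{ch:d} $\Rightarrow$ \eqref{ch:a} first — if $V_{a,sa}=0$, compute $[a_+,[a_+,\LL_i]]$ in each degree $i$ using the bracket formulas of \cref{def:Lie alg}: the only non-trivial contributions are $[a_+,[a_+,b_-]] = -U_a(b)_+$ and $[a_+,[a_+,t_-]] = -(V_{a,ta}^\delta)(\cdot)$-type terms, and one checks $U_a(b) \in \langle a\rangle$ for all $b$ follows from $V_{a,sa}=0$ because $\A = M(J,1)$ has skew-dimension one so every $V_{a,b}$ is controlled by $V_{a,sa}$ and $T_{a\overline a}$ via \cref{V operator symmetry}. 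Finally \eqref{ch:c} $\Leftrightarrow$ \eqref{ch:d} is literally \eqref{ch:b} $\Leftrightarrow$ \eqref{ch:d} applied to $sa$ in place of $a$ (noting $s(sa) = a$), so no new work. For \eqref{ch:a} $\Leftrightarrow$ \eqref{ch:e}: by \cref{Inner containing s} (or rather the computation $[s_+,[s_+,t_-]] = -2(sts)_+$) one has $[s_+,a_+] = \psi(s,a)_+$; the point is that $(s,a) \in E_{-1}$ in the sense of \cref{E_i} — i.e. $\langle s_+,a_+\rangle$ is a line of $\Gamma(\LL)$ — iff $[s_+,a_+]=0$, every $\lambda s_+ + \mu a_+$ is extremal, and $\langle s_+\rangle \neq \langle a_+\rangle$. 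Since $s_+$ is always extremal (it spans the minimal inner ideal $\SS_+$, which is $1$-dimensional here as $\dim\SS = 1$) and $[s_+,a_+] = \psi(s,a)_+ = (sa - a\overline s \cdot(-1)\cdots)$ — I would compute $\psi(s,a) = s\overline a - a \overline s = s\overline a + a s$; for $a = \begin{psmallmatrix}\alpha & l\\ j & \beta\end{psmallmatrix}$ one finds $\psi(s,a)$ is a multiple of $s$, in fact $(\alpha + \beta - (\alpha+\beta))s$-type expression, so $[s_+,a_+]$ lands in $\SS_+ = \langle s_+\rangle$; strong commutation then needs $[s_+,a_+]=0$, i.e. $\psi(s,a) = 0$, and the extremality of all $\lambda s_+ + \mu a_+$ reduces (using $V_{s,s} = L_{s^2} = L_1 = \id \neq 0$... so $s_+$ alone: $V_{s,\hat s} = \id$) — here extremality of $s_+ + a_+$ unwinds to $V_{a,sa}=0$ again after subtracting the $\psi$-correction. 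So \eqref{ch:e} $\Leftrightarrow$ \eqref{ch:d}, closing the loop.

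**Main obstacle.** The delicate point is upgrading the ``scalar'' statements ($U_a(b) \in \langle a\rangle$ for each individual $b$) to the operator identity $V_{a,sa} = 0$, and conversely checking extremality of $a_+$ in \emph{every} graded component from $V_{a,sa}=0$ — in particular the $\LL_{-2}$ and $\LL_2$ directions, where one must verify $[a_+,[a_+,t_-]]$ and $[a_+,[a_+,W]]$ ($W\in\Inst(\A)$) land in $\langle a_+\rangle$. I expect this to require the explicit structure of $M(J,1)$: skew-dimension one means $\Inst(\A) = \{T_x\} \oplus \Inder(\A)$ with $\Inder(\A)$ small, and \cref{V operator symmetry} expresses every $V_{a,b}$ in terms of $V_{a,sa}$, $T_{a\overline a}$ and skew data, so the vanishing of $V_{a,sa}$ propagates. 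This is the ``rather technical'' verification the authors flag in the introduction, and it is where the skew-dimension-one hypothesis is genuinely used; I would isolate it as a computational lemma about $M(J,1)$ (using the multiplication formula of \cref{def:struct matrix}) rather than trying to argue abstractly.
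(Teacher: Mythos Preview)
Your easy implications are fine: \eqref{ch:a}$\Rightarrow$\eqref{ch:b} is correct, and your shortcut \eqref{ch:a}$\Rightarrow$\eqref{ch:d} via $[a_+,[a_+,s_-]]=-V_{a,sa}\in\LL_0\cap\langle a_+\rangle=0$ is clean. (Minor slip: $\hat s=-s$, not $\tfrac14 s$, but this is irrelevant.) However, your proposed cycle \eqref{ch:a}$\Rightarrow$\eqref{ch:b}$\Rightarrow$\eqref{ch:d}$\Rightarrow$\eqref{ch:c}$\Rightarrow$\eqref{ch:a} is not actually closed. You never establish \eqref{ch:b}$\Rightarrow$\eqref{ch:d}: from $U_a(sa)\in\langle a\rangle$ you only get $V_{a,sa}(a)=\lambda a$, not $V_{a,sa}=0$, and your ``slick route'' \eqref{ch:a}$\Rightarrow$\eqref{ch:d} does not help here since you do not yet have \eqref{ch:b}$\Rightarrow$\eqref{ch:a}. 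Your appeal to ``\eqref{ch:c}$\Leftrightarrow$\eqref{ch:d} is \eqref{ch:b}$\Leftrightarrow$\eqref{ch:d} for $sa$'' is therefore circular. Even granting your proposed \eqref{ch:d}$\Rightarrow$\eqref{ch:a}, you end up with \eqref{ch:a}$\Leftrightarrow$\eqref{ch:d} and \eqref{ch:a}$\Rightarrow$\eqref{ch:b},\eqref{ch:c}, but nothing coming back from \eqref{ch:b} or \eqref{ch:c}.

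The paper's route is \eqref{ch:a}$\Rightarrow$\eqref{ch:b}$\Rightarrow$\eqref{ch:c}$\Rightarrow$\eqref{ch:d}$\Rightarrow$\eqref{ch:e}$\Rightarrow$\eqref{ch:a}, and the two hard steps are handled by a single idea you do not use: transport extremality of $s_+$ by the automorphism $e_-(-sa,0)$. For \eqref{ch:c}$\Rightarrow$\eqref{ch:d}, one writes $x=e_-(-sa,0)(s_+)$ explicitly via \cref{Image e_+}; the hypothesis $U_{sa}(\A)\leq\langle sa\rangle$ collapses the $(-1)$- and $(-2)$-components, and then extremality of $x$ together with the $5$-grading and \cref{Hexagon Assumption (i)} forces $V_{a,sa}=0$. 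This is where the division hypothesis on $J$ enters, and your sketch never invokes it. For \eqref{ch:d}$\Rightarrow$\eqref{ch:e}, the assumption $V_{a,sa}=0$ makes the exponential terminate after one step, so $e_-(-\lambda sa,0)(s_+)=\lambda a_++s_+$ is extremal for every $\lambda$; extremality of $a_+$ itself then follows by the grading projection: the $\LL_{i+2}$-component of $[a_++s_+,[a_++s_+,\LL_i]]$ is exactly $[a_+,[a_+,\LL_i]]$. This replaces your unspecified ``computational lemma about $M(J,1)$'' entirely --- no explicit matrix computation is needed, and the claim that ``every $V_{a,b}$ is controlled by $V_{a,sa}$ and $T_{a\bar a}$'' is neither stated precisely nor used.
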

\begin{proof}
	First note that $s_+\in\SS_+$ is extremal since $\langle s_+\rangle=\SS_+$ is the $2$-component of the $5$-grading on $\LL$. Recall that $s^2=1$.
	\begin{itemize}\itemsep1ex
	\item[\cref{ch:a}$\implies$\cref{ch:b}.]
    	If $a_+$ is extremal, then for each $b \in \A$, the element $[a_+,[a_+,b_-]]=-U_a(b)_+$ must be a multiple of $a_+$.
    	
	\item[\cref{ch:b}$\implies$\cref{ch:c}.]
    	This follows immediately from the identity $U_{sa}=-L_sU_aL_s$ (see \cite[Proposition 11.3]{Allison1981}).
    	
	\item[\cref{ch:c}$\implies$\cref{ch:d}.]
        Let $x:=e_-(-sa,0)(s_+)$.
    	Since $s_+$ is extremal, so is $x$.
    	Using \cref{Image e_+} together with the fact that $s(sa)=(s^2)a=a$, we get
    	\[ x = \tfrac{1}{24} \psi(sa, U_{sa}(a))_--\tfrac 1 6 U_{sa}(a)_-+\tfrac 1 2 V_{a,sa}+a_++s_+ . \]
    	Now $U_{sa}(a)\in\langle sa\rangle$ by assumption. Since $\psi(sa,sa)=0$, it follows that
    	\[ x=(\lambda sa)_-+\frac{1}{2}V_{a,sa}+a_++s_+ \]
    	for some $\lambda \in k$.
    	
    	Assume first that $\lambda\neq 0$. Since $[x,[x,s_+]]\leq \LL_0\oplus \LL_1\oplus \LL_2$, the fact that $x$ is extremal but has a non-zero $(-1)$-component implies that $[x,[x,s_+]]=0$. 
    	In particular, the $0$-component $\lambda^2V_{a,sa}$ equals $0$ and hence $V_{a,sa} = 0$.
    	
    	Assume next that $\lambda=0$. Let $b \in \A$ be arbitrary. Then $[x,[x,b_-]]$ must be a multiple of $\frac{1}{2}V_{a,sa}+a_++s_+$, but this element has $(-1)$-component $\tfrac{1}{4} V_{sa,a}^2(b)_-$.
    	Since $b$ was arbitrary, this implies $V_{sa,a}^2=0$.
    	Moreover, $V_{a,sa}^\delta(s)=-\psi(a,a)=0$. 
    	By \cref{V^2 equiv with (V^epsilon)^2}, we also have $V_{a,sa}^2 = 0$.
    	Suppose now that $V_{a,sa}\neq 0$.
    	By \cref{Hexagon Assumption (i)}, there exists $W\in\Inst(\A)$ such that $[V_{a,sa},[V_{a,sa},W]]\not\in \langle V_{a,sa}\rangle$. 
    	Since $x$ is extremal and $[V_{a,sa},[V_{a,sa},W]]$ is $4$ times the $0$-component of $[x,[x,W]]$, this is a contradiction. 
    	Hence $V_{a,sa}=0$ also in this case.
    	
	\item[\cref{ch:d}$\implies$\cref{ch:e}.]
        Let $\lambda \in k$ be arbitrary and let $x := e_-(-\lambda sa,0)(s_+)$.
    	Since $V_{a, sa} = 0$ by assumption, we have
    	\[ \ad(-\lambda (sa)_-)^2 (s_+)=\lambda^2 V_{a,sa}=0 \]
    	and hence $\ad(-\lambda (sa)_-)^j (s_+)=0$ for all $j\geq 2$.
    	It follows that $x = \lambda a_+ + s_+$, and since $s_+$ is extremal, we already obtain that $\lambda a_++s_+$ is also extremal for all $\lambda\in k$.
    	In particular, the element $a_++s_+$ is extremal.
    	
    	It remains to show that $a_+$ is extremal; the result will then follow because $[s_+,a_+]=0$.
    	Note that for any $i \in [-2, 2]$, we have
    	\[ [a_++s_+,[a_++s_+,\LL_i]]\leq \LL_{i+2}\oplus \LL_{i+3}\oplus \LL_{i+4} \]
    	and the projection of $[a_++s_+,[a_++s_+,\LL_i]]$ onto  $\LL_{i+2}$ equals $[a_+,[a_+,\LL_i]]$. 
    	Together with $[a_++s_+,[a_++s_+,\LL_i]]\leq \langle a_++s_+\rangle$ this implies $[a_+,[a_+,\LL_i]]\leq \langle a_+\rangle$. 
    	We conclude that $a_+$ is indeed extremal.
    	
	\item[\cref{ch:e}$\implies$\cref{ch:a}.]
    	This is obvious.
	\qedhere
	\end{itemize}
\end{proof}

\begin{definition}[{\cite[Definition 6.1]{Garibaldi2001}}]
	Let $I$ be a subspace of a structurable algebra $\A$ of skew-dimension one. Then $I$ is called an \emph{inner ideal} of $\A$ if $U_i(\A)\leq I$, for all $i\in I$.
\end{definition}

\begin{corollary}
\label{Inner ideal skew one}
	All proper non-trivial inner ideals of $\A$ are $1$-dimensional. 
\end{corollary}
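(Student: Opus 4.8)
The plan is to show that a proper non-trivial inner ideal $I$ of $\A$ coincides with $\langle a\rangle$ for every non-zero $a\in I$. The first observation is that $I$ cannot contain a conjugate invertible element: if $a\in I$ were conjugate invertible, then $U_a$ would be invertible, so from $U_a(\A)\leq I$ we would get $I=\A$, contradicting properness. In particular $1\notin I$, and since every non-zero element of $\SS$ is conjugate invertible, $I\cap\SS=0$.

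The heart of the argument is the claim that every non-zero $a\in I$ satisfies $U_a(\A)\leq\langle a\rangle$; by \cref{Characterization extremal J +} this is the same as saying that $a_+$ is extremal, equivalently $V_{a,sa}=0$. I would argue this by contradiction: if some $a\in I$ is not of this form, then $\dim_k I\geq 2$, and one enlarges $I$ by repeatedly forming $U_x(\A)$ and the linearisations $U_{x,y}(\A)=U_{x+y}(\A)-U_x(\A)-U_y(\A)$ for $x,y\in I$. Writing elements of $\A=M(J,1)$ in matrix form and using the explicit multiplication rule together with the anisotropy of $N$, of $T$ and of the sharp map on the division algebra $J$, one should be able to show that this procedure forces $I$ to contain a conjugate invertible element, contradicting the first paragraph. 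I expect this to be the main obstacle: it is exactly the place where one must exploit that $\A=M(J,1)$ with $J$ a division algebra, rather than an arbitrary structurable algebra of skew-dimension one, since there is no ready-made correspondence between inner ideals of $\A$ and inner ideals of $K(\A)$ to appeal to (compare the discussion following \cref{Characterization extremal J +}).

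Granting the claim, suppose for contradiction that $\dim_k I\geq 2$ and fix $k$-linearly independent $a,b\in I$. By the claim, $a_+$, $b_+$ and $(a+\lambda b)_+$ are extremal for all $\lambda\in k$, and $V_{a,sa}=V_{b,sb}=0$; moreover $U_{a,b}(\A)\leq\langle a,b\rangle$. Expanding this inclusion by means of \cref{V operator symmetry} (which rewrites $U_{a,b}(c)$ as $2V_{b,c}(a)+\psi(a,b)c$ and gives $V_{a,a}=L_{a\overline a}$) yields, for all $c\in\A$, a congruence $\psi(a,b)c\equiv-2V_{b,c}(a)\pmod{\langle a,b\rangle}$; specialising $c$ and once more using the structure of $M(J,1)$ forces $\psi(a,b)=0$, after which $U_{a,b}(\A)\leq I$ sharpens (as $\Char k\neq 2$) to $V_{b,c}(a)\in I$ for all $c\in\A$. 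From here one again produces a conjugate invertible element of $I$, contradicting the first paragraph; equivalently, one observes that $a_+$ and $b_+$ then span a line of the extremal geometry while $\langle s_+,a_+\rangle$ and $\langle s_+,b_+\rangle$ are two further, distinct, lines through $\SS_+$, so that these three points close up into an ordinary triangle, which is impossible. Either way $\dim_k I=1$, which is the assertion.
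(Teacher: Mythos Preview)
Your proposal has a genuine gap. The central claim---that every non-zero $a\in I$ satisfies $U_a(\A)\leq\langle a\rangle$---is precisely where all the difficulty lies, and you only offer a plan (``explicit multiplication rule together with the anisotropy of $N$, of $T$ and of the sharp map''), not an argument. The subsequent reduction from ``every element of $I$ is extremal'' to ``$\dim I=1$'' is plausible (and in fact follows quickly from \cref{Extremal J_+ explicit}: two linearly independent extremal elements have $\psi(a,b)\neq 0$, so your attempt to force $\psi(a,b)=0$ is already aiming at a contradiction), but this is the easy part.

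The paper takes a different, and shorter, route that avoids your gap entirely. It does \emph{not} show that every $a\in I$ is extremal. Instead, for an arbitrary $a\in I$ it applies the automorphism $e_-(-sa,0)$ of $K(\A)$ to the extremal element $s_+$, producing an extremal element $x$ whose $(-1)$-component is $(sa')_-$ with $a'=\tfrac{1}{6}U_a(sa)\in I$ (here one uses that $a$ is not conjugate invertible, so $\psi(sa,U_{sa}(a))=0$ by \cite[Theorem~2.11]{Allison1984}). Extremality of $x$ forces $U_{sa'}(\A)\leq\langle sa'\rangle$, and then \cref{Characterization extremal J +} gives a $1$-dimensional inner ideal $\langle a'\rangle\leq I$. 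The final step is not a direct argument but a citation of \cite[Theorem~6.12]{Garibaldi2001}, which handles the passage from ``$I$ contains a $1$-dimensional inner ideal'' to ``$I$ is $1$-dimensional''. So the paper's proof buys brevity by exploiting the Lie algebra and an external classification result, whereas your plan would be self-contained but requires carrying out the computation you left open.
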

\begin{proof}
	Let $I$ be a proper non-trivial inner ideal of $\A$.
	Consider $a\in I$.
	Let $x:=e_-(-sa,0)(s_+)$.
    Since $s_+$ is extremal, so is $x$.
    Using \cref{Image e_+} we get
    	\[ x = \tfrac{1}{24} \psi(sa, U_{sa}(a))_--\tfrac 1 6 U_{sa}(a)_-+\tfrac 1 2 V_{a,sa}+a_++s_+ . \]
    Now since $a$ is not conjugate invertible (otherwise $I=\A$), we get $\psi(sa, U_{sa}(a))=0$ by \cite[Theorem 2.11]{Allison1984}.
    Hence
    	\[ x=(sa')_-+\frac{1}{2}V_{a,sa}+a_++s_+ \]
    for some $a'\in I$.
	For any $b\in \A$ we get that $[x,[x,b_+]]$ has $(-1)$-component $-U_{sa'}(b)$.
	Since $x$ is extremal we get $U_{sa'}(\A)\leq \langle sa'\rangle $.
	By \cref{Characterization extremal J +} we get that $\langle a'\rangle$ is a $1$-dimensional inner ideal contained in $I$.
	Then \cite[Theorem 6.12]{Garibaldi2001} concludes this proof.
\end{proof}

\begin{remark}
	By \cref{Characterization extremal J +}, an element $a$ of the structurable algebra $\A$ is \emph{extremal}%
	\,\footnote{This notion coincides with the notion of a \textit{singular element} in \cite[Definition 5.1]{Garibaldi2001} and a \emph{strictly regular element} in \cite[p. 196]{Allison1984}.},
	i.e.\@~$U_a(\A)\leq\langle a\rangle$, if and only if $a_+$ is extremal (in the Lie algebra).
	Note that the proof uses the fact that we are working with a structurable algebra of skew-dimension one and the implication from (c) to (d) relies on the assumption that $J$ is division (via \cref{Hexagon Assumption (i)}). 
	One could expect that there is a more direct proof of the equivalence of (b) and (d), without going to the Lie algebra, which may hold in a more general setting.
\end{remark}

\begin{corollary}
\label{Extremal J_+ explicit}
	The set of extremal elements of $\LL$ contained in $\A_+$ equals 
	\[B:=\Biggl \{\lambda \begin{pmatrix} N(x) & x \\ x^\sharp & 1 \end{pmatrix} _+ \Bigm\vert x\in J, \lambda\in k^\times\Biggr\} \cup \Biggl \{ \lambda\begin{pmatrix} 1 & 0 \\ 0 & 0 \end{pmatrix} _+ \Bigm\vert \lambda\in k^\times\Biggr \} .\]
	Moreover, if $\psi(a,b)=0$ for $a,b \in B$, then $a$ and $b$ are linearly dependent.
\end{corollary}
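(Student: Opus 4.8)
The starting point is \cref{Characterization extremal J +}: for $0 \neq a \in \A$, the element $a_+$ is extremal if and only if $V_{a,sa} = 0$, equivalently $U_a(\A) \leq \langle a \rangle$. Thus the first assertion is exactly the determination of the solution set of $V_{a,sa} = 0$ inside $\A\setminus\{0\}$, and I would prove the two inclusions separately.

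For $B \subseteq \{\,a_+ \mid a_+ \text{ extremal}\,\}$, the element $a = \begin{psmallmatrix} 1 & 0 \\ 0 & 0 \end{psmallmatrix}$ is immediate: one checks $sa = a$ and $a\overline a = 0$, so $V_{a,sa} = V_{a,a} = L_{a\overline a} = 0$ by \cref{V operator symmetry}. For $a = \begin{psmallmatrix} N(x) & x \\ x^\sharp & 1 \end{psmallmatrix}$ I would verify $V_{a,sa}(z) = 0$ for every $z \in \A$ by a direct computation of the products in $M(J,1)$, using the cubic-norm identities $x^{\sharp\sharp} = N(x)x$, $T(x,x^\sharp)=3N(x)$, $x\times x = 2x^\sharp$ and their linearizations; alternatively, this family is precisely the (classically known) set of \emph{singular}, or \emph{strictly regular}, elements of the structurable matrix algebra $M(J,1)$, so one may instead invoke \cite{Allison1984} or \cite{Garibaldi2001}. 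For the reverse inclusion, write $a = \begin{psmallmatrix} k_1 & j_1 \\ j_2 & k_2 \end{psmallmatrix}$ with $a_+$ extremal; then $V_{a,sa}(1) = 0$ in particular. Since $s\,\begin{psmallmatrix} k_1 & j_1 \\ j_2 & k_2 \end{psmallmatrix} = \begin{psmallmatrix} k_1 & j_1 \\ -j_2 & -k_2 \end{psmallmatrix}$, an explicit computation gives the four matrix entries of $V_{a,sa}(1) = (a\overline{sa}) + (\overline{sa})a - \overline a(sa)$. If $k_2 \neq 0$, rescale $a$ to $k_2 = 1$ (this affects neither extremality nor membership in $B$); then $V_{a,sa}(1) = 0$ reads $j_2 = j_1^\sharp$, $k_1 j_1 = j_2^\sharp$, $3k_1 = T(j_1,j_2)$, so with $x := j_1$ we get $(k_1 - N(x))x = 0$, hence $k_1 = N(x)$ since $J$ is a division algebra, i.e.\ $a = \begin{psmallmatrix} N(x) & x \\ x^\sharp & 1 \end{psmallmatrix} \in B$. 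If $k_2 = 0$, the same entries give $j_1^\sharp = 0$, hence $j_1 = 0$, then $j_2^\sharp = k_1 j_1 = 0$, hence $j_2 = 0$, so $a = k_1\begin{psmallmatrix} 1 & 0 \\ 0 & 0 \end{psmallmatrix} \in B$.

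For the second assertion, recall $\psi(a,b) = a\overline b - b\overline a \in \SS = \langle s\rangle$. By bilinearity of $\psi$, and since rescaling $a$ or $b$ changes neither the hypothesis nor the conclusion, I may take $a,b$ to be the displayed normalized elements of $B$. If $a = \begin{psmallmatrix} 1 & 0 \\ 0 & 0 \end{psmallmatrix}$ and $b = \begin{psmallmatrix} N(y) & y \\ y^\sharp & 1 \end{psmallmatrix}$ (in particular if $b = \begin{psmallmatrix} 0 & 0 \\ 0 & 1 \end{psmallmatrix}$, the case $y=0$), a short computation yields $\psi(a,b) = s \neq 0$, so the hypothesis does not occur; and for $a = b = \begin{psmallmatrix} 1 & 0 \\ 0 & 0 \end{psmallmatrix}$ there is nothing to show. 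In the remaining case $a = \begin{psmallmatrix} N(x) & x \\ x^\sharp & 1 \end{psmallmatrix}$, $b = \begin{psmallmatrix} N(y) & y \\ y^\sharp & 1 \end{psmallmatrix}$ (which also covers $a = \begin{psmallmatrix} 0 & 0 \\ 0 & 1 \end{psmallmatrix}$, the case $x=0$), one computes $a\overline b$ and $b\overline a$; the off-diagonal entries cancel because $\times$ is symmetric and $(x+y)^\sharp = x^\sharp + x\times y + y^\sharp$, while the diagonal yields
\[ \psi(a,b) = \bigl(N(x) - N(y) + T(x,y^\sharp) - T(x^\sharp,y)\bigr)\, s = N(x-y)\, s, \]
using the expansion $N(x-y) = N(x) - T(x^\sharp,y) + T(x,y^\sharp) - N(y)$. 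Since $J$ is a division algebra, $\psi(a,b) = 0$ forces $N(x-y) = 0$, hence $x = y$, hence $a = b$; in particular $a$ and $b$ are linearly dependent.

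The main obstacle is the first inclusion: the equation $V_{a,sa}(1) = 0$ only cuts out $B$ set-theoretically, so to conclude that \emph{every} element of the family $\begin{psmallmatrix} N(x) & x \\ x^\sharp & 1 \end{psmallmatrix}$ is extremal one must push through the full identity $V_{a,sa}(z) = 0$ for all $z$, which means carefully juggling all the cubic-norm identities of $J$ (or else appealing to the known classification of strictly regular elements of $M(J,1)$, which is morally already contained in \cref{Inner ideal skew one}). Everything else reduces to the two matrix computations above plus the division property $j^\sharp = 0 \Rightarrow j = 0$.
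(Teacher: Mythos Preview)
Your proof is correct. For the second assertion (the $\psi$-computation yielding $N(x-y)\,s$ and the mixed case $\psi=\pm s$), your argument is identical to the paper's.

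For the first assertion the paper takes a shorter path: it invokes the equivalence $a_+$ extremal $\iff U_a(\A)\leq\langle a\rangle$ from \cref{Characterization extremal J +} and then simply cites \cite[Lemma 5.7]{Garibaldi2001}, which already classifies the elements with $U_a(\A)\leq\langle a\rangle$ in any $M(J,\eta)$ (together with the division hypothesis $j^\sharp=0\Rightarrow j=0$ and $(j^\sharp)^\sharp=N(j)j$ to eliminate the degenerate parametrizations). Your route via $V_{a,sa}(1)=0$ is more explicit and self-contained for the direction ``extremal $\Rightarrow a\in B$'': your system $j_2=j_1^\sharp$, $k_1 j_1=j_2^\sharp$, $3k_1=T(j_1,j_2)$ is exactly what drops out, and the division property finishes it cleanly. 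The cost, as you note yourself, is the other direction: showing every element of $B$ satisfies $V_{a,sa}=0$ (not just $V_{a,sa}(1)=0$) really does require either the full verification on a general $z$ or a citation, so in the end you land in the same place as the paper. What your approach buys is that the reverse inclusion is proved by hand rather than by reference; what the paper's buys is brevity.
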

\begin{proof}
	By \cref{Characterization extremal J +}, $a_+$ is extremal if and only if $U_a(\A)\leq \langle a\rangle$.
	Recall that $J$ is a division algebra, so in particular, if $j \in J$, then $(j^\sharp)^\sharp = N(j) j$ and if $j^\sharp=0$, then $j=0$.
	The first statement now follows from \cite[Lemma 5.7]{Garibaldi2001}.
	
	A straightforward calculation shows that for any $x,y\in J$,
	\[ \psi \left( \begin{pmatrix} N(x) & x \\ x^\sharp & 1 \end{pmatrix}_+ , \begin{pmatrix} N(y) & y \\ y^\sharp & 1 \end{pmatrix}_+ \right) = \lambda s_+ \]
	with $\lambda=N(x)-N(y)+T(x,y^\sharp)-T(y,x^\sharp)=N(x-y)$.
	Since $J$ is division, $\lambda=0$ if and only if $x=y$.
	Finally observe that
	\[ \psi \left( \begin{pmatrix} N(x) & x \\ x^\sharp & 1 \end{pmatrix}_+ , \begin{pmatrix} 1 & 0 \\ 0 & 0 \end{pmatrix}_+\right) = -s_+.\]
	The second statement is now clear.
\end{proof}

\begin{remark}
\label{Hexagon embedding Moufang set}
	By \cref{Inner ideal skew one} the only proper non-trivial
	inner ideals of the structurable algebra $\A$ are $1$-dimensional, and form the Moufang set corresponding to $J$ (see \cite{DeMedts2020}).
	\Cref{Characterization extremal J +} shows that if $x$ is an extremal element of $\A$, then $x_+$ is an extremal element of $\LL=K(\A)$. 
	Hence, we get an embedding of the Moufang set into the geometry of inner ideals of $\LL$, which will turn out to be the Moufang hexagon associated to $J$; see \cref{Hexagon Main Theorem} below.
\end{remark}

\begin{lemma}
\label{Hexagon inner containing S_+}
	Any proper inner ideal $I$ containing $\SS_+$ is either $\SS_+$ itself or equals $\langle a_+,s_+ \rangle$ for some extremal element $a_+\in\A_+$. Moreover, any non-zero element in $I$ is extremal.
\end{lemma}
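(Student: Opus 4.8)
The plan is to reduce immediately to the reduction theorem of \cref{sec 3}, then recognise the "horizontal part" of $I$ as a Garibaldi inner ideal of $\A$, and finally feed this into \cref{Characterization extremal J +}. First, if $I$ does not properly contain $\SS_+$, then $I=\SS_+$, and the claims are trivial: $\SS_+=\langle s_+\rangle$ is one-dimensional and $s_+$ spans the top component $\LL_2$ of the $5$-grading, hence is extremal (as already observed in the proof of \cref{Characterization extremal J +}), so every non-zero element of $I$ is extremal. So from now on assume $I$ properly contains $\SS_+$. By \cref{Hexagon Assumption (i)} (and its proof), $\A$ is central simple, \cref{ass:skew} is satisfied, and $\A$ satisfies \cref{ass:V0}. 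Therefore \cref{Containing S} applies and yields a decomposition $I=I_1\oplus\SS_+$ with $0\neq I_1\leq\A_+$ (the component $I_0\leq\Inst(\A)$ vanishes because \cref{ass:V0} holds).

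The next step is to show that $I_1$, viewed as a subspace of $\A$, is an inner ideal of $\A$ in the sense of Garibaldi, i.e. $U_i(\A)\leq I_1$ for all $i\in I_1$. Indeed, for $a_+\in I_1$ and $b\in\A$ we have $[a_+,[a_+,b_-]]=-U_a(b)_+\in I$; since this element lies in $\A_+=\LL_1$ and $I\cap\LL_1=I_1$, we conclude $U_a(\A)\subseteq I_1$. Moreover $I_1\neq\A$: otherwise $\A_+\leq I$, so $\psi(\A,\A)_+=[\A_+,\A_+]\leq[I,I]=0$ (proper inner ideals of $K(\A)$ are abelian by \cref{Inner abelian}), contradicting \cref{psi non-degenerate}. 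Thus $I_1$ is a proper non-trivial inner ideal of $\A$, so by \cref{Inner ideal skew one} it is one-dimensional, say $I_1=\langle a_+\rangle$. Then $U_a(\A)\leq\langle a\rangle$, so $a$ is extremal in $\A$, and by the implication \cref{ch:b}$\Rightarrow$\cref{ch:a} of \cref{Characterization extremal J +}, $a_+$ is extremal in $\LL$. Hence $I=\langle a_+,s_+\rangle$ for this extremal element $a_+\in\A_+$.

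Finally, for the "moreover" part: extremality of $a$ gives $V_{a,sa}=0$ by \cref{ch:a}$\Rightarrow$\cref{ch:d} of \cref{Characterization extremal J +}, whence $\langle s_+,a_+\rangle$ is a line of the extremal geometry $\Gamma(\LL)$ by \cref{ch:d}$\Rightarrow$\cref{ch:e}. By the definition of lines of $\Gamma(\LL)$ (they correspond to strongly commuting pairs, see \cref{E_i}), every non-zero element of $\langle s_+,a_+\rangle=I$ is extremal, which completes the plan.

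I do not expect a genuine obstacle: the argument is essentially an assembly of \cref{Containing S}, \cref{Inner ideal skew one}, and \cref{Characterization extremal J +}. The only points needing care are (i) keeping the two meanings of the subscript "${}_+$" apart — the degree-$1$ copy $\A_+=\LL_1$ versus the degree-$2$ copy $\SS_+=\LL_2$ — so that the bracket $[a_+,[a_+,b_-]]=-U_a(b)_+$ is correctly seen to lie in $\LL_1$ and may then be intersected with $I_1$; and (ii) verifying that we are indeed allowed to invoke \cref{Containing S} here, which is exactly what \cref{Hexagon Assumption (i)} guarantees.
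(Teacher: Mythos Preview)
Your proof is correct, but takes a genuinely different route from the paper's. The paper argues directly: once \cref{Containing S} gives $I\leq\A_+\oplus\SS_+$, it picks any non-zero $a_+\in I\cap\A_+$, computes $[a_+,[a_+,s_-]]=-V_{a,sa}\in I\cap\LL_0=0$, and concludes via \cref{Characterization extremal J +} that $a_+$ is extremal. To bound the dimension it then supposes $\dim I\geq 3$, finds two independent extremal elements $a_+,b_+\in I\cap\A_+$, uses abelianness to get $\psi(a,b)=0$, and derives a contradiction from \cref{Extremal J_+ explicit}. You instead package the whole $1$-component $I_1$ as a Garibaldi inner ideal of $\A$ and invoke \cref{Inner ideal skew one} to force $\dim I_1=1$ in one stroke.

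What each approach buys: your argument is more conceptual and shorter, and makes transparent that the content here is precisely the structurable-algebra statement \cref{Inner ideal skew one}. The paper's argument is slightly more self-contained at this spot, avoiding the appeal to \cite[Theorem~6.12]{Garibaldi2001} hidden inside \cref{Inner ideal skew one}, and instead leaning on the explicit description \cref{Extremal J_+ explicit}. For the ``moreover'' clause both proofs end up at the same place, namely \cref{Characterization extremal J +}\cref{ch:e}.
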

\begin{proof}
	We may assume $I \neq \SS_+$. 
	By \cref{Hexagon Assumption (i)} and \cref{Containing S}, we get $\SS_+ \lneqq I\leq \A_+\oplus \SS_+$.
	Hence $a_+\in I$ for some non-zero $a\in \A$.	
	Then $I$ also contains $[a_+,[a_+,s_-]]=-V_{a,sa} \in \LL_0$, hence $V_{a,sa}=0$. By \cref{Characterization extremal J +}, $a_+$ is then extremal.
	
	Suppose that $I$ is at least $3$-dimensional. 
	By the previous paragraph, any element of $I$ is of the form $\mu a_++\lambda s_+$, for some $\lambda,\mu\in k$ and some extremal element $a_+$. 
	Since $\dim(I)\geq 3$, we can find two linearly independent extremal elements $a_+$ and $b_+$ in $I \cap \A_+$.
	Since $I$ is abelian, $\psi(a_+,b_+)=[a_+,b_+]=0$, but this contradicts \cref{Extremal J_+ explicit}. 
	Hence $I$ must be $2$-dimensional and hence equal to $\langle a_+,s_+\rangle$ for some extremal element $a_+\in\A_+$.
	The last claim follows from \cref{Characterization extremal J +}\cref{ch:e}.
\end{proof}

\begin{theorem}
\label{Hexagon Main Theorem}
	Let $\A$ be the structurable algebra $M(J,1)$ (of skew-dimension one) over a field of characteristic different from $2$ and $3$, where $J$ is a cubic Jordan division algebra. 
	Set $\LL=K(\A)$. 
	Consider the graph $\Omega=(V,E)$, with
	\begin{align*}
	   V &= \{ I \mid I \text{ is a proper non-trivial inner ideal of $\LL$} \} , \\
	   E &= \{ \{ I,J \} \mid I \lneq J \} . 
	\end{align*}
	Then $\Omega$ is the incidence graph of a generalized hexagon.
\end{theorem}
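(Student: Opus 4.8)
The plan is to show that the vertex set of $\Omega$ is precisely the point set $\E$ together with the line set $\F$ of the extremal geometry $\Gamma(\LL)=(\E,\F)$ of $\LL$, with inclusion as incidence, and then to invoke \cref{Extremal geometry characterisation hexagon}. Throughout, recall that by \cref{Hexagon Assumption (i)} the algebra $\A$ satisfies \cref{ass:V0}, and that \cref{ass:skew} holds since $\SS=\langle s\rangle$ with $s$ conjugate invertible, so that \cref{Reducing to S_+} is available.

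I would first determine all proper non-trivial inner ideals of $\LL$. By \cref{Reducing to S_+} and \cref{Hexagon inner containing S_+}, each such inner ideal is $E(\A)$-conjugate either to $\SS_+$ (dimension $1$) or to some $\langle a_+,s_+\rangle$ with $a_+\in\A_+$ extremal (dimension $2$); in either case it is abelian (\cref{Inner abelian}) and all of its non-zero elements are extremal. Hence a proper non-trivial inner ideal has dimension $1$ or $2$; the $1$-dimensional ones are exactly the points of $\E$; and a $2$-dimensional one is an abelian subspace $\langle u,v\rangle$ whose non-zero elements are all extremal, so $(u,v)\in E_{-1}$ and it is a line of $\F$. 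Conversely, any line $\langle u,v\rangle$ of $\Gamma(\LL)$ is itself a $2$-dimensional inner ideal: it is abelian because $[u,v]=0$, all of $\lambda u+\mu v$ are extremal or $0$, and combining $[u+v,[u+v,l]]\in\langle u+v\rangle$, $[u,[u,l]]\in\langle u\rangle$ and $[v,[v,l]]\in\langle v\rangle$ with the Jacobi identity (here $\Char k\neq2$ is used) yields $[u,[v,l]]=[v,[u,l]]\in\langle u,v\rangle$ for every $l\in\LL$, hence $[\langle u,v\rangle,[\langle u,v\rangle,\LL]]\leq\langle u,v\rangle$. This identifies $\Omega$ with the incidence graph of $\Gamma(\LL)$.

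Next I would verify the remaining hypotheses of \cref{Extremal geometry characterisation hexagon}. The Lie algebra $\LL=K(\A)$ is finite-dimensional and central simple, since $\A$ is central simple (\cite[\S4, Lemma 2.1]{Allison1984}, \cite[\S5]{Allison1979}); it has no absolute zero divisors by \cref{A nondeg}; and it is generated by $E(\LL)$, because the linear span of $E(\LL)$ is stable under every automorphism $\exp(\ad l)$ with $l\in\LL_i$, $i\neq0$ (automorphisms preserve extremality), hence is a non-zero ideal by \cref{ideals 5 grading} (using $\LL_0=[\LL_1,\LL_{-1}]$) and therefore equals $\LL$. Writing $a_0:=\begin{psmallmatrix}1&0\\0&0\end{psmallmatrix}$, the element $(a_0)_+$ is extremal by \cref{Extremal J_+ explicit}, so $\ell:=\langle s_+,(a_0)_+\rangle$ is a line of $\Gamma(\LL)$ by \cref{Characterization extremal J +}; in particular $\F(\LL)\neq\emptyset$.

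It remains---and this is the crux---to show that $\ell$ is a maximal singular subspace of $(\E,\F)$; the conclusion then follows from \cref{Extremal geometry characterisation hexagon}. Suppose some singular subspace $X$ properly contains $\ell$ and pick a point $\langle y\rangle\in X\setminus\ell$. Since $\langle y\rangle$ is collinear with $\langle s_+\rangle$, they lie on a common line $m$, which by the first step is a $2$-dimensional inner ideal containing $s_+$, hence containing $\SS_+$; by \cref{Hexagon inner containing S_+}, $m=\langle s_+,b_+\rangle$ for some extremal $b_+\in\A_+$, so $y=\mu\,b_+ +\nu\,s_+$ with $\mu\neq0$ (otherwise $\langle y\rangle=\langle s_+\rangle\in\ell$), and after rescaling $y=b_+ +\nu\,s_+$. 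Since $\langle y\rangle$ is also collinear with $\langle(a_0)_+\rangle$, we get $0=[(a_0)_+,y]=[(a_0)_+,b_+]=\psi(a_0,b)_+$, using $[(a_0)_+,s_+]=0$. As $(a_0)_+$ and $b_+$ are extremal elements of $\LL$ lying in $\A_+$, the second statement of \cref{Extremal J_+ explicit}---which is exactly where the non-degeneracy of $N$ and the division property of $J$ enter---forces $a_0$ and $b$ to be linearly dependent, so $y\in\langle(a_0)_+,s_+\rangle=\ell$, contradicting $\langle y\rangle\notin\ell$. Hence $\ell$ is a maximal singular subspace, $(\E,\F)$ is a generalized hexagon, and $\Omega$ is its incidence graph. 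The main obstacle is this last maximality verification; all other steps are bookkeeping with the $5$-grading and the reduction theorems already established.
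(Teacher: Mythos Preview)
Your proposal is correct and follows essentially the same route as the paper: classify the proper non-trivial inner ideals as the points and lines of the extremal geometry via \cref{Reducing to S_+} and \cref{Hexagon inner containing S_+}, then invoke \cref{Extremal geometry characterisation hexagon}. Your write-up is in fact more explicit than the paper's in two places: you spell out why a line of $\F(\LL)$ is itself an inner ideal (the Jacobi computation) and why $\LL$ is generated by its extremal elements, and your maximality argument for the line $\ell$ directly uses the second statement of \cref{Extremal J_+ explicit}, whereas the paper only says ``This implies that any line is a maximal singular subspace'' and leaves the reader to observe that a singular subspace is automatically an inner ideal (hence bounded in dimension by $2$).
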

\begin{proof}
	Let $I$ be an arbitrary proper non-trivial inner ideal. 
	By \cref{Hexagon Assumption (i)} and \cref{Reducing to S_+}, there exists an element of $E(\A)$ mapping $I$ to an inner ideal containing $\SS_+$. 
	By \cref{Hexagon inner containing S_+}, the only proper non-trivial inner ideals containing $\SS_+$ are $\SS_+$ itself and the inner ideals $\langle a_+,s_+\rangle$ for an extremal element $a_+\in\A_+$.
	Moreover, all non-zero elements of such an inner ideal are extremal, i.e., these inner ideals are singular.
	This implies that any line in the extremal geometry $(\E(\LL),\F(\LL))$ is a maximal singular subspace. 
	\Cref{Extremal geometry characterisation hexagon} now implies that $(\E(\LL),\F(\LL))$ is a generalized hexagon (notice that the conditions of this theorem are satisfied by \cref{A nondeg} and \cref{Extremal J_+ explicit}). 
	Since all proper non-trivial inner ideals are either points or lines of the extremal geometry, we conclude that $\Omega$ is the incidence graph of a generalized hexagon.
\end{proof}

\begin{remark}
\label{Remark Faulkner hexagon}
	In \cite[Chapter 11]{Faulkner1977}, Faulkner defines a Lie algebra starting from a Jordan cubic division algebra $J$, which we will denote by $F(J)$. 
	There is a lot of evidence that this Lie algebra is isomorphic to $K(M(J,1))$, but we have not pursued this in detail.
	Indeed, in \cite[Chapter 12]{Faulkner1977}, Faulkner proves that the geometry with as points the $1$\dash dimensional inner ideals of $F(J)$ and as lines the $2$\dash dimensional inner ideals of $F(J)$ containing at least two $1$\dash dimensional inner ideals, with inclusion as incidence, form a generalized hexagon. 
	If it is indeed true that $K(M(J,1))\cong F(J)$, then \cref{Hexagon Main Theorem} is a generalization of Faulkners' result, in the sense that we are considering all inner ideals (rather than only the $1$\dash dimensional ones and the $2$\dash dimensional ones containing at least two $1$\dash dimensional ones). 
	Moreover, our approach also allows to identify the Moufang sets associated to $J$ in the Moufang hexagon associated to $J$ (see \cref{Hexagon embedding Moufang set}).
\end{remark}

\begin{remark}
\label{Hexagon other grading}
    Let us again (as in \cref{Triangle other grading} for the triangle case) try to obtain each inner ideal as the end of a $\mathbb Z$-grading.
	Of course, the inner ideal $\SS_+$ is the end of a $\mathbb Z$-grading on $\LL$.
	
	Let us now consider a $2$-dimensional inner ideal; then this inner ideal will also arise as the end of a $\mathbb Z$-grading on $\LL$.
	Indeed, note that $\ad_{T_{s_0}}$ is a grading derivation with components
	\begin{align*}
		\LL_{-3}&= \begin{psmallmatrix} 0 & 0 \\ 0 & k \end{psmallmatrix}_-\oplus \begin{psmallmatrix} 0 & 0 \\ 0 & k \end{psmallmatrix}_+\\[1ex]
		\LL_{-2}&= T_{\begin{psmallmatrix}
			0 & 0 \\ J & 0
		\end{psmallmatrix} }\\[1ex]
		\LL_{-1}&= \begin{psmallmatrix} 0 & J \\ 0 & 0 \end{psmallmatrix}_-\oplus \begin{psmallmatrix} 0 & J \\ 0 & 0 \end{psmallmatrix}_+\\[1ex]
		\LL_{-0}&=  \langle T_{\begin{psmallmatrix}
			1 & 0 \\ 0 & 0
		\end{psmallmatrix}}\rangle \oplus \langle T_{\begin{psmallmatrix}
			0 & 0 \\ 0 & 1
		\end{psmallmatrix}}\rangle\oplus \Inder(\A)\\[1ex]
		\LL_{1}&= \begin{psmallmatrix} 0 & 0 \\ J & 0 \end{psmallmatrix}_-\oplus \begin{psmallmatrix} 0 & 0 \\ J & 0 \end{psmallmatrix}_+\\[1ex]
		\LL_{2}&= T_{\begin{psmallmatrix}
			0 & J \\ 0 & 0
		\end{psmallmatrix} }\\[1ex]
		\LL_{3}&= \begin{psmallmatrix} k & 0 \\ 0 & 0 \end{psmallmatrix}_-\oplus \begin{psmallmatrix} k & 0 \\ 0 & 0 \end{psmallmatrix}_+, 	
	\end{align*}
	using $V_{\begin{psmallmatrix}
			0 & J \\ 0 & 0
		\end{psmallmatrix},\begin{psmallmatrix}
			0 & 0 \\ 0 & 1
		\end{psmallmatrix}}=V_{\begin{psmallmatrix}
			0 & 0 \\ J & 0
		\end{psmallmatrix},\begin{psmallmatrix}
			1 & 0 \\ 0 & 0
		\end{psmallmatrix}}=V_{\begin{psmallmatrix}
			1 & 0 \\ 0 & 0
		\end{psmallmatrix},\begin{psmallmatrix}
			1 & 0 \\ 0 & 0
		\end{psmallmatrix}}=V_{\begin{psmallmatrix}
			0 & 0 \\ 0 & 1
		\end{psmallmatrix},\begin{psmallmatrix}
			0 & 0 \\ 0 & 1
		\end{psmallmatrix}}=0$ and \cref{S1 Decom Inst}.
		So in this case, as opposed to the triangle case, we do not get a Jordan pair, and not even a so-called Kantor pair.
\end{remark}

\begin{remark}
    We have chosen to use the matrix structurable algebra $M(J,1)$, but we could also have chosen $M(J,\eta)$ for any parameter $\eta \in k^\times$.
    Indeed, the structurable algebras $M(J,1)$ and $M(J,\eta)$ are isotopic (see, e.g., \cite[Proposition~4.11 and Lemma~4.13]{Garibaldi2001}, where the result is stated for Albert algebras $J$ but holds in general).
    Since isotopic structurable algebras give rise to graded-isomorphic Lie algebras under the TKK-construction by \cite[Proposition~12.3]{Allison1981}, this does not affect the resulting geometry.
\end{remark}

Now we show that the generalized hexagon is, in fact, a Moufang hexagon and we determine the root groups in terms of the structurable algebra.
In the rest of this section, we fix a cycle $(x_0,x_1,\dots,x_{11},x_0)$ of length $12$ in $\Omega$ corresponding to the following cycle of length $6$ in the generalized hexagon:

\begin{tikzpicture}
   \newdimen\R
   \R=2.3cm
   \draw (0:\R) \foreach \x in {60,120,...,360} {  -- (\x:\R) };
   \foreach \x/\l/\p in
     { 60/{$\langle \begin{psmallmatrix} 0&0\\0&1 \end{psmallmatrix}_+\rangle=x_{11}$}/above,
      120/{$\langle \begin{psmallmatrix} 0&0\\0&1 \end{psmallmatrix}_-\rangle=x_9$ \qquad \ }/above,
      180/{$\mathcal S_-=x_7$}/left,
      240/{$\langle \begin{psmallmatrix} 1&0\\0&0 \end{psmallmatrix}_-\rangle =x_5$ \qquad \ }/below,
      300/{$\langle \begin{psmallmatrix} 1&0\\0&0 \end{psmallmatrix}_+\rangle =x_3$}/below,
      360/{$\mathcal S_+=x_1$}/right
     }
     \node[inner sep=1pt,circle,draw,fill,label={\p:\l}] at (\x:\R) {};
	\end{tikzpicture}

So explicitly, we have $x_0=\langle \begin{psmallmatrix} 0&0\\0&1 \end{psmallmatrix}_+\rangle\oplus \mathcal S_+$, $x_1=\mathcal S_+$, etc.

Let $U_1,\dots, U_6$ be the root groups, as defined in \cref{Notation root groups}.
So $U_1$ is the subgroup of $\Aut (\Omega)$ which fixes all neighbors of $x_2$, $x_3$, $x_4$, $x_5$ and $x_6$, and similarly for the other root groups.
It turns out that, except for $U_1$, all these root groups are subgroups of $E_-(\A)$.
In order to determine the root groups explicitly, we can use \cref{Moufang general remark} to see that it suffices to show that the claimed subgroup (of $E_-(\A)$) fixes all the neighbors of a set of $5$ distinguished vertices and acts transitively on the set of neighbors of another distinguished vertex, with one neighbor excluded. 
We first determine $U_2$, $U_4$ and $U_6$ using the following lemma.
Recall the notations from \cref{E_i}.

\begin{lemma}
\label{1-dim root groups}
	Let $L$ be a non-degenerate Lie algebra generated by its extremal elements.
	Consider extremal elements $x$, $y$ such that $(x,y)\in E_{-1}$.
	Then $\exp (x)$ fixes all lines (in the extremal geometry) through $\langle y\rangle $. 
	In particular, $\exp (x)$ fixes~$\langle y\rangle $.
\end{lemma}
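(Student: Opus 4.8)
The plan is to read $\exp(x)$ as $\exp(\ad x)$, which makes sense and is a Lie algebra automorphism of $L$: since $x$ is extremal there is a linear form $g_x\colon L\to k$ with $[x,[x,u]]=g_x(u)\,x$ for all $u\in L$, so $\ad_x^3=0$ and, as $\Char k\neq 2,3$, the map $\exp(\ad x)=\id+\ad_x+\tfrac12\ad_x^2$ is an automorphism. In particular it permutes the extremal elements of $L$ and preserves each of the relations $E_{-2},\dots,E_2$ of \cref{E_i}, hence it maps every line of the extremal geometry $(\E,\F)$ to a line. I record the explicit formula $\exp(\ad x)(u)=u+[x,u]+\tfrac12 g_x(u)\,x$ for later use.

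Next I would show that $\exp(\ad x)$ fixes $y$, which already yields the final assertion: from $(x,y)\in E_{-1}$ we have $[x,y]=0$, hence $g_x(y)\,x=[x,[x,y]]=0$ and $\exp(\ad x)(y)=y$. Now take an arbitrary line $\ell\in\F$ through $\langle y\rangle$; we may write $\ell=\langle y,z\rangle$ with $z$ extremal, $\langle z\rangle\neq\langle y\rangle$ and $(y,z)\in E_{-1}$. Since $\exp(\ad x)$ is a linear automorphism fixing $y$, it suffices to prove $\exp(\ad x)(z)\in\ell$; then $\exp(\ad x)(\ell)=\langle y,\exp(\ad x)(z)\rangle\subseteq\ell$, and equality follows from $\dim\exp(\ad x)(\ell)=\dim\ell$.

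To locate $\exp(\ad x)(z)$ I would use the geometry. Both $\langle x\rangle$ and $\langle z\rangle$ are collinear with $\langle y\rangle$ in $(\E,\F)$ (because $(x,y),(y,z)\in E_{-1}$), so $\langle x\rangle$ and $\langle z\rangle$ have a common neighbour; since $(\E,\F)$ is a root filtration space in the sense of \cite{Cohen2006} and a hyperbolic pair has no common neighbour there, we get $(x,z)\in E_{\leq 1}$. If $(x,z)\in E_{\leq 0}$ (in particular if $\langle x\rangle=\langle z\rangle$), then $[x,z]=0$, so $g_x(z)\,x=[x,[x,z]]=0$ and $\exp(\ad x)(z)=z\in\ell$. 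If $(x,z)\in E_1$, then $[x,[x,z]]=0$ forces $g_x(z)=0$, while $[x,z]$ is a non-zero extremal element whose span is the \emph{unique} point collinear with both $\langle x\rangle$ and $\langle z\rangle$; as $\langle y\rangle$ is such a point, $\langle [x,z]\rangle=\langle y\rangle$ and $\exp(\ad x)(z)=z+[x,z]\in z+\langle y\rangle\subseteq\ell$. Either way $\exp(\ad x)(z)\in\ell$, as required.

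The main obstacle is the geometric input quoted from \cite{Cohen2006}: one must justify, from the root filtration space axioms (which apply since $L$ is non-degenerate and generated by its extremal elements), that a hyperbolic pair $(x,z)$ has no common neighbour and that a special pair's bracket $[x,z]$ spans their unique common neighbour. These two facts are exactly what forces the relation $(x,z)$ into $E_{\leq 1}$ and makes the explicit formula for $\exp(\ad x)(z)$ return to the line $\ell$; the remaining verifications are the routine algebra above.
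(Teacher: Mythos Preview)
Your proposal is correct and follows essentially the same route as the paper's proof: both argue that $\exp(\ad x)$ is an automorphism fixing $y$, write an arbitrary line through $\langle y\rangle$ as $\langle y,z\rangle$ with $(y,z)\in E_{-1}$, use the root filtration space structure from \cite{Cohen2006} to force $(x,z)\in E_{\leq 1}$, and then split into the commuting case ($\exp(\ad x)$ fixes $z$) and the special case ($[x,z]\in\langle y\rangle$). The paper simply cites the relevant results from \cite{Cohen2006} directly (Lemma~15 for the automorphism, Theorem~28 for the filtration, and Lemma~1(ii) for $[x,z]\in\langle y\rangle$), whereas you unpack these facts and phrase the last one as uniqueness of the common neighbour; the substance is the same.
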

\begin{proof}
	First note that $\exp(x)$ is an automorphism, by \cite[Lemma 15]{Cohen2006}.
	Since $(x,y)\in E_{-1}$, we get $[x,y]=0$ and hence $\exp(x)$ fixes $y$.
	Consider $z$ extremal such that $(y,z)\in E_{-1}$.
	Then $(x,z)\in E_{\leq 0}$ or $(x,z)\in E_1$, by Theorem 28 of \textit{loc.\@ cit.\@} and the definition of a root filtration space.
	In the former case $[x,z]=0$, so $\exp(x)$ fixes the line $\langle y,z\rangle$.
	In the latter case $[x,z]\in \langle y\rangle$, by Lemma 1(ii) of \textit{loc.\@ cit.\@}, and thus $[x,[x,z]]=0$ and $\exp (x)$ fixes the line $\langle y,z\rangle$.
\end{proof}
\begin{corollary}
\label{Hexagon U246}
	For the root groups $U_2$, $U_4$ and $U_6$, we have
	\[ U_2=\{e_-(\begin{psmallmatrix} \lambda&0\\0&0 \end{psmallmatrix},0)\mid \lambda \in k\}, \ U_4=e_-(0,\SS), \ U_6=\{e_-(\begin{psmallmatrix} 0&0\\0&\lambda \end{psmallmatrix},0)\mid \lambda \in k\}.\]
	Moreover, $U_i$ acts transitively on the set of all neighbors of $x_i$ distinct from $x_{i+1}$, for $i=2,4,6$.
\end{corollary}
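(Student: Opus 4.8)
The plan is to verify, for each $i\in\{2,4,6\}$, the two properties that make the described framework work: first that the claimed subgroup $H_i\le E_-(\A)$ fixes every vertex at distance $\le 1$ from each of the five vertices $x_{i+1},\dots,x_{i+5}$, whence $H_i\le U_i$; and second that $H_i$ acts transitively on the neighbours of $x_i$ different from $x_{i+1}$. Since by \cref{Moufang general remark} at most one element of $U_i$ can carry a given such neighbour to another, these two facts force $U_i=H_i$ and simultaneously give the transitivity assertion. The key point is that in each case $H_i=\{\exp(\ad(\lambda z))\mid \lambda\in k\}$ for a single extremal element $z\in\LL$ spanning one of the \emph{point}-vertices $x_j$ of the $12$-cycle, with $j=5,7,9$ for $i=2,4,6$ respectively: indeed $\begin{psmallmatrix}1&0\\0&0\end{psmallmatrix}_-$ spans $x_5$, $s_-\in\SS_-=\LL_{-2}$ spans $x_7$, and $\begin{psmallmatrix}0&0\\0&1\end{psmallmatrix}_-$ spans $x_9$; the first and third are extremal by \cref{Characterization extremal J +} with the roles of $\A_+$ and $\A_-$ interchanged (using $V_{\begin{psmallmatrix}1&0\\0&0\end{psmallmatrix},\,s\begin{psmallmatrix}1&0\\0&0\end{psmallmatrix}}=V_{\begin{psmallmatrix}1&0\\0&0\end{psmallmatrix},\begin{psmallmatrix}1&0\\0&0\end{psmallmatrix}}=L_0=0$, and similarly for $\begin{psmallmatrix}0&0\\0&1\end{psmallmatrix}$), while $s_-$ is extremal because it spans $\LL_{-2}$.

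For the inclusion $H_i\le U_i$ I will apply \cref{1-dim root groups}; its hypotheses hold because $\LL$ is non-degenerate by \cref{A nondeg} and is generated by its extremal elements (their span is stable under all the automorphisms $\exp(\ad(l))$, $l\in\LL_m$, $m\ne 0$, hence is a non-zero ideal of the simple Lie algebra $\LL$ by \cref{ideals 5 grading}, hence equals $\LL$). Let $z$ span $x_j$ as above. In the fixed apartment $x_j$ lies on the two lines $x_{j-1},x_{j+1}$ and is collinear in the hexagon with the two points $x_{j-2},x_{j+2}$; and since $x_{j-1},x_{j+1}$ are singular lines of the extremal geometry through $x_j$ (using \cref{Hexagon Main Theorem} and \cref{Hexagon inner containing S_+}), every point incident with $x_{j-1}$ or $x_{j+1}$ is collinear with $x_j$. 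Hence for every point $\langle y\rangle$ that is $x_j$ itself, or is $x_{j\pm2}$, or is incident with $x_{j\pm1}$, we have $\langle y\rangle=\langle z\rangle$ or $(z,y)\in E_{-1}$, so \cref{1-dim root groups} shows $\exp(\ad(\lambda z))$ fixes $\langle y\rangle$ together with all lines through it. Since a neighbour of the line $x_{j\pm1}$ is one of its points and a neighbour of the point $x_j$ or $x_{j\pm 2}$ is one of its lines, this says precisely that $\exp(\ad(\lambda z))$ fixes everything at distance $\le 1$ from $x_{j-2},x_{j-1},x_j,x_{j+1},x_{j+2}$, i.e.\ from $x_{i+1},\dots,x_{i+5}$; thus $H_i\le U_i$.

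For the transitivity it suffices to exhibit, for each $i$, one neighbour of the line $x_i$ other than $x_{i+1}$ whose $H_i$-orbit is already all $|k|$ of them; here I use that $x_2,x_4,x_6$, being singular, have all their $1$-dimensional subspaces as incident points. Taking $x_1=\SS_+$, $x_3$, $x_5$ as base neighbours of $x_2$, $x_4$, $x_6$ respectively, a short computation with the bracket table of \cref{def:Lie alg} — in which $s\begin{psmallmatrix}1&0\\0&0\end{psmallmatrix}=\begin{psmallmatrix}1&0\\0&0\end{psmallmatrix}$, $\psi\bigl(\begin{psmallmatrix}0&0\\0&1\end{psmallmatrix},\begin{psmallmatrix}1&0\\0&0\end{psmallmatrix}\bigr)=-s$, and $V_{\begin{psmallmatrix}1&0\\0&0\end{psmallmatrix},\begin{psmallmatrix}1&0\\0&0\end{psmallmatrix}}=0$ make all quadratic and higher terms of the relevant $\ad$ vanish — shows that $e_-(\begin{psmallmatrix}\lambda&0\\0&0\end{psmallmatrix},0)$ sends $s_+\in\LL_2$ to $s_+-\lambda\begin{psmallmatrix}1&0\\0&0\end{psmallmatrix}_+$, that $e_-(0,\lambda s)$ sends $\begin{psmallmatrix}1&0\\0&0\end{psmallmatrix}_+$ to $\begin{psmallmatrix}1&0\\0&0\end{psmallmatrix}_++\lambda\begin{psmallmatrix}1&0\\0&0\end{psmallmatrix}_-$, and that $e_-(\begin{psmallmatrix}0&0\\0&\lambda\end{psmallmatrix},0)$ sends $\begin{psmallmatrix}1&0\\0&0\end{psmallmatrix}_-$ to $\begin{psmallmatrix}1&0\\0&0\end{psmallmatrix}_--\lambda s_-$ (with $s_-\in\LL_{-2}$). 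As $\lambda$ ranges over $k$ the images exhaust the points of $x_2=\langle s_+,\begin{psmallmatrix}1&0\\0&0\end{psmallmatrix}_+\rangle$ other than $x_3$, of $x_4=\langle\begin{psmallmatrix}1&0\\0&0\end{psmallmatrix}_+,\begin{psmallmatrix}1&0\\0&0\end{psmallmatrix}_-\rangle$ other than $x_5$, and of $x_6=\langle\begin{psmallmatrix}1&0\\0&0\end{psmallmatrix}_-,s_-\rangle$ other than $x_7$, respectively. Together with $H_i\le U_i$ and \cref{Moufang general remark} this gives $U_i=H_i$ and the transitivity claim. The only genuine difficulty is bookkeeping — above all, keeping the two incarnations of the skew element $s$ (the one in $\A_\pm=\LL_{\pm1}$ and the one in $\SS_\pm=\LL_{\pm2}$) apart, and tracking which $x_j$ is a point and which a line so that ``fixes all neighbours of the five distinguished vertices'' is checked against the correct incident objects.
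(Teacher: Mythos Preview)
Your proof is correct and follows the same strategy as the paper's: use \cref{1-dim root groups} to get $H_i\le U_i$, then verify transitivity on the neighbours of $x_i$ by an explicit bracket computation, and conclude via \cref{Moufang general remark}. Your treatment is in fact slightly more uniform than the paper's: where the paper applies \cref{1-dim root groups} only to the three point-vertices $x_3,x_5,x_7$ and handles the line-vertices $x_4,x_6$ by the separate observation that $e_-(\begin{psmallmatrix}1&0\\0&0\end{psmallmatrix},0)$ fixes a spanning pair of each line elementwise, you observe instead that every point on $x_{j\pm1}$ is collinear with $x_j$ and hence is fixed by $\exp(\lambda z)$ via the same lemma. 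One tiny wording issue: in the case $\langle y\rangle=\langle z\rangle$ the lemma does not literally apply (it assumes $(z,y)\in E_{-1}$), but the needed conclusion---that $\exp(\ad(\lambda z))$ fixes every line through $\langle z\rangle$---is immediate since such a line is spanned by $z$ and some $w$ with $[z,w]=0$.
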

\begin{proof}
	As mentioned before, $\mathcal L=K(M(J,\eta))$ satisfies the conditions of \cref{1-dim root groups} by \cref{A nondeg} and \cref{Extremal J_+ explicit}.
	By \cref{1-dim root groups}, $e_-( \langle \begin{psmallmatrix} 1&0\\0&0 \end{psmallmatrix}\rangle,0)=\exp(x_5)$ fixes all neighbors of $x_3$, $x_5$ and $x_7$.
	The neighbors of $x_4$ are all the $1$-dimensional inner ideals contained in this $2$-dimensional inner ideal. 
	Since $e_-( \begin{psmallmatrix} 1&0\\0&0 \end{psmallmatrix},0)$ fixes $\begin{psmallmatrix} 1&0\\0&0 \end{psmallmatrix}_-$ and $\begin{psmallmatrix} 1&0\\0&0 \end{psmallmatrix}_+$ it fixes all neighbors of $x_4$. 
	A similar argument holds for $x_6$.
	Now we still need to check that the subgroup $U_2=\{e_-(\begin{psmallmatrix} \lambda&0\\0&0 \end{psmallmatrix},0)\mid \lambda \in k\}$ acts transitively on the set of all neighbors of $x_2=\mathcal S_+\oplus \langle \begin{psmallmatrix} 1&0\\0&0 \end{psmallmatrix}_+\rangle $ distinct from $x_3=\langle \begin{psmallmatrix} 1&0\\0&0 \end{psmallmatrix}_+\rangle $.
	Since $e_-(\begin{psmallmatrix} \lambda&0\\0&0 \end{psmallmatrix},0)(s_+)=s_+-\begin{psmallmatrix} \lambda&0\\0&0 \end{psmallmatrix}_+
	$, this is obvious.
	This shows the claim for $U_2$.
	The proof for $U_4$ and $U_6$ is similar.
\end{proof}

In order to determine the other root groups it is necessary to describe all lines through $x_5=\langle \begin{psmallmatrix} 1&0\\0&0 \end{psmallmatrix}_- \rangle$.
One can easily modify the argument to get a suitable description for all lines through $x_3, x_9$ and $x_{11}$ as well.

\begin{lemma}
\label{Lines through x8}
	The only proper inner ideals containing $x_5$ different from $x_6$ are 
	\[ x_5\oplus e_-(\begin{psmallmatrix} 0&x\\0&0 \end{psmallmatrix},0)(\begin{psmallmatrix} 1&0\\0&0 \end{psmallmatrix}_+), \text{ with } x\in J.\]
\end{lemma}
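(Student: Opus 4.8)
The plan is to realise the inner ideals in the statement as images of the already-known line $x_4=\langle a_+,a_-\rangle$ under a one-parameter family of automorphisms fixing $x_5$, and then to show, by a $5$-graded analysis resting on the extremality results of this section and on \cref{Hexagon Assumption (i)}, that there are no further lines through $x_5$. Throughout write $a:=\begin{psmallmatrix}1&0\\0&0\end{psmallmatrix}$, so that $x_5=\langle a_-\rangle$, $x_4=\langle a_+,a_-\rangle$ and $x_6=\langle a_-,s_-\rangle$, and for $x\in J$ put $\psi_x:=e_-(\begin{psmallmatrix}0&x\\0&0\end{psmallmatrix},0)=\exp(\ad(\begin{psmallmatrix}0&x\\0&0\end{psmallmatrix}_-))$, an automorphism of $\LL$ by \cref{csa is algebraic}.

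First I would check that the subspaces in the statement really are lines through $x_5$. A direct computation in $M(J,1)$ gives $\psi(\begin{psmallmatrix}0&x\\0&0\end{psmallmatrix},a)=0$, so $\ad(\begin{psmallmatrix}0&x\\0&0\end{psmallmatrix}_-)(a_-)=0$ and $\psi_x$ fixes $a_-$; hence $\psi_x$ fixes $x_5$ and maps the $2$-dimensional inner ideal $x_4$ onto the $2$-dimensional inner ideal $\psi_x(x_4)=\langle a_-,\psi_x(a_+)\rangle=x_5\oplus e_-(\begin{psmallmatrix}0&x\\0&0\end{psmallmatrix},0)(a_+)$. Since $\ad$ of a negative-degree element strictly lowers the degree, $\psi_x(a_+)=a_++(\text{terms of degree}\leq 0)$ with $0$-component $-V_{a,\begin{psmallmatrix}0&x\\0&0\end{psmallmatrix}}$; this shows at once that these lines differ from $x_6$ (which lies in $\LL_{-2}\oplus\LL_{-1}$), and — using $\psi_x\psi_{x'}=\psi_{x+x'}$, valid because $\psi(\begin{psmallmatrix}0&x\\0&0\end{psmallmatrix},\begin{psmallmatrix}0&x'\\0&0\end{psmallmatrix})=0$ — that $\psi_x(x_4)=\psi_{x'}(x_4)$ forces $V_{a,\begin{psmallmatrix}0&x-x'\\0&0\end{psmallmatrix}}=0$, hence $x=x'$ by non-degeneracy of $T$. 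That $x_6$ is itself a line through $x_5$ is part of the chosen $12$-cycle.

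Conversely, let $L$ be a $2$-dimensional inner ideal with $a_-\in L$. By \cref{Reducing to S_+,Hexagon inner containing S_+}, every proper non-trivial inner ideal of $\LL$ is $1$-dimensional or singular of dimension $2$, so $L$ is singular; pick $y=t_-+b_-+V+c_++u_+\in L\setminus\langle a_-\rangle$, which is therefore extremal, with $[a_-,y]=0$ by \cref{Inner abelian}. Decomposing $[a_-,y]=0$ by degree gives $\psi(a,b)=0$, $V^\epsilon(a)=0$, $V_{c,a}=0$ and $ua=0$; as $\SS=\langle s\rangle$ and $sa=a$, the last relation yields $u=0$. If $L\cap\SS_-\neq 0$, then $\SS_-\leq L$, so $s_-\in L$ and $L=\langle a_-,s_-\rangle=x_6$. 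Otherwise, I would feed the basis elements of $\A_+$, $\SS_+$ and $\A_-$ into the relations ``$[y,[y,\LL_i]]\leq\langle y\rangle$'' (extremality of $y$) and ``$[y,[y,\LL_i]]\leq L$'' ($L$ an inner ideal of dimension $\leq 2$ through $a_-$), and compare $5$-graded components. This should yield successively: $c$ is extremal in $\A$; $c\neq 0$ — otherwise the $\Inst(\A)$-part $V$ would satisfy $V^2=0$, $V^\delta(\SS)=0$ and yet $\ad_y^2$ would carry $\Inst(\A)$ back into $\langle y\rangle$, contradicting \cref{Hexagon Assumption (i)} exactly as in the proof of \cref{Hexagon inner containing S_+}; and then, since $V_{c,a}=0$ gives $U_c(a)=V_{c,a}(c)=0$ whereas $U_c(a)=2c\neq 0$ for every extremal $c$ of the form $\begin{psmallmatrix}N(x)&x\\x^\sharp&1\end{psmallmatrix}$ (a short cubic-norm computation), \cref{Extremal J_+ explicit} forces $c$ to be a non-zero scalar multiple of $a$. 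After rescaling $y$ we may take $c=a$; a further round of the same analysis then pins down $V$, $b$ and $t$ in terms of a single parameter $x\in J$, so that $\psi_{-x}(y)\in\langle a_+,a_-\rangle$, whence $\psi_{-x}(L)=x_4$ and $L=\psi_x(x_4)=x_5\oplus e_-(\begin{psmallmatrix}0&x\\0&0\end{psmallmatrix},0)(a_+)$.

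The main obstacle is this last determination of $V$, $b$, $t$: since $L\cap\SS_-=0$ does \emph{not} force elements of $L$ to have vanishing $\LL_{-2}$-component (indeed $\psi_x(a_+)$ does not, for $x\neq 0$), one cannot annihilate components blindly but must track the scalar by which $[y,[y,s_+]]$, $[y,[y,\id]]$, etc.\ decompose along $y$; once $c=a$ is known this reduces to the identities $x^{\sharp\sharp}=N(x)x$, $T(x,x^\sharp)=3N(x)$ and non-degeneracy of $T$, but it is bookkeeping-heavy. (A tempting shortcut — transporting $x_5$ to $\SS_+$ by an element of $E(\A)$ via \cref{Reducing to S_+}, which is legitimate since $x_5$ is a $1$-dimensional proper inner ideal and $M(J,1)$ satisfies \cref{ass:skew,ass:V0} by \cref{Hexagon Assumption (i)}, and then reading off from \cref{Hexagon inner containing S_+,Extremal J_+ explicit} that there are exactly $|J|+1$ lines through $x_5$ — only establishes equicardinality with the family built above, which does not give surjectivity when $J$ is infinite; it does, however, settle the case of finite $J$.)
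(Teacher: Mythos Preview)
Your direct $5$-graded analysis is a legitimate route and the structure you outline (extract $u=0$, then $c$ extremal, then $c\in\langle a\rangle$ via $V_{c,a}=0$, then solve for $V,b,t$) is sound in principle, though the final ``bookkeeping-heavy'' step is not actually carried out. The paper, however, takes precisely the shortcut you dismiss --- but in a sharper form that avoids your objection.

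The paper writes down the explicit automorphism $\varphi:=e_+(\begin{psmallmatrix}1&0\\0&0\end{psmallmatrix},0)\,e_-(\begin{psmallmatrix}0&0\\0&-1\end{psmallmatrix},0)$ and checks that $\varphi(\SS_-)=x_5$. This is not an abstract existence statement from \cref{Reducing to S_+}; it is a concrete element of $E(\A)$ whose action can be computed. Hence lines through $x_5$ are exactly the $\varphi$-images of the lines through $\SS_-$, and the latter are fully known by \cref{Hexagon inner containing S_+,Extremal J_+ explicit}: they are $\SS_-\oplus\langle c_-\rangle$ for $c$ extremal in $\A$. The paper then computes the $(-1)$-component of $\varphi\bigl(\begin{psmallmatrix}N(x)&x\\x^\sharp&1\end{psmallmatrix}_-\bigr)$ and compares with the $(-1)$-component $\begin{psmallmatrix}0&0\\-x^\sharp&0\end{psmallmatrix}$ of $\psi_x(a_+)$; since $J$ is division, $x\mapsto x^\sharp$ is a bijection of $J$ onto itself, and this (modulo adding multiples of $a_-$) matches the two parametrizations. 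So the paper's argument is a genuine bijection, not an equicardinality count, and it replaces your entire second paragraph by a two-line computation.

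Your worry about the infinite case is therefore misplaced: the issue is not the cardinality of $J$ but whether one merely knows \emph{some} $\varphi$ exists versus having an explicit $\varphi$ whose effect on the parametrizing set can be traced. What your approach buys is self-containment (no need to find a good $\varphi$); what the paper's buys is brevity and the avoidance of the uncompleted determination of $V,b,t$.
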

\begin{proof}
	First note that $e_-(\begin{psmallmatrix} 0&x\\0&0 \end{psmallmatrix},0)(\begin{psmallmatrix} 1&0\\0&0 \end{psmallmatrix}_+)$ has $(-1)$-component $\begin{psmallmatrix} 0&0\\-x^\sharp &0 \end{psmallmatrix}$ and generates a $1$-dimensional inner ideal collinear with $e_-(\begin{psmallmatrix} 0&x\\0&0 \end{psmallmatrix},0)(x_5)=x_5$.
	Now we show that there is a unique $1$-dimensional inner ideal collinear with $x_5$ and with $(-1)$-component $\begin{psmallmatrix} 0&0\\x&0 \end{psmallmatrix}$.
	Set $\varphi=e_+(\begin{psmallmatrix} 1&0\\0&0 \end{psmallmatrix},0)e_-(\begin{psmallmatrix} 0&0\\0&-1 \end{psmallmatrix},0)$.\footnote{Recall the convention from \cref{convention multiplication}.} 
	It is straightforward to compute that $\varphi$ maps $x_7=\SS_-$ onto $x_5$ and maps the extremal element $\begin{psmallmatrix} N(x)&x\\x^\sharp&1 \end{psmallmatrix}_-$ onto an extremal element with $(-1)$-component $\begin{psmallmatrix} N(x)&0\\x^\sharp&0 \end{psmallmatrix}$.
	Now the fact that $J$ is division together with \cref{Extremal J_+ explicit,Hexagon inner containing S_+} concludes this proof.
\end{proof}

\begin{lemma}
\label{Hexagon U35}
	For the root groups $U_3$ and $U_5$, we have 
	\[ U_3=\{e_-(\begin{psmallmatrix} 0&0\\x&0 \end{psmallmatrix},0)\mid x\in J\}, \ U_5=\{e_-(\begin{psmallmatrix} 0&x\\0&0 \end{psmallmatrix},0)\mid x\in J\}.\]
	Moreover, $U_i$ acts transitively on all neighbors of $x_i$ distinct from $x_{i+1}$, for $i=3, 5$.
\end{lemma}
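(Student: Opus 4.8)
The plan is to argue exactly as in the proof of \cref{Hexagon U246}, but now using \cref{Lines through x8} (and its analogue for $x_3$) in place of the easy description of the neighbours of a line. Fix $i\in\{3,5\}$, write $U_i'$ for the subgroup appearing on the right-hand side of the asserted identity, and recall that $E_-(\A)\leq\Aut(\LL)$ acts on $\Omega$ since automorphisms of $\LL$ permute proper non-trivial inner ideals and preserve inclusion. By \cref{Moufang general remark} it suffices to prove: (1) $U_i'$ is a subgroup; (2) $U_i'$ fixes every neighbour of each of the five vertices $x_{i+1},x_{i+2},\dots,x_{i+5}$, so that $U_i'\leq U_i$; and (3) $U_i'$ acts transitively on the set of neighbours of $x_i$ distinct from $x_{i+1}$. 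Once (1)--(3) hold, the ``at most one element'' part of \cref{Moufang general remark} shows that $U_i$ acts freely on that set of neighbours, and a transitive subgroup of a free action is the whole group, so $U_i=U_i'$ and the transitivity assertion follows at once.

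Step (1) is immediate: by the composition law in \cref{convention multiplication}, $e_-(a,0)\,e_-(b,0)=e_-\bigl(a+b,\tfrac12\psi(a,b)\bigr)$, and expanding $\psi$ with the multiplication formula of \cref{def:struct matrix} and using the symmetry of $\times$ gives $\psi\bigl(\begin{psmallmatrix}0&x\\0&0\end{psmallmatrix},\begin{psmallmatrix}0&y\\0&0\end{psmallmatrix}\bigr)=0=\psi\bigl(\begin{psmallmatrix}0&0\\x&0\end{psmallmatrix},\begin{psmallmatrix}0&0\\y&0\end{psmallmatrix}\bigr)$, so both $U_3'$ and $U_5'$ are (abelian, $\cong(J,+)$) subgroups of $E_-(\A)$. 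For step (2), take $i=5$. Since $\begin{psmallmatrix}0&x\\0&0\end{psmallmatrix}_-\in\LL_{-1}$ it annihilates $\LL_{-2}=\SS_-$, so $e_-(\begin{psmallmatrix}0&x\\0&0\end{psmallmatrix},0)$ fixes $x_7=\SS_-$ elementwise; and one checks from the multiplication formula that $[\begin{psmallmatrix}0&x\\0&0\end{psmallmatrix}_-,\begin{psmallmatrix}1&0\\0&0\end{psmallmatrix}_-]=\psi\bigl(\begin{psmallmatrix}0&x\\0&0\end{psmallmatrix},\begin{psmallmatrix}1&0\\0&0\end{psmallmatrix}\bigr)_-=0$ and $[\begin{psmallmatrix}0&x\\0&0\end{psmallmatrix}_-,\begin{psmallmatrix}0&0\\0&1\end{psmallmatrix}_-]=0$, while $[\begin{psmallmatrix}0&x\\0&0\end{psmallmatrix}_-,\begin{psmallmatrix}0&0\\0&1\end{psmallmatrix}_+]=-V_{\begin{psmallmatrix}0&0\\0&1\end{psmallmatrix},\begin{psmallmatrix}0&x\\0&0\end{psmallmatrix}}$ vanishes because $V_{\begin{psmallmatrix}0&J\\0&0\end{psmallmatrix},\begin{psmallmatrix}0&0\\0&1\end{psmallmatrix}}=0$ by \cref{Hexagon other grading} together with \cref{formula:V epsilon}. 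Hence $e_-(\begin{psmallmatrix}0&x\\0&0\end{psmallmatrix},0)$ fixes each of $x_5,x_6,x_7,x_8,x_9,x_{10}$ elementwise (each being spanned by the elements just listed), so it fixes all their neighbours, giving $U_5'\leq U_5$. The case $i=3$ is identical, with $\begin{psmallmatrix}0&x\\0&0\end{psmallmatrix}$ replaced by $\begin{psmallmatrix}0&0\\x&0\end{psmallmatrix}$ and using $V_{\begin{psmallmatrix}1&0\\0&0\end{psmallmatrix},\begin{psmallmatrix}0&0\\x&0\end{psmallmatrix}}=-\bigl(V_{\begin{psmallmatrix}0&0\\J&0\end{psmallmatrix},\begin{psmallmatrix}1&0\\0&0\end{psmallmatrix}}\bigr)^{\epsilon}=0$ (again by \cref{Hexagon other grading} and \cref{formula:V epsilon}) to see that $x_3$ and $x_4$ are fixed.

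For step (3) with $i=5$: since $e_-(\begin{psmallmatrix}0&x\\0&0\end{psmallmatrix},0)$ fixes $x_5$ and $x_4=x_5\oplus\langle\begin{psmallmatrix}1&0\\0&0\end{psmallmatrix}_+\rangle$, we get $e_-(\begin{psmallmatrix}0&x\\0&0\end{psmallmatrix},0)(x_4)=x_5\oplus e_-(\begin{psmallmatrix}0&x\\0&0\end{psmallmatrix},0)(\begin{psmallmatrix}1&0\\0&0\end{psmallmatrix}_+)$, and \cref{Lines through x8} says that as $x$ ranges over $J$ these are exactly all lines through $x_5$ other than $x_6$; since $e_-(\begin{psmallmatrix}0&x\\0&0\end{psmallmatrix},0)$ also fixes $x_6$, this is precisely transitivity of $U_5'$ on the neighbours of $x_5$ distinct from $x_6$. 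For $i=3$ one argues in the same way from the analogue of \cref{Lines through x8} for $x_3$ (derived, as noted after that lemma, by the same argument), using that $e_-(\begin{psmallmatrix}0&0\\x&0\end{psmallmatrix},0)$ fixes $x_3$ and $x_4$ and moves $x_2$. This completes the plan. The only slightly delicate point is the vanishing of the two families of $V$-operators used in step (2); but these are already recorded in \cref{Hexagon other grading}, so the proof is in essence just bookkeeping with the $5$-grading of $\LL$ and the composition law of \cref{convention multiplication}.
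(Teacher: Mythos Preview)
Your argument for step (2) has a genuine gap.  You claim that because $\varphi=e_-(\begin{psmallmatrix}0&x\\0&0\end{psmallmatrix},0)$ fixes each of $x_5,\dots,x_{10}$ \emph{elementwise}, it automatically fixes all their neighbours.  For the two-dimensional vertices $x_6,x_8,x_{10}$ this is fine: their neighbours are the one-dimensional subspaces they contain.  But $x_7$ and $x_9$ are one-dimensional, and their neighbours are the \emph{lines through them}; an automorphism fixing a point of the hexagon certainly need not fix every line through that point.  So the implication you invoke is simply false for points.

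The paper supplies exactly the extra work your argument skips.  For $x_7=\SS_-$ it uses \cref{Hexagon inner containing S_+} to write every line through $\SS_-$ as $\SS_-\oplus\langle a_-\rangle$ and observes that $\varphi(a_-)=a_-+\psi(\begin{psmallmatrix}0&x\\0&0\end{psmallmatrix},a)_-\in\SS_-\oplus\langle a_-\rangle$, so each such line is preserved (though not elementwise).  For $x_9$ this trick is unavailable, and the paper instead invokes the analogue of \cref{Lines through x8} to describe the lines through $x_9$ explicitly as $x_9\oplus\langle y\rangle$ with $y=e_-(\begin{psmallmatrix}0&0\\x'&0\end{psmallmatrix},0)(\begin{psmallmatrix}0&0\\0&1\end{psmallmatrix}_+)$, and then computes $\varphi(y)=y-T(x,x')\begin{psmallmatrix}0&0\\0&1\end{psmallmatrix}_-\in x_9\oplus\langle y\rangle$.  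Your use of \cref{Lines through x8} is only in step (3) for transitivity; you also need it (or its $x_9$-analogue) already in step (2).  Once these two verifications are inserted, the rest of your plan is correct and matches the paper's proof.
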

\begin{proof}
	We show the claim for $U_5$.
	Let $x\in J$ be arbitrary.
	Set $\varphi=e_-(\begin{psmallmatrix} 0&x\\0&0 \end{psmallmatrix},0)$.
	Observe that $\varphi$ fixes all neighbors of $x_6$, $x_8$ and $x_{10}$.
	By \cref{Extremal J_+ explicit,Hexagon inner containing S_+} we see that any neighbor of $x_7=\SS_-$ is of the form $\SS_-\oplus \langle a_-\rangle$, for some $a\in \A$.
	Clearly $\varphi (\SS_-\oplus \langle a_-\rangle)=\SS_-\oplus \langle a_-\rangle$.
	Hence $\varphi$ also fixes all neighbors of $x_7$, i.e., all lines through $\mathcal S_-$. 
	We now need to check that $\varphi$ fixes all lines through $x_9=\langle \begin{psmallmatrix} 0&0\\0&1 \end{psmallmatrix}_-\rangle $.
	Consider $y=e_-(\begin{psmallmatrix} 0&0\\x'&0 \end{psmallmatrix},0)(\begin{psmallmatrix} 0&0\\0&1 \end{psmallmatrix}_+)$, with $x'\in J$ arbitrary.
	Then $\varphi (y)=y-T(x,x')x_9$.
	So $\varphi$ fixes all neighbors of $x_6$, $x_7$, $x_8$, $x_9$ and $x_{10}$.
	By \cref{Lines through x8}, we see that $\{e_-(\begin{psmallmatrix} 0&x\\0&0 \end{psmallmatrix},0)\mid x\in J\}$ acts transitively on the set of all neighbors of $x_5$ different from $x_6$.
\end{proof}

We now get to the final root group, which is the hardest to determine.
\begin{lemma}
\label{Hexagon U1}
	The root group $U_1$ is equal to
	\[ U_1 = \left\{ \exp \left( \ad \bigl( T_{\begin{psmallmatrix} 0&x\\0&0 \end{psmallmatrix}} \bigr) \right) \mid x\in J \right\} \]
	and acts transitively on the set of all neighbors of $x_1$ distinct from $x_2$.
\end{lemma}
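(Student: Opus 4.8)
The plan is to produce the claimed one‑parameter family explicitly, show it sits inside $U_1$, show it already acts transitively on the relevant set of neighbors, and then invoke \cref{Moufang general remark} to force equality. Write $\varphi_x:=\exp\bigl(\ad(T_{\begin{psmallmatrix}0&x\\0&0\end{psmallmatrix}})\bigr)$ for $x\in J$. The first thing to settle is that each $\varphi_x$ is a genuine automorphism of $\LL$ and that $x\mapsto\varphi_x$ is a homomorphism $(J,+)\to\Aut(\LL)$. For this I would use the $\mathbb Z$-grading of \cref{Hexagon other grading}: the element $T_{\begin{psmallmatrix}0&x\\0&0\end{psmallmatrix}}$ lies in its degree‑$2$ piece $T_{\begin{psmallmatrix}0&J\\0&0\end{psmallmatrix}}$, so $\ad_{T_{\begin{psmallmatrix}0&x\\0&0\end{psmallmatrix}}}$ is nilpotent (indeed of order $\le 4$), and one verifies, using that grading and the Leibniz rule, that $\exp\bigl(\ad(T_{\begin{psmallmatrix}0&x\\0&0\end{psmallmatrix}})\bigr)$ is an automorphism (this is the step that uses $\Char k\ne 2,3$). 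Since $[\LL_2,\LL_2]\subseteq\LL_4=0$ in that grading, the elements $T_{\begin{psmallmatrix}0&x\\0&0\end{psmallmatrix}}$ commute pairwise, whence $\varphi_x\varphi_y=\varphi_{x+y}$.

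Next I would check that $\varphi_x\in U_1$. Note that $T_{\begin{psmallmatrix}0&x\\0&0\end{psmallmatrix}}=V_{\begin{psmallmatrix}0&x\\0&0\end{psmallmatrix},1}$ acts on $\A_+$ as left multiplication by the hermitian element $\begin{psmallmatrix}0&x\\0&0\end{psmallmatrix}$, on $\A_-$ as minus that, and annihilates $\SS_+$ and $\SS_-$ (short computations with the operators of \cref{def:epsilon delta}, using \eqref{formula:V delta}). In particular, using $\begin{psmallmatrix}0&x\\0&0\end{psmallmatrix}\begin{psmallmatrix}1&0\\0&0\end{psmallmatrix}=0$, the automorphism $\varphi_x$ fixes $\begin{psmallmatrix}1&0\\0&0\end{psmallmatrix}_+$, $\begin{psmallmatrix}1&0\\0&0\end{psmallmatrix}_-$, $\SS_+$ and $\SS_-$ pointwise, hence it fixes each of the inner ideals $x_2,x_3,x_4,x_5,x_6$ elementwise and therefore fixes all of their neighbors in $\Omega$. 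Thus $\varphi_x\in U_1$, and the $\varphi_x$ form a subgroup of $U_1$.

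Then I would establish transitivity on the neighbors of $x_1=\SS_+$ distinct from $x_2$. By \cref{Hexagon inner containing S_+} together with \cref{Extremal J_+ explicit}, these neighbors are exactly the lines $\langle\begin{psmallmatrix}N(j)&j\\j^\sharp&1\end{psmallmatrix}_+,\,s_+\rangle$ for $j\in J$, the value $j=0$ giving $x_0$. Since $\ad_{T_{\begin{psmallmatrix}0&x\\0&0\end{psmallmatrix}}}$ preserves $\A_+$, the automorphism $\varphi_x$ acts on $\A_+$ by $\exp$ of left multiplication by $\begin{psmallmatrix}0&x\\0&0\end{psmallmatrix}$, and iterating that multiplication on $\begin{psmallmatrix}N(j)&j\\j^\sharp&1\end{psmallmatrix}$ and collecting terms — using $T(x,x^\sharp)=3N(x)$, $T(x\times x,j)=2T(x^\sharp,j)$, and the linearizations $N(j+x)=N(j)+T(j^\sharp,x)+T(x^\sharp,j)+N(x)$ and $(j+x)^\sharp=j^\sharp+j\times x+x^\sharp$ — yields
\[ \varphi_x\!\left(\begin{pmatrix}N(j)&j\\j^\sharp&1\end{pmatrix}_{\!+}\right)=\begin{pmatrix}N(j+x)&j+x\\(j+x)^\sharp&1\end{pmatrix}_{\!+}. \]
Hence $\varphi_x$ sends the neighbor attached to $j$ to the one attached to $j+x$; in particular $\{\varphi_x\mid x\in J\}$ acts transitively on the neighbors of $x_1$ distinct from $x_2$, and sharply so (e.g.\ $\varphi_x(x_0)$ determines $x$). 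Finally, combining $\{\varphi_x\}\le U_1$ with this transitivity and the fact from \cref{Moufang general remark} that the stabilizer in $U_1$ of such a neighbor is trivial: any $u\in U_1$ fixes $x_1$ and $x_2$, so $u(x_0)$ is again a neighbor of $x_1$ distinct from $x_2$; choosing $\varphi_x$ with $\varphi_x(x_0)=u(x_0)$, the element $\varphi_x^{-1}u\in U_1$ fixes $x_0$ and must be the identity, so $u=\varphi_x$. Thus $U_1=\{\varphi_x\mid x\in J\}$ and it acts transitively on the neighbors of $x_1$ distinct from $x_2$.

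The routine parts are the operator bookkeeping of the second step and the cubic‑norm identity of the third. The one genuinely delicate point is the very first step: $T_{\begin{psmallmatrix}0&x\\0&0\end{psmallmatrix}}$ lies in $\LL_0$ of the standard $5$-grading, not in the pieces $\LL_{\sigma 1}\oplus\LL_{\sigma 2}$ for which algebraicity of $\A$ (\cref{csa is algebraic}) directly guarantees that the relevant exponential is an automorphism; so one genuinely needs the auxiliary $\mathbb Z$-grading of \cref{Hexagon other grading} (or, alternatively, an expression of $\varphi_x$ as a product of elements of $E(\A)$) to justify $\varphi_x\in\Aut(\LL)$, and this is also the only spot requiring a little care when $\Char k=5$.
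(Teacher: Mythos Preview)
Your argument has a genuine gap in the second step. You assert that because $\varphi_x$ fixes the inner ideals $x_2,\dots,x_6$ \emph{elementwise}, it fixes all of their neighbors in $\Omega$. This is immediate for the $2$-dimensional inner ideals $x_2,x_4,x_6$, whose neighbors are the $1$-dimensional subspaces they contain. It is \emph{not} immediate for the $1$-dimensional inner ideals $x_3$ and $x_5$: their neighbors are the $2$-dimensional inner ideals containing them, and an automorphism fixing a point of the extremal geometry need not fix every line through that point. Nothing you have said rules out that $\varphi_x$ permutes the lines through $x_3$ (or $x_5$) nontrivially.

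The paper closes exactly this gap by invoking the explicit description of the lines through $x_5$ from \cref{Lines through x8}: every such line other than $x_6$ is $x_5\oplus\bigl\langle e_-\bigl(\begin{psmallmatrix}0&x'\\0&0\end{psmallmatrix},0\bigr)\begin{psmallmatrix}1&0\\0&0\end{psmallmatrix}_+\bigr\rangle$ for some $x'\in J$, and one then computes directly that $\varphi_x(z)=z-T(x'^\sharp,x)\,x_5$ for the relevant generator $z$, so each such line is preserved. The analogous check handles $x_3$. You need either this computation or an equivalent argument; ``fixes $x_3$ elementwise'' is not enough.

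A secondary remark: your justification that $\varphi_x\in\Aut(\LL)$ via the $7$-grading of \cref{Hexagon other grading} can be made to work, but it is not as routine as your phrasing suggests. Since $T_{\begin{psmallmatrix}0&x\\0&0\end{psmallmatrix}}$ sits in degree $2$ of that grading, $\ad$ of it has nilpotency order $4$, and the cross terms of degree $n\ge 4$ in $[\exp(D)a,\exp(D)b]$ must be shown to vanish; the $n=4$ term dies because $4!$ is invertible, but the $n=5,6$ terms require the grading bound (this is precisely where $\Char k=5$ needs care). The paper sidesteps this by producing instead a genuine $5$-grading, with grading derivation $V_{\begin{psmallmatrix}0&0\\0&1\end{psmallmatrix},\begin{psmallmatrix}1&0\\0&0\end{psmallmatrix}}$ coming from \cite[Proposition~22]{Cohen2006}, in which $T_{\begin{psmallmatrix}0&x\\0&0\end{psmallmatrix}}$ lands in degree $-1$; then \cite[Theorems~2.13 and~3.4]{Stavrova2017} apply directly.
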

\begin{proof}
	First we explain why $\varphi := \exp(\ad(T_{\begin{psmallmatrix} 0&x\\0&0 \end{psmallmatrix}}))$, with $x\in J$, is an automorphism.
	By \cite[Theorem 2.13, Theorem 3.4]{Stavrova2017} any endomorphism $\exp(\ad(l))$ is an automorphism if $l\in K(\A)$ is in the $(-1)$-component of a $5$-grading on $\LL=K(\A)$.
	Now we construct a $5$-grading on $\LL$ such that $T_{\begin{psmallmatrix} 0&x\\0&0 \end{psmallmatrix}}=L_{\begin{psmallmatrix} 0&x\\0&0 \end{psmallmatrix}}$ is in the $(-1)$-component.
	By \cite[Proposition 22]{Cohen2006}, there exists a $5$-grading on $\LL$ with grading derivation $[x_{11},x_5]=V_{\begin{psmallmatrix} 0&0\\0&1 \end{psmallmatrix},\begin{psmallmatrix} 1&0\\0&0 \end{psmallmatrix}}=:V$.
	Note that that $V \begin{psmallmatrix} 0&x\\0&0 \end{psmallmatrix}=\begin{psmallmatrix} 0&x\\0&0 \end{psmallmatrix}$ and $V^\epsilon (1)=\begin{psmallmatrix} -2&0\\0&1 \end{psmallmatrix}$.
	Then 
	\[ [V,T_{\begin{psmallmatrix} 0&x\\0&0 \end{psmallmatrix}}]=V_{V\begin{psmallmatrix} 0&x\\0&0 \end{psmallmatrix},1}+V_{\begin{psmallmatrix} 0&x\\0&0 \end{psmallmatrix},V^\epsilon(1)}=-T_{\begin{psmallmatrix} 0&x\\0&0 \end{psmallmatrix}},\] 
	using $V_{\begin{psmallmatrix} 0&x\\0&0 \end{psmallmatrix},\begin{psmallmatrix} 0&0\\0&1 \end{psmallmatrix}}=0$.
	Moreover, $[T_{\begin{psmallmatrix} 0&x\\0&0 \end{psmallmatrix}},T_{\begin{psmallmatrix} 0&x'\\0&0 \end{psmallmatrix}}]=0$ shows that we are indeed considering a subgroup of $\Aut(\LL)$, using \cite[Lemma 3.1.3]{Boelaert2019}.
	
	One easily checks:
	\begin{align}
		\label{T op 1} \varphi\begin{psmallmatrix} 1&0\\0&0 \end{psmallmatrix}_+&=\begin{psmallmatrix} 1&0\\0&0 \end{psmallmatrix}_+\\ 
		 \label{T op 2} T_{\begin{psmallmatrix} 0&x\\0&0 \end{psmallmatrix}} \begin{psmallmatrix} 0&0\\x'&0 \end{psmallmatrix}&=\begin{psmallmatrix} T(x',x)&0\\0&0 \end{psmallmatrix}\\ 
		 \label{T op 3}\varphi \begin{psmallmatrix} 0&0\\0&1 \end{psmallmatrix}_+&=\begin{psmallmatrix} N(x)&x\\x^\sharp &1 \end{psmallmatrix}_+,
	\end{align}
	for any $x'\in J$.
	By $[T_{\begin{psmallmatrix} 0&x\\0&0 \end{psmallmatrix}},s_+]=-\psi (\begin{psmallmatrix} 0&x\\0&0 \end{psmallmatrix},s)_+=0$, \cref{T op 1} and $T_{\begin{psmallmatrix} 0&x\\0&0 \end{psmallmatrix}}^\epsilon=-T_{\begin{psmallmatrix} 0&x\\0&0 \end{psmallmatrix}}$, $\varphi$ fixes all neighbors of $x_2$, $x_4$ and $x_6$.
	Consider $z:=e_-(\begin{psmallmatrix} 0&x'\\0&0 \end{psmallmatrix},0)(\begin{psmallmatrix} 1&0\\0&0 \end{psmallmatrix}_+)$.
	The $0$-component of $z$ is $-V_{\begin{psmallmatrix} 1&0\\0&0 \end{psmallmatrix},\begin{psmallmatrix} 0&x'\\0&0 \end{psmallmatrix}}$.
	We have 
	\[[T_{\begin{psmallmatrix} 0&x\\0&0 \end{psmallmatrix}},V_{\begin{psmallmatrix} 1&0\\0&0 \end{psmallmatrix},\begin{psmallmatrix} 0&x'\\0&0 \end{psmallmatrix}}]=V_{0,\begin{psmallmatrix} 0&x'\\0&0 \end{psmallmatrix}}-V_{\begin{psmallmatrix} 1&0\\0&0 \end{psmallmatrix},\begin{psmallmatrix} 0&0\\x\times x'&0 \end{psmallmatrix}}=0.\]
	The $(-1)$-component of $z$ equals $-\frac{1}{2}U_{\begin{psmallmatrix} 0&x'\\0&0 \end{psmallmatrix}}\begin{psmallmatrix} 1&0\\0&0 \end{psmallmatrix}=-\begin{psmallmatrix} 0&0\\x'^\sharp &0 \end{psmallmatrix}$.
	Hence, using \cref{T op 2}, we see that $\varphi(z)=z-T(x'^\sharp,x) x_5$ and so $\varphi$ fixes all neighbors of $x_5$ and similarly, all neighbors of $x_3$.
	Finally, \cref{T op 3} shows that $\{\exp(\ad(T_{\begin{psmallmatrix} 0&x\\0&0 \end{psmallmatrix}}))\mid x\in J\}$ acts transitively on the set of all neighbors of $x_1=\mathcal S_+$ different from $x_2$.
	Hence, \cref{Moufang general remark} concludes this proof.
	\end{proof}

\begin{lemma}
\label{Hexagon trans hex}
	Every cycle $(y_0,\dots,y_{11},y_0)$ of length $12$ in $\Omega$ can be mapped onto the cycle $(x_0,\dots,x_{11},x_0)$ by an element of $E(\A)$.
\end{lemma}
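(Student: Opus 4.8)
The plan is to prove that $E(\A)$ acts transitively on the set of $12$\dash cycles of $\Omega$, by normalising an arbitrary cycle $(y_0,\dots,y_{11},y_0)$ towards $(x_0,\dots,x_{11},x_0)$ in three steps. Throughout, \cref{Hexagon Assumption (i)} makes all results of \cref{sec 3} available, and by \cref{Hexagon Main Theorem} every vertex of $\Omega$ is an inner ideal of dimension $1$ or $2$, the $1$\dash dimensional ones being the points and the $2$\dash dimensional ones the lines of the generalised hexagon $(\E(\LL),\F(\LL))$. Since $E(\A)$ is type\dash preserving, after a harmless cyclic relabelling (and possibly a reversal) of the cycle we may assume that $y_i$ is a point for every odd $i$; then $y_1$ and $y_7$ are opposite points of the hexagon.

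\emph{Normalising $y_1$ and $y_7$.} Since $y_1$ is a $1$\dash dimensional inner ideal, \cref{Reducing to S_+} yields an element of $E(\A)$ sending $y_1$ to an inner ideal $J$ with $\SS_+\leq J\leq\Inst(\A)\oplus\A_+\oplus\SS_+$; as $\dim J=\dim\SS_+=1$ this forces $J=\SS_+$, so after applying that element we may assume $y_1=\SS_+=x_1$. Next, write $y_7=\langle z\rangle$. Because $y_1=\SS_+$ and $y_7$ are opposite points, the pair $(s_+,z)$ is hyperbolic in the sense of \cref{E_i}, i.e.\@ lies in $E_2$ — opposite points of the extremal geometry are precisely the hyperbolic pairs, which is part of the root filtration space structure underlying \cref{Extremal geometry characterisation hexagon}. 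Hence $[s_+,[s_+,z]]\neq 0$; since $[s_+,[s_+,\,\cdot\,]]$ annihilates $\bigoplus_{i\geq -1}\LL_i$ and is injective on $\SS_-$, the element $z$ has non\dash zero $\SS_-$\dash component. Now \cref{-2 -1 0 1 2}, applied with $+$ and $-$ interchanged (legitimate by the remark opening \cref{sec 3}), gives an element of $E_+(\A)$ mapping $y_7$ onto $\SS_-$; and $E_+(\A)$ preserves $\SS_+$, because $[\SS_+,\SS_+]=0$ and $[a_+,\SS_+]\subseteq\SS_+$ for all $a\in\A$. So this element fixes $y_1$, and we may further assume $y_7=\SS_-=x_7$.

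\emph{Normalising the rest.} In a generalised hexagon an ordered apartment through the opposite points $\SS_+,\SS_-$ is completely determined by the ordered pair of its two lines through $\SS_+$: given a line $L$ through $\SS_+$ there is a unique geodesic from $\SS_+$ to $\SS_-$ beginning with $L$ (the projection of $\SS_-$ onto $L$ is unique, as is the subsequent geodesic of length $4$), and two such geodesics, for distinct starting lines, meet only in $\SS_+,\SS_-$ by the girth condition. Hence it suffices to produce an element of $\operatorname{Stab}_{E(\A)}(\SS_+,\SS_-)$ carrying $(y_0,y_2)$ to $(x_0,x_2)$; the induced map then carries the whole cycle to $(x_0,\dots,x_{11},x_0)$. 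By \cref{Hexagon inner containing S_+} the lines through $\SS_+$ are the inner ideals $\langle a_+,s_+\rangle$ with $a_+$ extremal, and by \cref{Extremal J_+ explicit,Hexagon embedding Moufang set} they form the (infinite) point set of the Moufang set $\mathbb M(J)$. The root group $U_1$ lies in $\operatorname{Stab}_{E(\A)}(\SS_+,\SS_-)$ — it fixes $x_1$ and $x_7$, these being neighbours of the interior vertices $x_2$ and $x_6$ of its root — and by \cref{Hexagon U1} it acts transitively on the lines through $\SS_+$ distinct from $x_2$. If $\tau\in\operatorname{Stab}_{E(\A)}(\SS_+,\SS_-)$ is any element with $\tau(x_2)\neq x_2$, then $\tau U_1\tau^{-1}$ again lies in $\operatorname{Stab}_{E(\A)}(\SS_+,\SS_-)$, fixes $\tau(x_2)$, and is transitive on the remaining lines through $\SS_+$; hence $\langle U_1,\tau U_1\tau^{-1}\rangle$ acts $2$\dash transitively on the lines through $\SS_+$, so transitively on ordered pairs of distinct such lines, which completes the reduction. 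Such a $\tau$ exists: one may take the reflection of the standard apartment fixing $\SS_+,\SS_-$ and interchanging $x_0$ with $x_2$, realised in $E(\A)$ as a suitable product of the maps $e_\pm(\cdot,\cdot)$ (a $\mu$\dash type element), equivalently as a conjugate of $U_1$ by an appropriate apartment\dash stabilising automorphism of $\LL$.

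\emph{The hard part.} The two delicate points are the opposition criterion used in Step 2 (which we import from the root filtration space machinery of Cohen et al.) and, above all, the explicit construction of $\tau$ in Step 3 — equivalently, of the ``opposite'' root group $U(x_7,x_8,\dots,x_{11},x_0,x_1)$ inside $E(\A)$ that still stabilises $\SS_+$ and $\SS_-$. Writing $\tau$ down in $M(J,1)$ and verifying its action on the standard apartment is a computation of the same nature as, but somewhat heavier than, the proof of \cref{Hexagon U1}; everything else is bookkeeping with the results already established in \cref{sec 3} and earlier in \cref{sec 7}.
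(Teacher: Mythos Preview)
Your first two steps --- normalising $y_1$ to $\SS_+$ via \cref{Reducing to S_+}, and then $y_7$ to $\SS_-$ via \cref{-2 -1 0 1 2} with an element of $E_+(\A)$ --- are exactly what the paper does.

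The divergence, and the gap, is in the third step. You reduce to finding an element of $\operatorname{Stab}_{E(\A)}(\SS_+,\SS_-)$ carrying the ordered pair of lines $(y_0,y_2)$ through $\SS_+$ to $(x_0,x_2)$, and you propose to obtain the required $2$\dash transitivity from $U_1$ together with a conjugate $\tau U_1\tau^{-1}$ for some $\tau$ moving $x_2$. But you never construct~$\tau$: you gesture at ``a $\mu$\dash type element'' realised ``as a suitable product of the maps $e_\pm(\cdot,\cdot)$'' and concede that writing it down is ``somewhat heavier than the proof of \cref{Hexagon U1}''. As it stands, nothing in the paper guarantees that such a reflection lies in $E(\A)$, and you have not verified it. This is a genuine hole, not just a missing computation you could clearly fill in.

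The paper sidesteps this entirely by normalising a different vertex. After $y_1=\SS_+$ and $y_7=\SS_-$, the point $y_5$ is collinear with $\SS_-$ and at distance $2$ from $\SS_+$; the latter gives $[\SS_+,[\SS_+,y_5]]=0$, so $y_5=\langle a_-\rangle$ for some extremal $a\in\A$. Now \cref{Extremal J_+ explicit} parametrises these explicitly, and (the $\A_-$\dash analogue of) \cref{Hexagon U1} provides elements of $\operatorname{Stab}_{E(\A)}(\SS_+,\SS_-)$ acting transitively on them with one fixed point --- which is enough to force $y_5=x_5$. Then ``similarly'' $y_9=x_9$, and $y_3,y_{11}$ are uniquely determined as common neighbours. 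The point is that two separate applications of $1$\dash transitivity (on points of $\A_-$) suffice; your route via $2$\dash transitivity on lines through $\SS_+$ demands an extra ingredient you do not supply.
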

\begin{proof}
	We may assume that $y_1$ is $1$-dimensional.
	By \cref{Reducing to S_+}, the inner ideal $y_1$ can be mapped onto $\SS_+$.
	So we may assume $y_1=\SS_+=x_1$.
	Now $y_7$ is at distance~$3$ from $y_1$ in the extremal geometry.
	This is equivalent with $[\SS_+,[\SS_+,y_7]]\neq 0$, i.e. $y_7$ has non-zero $(-2)$-component.
	By \cref{-2 -1 0 1 2}, there exists an element of $E_+(\A)$ mapping $y_7$ onto $\SS_-$ and fixing $\SS_+$.
	So we may assume $y_7=\SS_-=x_7$.
	Since $y_5$ is collinear with $y_7$ and at distance $2$ from $y_1$, so $[\SS_+,[\SS_+,y_5]]=0$, we get by \cref{Extremal J_+ explicit,Hexagon U1} that we may assume $y_5=x_5$.
	Similarly, we may assume that $y_9=x_9$.
	Since $y_3$ is the unique neighbor of $y_1=x_1$ and $y_5=x_5$, we get $y_3=x_3$, and similarly $y_{11}=x_{11}$. 
\end{proof}

\begin{proposition}
	The generalized hexagon $\Omega$ is Moufang.
\end{proposition}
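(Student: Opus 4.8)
The plan is to verify the defining property of a Moufang hexagon directly: for every path $(y_0,\dots,y_6)$ of length $6$ in $\Omega$, the root group $U(y_0,\dots,y_6)$ acts transitively on the set of neighbours of $y_0$ distinct from $y_1$. The substantial work is already done: \cref{Hexagon U246,Hexagon U35,Hexagon U1} determine the six root groups $U_1,\dots,U_6$ attached to the apartment $(x_0,\dots,x_{11},x_0)$ and show that each $U_i$ acts transitively on the neighbours of $x_i$ distinct from $x_{i+1}$, while \cref{Hexagon trans hex} shows that $E(\A)$ acts transitively on the $12$-cycles of $\Omega$. What remains is a transport-of-structure argument.

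First I would invoke the standard fact that in a generalized hexagon every path of length $6$ in the incidence graph lies on an apartment, i.e.\ on a $12$-cycle: such a path is a geodesic, so its endpoints are opposite, and a geodesic between opposite elements extends to an apartment (see \cite{Tits2002}). So, given a $6$-path $(y_0,\dots,y_6)$, fix a $12$-cycle $C$ through it and write $C=(y_0,y_1,\dots,y_6,y_7,\dots,y_{11},y_0)$. Note that every automorphism in $E(\A)$ is induced by a linear automorphism of $\LL$, hence preserves the dimension of an inner ideal and therefore the bipartition of $\Omega$ into points ($1$-dimensional inner ideals) and lines ($2$-dimensional inner ideals); in the fixed apartment the points are the $x_i$ with $i$ odd and the lines those with $i$ even. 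Now I would apply \cref{Hexagon trans hex} to a suitable cyclic re-presentation of $C$. If $y_0$ is a point, present $C$ as $(y_{11},y_0,y_1,\dots,y_6,y_7,\dots,y_{10},y_{11})$; its entry in position $1$ is the point $y_0$, so \cref{Hexagon trans hex} produces $g\in E(\A)$ mapping $(y_{11},y_0,\dots,y_6)$ onto $(x_0,x_1,\dots,x_7)$, in particular mapping the path $(y_0,\dots,y_6)$ onto $(x_1,\dots,x_7)$. If $y_0$ is a line, present $C$ as $(y_{10},y_{11},y_0,y_1,\dots,y_6,y_7,y_8,y_9,y_{10})$; its entry in position $1$ is the point $y_{11}$, so \cref{Hexagon trans hex} produces $g\in E(\A)$ mapping $(y_0,\dots,y_6)$ onto $(x_2,\dots,x_8)$.

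In either case $g$ conjugates the root group $U(y_0,\dots,y_6)$ onto $U_1=U(x_1,\dots,x_7)$ or $U_2=U(x_2,\dots,x_8)$, and it intertwines the action of $U(y_0,\dots,y_6)$ on the neighbours of $y_0$ with the action of that root group on the neighbours of the image of $y_0$. Hence transitivity of $U(y_0,\dots,y_6)$ on the neighbours of $y_0$ distinct from $y_1$ reduces to transitivity of $U_1$ on the neighbours of $x_1$ distinct from $x_2$, respectively of $U_2$ on the neighbours of $x_2$ distinct from $x_3$; these hold by \cref{Hexagon U1} and \cref{Hexagon U246} respectively. Since $(y_0,\dots,y_6)$ was an arbitrary $6$-path, $\Omega$ is a Moufang hexagon; by \cref{Moufang general remark} the action of each root group is in fact sharply transitive. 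The only point that needs care is the bookkeeping with the cyclic presentation of $C$ together with the parity (point versus line) of $y_0$, which guarantees that every root can be moved onto one of the roots whose root group has been determined; this is the step I would write out most carefully, though it poses no real difficulty, the genuine work being contained in the preceding lemmas.
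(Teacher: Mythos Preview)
Your proof is correct and follows the same approach as the paper, which simply states that the result follows from \cref{Hexagon trans hex,Hexagon U246,Hexagon U35,Hexagon U1}. You have spelled out the transport-of-structure argument in detail, and your observation that only $U_1$ and $U_2$ are actually needed (via a suitable cyclic re-presentation of the apartment) is a small refinement of the paper's terse one-line proof.
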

\begin{proof}
	This follows from \cref{Hexagon trans hex,Hexagon U246,Hexagon U35,Hexagon U1}. 
\end{proof}

We now determine the commutator relations.
\begin{lemma}
\label{commutator rel}
	The commutator relations between $U_2$, $U_3$, $U_4$, $U_5$ and $U_6$ are as in $E_-(\A)$.
	The commutator relations with $U_1$ are trivial except for:
	\begin{align*}
		&[\exp(\ad(T_{\begin{psmallmatrix} 0&x\\0&0 \end{psmallmatrix}})), e_-(\begin{psmallmatrix} 0&0\\0&\lambda \end{psmallmatrix},0)] \\
		      &\hspace*{8ex} = e_-\Bigl(\begin{psmallmatrix} \lambda N(x)&0\\0 &0 \end{psmallmatrix},0\Bigr) \; e_-\Bigl(\begin{psmallmatrix} 0&0\\-\lambda x^\sharp &0 \end{psmallmatrix},0 \Bigr) \; e_- \Bigl(0,-\lambda^2N(x)s\Bigr) \; e_-\Bigl(\begin{psmallmatrix} 0&\lambda x\\0 &0 \end{psmallmatrix},0\Bigr), \\
		&[\exp(\ad(T_{\begin{psmallmatrix} 0&x\\0&0 \end{psmallmatrix}})),e_-(\begin{psmallmatrix} 0&y\\0&0 \end{psmallmatrix},0)] \\
		      &\hspace*{8ex} = e_- \Bigl(\begin{psmallmatrix} -T(x^\sharp,y)&0\\0&0 \end{psmallmatrix},0\Bigr) \; e_-\Bigl(\begin{psmallmatrix} 0&0\\x\times y&0 \end{psmallmatrix},0\Bigr) \; e_-\Bigl(0,-T(x,y^\sharp)s\Bigr),\\
		&[\exp(\ad(T_{\begin{psmallmatrix} 0&x\\0&0 \end{psmallmatrix}})),e_-(\begin{psmallmatrix} 0&0\\y&0 \end{psmallmatrix},0)] = e_- \Bigl(\begin{psmallmatrix} T(x,y)&0\\0&0 \end{psmallmatrix},0\Bigr),
	\end{align*}
	for all $x,y\in J$ and $\lambda \in k$.
\end{lemma}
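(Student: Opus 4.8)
The plan is to split the statement into two parts. For the commutator relations among $U_2,\dots,U_6$ there is essentially nothing to do: by \cref{Hexagon U246,Hexagon U35} these five root groups are subgroups of $E_-(\A)$, and the action of $\Aut(\LL)$ (hence of $E_-(\A)$) on the poset of inner ideals, i.e.\ on $\Omega$, is a group homomorphism, so every commutator of two of them, computed in $\Aut(\Omega)$, agrees with the commutator computed inside $E_-(\A)$. The multiplication rule of \cref{convention multiplication} gives $e_-(a,s)^{-1}=e_-(-a,-s)$ and, after a short computation, $[e_-(a,s),e_-(b,t)]=e_-(0,\psi(a,b))$; substituting the explicit parametrizations and evaluating the finitely many resulting values of the skewer map $\psi$ in $M(J,1)$ (each a $2\times2$ matrix product, simplified using $x\times x=2x^\sharp$ and $T(x,x^\sharp)=3N(x)$) yields the claim.

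For the relations with $U_1$ I would use the description $U_1=\{\varphi_x\mid x\in J\}$ with $\varphi_x:=\exp(\ad(T_z))$ and $z:=\begin{psmallmatrix}0&x\\0&0\end{psmallmatrix}$ from \cref{Hexagon U1}; note that $T_z\in\LL_0$, so $\varphi_x$ preserves the $5$-grading. Since $\overline z=z$, the formulas of \cref{def:epsilon delta} give $\ad(T_z)(b_-)=(-zb)_-$ for $b\in\A$ and $\ad(T_z)(u_-)=0$ for $u\in\SS$, and a direct computation in $M(J,1)$ shows $z(z(z(zb)))=0$ for all $b$. Hence $\varphi_x$ restricted to $\LL_{-1}\oplus\LL_{-2}$ is the explicit polynomial map
\[ \varphi_x\colon\ b_-+u_-\ \longmapsto\ \bigl(b-zb+\tfrac12 z(zb)-\tfrac16 z(z(zb))\bigr)_-+u_-, \]
so for $b\in\A$ there is a well-defined $\tilde b\in\A$ with $\varphi_x(b_-)=\tilde b_-$. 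Every element of $U_j$ ($j\ge2$) is of the form $\psi=\exp(\ad\xi)$ with $\xi=b_-+u_-\in\LL_{-1}\oplus\LL_{-2}$, hence $\varphi_x^{-1}\psi\varphi_x=\exp(\ad(\varphi_x\xi))=e_-(\tilde b,u)\in E_-(\A)$, and therefore, by the group law of \cref{convention multiplication},
\[ [\varphi_x,\,e_-(b,u)]\ =\ e_-(-\tilde b,-u)\,e_-(b,u)\ =\ e_-\bigl(b-\tilde b,\ -\tfrac12\psi(\tilde b,b)\bigr). \]
Running this through the five parameter families $\xi=\begin{psmallmatrix}\lambda&0\\0&0\end{psmallmatrix}_-$, $\begin{psmallmatrix}0&0\\y&0\end{psmallmatrix}_-$, $(\mu s)_-$, $\begin{psmallmatrix}0&y\\0&0\end{psmallmatrix}_-$, $\begin{psmallmatrix}0&0\\0&\lambda\end{psmallmatrix}_-$ parametrizing $U_2,\dots,U_6$, one finds $\tilde b=b$ in the first and third cases, so that $[\varphi_x,U_2]=[\varphi_x,U_4]=1$; in the remaining three cases $b-\tilde b$ and $\psi(\tilde b,b)$ come out exactly as required. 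A final application of the group law checks that each displayed product of $e_-$'s on the right-hand side of the asserted relations equals the single $e_-$ just obtained.

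The only real work here is bookkeeping: one has to keep the convention $fg=g\circ f$ of \cref{convention multiplication} straight while forming commutators and conjugates, and one has to normalize the various $2\times2$ matrix products in $M(J,1)$ using the identities $x\times x=2x^\sharp$, $T(x,x^\sharp)=3N(x)$ and the full symmetry of $(a,b,c)\mapsto T(a,b\times c)$. There is no conceptual obstacle; the chief danger is sign errors, and these are easily caught by verifying, on a generating set such as $\A_+\cup\SS_-$ of $\LL$, that the automorphism produced by each commutator coincides with the claimed product of elements of $E_-(\A)$.
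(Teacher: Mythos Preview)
Your argument is correct and takes a genuinely different route from the paper. The paper's proof invokes \cite[Proposition~5.5]{Tits2002} to know in advance that each commutator $[\varphi_x,e_-(b,0)]$ factors as a product in $U_2U_3U_4U_5$, and then determines the four unknown parameters by evaluating both sides on the single element $\id\in\Inst(\A)\subset\LL$ (using \cref{T op 3}). You avoid this appeal to Moufang-polygon theory entirely: because $T_z\in\LL_0$, the automorphism $\varphi_x$ preserves the $5$-grading, so the conjugation identity $\varphi_x^{-1}\exp(\ad\xi)\varphi_x=\exp(\ad(\varphi_x\xi))$ (with the convention $fg=g\circ f$) immediately gives the commutator as a single element $e_-(b-\tilde b,-\tfrac12\psi(\tilde b,b))$ of $E_-(\A)$, after which the decomposition into the displayed product is a matter of applying the group law in $E_-(\A)$. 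Your approach is more self-contained and purely Lie-algebraic; the paper's approach is shorter on the page because the structural input from \cite{Tits2002} reduces the verification to a single evaluation. One small remark: the nilpotence $L_z^4=0$ that you verify by direct matrix computation can also be read off from the $7$-grading in \cref{Hexagon other grading}, where $T_z\in\LL_2$ forces $\ad(T_z)^4=0$.
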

\begin{proof}
	We deduce the first commutator relation, the other two are obtained in a similar fashion.
	Consider $x\in J$ and $\lambda\in k$ arbitrary.
	Set 
	\begin{align*} 
		\varphi &=[\exp(\ad(T_{\begin{psmallmatrix} 0&x\\0&0 \end{psmallmatrix}})),e_-(\begin{psmallmatrix} 0&0\\0&\lambda \end{psmallmatrix},0)]\\
			&=\exp \bigl( \ad(T_{\begin{psmallmatrix} 0&-x\\0&0 \end{psmallmatrix}}) \bigr) \; e_- \bigl( \begin{psmallmatrix} 0&0\\0&-\lambda \end{psmallmatrix},0 \bigr) \; \exp \bigl( \ad(T_{\begin{psmallmatrix} 0&x\\0&0 \end{psmallmatrix}}) \bigr) \; e_- \bigl( \begin{psmallmatrix} 0&0\\0&\lambda \end{psmallmatrix},0 \bigr) .
	\end{align*}
	By \cite[Proposition 5.5]{Tits2002} there exist $a,b\in J$ and $\gamma, \mu\in k$ such that 
	\[ \varphi = e_- \bigl( \begin{psmallmatrix} \gamma &0\\0&0 \end{psmallmatrix},0 \bigr) \; e_- \bigl( \begin{psmallmatrix} 0&0\\a&0 \end{psmallmatrix},0 \bigr) \; e_- \bigl( 0,\mu s \bigr) \; e_- \bigl( \begin{psmallmatrix} 0 &b\\0&0 \end{psmallmatrix},0 \bigr)
	   = e_- \bigl( \begin{psmallmatrix} \gamma &b\\a&0 \end{psmallmatrix},(-\tfrac{1}{2} T(a,b)+\mu) s \bigr) . \]
	We get $\varphi e_- \bigl( \begin{psmallmatrix} 0 &0\\0&-\lambda \end{psmallmatrix},0 \bigr) = e_- \Bigl( \begin{psmallmatrix} \gamma &b\\a&-\lambda \end{psmallmatrix},(-\frac{1}{2}T(a,b)-\frac{1}{2}\lambda\gamma+\mu)s \Bigr)$, using 
	\[ \psi \bigl( \begin{psmallmatrix} \gamma &b\\a&0 \end{psmallmatrix},\begin{psmallmatrix} 0 &0\\0&-\lambda \end{psmallmatrix} \bigr) = -\lambda\gamma s.\]
	From \cref{T op 3}, we see that $(\varphi e_-(\begin{psmallmatrix} 0 &0\\0&-\lambda \end{psmallmatrix},0))(\id)$ equals $-\lambda \begin{psmallmatrix} -N(x) &-x\\x^\sharp& 1 \end{psmallmatrix}_-+\id$.
	On the other hand, $e_- \Bigl( \begin{psmallmatrix} \gamma &b\\a&-\lambda \end{psmallmatrix},(-\frac{1}{2}T(a,b)-\frac{1}{2}\lambda\gamma+\mu)s \Bigr)(\id)$ equals
	\[ (-T(a,b)-\lambda\gamma+2\mu)s_-+\begin{psmallmatrix} \gamma &b\\a&-\lambda \end{psmallmatrix}_-+\id.\]
	Hence $a=-\lambda x^\sharp$, $b=\lambda x$, $\gamma=\lambda N(x)$ and $\mu=\frac{1}{2}\lambda^2N(x)-\frac{1}{2}\lambda^2 T(x,x^\sharp)=-\lambda^2N(x)$.
\end{proof}

\begin{theorem}\label{hexagon root groups}
	Let $\Omega$ be the generalized hexagon from \cref{Hexagon Main Theorem}. Then $\Omega$ is the Moufang hexagon associated to the cubic Jordan division algebra $J$.
\end{theorem}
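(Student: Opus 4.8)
The plan is to match the root group sequence of $\Omega$, which we have now computed in full, with that of the Moufang hexagon attached to $J$ in the classification of Tits and Weiss \cite{Tits2002}. We have already seen that $\Omega$ is a Moufang hexagon, and \cref{Hexagon U246,Hexagon U35,Hexagon U1} together with \cref{commutator rel} provide a complete description of its root group sequence $(U_1,\dots,U_6)$: the groups $U_1$, $U_3$, $U_5$ are isomorphic to the additive group of $J$, the groups $U_2$, $U_4$, $U_6$ to the additive group of $k$, and all commutator relations have been written out. Since a Moufang polygon is recovered, up to isomorphism, from its root group sequence (see \cite[Chapters 7--8]{Tits2002}), it suffices to recognize this root group sequence as the one defining the Moufang hexagon $\mathcal{H}(J)$ associated with the cubic norm structure of $J$ (with cubic norm $N$, adjoint $x\mapsto x^\sharp$ and base point $1$); see \cite[Chapter 16]{Tits2002}, and \cite[Chapter 15]{Tits2002} for hexagonal systems.

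Concretely, I would fix the parametrizations $u_1,\dots,u_6$ of the root groups $U_1,\dots,U_6$ furnished by \cref{Hexagon U246,Hexagon U35,Hexagon U1}, so that $U_1,U_3,U_5$ are indexed by $J$ (with $u_1(a)=\exp(\ad(T_{\begin{psmallmatrix} 0 & a \\ 0 & 0\end{psmallmatrix}}))$, and so on) and $U_2,U_4,U_6$ are indexed by $k$ (with $u_4(\lambda)=e_-(0,\lambda s)$, and so on). In this notation, \cref{commutator rel} reads
\begin{align*}
  [u_1(a),u_3(b)] &= u_2\bigl(T(a,b)\bigr), \\
  [u_1(a),u_5(b)] &= u_2\bigl(-T(a^\sharp,b)\bigr)\,u_3(a\times b)\,u_4\bigl(-T(a,b^\sharp)\bigr), \\
  [u_1(a),u_6(\lambda)] &= u_2\bigl(\lambda N(a)\bigr)\,u_3\bigl(-\lambda a^\sharp\bigr)\,u_4\bigl(-\lambda^2 N(a)\bigr)\,u_5(\lambda a),
\end{align*}
for $a,b\in J$ and $\lambda\in k$, together with the index-shifted relations among $U_2,\dots,U_6$ (``as in $E_-(\A)$''), all other commutators between root groups being trivial. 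These are exactly the defining commutator relations of the hexagonal system of $J$, as in \cite[Chapter 16]{Tits2002}, after rewriting the coefficients by means of the standard identities of a cubic norm structure (such as $a^{\sharp\sharp}=N(a)a$ and $T(a^\sharp,b)=T(a,a\times b)$) and, if needed, after rescaling the parametrizations of some of the $U_i$ by $a\mapsto-a$ and/or reversing the numbering $(U_1,\dots,U_6)\mapsto(U_6,\dots,U_1)$ (which corresponds to the other choice of orientation of the apartment $(x_0,\dots,x_{11})$). Hence the root group sequence of $\Omega$ is isomorphic to that of $\mathcal{H}(J)$, and the construction of a Moufang polygon from its root group sequence in \cite[Chapter 8]{Tits2002} yields an isomorphism $\Omega\cong\mathcal{H}(J)$. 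As $J$ is a division algebra, $\mathcal{H}(J)$ is a thick Moufang hexagon; this also completes the proof of part \cref{main:3} of the Main Theorem.

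I expect the only genuine work to be the bookkeeping of the second paragraph: lining up our commutator formulas --- derived from the TKK Lie algebra of $M(J,1)$ --- with the normal form of \cite{Tits2002}, and verifying that the remaining conventions (signs, which end of the apartment is numbered first, and the induced $\mu$\dash maps and Weyl element) match up. This is routine precisely because, once the commutator relations agree, the entire root group sequence --- and with it the Moufang hexagon --- is determined. The single point that deserves a line of care is to make sure that the identification is with $\mathcal{H}(J)$ itself, and not \emph{a priori} with its dual; this is settled by the explicit form of the root groups, which places the groups parametrized by $J$ in the positions prescribed by the hexagonal system of $J$.
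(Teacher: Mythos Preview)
Your proposal is correct and follows essentially the same approach as the paper: fix explicit parametrizations of $U_1,\dots,U_6$, read off the commutator relations from \cref{commutator rel}, and observe that these coincide with the defining relations \cite[(16.8)]{Tits2002} of the Moufang hexagon attached to the hexagonal system of $J$. The paper is slightly more economical in that it chooses the parametrizations with the signs already adjusted (for instance $x_4(t)=e_-(0,-ts)$ and $x_6(t)=e_-(\begin{psmallmatrix}0&0\\0&-t\end{psmallmatrix},0)$) so that the match with \cite[(16.8)]{Tits2002} is immediate, whereas you leave the sign normalizations and possible reindexing as an acknowledged bookkeeping step; but this is purely cosmetic.
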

\begin{proof}
	Consider the following parametrization:
	\begin{align*}
		x_1(a) &= \exp \bigl( \ad(T_{\begin{psmallmatrix} 0&a\\0&0 \end{psmallmatrix}}) \bigr), &
		x_4(t) &= e_- \bigl( 0,-ts \bigr), \\
		x_2(t) &= e_- \bigl(\begin{psmallmatrix} t &0\\0&0 \end{psmallmatrix},0 \bigr), &
		x_5(a) &= e_- \bigl(\begin{psmallmatrix} 0 &a\\0&0 \end{psmallmatrix},0 \bigr), \\
		x_3(a) &= e_- \bigl(\begin{psmallmatrix} 0 &0\\a&0 \end{psmallmatrix},0 \bigr), &
		x_6(t) &= e_- \bigl(\begin{psmallmatrix} 0 &0\\0&-t \end{psmallmatrix},0 \bigr),
	\end{align*}
	for all $a\in J$ and $t\in k$.
	Using \cref{commutator rel}, we see that the commutator relations are the same as in \cite[(16.8)]{Tits2002}. 
\end{proof}

\begin{remark}
	The explicit description of the root groups is related to Peirce subspaces. 
	More precisely, if one considers the idempotent $e=\begin{psmallmatrix} 1 &0\\0&0 \end{psmallmatrix}$, and sets 
	\[ \A_{ij}=\{x\in\A\mid ex=ix, xe=jx\},\]
	for $i,j=0,1$, we get $U_2=e_-(\A_{11},0)$, $U_3=e_-(\A_{01},0)$, $U_5=e_-(\A_{10},0)$ and finally $U_6=e_-(\A_{00},0)$, where $\A=M(J)$, with $J$ a cubic Jordan division algebra.
	
	A similar remark applies to the triangle case. 
	In this case, consider $\A=F\oplus F$ as in \cref{construction exchange algebra}, with $F$ an alternative division algebra.
	Then $e=(1,0)$ is an idempotent and \cref{triangle root groups} yields $U_1=e_-(\A_{11},0)$ and $U_3=e_-(\A_{00},0)$.
\end{remark}

\bibliographystyle{alpha}
\bibliography{TDM-JM}

\end{document}